\newtheorem{definition}{Definition} [section]
\newtheorem{de}[definition]{Definition}
\newtheorem{theorem}[definition]{Theorem} 
\newtheorem{theo}[definition]{Theorem}
\newtheorem{proposition}[definition]{Proposition}
\newtheorem{prop}[definition]{Proposition}
\newtheorem{lemma}[definition]{Lemma}
\newtheorem{remark}[definition]{Remark}
\newtheorem*{corintro}{Theorem~1}
\newtheorem*{corintro2}{Theorem~2}
\newtheorem*{propintro}{Proposition}
\def\aut{{\rm{Aut}}}
\def\out{{\rm{Out}}}
\def\G{\Gamma}
\def\stab{{\rm{Stab}}}
\newcommand{\suchthat}{\, | \,}
\newcommand{\Out}{{\rm{Out}}}
\newcommand{\Comm}{\rm{Comm}}
\newcommand{\Stab}{{\rm{Stab}}}
\newcommand{\calf}{\mathcal{F}}
\newcommand{\calI}{\mathcal{I}}
\newcommand{\calJ}{\mathcal{J}}
\newcommand{\caly}{\mathcal{Y}}
\newcommand{\Aut}{{\rm{Aut}}}
\newcommand{\ad}{{\rm{ad}}}
\newcommand{\Mod}{\rm{Mod}}
\newcommand{\zmax}{\mathcal{Z}_{\mathrm{RC}}}
\newcommand{\diam}{\mathrm{diam}}
\newcommand{\cali}{\calI}
\newcommand{\calj}{\calJ}
\newcommand{\calh}{\mathcal{H}}
\newcommand{\lk}{\rm{lk}}
\newcommand{\AT}{\mathcal{AT}}
\newcommand{\Inc}{\mathrm{Inc}}
\newcommand{\ia}{\mathrm{IA}_N(\mathbb{Z}/3\mathbb{Z})}
\newcommand{\ian}{{\mathrm{IA}_N}}
\newcommand{\Pcomp}{(P_{\mathrm{comp}})}
\newcommand{\Pstab}{(P_{\mathrm{stab}})}
\newcommand{\Ppcomp}{(P'_{\mathrm{comp}})}
\newcommand{\Pscomp}{(P''_{\mathrm{comp}})}
\newcommand{\ens}{\mathrm{FS}^{ens}}
\newcommand{\FF}{\mathrm{FF}}
\newcommand{\pr}{\mathrm{rk}_{\mathrm{prod}}}
\newcommand{\psl}{\mathrm{PSL}_n(\mathbb{Z})}
\newcommand{\NS}{\mathrm{FS}^{ns}}
\title{Commensurations of subgroups of $\Out(F_N)$}
\author{Camille Horbez~~and Richard D. Wade}
\date{}
\begin{document}

\maketitle

\begin{abstract}
A theorem of Farb and Handel \cite{FH} asserts that for $N\ge 4$, the natural inclusion from $\Out(F_N)$ into its abstract commensurator is an isomorphism. We give a new proof of their result, which enables us to generalize it to the case where $N=3$. More generally, we give sufficient conditions on a subgroup $\Gamma$ of $\Out(F_N)$ ensuring that its abstract commensurator $\Comm(\Gamma)$ is isomorphic to its relative commensurator in $\Out(F_N)$. In particular, we prove that the abstract commensurator of the Torelli subgroup $\ian$ for all $N\ge 3$, or more generally any term of the Andreadakis--Johnson filtration if $N\ge 4$, is equal to $\Out(F_N)$. Likewise, if $\Gamma$ the kernel of the natural map from $\Out(F_N)$ to the outer automorphism group of a free Burnside group of rank $N\geq 3$, then the natural map $\Out(F_N)\to\Comm(\Gamma)$ is an isomorphism.
\end{abstract}

\setcounter{tocdepth}{1}
\tableofcontents

\section*{Introduction}

Consider the following three classes of groups: the group $\psl$, with $n\ge 3$; the mapping class group $\Mod(\Sigma_g)$ of a closed, connected, oriented surface of genus $g\ge 2$; the group $\Out(F_N)$ of outer automorphisms of a finitely generated free group, with $N\ge 4$. All these groups are known to satisfy strong rigidity properties. For instance, if $\Gamma$ is a finite-index subgroup in one of them, then $\Out(\Gamma)$ is finite -- in other words $\Gamma$ has basically no more symmetries than the obvious ones given by conjugation within the ambient group. This is a consequence of the Mostow--Prasad--Margulis rigidity theorem \cite{Mos,Mos2,Pra,Mar} for $\psl$  (we are simplifying in this introduction by restricting our attention to $\psl$, but this discussion applies to many more lattices in semisimple Lie groups), was proved by Ivanov \cite{Iva} for mapping class groups, and by Farb--Handel \cite{FH} for $\Out(F_N)$, generalizing an earlier result of Khramtsov \cite{Khr} and Bridson--Vogtmann \cite{BV} stating that $\Out(\Out(F_N))$ is trivial for $N\ge 3$. 

A natural problem is to relax the symmetries one is allowed to look for, and study commensurations of the above groups instead of solely their automorphisms. Given a group $G$, the \emph{abstract commensurator} $\Comm(G)$ is the group whose elements are equivalence classes of isomorphisms between finite-index subgroups of $G$. The equivalence relation is given by saying that two such isomorphisms are equivalent if they agree on some common finite-index subgroup of their domains. Notice that every automorphism of $G$ determines an element of $\Comm(G)$; in particular, the action of $G$ on itself by conjugation gives a natural map $G\to\Comm(G)$. But in general, the abstract commensurator of a group $G$ is much larger than its automorphism group: for instance, the abstract commensurator of $\mathbb{Z}^n$ is isomorphic to $\mathrm{GL}(n,\mathbb{Q})$, and the abstract commensurator of a nonabelian free group is not finitely generated (see, for example \cite{MR2736164}).  Two groups $G$ and $H$ are \emph{abstractly commensurable} if they have isomorphic finite index subgroups. There is also a notion of \emph{relative commensurator}: given a group $G$ and a subgroup $H\subseteq G$, the \emph{relative commensurator} of $H$ in $G$, denoted as $\Comm_G(H)$, is the subgroup of $G$ made of all elements such that $H\cap gHg^{-1}$ has finite index in both $H$ and $gHg^{-1}$. There is always a natural  map $\Comm_G(H)\to\Comm(H)$. 

The Mostow--Prasad--Margulis rigidity theorem shows that the abstract commensurator of  $\psl$ is abstractly commensurable to its relative commensurator in $\mathrm{PGL}_n(\mathbb{R})$. Using work of Borel \cite{Bor}, this is known in turn to be isomorphic to $\mathrm{PGL}_n(\mathbb{Q})$, so the abstract commensurator is much larger than the automorphism group in this case. 

Mapping class groups and automorphism groups of free groups satisfy an even stronger form of rigidity. Ivanov proved in \cite{Iva} that for all $g\ge 3$, the natural map $\Mod^\pm(\Sigma_g)\to\Comm(\Mod(\Sigma_g))$ is an isomorphism. Farb and Handel proved in \cite{FH} that for every $N\ge 4$, the natural map $\Out(F_N)\to\Comm(\Out(F_N))$ is an isomorphism. In fact, every isomorphism between two finite-index subgroups of $\Out(F_N)$ extends to an inner automorphism of $\Out(F_N)$. Informally, these results imply that mapping class groups and $\Out(F_N)$ do not have natural enveloping `Lie groups'. These strong rigidity results have recently been extended to other groups, such as handlebody groups \cite{Hen} and big mapping class groups \cite{BDR}.

Margulis' normal subgroup theorem tells us that $\psl$ does not have a normal subgroup of infinite index. On the contrary, mapping class groups and $\Out(F_N)$ have many interesting normal subgroups. Ivanov's theorem has since been generalized to show that the abstract commensurator of various natural normal subgroups of $\Mod(\Sigma_g)$ is isomorphic to $\Mod^\pm(\Sigma_g)$. This includes the Torelli group \cite{FI} (with a recent extension to big mapping class groups in \cite{AGKMTW}), or more generally the further terms of the Johnson filtration \cite{BM2,Kid, BPS}. The latest development is a result by Brendle and Margalit \cite{BM}, asserting that if $\Gamma$ is a normal subgroup of $\Mod(\Sigma_g)$ that contains a `small' element (roughly, a homeomorphism supported on at most one third of the surface), then the natural map $\Mod^\pm(\Sigma_g)\to\Comm(\Gamma)$ induced by conjugation is an isomorphism. We warn the reader that the condition on `small' elements cannot be removed, as $\Mod(\Sigma_g)$ also contains normal purely pseudo-Anosov free subgroups \cite{DGO}, and as recalled earlier the abstract commensurator of a nonabelian free group is not finitely generated.  

Similarly to mapping class groups, $\out(F_N)$ also has many interesting normal subgroups, for instance $\ian$, which is the kernel of the action of $\out(F_N)$ on the abelianization of $F_N$. This is the first term in a family of normal subgroups called the Andreadakis--Johnson filtration, where the $k^{\text{th}}$ term is the kernel of the natural map from $\out(F_N)$ to the outer automorphism group of the free nilpotent group of rank $N$ of class $k$. The main result of the present paper is the following (we give a slightly weaker statement in rank $3$ just below).

\begin{corintro}\label{cor:intro}
Let $N\ge 4$, and let $\Gamma$ be either 
\begin{itemize}
\item a subgroup of $\Out(F_N)$ which contains a term of the Andreadakis--Johnson filtration of $\Out(F_N)$, or 
\item a subgroup of $\Out(F_N)$ that contains a power of every Dehn twist. 
\end{itemize}
Then the natural map $\Comm_{\Out(F_N)}(\Gamma)\to\Comm(\Gamma)$ is an isomorphism. In fact, every isomorphism between two finite-index subgroups of $\Gamma$ is equal to the restriction of the conjugation by some element in $\Comm_{\Out(F_N)}(\Gamma)$.
\end{corintro}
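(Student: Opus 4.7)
The plan is to treat both bullets through a uniform strategy: identify a distinguished family $\calt \subseteq \Gamma$ of elements which (i) admits an algebraic characterization intrinsic to $\Gamma$, and (ii) encodes enough of the combinatorics of splittings of $F_N$ to pin down a commensuration geometrically. For the second bullet, $\calt$ is naturally the set of nontrivial Dehn twist powers that lie in $\Gamma$. For the first bullet, one takes instead a family of twist-like elements actually supported by the Andreadakis--Johnson filtration --- partial conjugations along free factors, or iterated commutators of Dehn twists --- whose conjugacy classes remain in bijective correspondence with (one-edge) free splittings of $F_N$, even though honest Dehn twists themselves may not lie in $\Gamma$.

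Given an isomorphism $f \co \Gamma_1 \to \Gamma_2$ between two finite-index subgroups of $\Gamma$, the central step is to show that, after possibly passing to a further finite-index subgroup, $f$ sends $\calt \cap \Gamma_1$ to $\calt \cap \Gamma_2$. This relies on an algebraic characterization of elements of $\calt$ internal to $\Gamma$, exploiting the structure of their centralizers, the ranks and intersection patterns of the maximal abelian subgroups containing them, and the commutation relations they satisfy with other twist-like elements; any such characterization is automatically preserved by isomorphism. Extracting this characterization is the principal technical obstacle, since it must be robust across the various groups allowed by the hypotheses --- the Torelli group, deeper Andreadakis--Johnson terms, Burnside kernels, and arbitrary subgroups containing twist powers --- each with rather different internal structure.

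Once $f$ preserves $\calt$ up to conjugacy, it descends to a bijection on conjugacy classes of one-edge free splittings of $F_N$ that respects commutation, hence compatibility of splittings. This in turn induces a simplicial automorphism of a suitable free splitting graph, such as one of the variants $\NS$ or $\ens$. The key geometric input is a rigidity theorem asserting that every simplicial automorphism of this graph is realized by an element $\phi \in \Out(F_N)$. Because $\calt$ meets enough conjugacy classes of splittings under the hypotheses on $\Gamma$, the element $\phi$ commensurates $\Gamma$, and therefore $\phi \in \Comm_{\Out(F_N)}(\Gamma)$.

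To finish, I replace $f$ by $g \mapsto \phi^{-1} f(g) \phi$. By construction this modified isomorphism acts as the identity on $\calt \cap \Gamma_1$; a faithfulness statement --- that the centralizer in $\Out(F_N)$ of a sufficiently rich subfamily of $\calt$ is trivial --- then forces it to be the identity on a finite-index subgroup of $\Gamma_1$, which is the desired conclusion. The overall strategy parallels Farb--Handel's proof, with the main innovations lying in the algebraic characterization of $\calt$ for normal subgroups where Dehn twists are not themselves available, and (for the $N=3$ companion statement) in strengthening the rigidity theorem for the splitting graph in low rank, where the usual combinatorial arguments become more delicate.
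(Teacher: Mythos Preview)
Your outline follows the Farb--Handel paradigm: single out a class $\calt$ of twist-like \emph{elements}, characterize them algebraically via centralizers and abelian ranks, and push the resulting bijection on conjugacy classes through to a splitting graph. The paper takes a genuinely different route, closer to Ivanov. It never attempts to characterize individual elements; instead it characterizes the commensurability classes of the \emph{stabilizer subgroups} $\Stab_\Gamma(S)$ for one-edge nonseparating free splittings $S$. The invariant is Property~$\Pstab$: such a stabilizer contains a normal direct product $K_1\times K_2$ of nonabelian free groups (the left and right twist groups) with the peculiar feature that $C_\Gamma(P_i)=K_{i+1}$ for every $P_i$ normal in a finite-index subgroup of $K_i$, together with a direct product of $2N-4$ nonabelian free groups. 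The converse --- that any subgroup with this structure fixes a nonseparating free splitting --- is proved by analyzing the action on relative free factor graphs $\FF(F_N,\calf)$ and the stabilizers of relatively arational trees in $\partial_\infty\FF$; this hyperbolic-geometric input replaces the abelian-subgroup combinatorics in Farb--Handel. Rose-compatibility of $S_1,S_2$ is then detected not by commutation of twists but by whether $\Stab_\Gamma(S_1)\cap\Stab_\Gamma(S_2)$ is contained in the stabilizer of a third splitting (via sphere surgery). The general framework of Proposition~\ref{prop:blueprint} packages this into the conclusion without any ``replace $f$ by $\phi^{-1}f\phi$ and check it is the identity on $\calt$'' step.

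As written, your proposal has a gap rather than an error: you name the algebraic characterization of $\calt$ as the principal obstacle and then do not supply it. For subgroups containing twist powers this is plausible along Farb--Handel lines, but for deep Andreadakis--Johnson terms it is unclear what $\calt$ should even be or how to pin it down from centralizer data alone --- the suggestions (partial conjugations, iterated commutators of twists) do not obviously admit a uniform intrinsic description across all the groups in the statement. The paper's subgroup-level approach was designed precisely to sidestep this: the direct-product-of-free-groups structure of $\Stab_\Gamma(S)$, and the centralizer condition on it, survive intersection with any twist-rich $\Gamma$ uniformly, which is what makes a single argument cover both bullets.
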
 

In rank three, we prove the following.

\begin{corintro2}
Let $\Gamma$ be either $\mathrm{IA}_3$ or a subgroup of $\Out(F_3)$ that contains a power of every Dehn twist. 
\\ Then the natural map $\Comm_{\Out(F_3)}(\Gamma)\to\Comm(\Gamma)$ is an isomorphism. In fact, every isomorphism between two finite-index subgroups of $\Gamma$ is equal to the restriction of the conjugation by some element in $\Comm_{\Out(F_3)}(\Gamma)$.
\end{corintro2}

\noindent\emph{Example.} Let $N\ge 3$, let $p\in\mathbb{N}$, and let $\Gamma$ be the kernel of the natural map from $\out(F_N)$ to the outer automorphism group of the free Burnside group $B(N,p)$. Then $\Gamma$ contains the $p^{\text{th}}$ power of every Dehn twist, and hence is covered by the theorem. As $\Gamma$ is normal in $\out(F_N)$, we deduce that the natural map $\Out(F_N)\to\Comm(\Gamma)$ is an isomorphism.
\\
\\
\indent Let us make a few more comments about our main theorem. First, we recover Farb and Handel's theorem that $\Comm(\Out(F_N))\simeq\Out(F_N)$ -- with a new proof -- and extend it to the case where $N=3$. Second, in the case where $\Gamma$ is normal, the conclusion is that the natural map $\Out(F_N)\to\Comm(\Gamma)$ is an isomorphism, so our theorem computes the abstract commensurator of $IA_N$ and of all terms in the Andreadakis--Johnson filtration if $N\ge 4$. Third, the requirement that $N \geq 3$ in the above theorem is strict as the group $\Out(F_2)$ is virtually free and therefore has a more complicated abstract commensurator. Finally, we would like to mention that when $N\ge 4$, all examples in the statement are recast in the more general framework of \emph{twist-rich} subgroups of $\Out(F_N)$ (see Section~\ref{sec:hypotheses} for the precise definition of twist-rich and Section~\ref{sec:conclusion} for the most general statement of Theorem~1\ref{cor:intro}).

While Farb and Handel's proof in \cite{FH} was more algebraic (and relied on previous work of Feighn and Handel \cite{FeH} classifying abelian subgroups of $\Out(F_N)$), the broad strategy of our proof is closer in spirit to Ivanov's, which relied on the computation of the symmetries of the curve complex. Namely, we use the fact that the simplicial automorphisms of a certain $\Out(F_N)$-complex all come from the $\Out(F_N)$-action. Before giving a simplified sketch of the proof, we feel that is worth highlighting three places where, as far as we are aware, our techniques differ from the current literature: 
\begin{itemize}
\item We provide a general framework in the language of relative commensurators, which allows us to understand $\Comm(\G)$ for subgroups $\G$ of $\Out(F_N)$ that are not necessarily normal. 

\item As we shall see below the algebraic structure of $\Out(F_N)$ is quite different from a mapping class group, and this is used in the proof in an essential way. In particular, we will crucially take advantage of twist subgroups associated to one-edge free splittings, which do not have a natural analogue in the surface setting.
 
\item Actions of subgroups on Gromov hyperbolic spaces and their boundaries are a fundamental part of the proof. \end{itemize}

\paragraph*{Strategy of proof.} The rest of the introduction is devoted to sketching our proof that the natural map $\Out(F_N)\to\Comm(\Out(F_N))$ is an isomorphism for all $N\ge 3$ -- a few more technicalities arise for general twist-rich subgroups, but we will ignore them for now. As we are working up to commensuration, it is actually enough to compute the abstract commensurator of the torsion-free finite-index subgroup $\ia$ made of automorphisms acting trivially on homology mod $3$ (this is useful in order to avoid some finite-order phenomena). 

Various natural $\Out(F_N)$-complexes are known to be rigid in the sense that all their simplicial automorphisms come from the $\Out(F_N)$-action. These include the spine of reduced Outer space \cite{BV2}, the free splitting complex \cite{AS}, the complex of nonseparating free splittings \cite{Pan}, the cyclic splitting complex \cite{HW} or the free factor complex \cite{BB}. We work with the \emph{edgewise nonseparating free splitting graph} $\ens$, defined as follows: vertices are free splittings of $F_N$ as an HNN extension $F_N=A\ast$, and two splittings are joined by an edge if they are \emph{rose-compatible}, i.e.\ they have a common refinement which is a two-petalled rose (if they are compatible and their refinement is a two-edge loop, we do not add an edge). We prove that for $N\ge 3$, this graph is rigid in the above sense.

We then show that every commensuration $f$ of $\Out(F_N)$ induces a simplicial automorphism $f_*$ of $\ens$ -- once this is done, a general argument presented in Section~\ref{sec:blueprint} allows us to deduce that $f$ is induced by conjugation and therefore $\Comm(\Out(F_N))\simeq\Out(F_N)$. This comes in two parts: we need to define $f_*$ on the vertex set of $\ens$ and then we need to show that $f_*$ respects edges in $\ens$.

Firstly, we look at the vertex set of $\ens$. Each vertex is given by a nonseparating free splitting $S$ and its stabilizer has a finite index subgroup $H_S$ contained in the domain of $f$. We  give a purely algebraic characterization of $\Out(F_N)$-stabilizers of nonseparating free splittings. This will imply that there is a unique splitting $S'$ whose stabilizer in $\Out(F_N)$ contains $f(H_S)$, allowing us to define $f_*(S)=S'$. A short argument using the fact that $f$ is invertible implies that $f_*$ is a bijection on the vertex set and that $f(H_S)$ is finite index in the stabilizer of $S'$. The idea for the characterization is the following: the group of twists associated to the splitting $S$ is by \cite{Lev} a direct product of two nonabelian free groups isomorphic to $F_{N-1}$. This gives a direct product of free groups $K_1\times K_2$ which is normal in $H_S$. In addition, the centralizer of $K_i$ (or more generally, a normal subgroup of $K_i$) in $\Out(F_N)$ is a free group (the centralizer of $K_1$ is $K_2$ and vice versa). These features are enough for the characterization: we prove the following.

\begin{propintro}
Let $H$ be a subgroup of $\ia$ which contains a normal subgroup that splits as a direct product $K_1\times K_2$ of two nonabelian subgroups, such that for every $i\in\{1,2\}$, and every subgroup $P_i$ which is normal in a finite-index subgroup of $K_i$, the centralizer of $P_i$ in $\ia$ is equal to $K_{i+1}$ (where indices are taken mod $2$).
\\ Then $H$ fixes a free splitting of $F_N$.
\end{propintro}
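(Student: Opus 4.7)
The plan is to extract a single nonseparating one-edge free splitting $S$ of $F_N$ directly from the algebraic data of the pair $(K_1,K_2)$, and then to push the resulting invariance up from $K_1\times K_2$ to $H$ using normality. The guiding picture is Levitt's description of twist subgroups: for an HNN splitting $F_N=A\ast$, the twist subgroup of $\Stab(S)$ is isomorphic to $A\times A$, with the two factors corresponding to twists at the two ends of the stable letter, and these factors are mutually centralizing inside $\Out(F_N)$. Our hypothesis is engineered to recognize exactly this configuration.

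First I would use the action of $\Out(F_N)$ on a suitable Gromov-hyperbolic $\Out(F_N)$-complex (for instance the free factor graph, or the cyclic-splitting graph) to rule out loxodromic behaviour in $K_1$ or $K_2$. On such complexes the centralizer of a loxodromic element is virtually cyclic. Because every nontrivial $\alpha\in K_1$ commutes with the nonabelian free group $K_2$, no such $\alpha$ can act loxodromically, and the same holds with the roles of $K_1$ and $K_2$ swapped. Combined with the nonabelianness of each $K_i$, this forces $K_1$ and $K_2$ to each virtually preserve a proper nontrivial free factor system, and in fact (by standard arguments on elliptic subgroups) a free splitting of $F_N$.

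The heart of the argument is to show that this invariant splitting can be chosen to be a \emph{common} nonseparating one-edge free splitting, and that $K_i$ is contained in the corresponding twist subgroup. This is where the stronger form of the centralizer hypothesis enters: applying it to small subgroups $P_i$ that are normal in a finite-index subgroup of $K_i$ — for example, to cyclic or two-generated subgroups generated by polynomially growing elements — yields enough centralizer information to pin down a canonical Bass--Serre tree. Concretely, I would pick a suitable $\alpha\in K_1$ that preserves a free splitting and examine the centralizer $Z_{\ia}(\alpha)$; by hypothesis it contains $K_2$, which is nonabelian. Using Levitt's structure theorem for twist subgroups, together with the algebraic characterization of stabilizers of nonseparating one-edge free splittings to be established in the paper, the presence of a nonabelian centralizer of this form forces the splitting associated to $\alpha$ to be a nonseparating one-edge free splitting $S$, and identifies $K_2$ as (a finite-index subgroup of) the opposite twist factor. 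Running this for all $\alpha\in K_1$ and using the uniqueness coming from the centralizer equality $Z_{\ia}(P_1)=K_2$ shows that the same $S$ works for every element, so $K_1\subseteq \Stab(S)$, and symmetrically $K_2\subseteq \Stab(S)$.

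Finally, canonicity of $S$ in terms of $(K_1,K_2)$ propagates the invariance to $H$: since $K_1\times K_2$ is normal in $H$, the unordered pair $\{K_1,K_2\}$ is preserved by $H$-conjugation, so $H$ preserves $S$ as well. An element of $H$ could a priori swap $K_1$ and $K_2$, but the corresponding symmetry of the HNN decomposition (inverting the stable letter) still preserves the underlying splitting, so $H\subseteq\Stab(S)$ as required. I expect the main obstacle to be the second step above — producing $S$ canonically from $(K_1,K_2)$ without circular reasoning. The weaker hypothesis ``$Z_{\ia}(K_i)=K_{i+1}$'' would be enough to pick out an invariant free factor system, but not a \emph{one-edge} splitting; it is precisely the fact that the centralizer equality is assumed for every $P_i$ normal in a finite-index subgroup of $K_i$ that allows one to discard separating splittings and splittings with more edges, and to single out the nonseparating one-edge case where the twist group has the required product structure.
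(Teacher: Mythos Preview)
Your proposal has a genuine gap at the step where you pass from ``no element of $K_i$ acts loxodromically'' to ``$K_i$ virtually fixes a free splitting.'' Absence of loxodromics in a group acting on a hyperbolic graph does \emph{not} force bounded orbits: by Gromov's trichotomy, $K_i$ could still have unbounded orbits with a finite invariant set in the Gromov boundary (the horocyclic/parabolic case). For the free factor graph this means $K_i$ virtually fixes the homothety class of an \emph{arational} tree, and you never return to the simplicial world of free splittings. The ``standard arguments on elliptic subgroups'' you invoke do not apply here, and this is precisely the hard case. The paper's proof is organized around ruling out this possibility: one shows (Section~\ref{sec:arat}) that the isometric stabilizer of a relatively arational tree virtually centralizes a copy of $\mathbb{Z}^2$ (in fact $\mathbb{Z}^3$), which is incompatible with the hypothesis $C_{\ia}(P_i)=K_{i+1}$ since $K_{i+1}$ is free. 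Your outline contains no mechanism for handling arational trees.

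There is also a structural difference in how the argument is set up. You try to build the splitting bottom-up from the pair $(K_1,K_2)$ and then push invariance to $H$ by normality. The paper instead argues by contradiction at the level of $H$: take a \emph{maximal} $H$-invariant free factor system $\calf$; if it is sporadic then $H$ already fixes a free splitting, and if not then $H$ has unbounded orbits on the \emph{relative} free factor graph $\FF(F_N,\calf)$. The direct product structure of the normal subgroup $K_1\times K_2$, via Proposition~\ref{product-vs-hyp}, then forces some $K_i$ (or a finite-index subgroup) to fix a boundary point, hence an arational $(F_N,\calf)$-tree, and the centralizer analysis above gives the contradiction. Working relative to a maximal $H$-invariant system is what makes the unbounded-orbits statement available for $H$ itself; your use of the absolute complex only constrains the $K_i$ separately. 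Finally, your ``heart of the argument'' appeals to ``the algebraic characterization of stabilizers of nonseparating one-edge free splittings to be established in the paper,'' but that characterization \emph{is} (the full version of) the proposition you are trying to prove, so this step is circular as written.
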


An examination of maximal free abelian subgroups (or of maximal direct products of free groups inside $H$), then enables us to distinguish separating and nonseparating free splittings. The idea behind our proof of the above proposition is that containing a normal direct product restricts the possible actions of $H$ on a hyperbolic graph. We let $\calf$ be a maximal $H$-invariant free factor system of $F_N$, and apply this idea to the relative free factor graph $\FF:=\FF(F_N,\calf)$, which is known to be hyperbolic \cite{BF,HM3}. If this free factor graph has bounded diameter, then the free factor system is called \emph{sporadic}, and in this case the group $H$ fixes a free splitting. Otherwise $\FF$ has infinite diameter. Furthermore, a theorem of Guirardel and the first author \cite{GH} states that if $\calf$ is maximal, then $H$ acts on $\FF$ with unbounded orbits. It remains to show that if $H$ contains a normal $K_1 \times K_2$ as above then this cannot happen. Using Gromov's classification of group actions on hyperbolic spaces, we show that if $H$ acts on $\FF$ (or indeed, any hyperbolic graph) with unbounded orbits, then one of the subgroups $K_i$ has a finite orbit in the Gromov boundary $\partial_\infty \FF$. The boundary $\partial_\infty\FF$ has been identified \cite{BR,Ham,GH} with a space of \emph{arational} $(F_N,\calf)$-trees. The most technical work in the paper is an analysis of isometric stabilizers of arational trees, which relies on arguments of Guirardel and Levitt \cite{GL}: in particular, we show that they have a $\mathbb{Z}^2$ in their centralizer. This implies that an isometric stabilizer of an arational  tree cannot contain a normal subgroup of a $K_i$ as the centralizer of such a group is free. This finishes the proposition and allows for the definition of $f_*$ on the vertices.

To show this map $f_*$ extends to the edge set of $\ens$, we need to give an algebraic characterization of when two free splittings are compatible -- distinguishing between rose compatibility and circle compatibility can then be done algebraically by considering maximal abelian subgroups in the common stabilizer. The key idea is to observe that two one-edge free splittings $S$ and $S'$ are noncompatible if and only if their common stabilizer also fixes a third one-edge free splitting. Indeed, thinking of free splittings as spheres in a doubled handlebody, the stabilizer of two spheres that intersect also, up to finite index, fixes any sphere obtained by surgery between them. Conversely, if two nonseparating free splittings have a common refinement (or equivalently determine disjoint spheres), then their common stabilizer does not fix any other free splittings. This characterization shows that $f_*$ extends to the edge set of $\ens$, and concludes our proof.

\paragraph*{Organization of the paper.} The paper is organized as follows. In Sections~\ref{sec:blueprint} to~\ref{sec:product-f2}, we collect several tools that will be crucial in the proof of our main theorem: these include (in addition to general background on $\Out(F_N)$ given in Section~\ref{sec:background})
\begin{itemize}
\item a general framework to deduce commensurator rigidity from the rigidity of a graph (Section~\ref{sec:blueprint}),
\item a proof that the edgewise nonseparating free splitting graph is rigid (Section~\ref{sec:ens}),
\item an analysis of actions of direct products on hyperbolic spaces (Section~\ref{sec:direct-product-vs-hyp}),
\item an analysis of stabilizers of relatively arational trees (Section~\ref{sec:arat}),
\item an analysis of maximal direct products of free groups in $\Out(F_N)$ (Section~\ref{sec:product-f2}).
\end{itemize}
The next sections are devoted to the proof of rigidity. In Section~\ref{sec:hypotheses}, we define twist-rich subgroups of $\Out(F_N)$. In Section~\ref{sec:vertices}, we prove that the commensurability classes of vertex stabilizers of $\ens$ are $\Comm(\Gamma)$-invariant, and in Section~\ref{sec:edges} we prove the same thing for stabilizers of edges. This is enough to conclude the proof in Section~\ref{sec:conclusion}.

\paragraph*{Acknowledgements.} We would like to thank Martin Bridson and Vincent Guirardel for enlightening discussions about this project. In particular, Martin Bridson showed us that direct products of free groups and the special structure of stabilizers of nonseparating free splittings could be utilised in these rigidity problems. We would like to thank Vincent Guirardel for many discussions regarding the structure of stabilizers of arational trees.  We are grateful to Vincent Guirardel and Gilbert Levitt for sharing with us some arguments from their ongoing paper on stabilizers of $\mathbb{R}$-trees \cite{GL} and allowing us to use some of these arguments in Section~\ref{sec:arat} of the present paper. We would also like to thank the referee for their careful reading of the paper and helpful comments.

The first author acknowledges support from the Agence Nationale de la Recherche under Grant ANR-16-CE40-0006. He also thanks the Fields Institute, where this project was completed during the \emph{Thematic Program on Teichmüller Theory and its Connections to Geometry, Topology and Dynamics} in Fall 2018, for its hospitality.

\section{Commensurations and complexes}\label{sec:blueprint}

\emph{In this section, we setup a general framework  to use the rigidity of a graph equipped with an action of a group $G$ in order to compute the abstract commensurator of $G$ and some of its subgroups.}
\\
\\
\indent Let $G$ be a group. We recall from the introduction that the \emph{abstract commensurator} $\Comm(G)$ is the group whose elements are the equivalence classes of isomorphisms $f:H_1 \to H_2$ between finite index subgroups of $G$. The equivalence relation is given by saying that $f$ is equivalent to $f':H_1' \to H_2'$ if $f$ and $f'$ agree on some common finite index subgroup $H$ of their domains. We will denote by $[f]$ the equivalence class of $f$. The identity element of $\Comm(G)$ is the equivalence class of the identity map on $G$, and composition $[f]\cdot[f']$ is obtained by restriction to a finite index subgroup so that $f \circ f'$ is well-defined. Notice that if $H$ is finite index in $G$, then the natural map $\Comm(G)\to\Comm(H)$ (obtained by restriction to a further finite-index subgroup) is an isomorphism.

Two subgroups $P_1$ and $P_2$ of $G$ are \emph{commensurable in $G$} if their intersection $P_1\cap P_2$ has finite index in both $P_1$ and $P_2$. We will denote by $[P]$ the commensurability class of a subgroup $P$ of $G$. The group $\Comm(G)$ acts on the set of all commensurability classes of subgroups of $G$ by letting $[f]\cdot[P]=[f(P)]$, where $P$ is any representative of its commensurability class that is contained in the domain of $f$. 

We now let $\Gamma\subseteq G$ be a subgroup of $G$. We recall that the \emph{relative commensurator} of $\Gamma$ in $G$, denoted by $\Comm_G(\Gamma)$, is the subgroup of $G$ made of all elements $g$ such that $\Gamma$ and $g\Gamma g^{-1}$ are commensurable in $G$. In this case, if $\ad_g$ is the inner automorphism of $g$ sending $h \mapsto ghg^{-1}$ and $g \in \Comm_G(\Gamma)$, then $\ad_g$ restricts to an isomorphism between the finite index subgroups $g^{-1}\Gamma g \cap \Gamma$ and $\Gamma \cap g \Gamma g^{-1}$ of $\Gamma$. In this way, the action of $\Comm_G(\Gamma)$ by conjugation induces a map $\ad \colon \Comm_G(\Gamma) \to \Comm(\Gamma)$. 

In the following statement, given a graph $X$, we let $V(X)$ be the vertex set of $X$ and $E(X)$ be the edge set of $X$. We use $\Aut(X)$ to denote the group of graph automorphisms of $X$.  A graph $X$ is \emph{simple} if it contains no edge-loops and there are no multiple edges between pairs of vertices. This is equivalent to the condition that every automorphism of $X$ is determined by its induced map on the vertices.

\begin{prop}\label{prop:blueprint}
Let $G$ be a group, let $\Gamma\subseteq G$ be a subgroup. Let $X$ be a simple graph equipped with a $G$-action by graph automorphisms. Assume that 
\begin{enumerate}
\item the natural map $G\to\Aut(X)$ is an isomorphism,
\item given two distinct vertices $v$ and $w$ in $X$, the groups $\Stab_\Gamma(v)$ and $\Stab_\Gamma(w)$ are not commensurable in $\Gamma$,
\item the sets $$\cali:=\{[\Stab_\Gamma(v)]\suchthat v\in V(X)\}$$ and $$\calj:=\{([\Stab_\Gamma(v)],[\Stab_\Gamma(w)])\suchthat vw\in E(X)\}$$ are $\Comm(\Gamma)$-invariant (in the latter case with respect to the diagonal action).
\end{enumerate}
\noindent Then any isomorphism $f\colon H_1 \to H_2$ between two finite index subgroups of $\G$ is given by conjugation by an element of $\Comm_G(\G)$ and $\ad\colon\Comm_G(\Gamma)\to\Comm(\Gamma)$ is an isomorphism. 
\end{prop}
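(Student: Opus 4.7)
The plan is to construct a set-theoretic inverse $\Phi\colon \Comm(\Gamma) \to G$ to $\ad$ by using each $[f] \in \Comm(\Gamma)$ to induce a graph automorphism of $X$ and then invoking hypothesis~(1).

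\emph{Definition of $\Phi$.} Take a representative $f\colon H_1 \to H_2$. For each $v \in V(X)$, $\Stab_\Gamma(v) \cap H_1$ is a representative of $[\Stab_\Gamma(v)]$ lying in $H_1$, so by~(3) its $f$-image lies in $\cali$, and by~(2) there is a unique vertex $v'$ with $[f(\Stab_\Gamma(v)\cap H_1)] = [\Stab_\Gamma(v')]$. Setting $f_*(v) := v'$ defines a bijection of $V(X)$ (its inverse is produced from $f^{-1}$) which preserves edges by the $\Comm(\Gamma)$-invariance of $\calj$, and is therefore an element of $\Aut(X)$ since $X$ is simple. By~(1) it corresponds to a unique $g \in G$, and I set $\Phi([f]) := g$; independence of the choice of representative of $[f]$ is immediate.

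\emph{Equivariance.} The crucial step is to show that for every $h \in H_1$ one has $f(h) = ghg^{-1}$ in $G$, where $g = \Phi([f])$. Fix $v \in V(X)$ and set $H := \Stab_\Gamma(v) \cap H_1 \cap h^{-1}H_1 h$, a finite-index subgroup of $\Stab_\Gamma(v)$ for which both $H$ and $hHh^{-1}$ lie in $H_1$. The subgroup $f(hHh^{-1})$ can be identified in two ways:
\[
f(hHh^{-1}) = f(h)\cdot f(H)\cdot f(h)^{-1},
\]
where $f(H)$ is commensurable with $\Stab_\Gamma(gv)$ by definition of $f_*$, so conjugation by $f(h) \in \Gamma$ shows $f(hHh^{-1})$ is commensurable with $\Stab_\Gamma(f(h)\cdot gv)$; on the other hand, $hHh^{-1}$ has finite index in $\Stab_\Gamma(hv) \cap H_1$, so $f(hHh^{-1})$ is also commensurable with $\Stab_\Gamma(g \cdot hv)$. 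Hypothesis~(2) then forces $f(h)\cdot gv = g \cdot hv$. Letting $v$ vary, the elements $ghg^{-1}$ and $f(h)$ of $G$ act identically on $X$, and by~(1), $ghg^{-1} = f(h)$ in $G$.

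\emph{Conclusion.} The equality $f = \ad_g|_{H_1}$ gives $gH_1g^{-1} = H_2 \subseteq \Gamma$, so $\Gamma\cap g\Gamma g^{-1} \supseteq H_2$ has finite index in $\Gamma$; the symmetric argument applied to $f^{-1}$ yields $g \in \Comm_G(\Gamma)$ and $[\ad_g] = [f]$, proving surjectivity of $\ad$ together with the stronger statement that every isomorphism between finite-index subgroups of $\Gamma$ is the restriction of a conjugation by an element of $\Comm_G(\Gamma)$. Injectivity of $\ad$ follows from $\Phi \circ \ad = \mathrm{id}_{\Comm_G(\Gamma)}$, a direct computation: for $g \in \Comm_G(\Gamma)$, $g(\Stab_\Gamma(v) \cap g^{-1}\Gamma g)g^{-1}$ is commensurable with $\Stab_\Gamma(gv)$, so $(\ad_g)_*(v) = gv$ and hence $\Phi([\ad_g]) = g$. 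The main obstacle lies in the equivariance step, where one must carefully chain commensurability relations through a conjugation and invoke the uniqueness hypothesis~(2) to extract a genuine equality of vertices from equal commensurability classes of their stabilizers.
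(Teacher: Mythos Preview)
Your proof is correct and follows essentially the same strategy as the paper: construct a map $\Comm(\Gamma)\to\Aut(X)\cong G$ via the action on commensurability classes of vertex stabilizers, then verify that each $f$ agrees with conjugation by the resulting $g$. The only minor difference is in the equivariance step: the paper packages this as the identity $[\ad_{f(h)}]=[f][\ad_h][f]^{-1}$ in $\Comm(\Gamma)$ and applies the homomorphism $\Phi$, whereas you carry out the same computation directly on stabilizers of a vertex $v$; both arguments are unwinding the same equality.
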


\begin{proof}
We first define a map $\Phi:\Comm(\Gamma)\to\Aut(X)$ in the following way. As $\cali$ is $\Comm(\Gamma)$-invariant, given  $f\in\Comm(\Gamma)$ and a vertex $v\in X$, there exists a vertex $w\in X$ such that $f([\Stab_\Gamma(v)])=[\Stab_{\Gamma}(w)]$; in addition, our second hypothesis ensures that this vertex $w$ is unique. We thus get a map $V(X)\to V(X)$, sending $v$ to $w$, and this map is bijective because $f$ is invertible. As two vertices of $X$ are adjacent if and only if $([\Stab_\Gamma(v)],[\Stab_{\Gamma}(w)])\in\calj$, and $\calj$ is $\Comm(\Gamma)$-invariant, the above map extends to a graph automorphism of $X$. Hence $\Phi$ is well-defined, and it is easy to check that $\Phi$ is a homomorphism. From now on, given $f\in\Comm(\Gamma)$, we will let $f_X:=\Phi(f)$ denote the induced action on $X$.

Let $\Psi:G\to\Aut(X)$ be the natural map. We next claim that the following diagram commutes:
\[
  \begin{tikzcd}
 G\arrow[bend left=10]{rrd}{\Psi} & & \\ 
    \Comm_G(\Gamma) \arrow{r}{\ad}\arrow[u, hook] & \Comm(\Gamma) \arrow{r}{\Phi} & \aut(X).
      \end{tikzcd}
\]    
Equivalently, we need to check that if $g\in\Comm_G(\Gamma)$ and $v\in X$, then $(\ad_g)_X(v)=gv$. This holds because: 
\begin{displaymath}
\begin{array}{rlc}
\ad_g([\Stab_\Gamma(v)])&=\ad_g([\Stab_G(v)\cap\Gamma]) &\\
&=[\Stab_G(gv)\cap\Gamma] &\text{~as $g\in\Comm_G(\Gamma)$} \\
&=[\Stab_\Gamma(gv)]. &
\end{array}     
\end{displaymath}
Now let $f \colon H_1\to H_2$ be an isomorphism between two finite-index subgroups of $\Gamma$. Then $[f]_X=\Psi(g)$ for some $g \in G$ as $\Psi$ is surjective. We aim to prove that $f$ is equal to the restriction to $H_1$ of the conjugation by $g$ in $G$: this will imply in particular that $g \in \Comm_G(\G)$, and that $f=\ad_g$ in $\Comm(\Gamma)$. Let $h \in H_1$. Then \[ [\ad_{f(h)}] = [f\circ\ad_h\circ f^{-1}] \] in $\Comm(\Gamma)$, therefore  \[ [\ad_{f(h)}]_{X}=\Psi(g) \circ [\ad_{h}]_{X} \circ \Psi(g^{-1})\] as $[f]_X=\Psi(g)$. By commutativity of the diagram we have $[\ad_{f(h)}]_{X}=\Psi(f(h))$ and $[\ad_{h}]_{X}=\Psi(h)$, so that $\Psi(f(h))=\Psi(ghg^{-1})$. As $\Psi$ is injective, this implies that $f(h)=ghg^{-1}$, as desired. This shows that the map $\ad \colon \Comm_G(\G) \to \Comm(\G)$ is surjective. It is also injective as the diagram commutes and the top two arrows are injective.
\end{proof}

\section{Background on $\Out(F_N)$}\label{sec:background}

\emph{In this section, we review some general background on $\Out(F_N)$. In particular we look at the geometry of relative free factor complexes, and establish a few basic facts about Dehn twist automorphisms.}

\subsection{Splittings and free factor systems}

A \emph{splitting} of $F_N$ is a minimal, simplicial $F_N$-action on a simplicial tree $S$ (we recall that the action is said to be \emph{minimal} if $S$ does not contain any proper $F_N$-invariant subtree). Splittings of $F_N$ are always considered up to $F_N$-equivariant homeomorphism. A \emph{free splitting} of $F_N$ is a splitting of $F_N$ in which all edge stabilizers are trivial. A $\mathcal{Z}_{max}$ splitting of $F_N$ is a splitting of $F_N$ in which all edge stabilizers are isomorphic to $\mathbb{Z}$ and root-closed. A \emph{$\zmax$ splitting} of $F_N$ is a splitting of $F_N$ in which all edge stabilizers are either trivial or isomorphic to $\mathbb{Z}$ and root-closed. The class of $\zmax$ splittings contains all free splittings and all $\mathcal{Z}_{max}$ splittings. We say that a splitting is a \emph{one-edge} splitting if the quotient graph $S/F_N$ consists of a single edge, and a \emph{loop-edge} splitting if $S/F_N$ is a single loop. We say that a splitting $S'$ is a \emph{blowup} or, equivalently, a \emph{refinement} of $S$ if $S$ is obtained from $S'$ by collapsing some edge orbits in $S'$. The splitting $S'$ is a \emph{blowup of $S$ at a vertex $v\in S$} if every collapsed edge from $S'$ has its image in the $F_N$-orbit of $v$ under the quotient map $S'\to S$.  Two splittings are \emph{compatible} if they admit a common refinement.

A \emph{free factor system} of $F_N$ is a collection $\calf$ of conjugacy classes of subgroups of $F_N$ which arise as the collection of all nontrivial point stabilizers in some nontrivial free splitting of $F_N$. Equivalently, this is a collection of conjugacy classes of subgroups $A_i$ such that $F_N$ splits as $F_N=A_1\ast\dots\ast A_k\ast F_r$. We sometimes blur the distinction between the finite set of all conjugacy classes in $\calf$ and the infinite set of all free factors whose conjugacy classes belong to $\calf$. The free factor system is \emph{sporadic} if $(k+r,r)\le (2,1)$ (for the lexicographic order), and \emph{nonsporadic} otherwise. Concretely, the sporadic free factor systems are those of the form $\{[C]\}$ where $C$ is rank $N-1$ so that $F_N=C\ast$, and those of the form $\{[A],[B]\}$ where $F_N=A\ast B$. The collection of all free factor systems of $F_N$ has a natural partial order, where $\calf\leq\calf'$ if every factor in $\calf$ is conjugate into one of the factors in $\calf'$.

More generally, if $\calh$ is a collection of conjugacy classes of subgroups of $F_N$, there exists a unique smallest free factor system $\calf$ of $F_N$ such that every subgroup in $\calh$ is conjugate into a subgroup of $\calf$. We say that the pair $(F_N,\calh)$ is \emph{sporadic} if $\calf$ is sporadic.  

Given a free factor system $\calf$ of $F_N$, a \emph{free splitting of $F_N$ relative to $\calf$} is a free splitting of $F_N$ in which every factor in $\calf$ fixes a point. A \emph{free factor} of $(F_N,\calf)$ is a subgroup of $F_N$ which arises as a point stabilizer in some free splitting of $F_N$ relative to $\calf$. A free factor is \emph{proper} if it is nontrivial, not conjugate to an element of $\calf$ and not equal to $F_N$.  An element $g\in F_N$ is \emph{peripheral} (with respect to $\calf$) if it is conjugate into one of the subgroups in $\calf$, and \emph{nonperipheral} otherwise. 

Given a free factor system $\calf$, we denote by $\Out(F_N,\calf)$ the subgroup of $\Out(F_N)$ made of all automorphisms that preserve the conjugacy classes of free factors in $\calf$. Given a subgroup $H\subseteq\Out(F_N)$, we say that $\calf$ is \emph{$H$-periodic} if the $H$-orbit of $\calf$ is finite, equivalently if $H$ has a finite-index subgroup contained in $\Out(F_N,\calf)$ (the notions of $H$-periodic free factors and free splittings are defined in the same way).

\subsection{Relative free factor graphs}

\paragraph*{The definition of $\FF(F_N, \calf)$ and hyperbolicity.} Given a free factor system $\calf$ of $F_N$, the \emph{free factor graph} $\FF(F_N,\calf)$ is the graph whose vertices are the nontrivial free splittings of $F_N$ relative to $\calf$, where two free splittings are joined by an edge if they are either compatible or share a nonperipheral elliptic element. In this way $\FF(F_N,\calf)$ is defined as an electrification of another natural $\Out(F_N,\calf)$-graph, the so-called \emph{free splitting graph}. This definition of the free factor graph, which is the one adopted in \cite{GH}, has the advantage of being adapted to all nonsporadic free factor systems $\calf$. Except in some low-complexity cases, it is quasi-isometric to all other models of the free factor graph available in the literature (e.g. where vertices are given by proper free factors of $(F_N,\calf)$), as discussed in \cite[Section~2.2]{GH}. The free factor graph $\FF(F_N,\calf)$ is always hyperbolic: this was first proved by Bestvina and Feighn \cite{BF} in the crucial absolute case where $\calf=\emptyset$, and then extended by Handel and Mosher \cite{HM3} to the general case (with the exception of one low-complexity case which is handled in \cite[Proposition~2.11]{GH}). 

\paragraph*{The boundary of $\FF(F_N,\calf)$.} We will now recall the description of the Gromov boundary of $\FF(F_N,\calf)$ in terms of certain $F_N$-actions on $\mathbb{R}$-trees \cite{BR,Ham,GH}.

An \emph{$(F_N,\calf)$-tree} is an $\mathbb{R}$-tree $T$ equipped with a minimal isometric $F_N$-action in which every subgroup in $\calf$ fixes a point. It is a \emph{Grushko $(F_N,\calf)$-tree} if $T$ is a simplicial metric tree, and every nontrivial point stabilizer in $T$ is conjugate to an element of $\calf$. When $\calf=\emptyset$, the space of all Grushko $F_N$-trees is nothing but Culler and Vogtmann's Outer space $CV_N$ from \cite{CV}. 

Given an $F_N$-action on an $\mathbb{R}$-tree $T$ and a subgroup $A\subseteq F_N$ which does not fix a point in $T$, there exists a unique minimal $A$-invariant subtree of $T$ (which is equal to the union of all axes of elements of $A$ acting hyperbolically on $T$). This is normally denoted by $T_A$.

If $A$ is a proper free factor of $(F_N,\calf)$ then there is an associated free factor system $\calf_{|A}$  of $A$ given by the vertex stabilizers appearing in the action of $A$ on a Grushko $(F_N,\calf)$-tree. An $(F_N,\calf)$-tree $T$ is \emph{arational} if $T$ is not a Grushko $(F_N,\calf)$-tree, no proper $(F_N,\calf)$-free factor fixes a point in $T$, and for every proper $(F_N,\calf)$-free factor $A$, the $A$-action on its minimal invariant subtree $T_A\subseteq T$ is a Grushko $(A,\calf_{|A})$-tree. We denote by $\AT(F_N,\calf)$ the space of all arational $(F_N,\calf)$-trees, equipped with the equivariant Gromov--Hausdorff topology introduced in \cite{Pau}. Two arational trees $T$ and $T'$ are \emph{equivalent} (denoted as $T\sim T'$) if they admit $F_N$-equivariant alignment-preserving bijections to one another. Arational trees are used to describe the boundary of the free factor graph: the following theorem was established by Bestvina and Reynolds \cite{BR} and independently Hamenstädt \cite{Ham} in the case where $\calf=\emptyset$, and extended by Guirardel and the first author in \cite{GH} to the general case.

\begin{theo}
Let $\calf$ be a nonsporadic free factor system of $F_N$. Then there exists an $\Out(F_N,\calf)$-equivariant homeomorphism $\AT(F_N,\calf)/{\sim}\to\partial_\infty\FF(F_N,\calf)$.
\end{theo}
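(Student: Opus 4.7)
The plan is to follow the Bestvina--Reynolds and Hamenstädt blueprint, suitably adapted to the relative setting as in \cite{GH}. The candidate map $\Psi\colon\AT(F_N,\calf)/{\sim}\to\partial_\infty\FF(F_N,\calf)$ is built by approximation. Given an arational $(F_N,\calf)$-tree $T$, one selects a sequence $(T_n)$ of Grushko $(F_N,\calf)$-trees converging to $T$ in the equivariant Gromov--Hausdorff topology; such sequences exist because Grushko trees are dense in the natural compactification of relative Outer space and $T$ lies in its boundary. The underlying simplicial free splittings $S_n$ are vertices of $\FF(F_N,\calf)$, and the first main claim is that $(S_n)$ is a Gromov sequence: it leaves every bounded set and its Gromov products tend to infinity. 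The point $\Psi([T])\in\partial_\infty\FF(F_N,\calf)$ is then defined as the common limit.

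The key tool for unboundedness is the interplay between arationality and the edge definition of $\FF(F_N,\calf)$. If $(S_n)$ stayed in a bounded region, then by hyperbolicity one could extract a single free splitting $U$ sharing a nonperipheral elliptic with infinitely many $S_n$, or compatible with them. Such an element (or the invariant subtree furnished by the compatibility) would persist in the limit $T$, producing either a proper $(F_N,\calf)$-free factor fixing a point in $T$, or a proper free factor whose action on its minimal subtree fails to be Grushko. Both possibilities are excluded by arationality. A similar argument shows that two approximating sequences for the same tree, and more generally for $\sim$-equivalent arational trees, share the same boundary limit, so $\Psi$ descends to equivalence classes.

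Injectivity is the heart of the matter. If $T\not\sim T'$, the dual laminations of $T$ and $T'$ differ; one uses this to produce a free splitting $U$ whose nonperipheral elliptic set witnesses both trees and yields uniformly bounded coarse projections to $\FF(F_N,\calf)$ of each approximating sequence. Since these projections accumulate on $\Psi([T])$ and $\Psi([T'])$ respectively, the two limits must differ. For surjectivity, an unbounded sequence $(S_n)$ defining a boundary point can be viewed, after rescaling, as a sequence in the compactification of relative Outer space; one extracts a limit $\mathbb{R}$-tree $T$ and argues that $T$ is arational. Indeed, if some proper $(F_N,\calf)$-free factor $A$ either fixed a point in $T$ or failed the Grushko condition on $T_A$, then a free splitting built from $A$ would project boundedly to all $S_n$, contradicting unboundedness.

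The main obstacle is the fine analysis of when approximating sequences stay close in $\FF(F_N,\calf)$. This rests on a coarse projection from relative Outer space to the free factor graph, a relative analogue of the Bestvina--Feighn projection, together with a careful description of dual laminations of $(F_N,\calf)$-trees and of Guirardel cores, so that arationality can be detected through unboundedness in $\FF(F_N,\calf)$. Once these technical ingredients are in place, $\Out(F_N,\calf)$-equivariance is tautological from the construction, continuity of $\Psi$ follows from the density of Grushko trees, and continuity of the inverse comes from the compactness at play in the surjectivity argument.
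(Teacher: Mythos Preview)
The paper does not prove this theorem; it is stated as background and attributed to Bestvina--Reynolds \cite{BR} and Hamenst\"adt \cite{Ham} for $\calf=\emptyset$, and to Guirardel--Horbez \cite{GH} in the relative case. There is therefore no proof in the paper to compare your proposal against.

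That said, your outline is a reasonable high-level sketch of the strategy in those references: build the map via Grushko approximations, use arationality to force unboundedness, and invoke the coarse projection/lamination machinery for injectivity and surjectivity. But as written it is only a narrative of the architecture, not a proof: the genuinely hard steps (the relative coarse projection and its Lipschitz properties, the characterization of arationality via unboundedness, the analysis of limits of unbounded sequences in relative Outer space, and the passage from $\not\sim$ to distinct boundary points) are asserted rather than carried out. If you intend this as a summary of the cited proofs, that is fine; if you intend it as a self-contained argument, each of those steps needs substantial work and precise references to the relevant lemmas in \cite{BR,Ham,GH}.
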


We also mention that the space of all projective classes of arational trees in a given $\sim$-class is a finite-dimensional simplex, see e.g.\ \cite[Proposition~13.5]{GH1}. In particular, we record the following fact.

\begin{prop}\label{prop:fix-boundary}
Let $\calf$ be a nonsporadic free factor system of $F_N$, and let $H\subseteq\Out(F_N,\calf)$ be a subgroup which has a finite orbit in $\partial_\infty\FF(F_N,\calf)$.
\\ Then $H$ has a finite-index subgroup that fixes the homothety class of an arational $(F_N,\calf)$-tree.
\end{prop}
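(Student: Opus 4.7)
The plan is to translate the hypothesis about finite orbits on $\partial_\infty\FF(F_N,\calf)$, via the equivariant homeomorphism $\AT(F_N,\calf)/{\sim}\to\partial_\infty\FF(F_N,\calf)$ recalled just above, into a statement that $H$ preserves a single $\sim$-class of arational trees (after passing to finite index), and then to exploit the finite-simplex structure of the projective classes inside each such $\sim$-class as recorded in \cite[Proposition~13.5]{GH1}.

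First, since $H$ has a finite orbit on $\partial_\infty\FF(F_N,\calf)$, I pass to a finite-index subgroup $H_1\subseteq H$ that fixes a single boundary point $\xi$. Under the equivariant homeomorphism with $\AT(F_N,\calf)/{\sim}$, the point $\xi$ corresponds to an equivalence class $[T]$ of arational $(F_N,\calf)$-trees, and $H_1$ preserves $[T]$. Next, the cited result gives a finite-dimensional simplex $\Delta$ whose points are the projective classes of arational trees lying in $[T]$, and $H_1$ acts on $\Delta$ by automorphisms of this simplex structure. Since $\Delta$ has only finitely many vertices, which $H_1$ must permute, a further finite-index subgroup $H_2\subseteq H_1$ fixes every vertex of $\Delta$ individually. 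Each such vertex is the projective class — equivalently, the $F_N$-equivariant homothety class — of some arational $(F_N,\calf)$-tree, so $H_2$ is the desired finite-index subgroup of $H$.

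The whole proof is thus a two-step finite-index reduction, with both steps handed to us by external results. The only point that deserves a line of justification is that the $H_1$-action on $\Delta$ genuinely sends vertices to vertices; this should be essentially tautological, since the simplex structure on projective classes inside a given $\sim$-class is $\Out(F_N,\calf)$-equivariant by construction. I do not anticipate a real obstacle beyond unpacking that equivariance from \cite{GH1}.
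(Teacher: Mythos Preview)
Your proposal is correct and matches the paper's approach exactly: the paper does not give a detailed proof but simply records the proposition as a consequence of the fact (from \cite[Proposition~13.5]{GH1}) that the projective classes in a given $\sim$-class form a finite-dimensional simplex, which is precisely the two-step finite-index reduction you carry out.
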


In the rest of the paper, a \emph{relatively arational tree} will be an $F_N$-tree which is arational relative to some (nonsporadic) free factor system of $F_N$.

\paragraph*{Dynamics of subgroups of $\Out(F_N,\calf)$ acting on $\FF(F_N,\calf)$.} It will be important to determine whether certain subgroups of $\Out(F_N,\calf)$ have bounded or unbounded orbits in the relative free factor graph. To this end, we will use the following theorem established by Guirardel and the first author in \cite[Proposition~5.1]{GH}. 

\begin{theorem}
Let $\calf$ be a nonsporadic free factor system, and let $H\subseteq\Out(F_N)$ be a subgroup which acts on $\FF(F_N,\calf)$ with bounded orbits. Then there exists a $H$-periodic free factor system $\calf'$ such that $\calf\le\calf'$ and $\calf' \neq \calf$.
\end{theorem}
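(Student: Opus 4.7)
The plan is to convert the coarse dynamical hypothesis on the hyperbolic graph $\FF(F_N,\calf)$ into a purely algebraic statement: the existence of an $H$-periodic free factor system strictly refining $\calf$. I would approach this in three steps.

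First, pick any vertex $S_0$ of $\FF(F_N,\calf)$. Then $S_0$ is a nontrivial free splitting of $F_N$ relative to $\calf$, and its collection of nontrivial vertex stabilizers forms a free factor system $\calf_{S_0}$ satisfying $\calf \le \calf_{S_0}$ and $\calf_{S_0} \neq \calf$. The bounded orbit hypothesis gives an $R > 0$ such that $H \cdot S_0 \subseteq B(S_0, R)$. In particular, the $H$-orbit of the free factor system $\calf_{S_0}$ is realized by vertex group systems of splittings at uniformly bounded distance from $S_0$ in $\FF(F_N,\calf)$.

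Second, I would show that the $H$-orbit of $\calf_{S_0}$, viewed as a set of free factor systems, is finite. This is the heart of the argument. The strategy is to analyze how adjacency in $\FF(F_N,\calf)$ constrains the change of vertex group system along a single edge: if $S$ and $S'$ are joined by an edge, then either they admit a common refinement $\widehat{S}$ (so $\calf_S$ and $\calf_{S'}$ both coarsen $\calf_{\widehat S}$) or they share a nonperipheral elliptic element (yielding a common conjugate subfactor). Iterating this along a path of length at most $R$ from $S_0$, and using accessibility of $(F_N,\calf)$ to bound the complexity of the intermediate free factor systems that can appear, one should conclude that only finitely many free factor systems arise up to a join operation. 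Since $H$ preserves this finite collection, its orbit on $\calf_{S_0}$ is finite.

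Third, from the finiteness of the $H$-orbit of $\calf_{S_0}$, a finite-index subgroup $H' \le H$ fixes $\calf_{S_0}$, so $\calf' := \calf_{S_0}$ is $H$-periodic, with $\calf \le \calf'$ and $\calf' \neq \calf$, as required.

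The main obstacle is the finiteness argument in the second step. Since $\FF(F_N,\calf)$ is not locally finite -- already the set of splittings compatible with $S_0$ is infinite -- one cannot appeal to any naive ball-counting. Control must instead come from the bounded complexity of free factor systems of $F_N$ (reflected in accessibility results in the spirit of Bestvina--Feighn), together with a careful examination of how the two possible types of adjacency in $\FF(F_N,\calf)$ (compatibility and shared ellipticity) constrain the vertex group system across a single edge.
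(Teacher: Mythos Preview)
The paper does not prove this statement; it is quoted from \cite[Proposition~5.1]{GH} as an input. So there is no ``paper's own proof'' to compare against, only the external reference.

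That said, your proposal has a genuine gap in Step~2, and it is not a gap that can be closed along the lines you sketch. You want to deduce from $d_{\FF}(S_0,h\cdot S_0)\le R$ for all $h\in H$ that the orbit $\{h\cdot\calf_{S_0}:h\in H\}$ is finite. But bounded distance in $\FF(F_N,\calf)$ gives \emph{no} finiteness for the vertex group systems that occur. Already the $1$-ball around $S_0$ contains infinitely many splittings realizing infinitely many pairwise distinct free factor systems: for instance, if $g$ is any nonperipheral element elliptic in $S_0$, then \emph{every} relative free splitting in which $g$ is elliptic lies at distance~$1$ from $S_0$, and these splittings have arbitrarily unrelated vertex group systems. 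Accessibility bounds the \emph{complexity} of a single free factor system (the number of conjugacy classes of factors, or the rank of the cofactor), but there are infinitely many free factor systems of any fixed complexity, so this does not help. Your edge-by-edge analysis (``compatibility'' versus ``shared ellipticity'') likewise only produces a common subfactor or a common refinement for one pair at a time; it yields no uniform finite set through which all paths of length~$\le R$ must pass.

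The arguments in \cite{GH} do not attempt to extract finiteness from a bounded ball in $\FF$. They instead work in relative Outer space: one takes a limiting $\mathbb{R}$-tree for the $H$-action and uses the structure theory of very small $(F_N,\calf)$-trees (reductions, transverse families, the description of the boundary of $\FF$) to locate a proper $(F_N,\calf)$-free factor that is $H$-periodic. In other words, the invariant free factor system is found by geometric limit methods, not by counting inside a ball of $\FF$. If you want to give an independent proof, you will need some ingredient of that kind; the purely combinatorial route you outline does not get off the ground.
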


While working with subgroups of $\out(F_N)$, it is convenient to have factor systems that are genuinely fixed rather than just being periodic. For this reason, it is good to work in the group $\ia$, which is the finite-index subgroup of $\Out(F_N)$ defined as the kernel of the natural map $\Out(F_N)\to\mathrm{GL}_N(\mathbb{Z}/3\mathbb{Z})$ given by the action on $H_1(F_N;\mathbb{Z}/3\mathbb{Z})$. It satisfies a certain number of useful properties, of particular importance being:

\begin{theo}[{Handel--Mosher \cite[Theorem~3.1]{HM5}}]
Let $H\subseteq\ia$ be a subgroup, and let $A\subseteq F_N$ be a free factor whose conjugacy class is $H$-periodic.
\\ Then the conjugacy class of $A$ is $H$-invariant.
\end{theo}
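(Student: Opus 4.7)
The plan is to reduce the statement to a single element. The group $\ia$ is torsion-free (being the kernel of the mod-$3$ homology representation for the prime $p=3$), so it suffices to prove the following: for any $\phi \in \ia$, every $\phi$-periodic conjugacy class of free factor is actually $\phi$-invariant. Indeed, given this, for any subgroup $H \subseteq \ia$ and any $H$-periodic conjugacy class $[A]$, each element $\phi \in H$ has $[A]$ periodic under $\langle \phi \rangle$ and therefore fixes $[A]$, so that $H$ fixes $[A]$.

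So fix $\phi \in \ia$ with $\phi^k([A]) = [A]$ for some minimal $k \geq 1$, and set $[A_i] = [\phi^i(A)]$ for $0 \leq i \leq k-1$; the goal is to show $k = 1$. A first easy observation, using that $\phi$ acts trivially on $H_1(F_N;\mathbb{Z}/3\mathbb{Z})$, is that the homological image $V_i \subseteq H_1(F_N;\mathbb{Z}/3\mathbb{Z})$ of $A_i$, a rank-$\mathrm{rk}(A)$ direct summand, is independent of $i$; call this common subspace $V$. This is a necessary condition but is not on its own sufficient, since many nonconjugate free factors share the same homology image.

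To upgrade this homological control to a statement about conjugacy classes, I would invoke the machinery of completely split improved relative train tracks of Bestvina--Feighn--Handel and Feighn--Handel. Replacing $\phi$ by a uniformly bounded power (harmless since we want to prove $k=1$, not merely bound $k$), we may assume $\phi$ admits a completely split relative train track representative $f \colon G \to G$ whose filtration realizes the free factor system generated by $\{[A_0], \ldots, [A_{k-1}]\}$. The orbit is then carried by a collection of filtration subgraphs $G_0, \ldots, G_{k-1}$ of $G$, cyclically permuted by $f$. The trivial action of $f_*$ on $H_1(G;\mathbb{Z}/3\mathbb{Z})$, combined with the filtration structure and an inductive argument on the highest filtration stratum involved in the cycle, should force the cyclic permutation of the $G_i$ to be trivial, hence $k=1$.

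The main obstacle is this last implication, namely extracting triviality of the permutation from the triviality of $f_*$ on homology mod $3$. The subtlety is that the contributions of the individual $G_i$ to $H_1(G;\mathbb{Z}/3\mathbb{Z})$ are not in general independent direct summands (and the identification of each $\pi_1(G_i)$ with $A_i$ depends on a choice of basepoint path), so one must combine the homological constraint with the geometric structure provided by the completely split train track. The choice of modulus $3$ rather than $2$ is essential here: it guarantees that a nontrivial cyclic permutation of a set of nonzero mod-$3$ homology classes cannot be trivial, which is exactly what rules out $k > 1$ once the geometric setup has been put in place.
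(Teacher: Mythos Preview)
The paper does not supply a proof of this theorem; it is quoted as a black box from Handel--Mosher \cite[Theorem~3.1]{HM5}. So there is no ``paper's own proof'' to compare against, and your proposal should be assessed against the actual Handel--Mosher argument.

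Your reduction to a single element $\phi$ is fine, and the observation about coinciding mod-$3$ homology images is correct (and correctly flagged as insufficient). But the sketch after that has two genuine gaps.

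First, the sentence ``replacing $\phi$ by a uniformly bounded power (harmless since we want to prove $k=1$)'' is backwards. If $[A]$ has minimal period $k$ under $\phi$ and you pass to $\phi^m$, the period of $[A]$ under $\phi^m$ becomes $k/\gcd(k,m)$. Proving this equals $1$ only gives $k\mid m$, not $k=1$. Passing to a power is exactly the move you cannot afford here: the whole content of the theorem is that no power is needed. The Handel--Mosher proof turns precisely on establishing that every element of $\ia$ is already \emph{forward rotationless} in the sense of Feighn--Handel, so that a CT representative exists for $\phi$ itself without passing to a power; once that is known, fixedness of periodic free factor conjugacy classes is essentially part of the definition of rotationless.

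Second, the collection $\{[A_0],\dots,[A_{k-1}]\}$ need not be a free factor system of $F_N$ (for instance if $\mathrm{rk}(A)>N/2$ the conjugacy classes cannot be simultaneously realized as vertex groups of a free splitting), so ``realizing the free factor system generated by the orbit'' as a filtration level of a train track is not available in the form you state. One can instead work with the smallest $\phi$-invariant free factor system dominating $[A]$, but then the $A_i$ are no longer carried by disjoint subgraphs and your permutation argument does not get off the ground.

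In short, the missing idea is the theorem that membership in $\ia$ already forces rotationlessness; your outline tries to bypass this and in doing so makes a step (passing to a power) that destroys the conclusion you need.
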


As noted in the previous section, passing to a finite index subgroup does not change the abstract commensurator of a group, and for this reason we work in $\ia$ for much of the paper. Handel and Mosher's theorem implies that if $H$ is contained in $\ia$ then a $H$-periodic free factor system $\calf$ is $H$-invariant. Combining both of the above results gives:

\begin{proposition}\label{prop:maximal-unbounded}
Suppose that $H$ is a subgroup of $\ia$ and $\calf$ is a maximal, $H$-invariant free factor system. If $\calf$ is not sporadic, then $H$ acts on $\FF(F_N,\calf)$ with unbounded orbits.
\end{proposition}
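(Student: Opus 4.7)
The plan is to argue by contradiction, feeding the conclusion of the Guirardel--Horbez theorem into Handel--Mosher's theorem to upgrade ``periodic'' to ``invariant'', and then appealing to maximality of $\calf$ to reach a contradiction.

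More precisely, suppose that $\calf$ is a maximal $H$-invariant, nonsporadic free factor system, but that $H$ acts on $\FF(F_N,\calf)$ with bounded orbits. The Guirardel--Horbez theorem (the first displayed theorem of this subsection) then produces an $H$-periodic free factor system $\calf'$ with $\calf \leq \calf'$ and $\calf' \neq \calf$. By definition, this means that the $H$-orbit of $\calf'$ (in the poset of free factor systems) is finite, so in particular each conjugacy class of a factor in $\calf'$ is $H$-periodic.

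Here is where the hypothesis $H \subseteq \ia$ enters: by Handel--Mosher's theorem (just above), an $H$-periodic conjugacy class of a free factor is actually $H$-invariant. Applying this to each conjugacy class in $\calf'$, we conclude that $\calf'$ itself is $H$-invariant. But then $\calf'$ is an $H$-invariant free factor system strictly larger than $\calf$, contradicting the maximality of $\calf$. Hence $H$ must act on $\FF(F_N,\calf)$ with unbounded orbits.

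There is essentially no obstacle beyond correctly combining the two inputs; the whole point is that the hypothesis $H \subseteq \ia$ is exactly what is needed to close the gap between ``periodic'' (as output by the Guirardel--Horbez theorem) and ``invariant'' (as required for contradicting maximality). The only thing worth double-checking is the convention that a finite $H$-orbit of the \emph{system} $\calf'$ implies a finite $H$-orbit of each of its constituent conjugacy classes, which is immediate since $\calf'$ has only finitely many components to permute.
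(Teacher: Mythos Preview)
Your proof is correct and follows exactly the approach the paper intends: the proposition is stated immediately after the sentence ``Combining both of the above results gives,'' and your argument is precisely that combination spelled out in full.
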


For future use, we also mention another fact about $\ia$ that we will use several times in the paper.

\begin{lemma}\label{lemma:stab-splitting-ia}
Let $S$ be a free splitting of $F_N$. Let $H\subseteq\ia$ and suppose that $S$ is $H$-periodic.
\\ Then $H \subseteq \Stab(S)$ and $H$ acts trivially on the quotient graph $S/F_N$.
\\ In particular, if $\hat{S}$ is a refinement of $S$, then $\Stab_{\ia}(\hat{S})\subseteq\Stab_{\ia}(S)$. 
\end{lemma}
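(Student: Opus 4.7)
The plan is to combine two ingredients: Handel--Mosher's theorem, which upgrades $H$-periodicity of a free factor to $H$-invariance once $H\subseteq\ia$, and the triviality of the $\ia$-action on $H_1(F_N;\mathbb{Z}/3)$.

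First, I reduce to a single element. Both conclusions ``$h\in\Stab(S)$'' and ``$h$ acts trivially on $S/F_N$'' are pointwise in $h$, so it suffices to fix $h\in H$ and observe that $S$ is $\langle h\rangle$-periodic. Let $\calf=\{[A_1],\dots,[A_k]\}$ be the free factor system of nontrivial vertex stabilizers of $S$; it is $\langle h\rangle$-periodic since $S$ is, and so each class $[A_i]$ is $\langle h\rangle$-periodic as well. Handel--Mosher's theorem then forces each $[A_i]$ to be $h$-invariant.

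To obtain $h\in\Stab(S)$, I choose a lift $\tilde h\in\Aut(F_N)$, so invariance of each $[A_i]$ yields $g_i\in F_N$ with $\tilde h(A_i)=g_iA_ig_i^{-1}$. My goal is to promote this data to an $F_N$-equivariant (twisted by $\tilde h$) isomorphism $S\to S$, which establishes $hS=S$ in the Grushko deformation space $\cald(\calf)$. Because edge stabilizers in $S$ are trivial, the edges of $S$ incident to a fixed vertex are canonically indexed by cosets of the corresponding vertex stabilizer, and the rigid compatibility between the conjugating elements $g_i$ and the edge attachments allows one to assemble the $\tilde h$-action on vertices into an isomorphism of $F_N$-trees. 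Periodicity of $S$ is used here to confine the $\langle h\rangle$-orbit of $S$ to the same deformation space $\cald(\calf)$, and in particular to ensure no change in the combinatorial type of $S/F_N$.

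I then show $\bar h=\id$ on $S/F_N$, where $\bar h$ is the induced graph automorphism. Handel--Mosher gives that $\bar h$ fixes each vertex of $S/F_N$ with nontrivial stabilizer (distinct nontrivial vertex stabilizer conjugacy classes pin down distinct vertices). Killing the vertex groups gives a canonical quotient $F_N\to\pi_1(S/F_N)$ which intertwines the $h$- and $\bar h$-actions, hence intertwines their actions on mod-$3$ homology. Since $h\in\ia$ acts trivially on $H_1(F_N;\mathbb{Z}/3)$, the map $\bar h$ acts trivially on $H_1(S/F_N;\mathbb{Z}/3)$. A graph automorphism of $S/F_N$ that fixes all nontrivial-stabilizer vertices and acts trivially on $H_1(-;\mathbb{Z}/3)$ must be the identity: nontrivial edge permutations, orientation reversals on loop edges, or permutations of trivial-stabilizer vertices each contribute a signed permutation matrix whose reduction mod $3$ is nontrivial (using $-1\not\equiv 1\pmod 3$), contradicting triviality on $H_1$. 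This forces $\bar h=\id$.

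The ``in particular'' statement follows immediately: if $\hat S$ refines $S$ and $\phi\in\Stab_{\ia}(\hat S)$, then $\phi$ permutes the finite set of free splittings obtained by collapses of $\hat S$. Hence $S$ is $\langle\phi\rangle$-periodic, and the main part of the lemma gives $\phi\in\Stab_{\ia}(S)$.

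The main obstacle is the step promoting $h$-invariance of each $[A_i]$ to $hS=S$: preservation of vertex conjugacy classes alone is strictly weaker than preservation of the splitting, and one must simultaneously invoke the rigidity of free splittings (trivial edge stabilizers) and the full strength of the periodicity hypothesis on $S$ to force coincidence.
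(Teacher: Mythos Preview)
Your proposal has a genuine gap at exactly the point you flag as ``the main obstacle'': promoting $h$-invariance of the vertex stabilizer conjugacy classes $[A_i]$ to $hS=S$. This step fails as written. Preservation of the vertex free factor system $\calf$ only tells you that $hS$ lies in the same Grushko deformation space $\cald(\calf)$ as $S$, and that deformation space typically contains infinitely many free splittings. Your appeal to ``rigid compatibility between the conjugating elements $g_i$ and the edge attachments'' is not an argument: the edges incident to a vertex with stabilizer $A_i$ are indexed by $A_i$ only after choosing a base edge in each half-edge orbit, and there is no reason $\tilde h$ should respect these choices. Worse, if $S$ is a simple free splitting (free $F_N$-action), then $\calf=\emptyset$ and your argument produces no constraint at all, while there are many simple free splittings. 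Periodicity does not rescue this: knowing $h^nS=S$ for some $n$ does not by itself give $hS=S$.

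The paper's fix is to bypass vertex stabilizers entirely and work with one-edge collapses. The key observation is that a \emph{one-edge} free splitting is completely determined by its sporadic free factor system ($\{[A],[B]\}$ for $A\ast B$, or $\{[C]\}$ for $C\ast$), so Handel--Mosher applies directly to give $h$-invariance of each one-edge collapse of $S$. Since $S$ is determined by the collection of its one-edge collapses, this yields $hS=S$. This also immediately gives that $h$ fixes each edge of $S/F_N$ setwise, so the only remaining issue is ruling out flips of individual edges. That is then handled as you do: a flip of a nonseparating edge negates a basis element of $H_1(S/F_N;\mathbb{Z}/3)$, and a flip of a separating edge swaps two distinct free factor conjugacy classes, contradicting Handel--Mosher. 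Your argument for the ``in particular'' is fine.
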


\begin{proof}
 The second conclusion of the lemma is a consequence of the first, so we focus on the first. Each one-edge free splitting of $F_N$ is determined by a sporadic free factor system, so by the theorem of Handel and Mosher, any one-edge splitting that is periodic under $H$ is in fact invariant. In general, an arbitrary splitting $S$ is determined by its one-edge collapses, so if $S$ is $H$-periodic and $H\subseteq\ia$ then $S$ is $H$-invariant. This argument also shows that $H$ preserves the edges in $S/F_N$, and will act trivially on $S/F_N$ if no edges are flipped. Such a flip is visible in $H_1(F_N;\mathbb{Z}/3\mathbb{Z})$ as either it induces a nontrivial action on $H_1(S/F_N;\mathbb{Z}/3\mathbb{Z})$ (if the splitting is nonseparating), or it permutes distinct free factors (if the splitting is separating).
\end{proof}

\subsection{Groups of twists}

Let $S$ be a splitting of $F_N$, let $v\in S$ be a vertex, let $e$ be a half-edge of $S$ incident on $v$, and let $z$ be an element in $C_{G_v}(G_e)$ (the centralizer of the stabilizer of $e$ inside the stabilizer of $v$; notice in particular that the existence of such a $z$ implies that $G_e$ is either trivial or cyclic). Following \cite{Lev}, we define the \emph{twist by $z$ around $e$} to be the automorphism $D_{e,z}$ of $F_N$ (preserving $S$) defined in the following way. Let $\overline{S}$ be the splitting obtained from $S$ by collapsing all half-edges outside of the orbit of $e$; we denote by $\overline{e}$ (resp.\ $\overline{v}$) the image of $e$ (resp.\ $v$) in $\overline{S}$, and by $\overline{w}$ the other extremity of $\overline{e}$. If the extremities of $\overline{e}$ are in distinct $F_N$-orbits, then we have an amalgam, and $D_{e,z}$ is defined to be the unique automorphism that acts as the identity on $G_{\overline{v}}$, and as conjugation by $z$ on $G_{\overline{w}}$. 
If the extremities $\overline{v},\overline{w}$ of $\overline{e}$ are in the same $F_N$-orbit, then we let $t\in F_N$ be such that $\overline{w}=t\overline{v}$, and $D_{e,z}$ is defined as the identity on $G_{\overline{v}}$, with $D_{e,z}(t)=zt$. In this case, $D_{e,z}$ is a Nielsen automorphism. 
The element $z$ is called the \emph{twistor} of $D_{e,z}$. The \emph{group of twists} of the splitting $S$ is the subgroup of $\Out(F_N)$ generated by all twists around half-edges of $S$. 

\paragraph*{Twists about cyclic splittings.} 
Let $S$ be a splitting of $F_N$ with exactly one orbit of edges, whose stabilizer is root-closed and isomorphic to $\mathbb{Z}$. Then the group of twists of the splitting $S$ is isomorphic to $\mathbb{Z}$ (see \cite[Proposition~3.1]{Lev}).

\begin{lemma}\label{twist-compatible}
Let $S$ be a splitting of $F_N$ with exactly one orbit of edges whose stabilizer is root-closed and isomorphic to $\mathbb{Z}$, and let $D$ be a nontrivial twist about $S$. Let $R$ be a free splitting of $F_N$, such that $D(R)=R$.
\\ Then $S$ and $R$ are compatible.
\end{lemma}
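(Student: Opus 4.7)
We treat the amalgam case $S\colon F_N = A *_{\langle z \rangle} B$; the HNN case is analogous, with the stable letter playing the role of elements of $B$. First, fix a representative $\Phi \in \Aut(F_N)$ of $D$ with $\Phi|_A = \mathrm{id}$ and $\Phi|_B = \ad_z$. The hypothesis $D(R) = R$ will yield, after adjusting $\Phi$ by an inner automorphism, an $F_N$-equivariant isometry $f\colon R \to R$ satisfying $f(g\cdot x) = \Phi(g)\cdot f(x)$ for all $g\in F_N$ and $x\in R$.

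The central step is to show that $z$ acts elliptically on $R$. I plan to argue by contradiction: suppose $z$ is hyperbolic on $R$, with axis $L_z$ and translation length $\tau>0$. The root-closedness of $\langle z\rangle$ forces the $F_N$-stabilizer of $L_z$ to equal $\langle z\rangle$, so no element outside $\langle z\rangle$ has axis $L_z$. Since $A$ and $B$ both contain the hyperbolic element $z$, neither can be elliptic on $R$, and each contains a hyperbolic element whose axis is distinct from $L_z$; pick such $a\in A$ and $b\in B$. Setting $w=ab$, one computes $\Phi^k(w)=a\cdot z^k b z^{-k}$. Because $f^k$ is an isometry intertwining the $\Phi^k$-twisted $F_N$-action with the original one, the translation length $\ell_R(\Phi^k(w))$ equals $\ell_R(w)$, a fixed constant. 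However, the axis of $z^k b z^{-k}$ is $z^k\cdot L_b$, translated along $L_z$ by $k\tau$; for $k$ large this axis is arbitrarily far from $L_a$, and a standard tree estimate forces $\ell_R(a\cdot z^k b z^{-k})\to\infty$, contradicting the constancy of $\ell_R(\Phi^k(w))$.

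Once $\langle z\rangle$ is known to be elliptic on $R$, the compatibility of $R$ and $S$ would follow from a standard Bass--Serre blowup argument. For each vertex $v$ of $R$, the vertex group $V_v$ acts on $S$; where this action is nontrivial, it endows $V_v$ with a cyclic-edge splitting whose edge groups lie in conjugates of $\langle z\rangle$. Equivariantly blowing up $R$ at these vertex orbits produces a tree $T$ refining $R$, and the ellipticity of $\langle z\rangle$ in $R$ ensures that collapsing the $R$-type edges of $T$ yields $S$.

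The main obstacle will be the ellipticity step. Verifying that hyperbolic $a\in A$ and $b\in B$ with axes distinct from $L_z$ can be chosen is precisely where the root-closed hypothesis on $\langle z\rangle$ enters essentially, since it guarantees that no element outside $\langle z\rangle$ translates along $L_z$.
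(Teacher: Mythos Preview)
Your Step~1 (showing that $z$ is elliptic in $R$) is correct and nicely argued, but Step~2 has a genuine gap: ellipticity of the edge group $\langle z\rangle$ in $R$ is \emph{not} sufficient to conclude that $R$ and $S$ are compatible. Here is a concrete counterexample. Take $F_3=\langle a,b,c\rangle$, let $R$ be the one-edge free splitting $\langle a,b\rangle * \langle c\rangle$, and let $S$ be the amalgam $\langle a,c\rangle *_{\langle c\rangle} \langle b,c\rangle$ with $z=c$. Then $z$ is elliptic in $R$, but $R$ and $S$ are not compatible. Indeed, when you blow up $R$ at the $\langle a,b\rangle$-vertex by the $\langle a,b\rangle$-minimal subtree of $S$, the induced splitting of $\langle a,b\rangle$ has \emph{trivial} edge groups (every conjugate of $\langle c\rangle$ meets $\langle a,b\rangle$ trivially). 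So the blown-up tree $T$ has only trivial edge stabilizers, and hence cannot collapse onto $S$, which has an edge with stabilizer $\langle c\rangle$. One can check directly that no two-edge common refinement exists. Note also that in this example the twist $D$ about $S$ does \emph{not} preserve $R$ (for instance $\ell_R(ab)=0$ but $\ell_R(D(ab))=\ell_R(a\cdot cbc^{-1})>0$), so this is not a counterexample to the lemma itself --- only to your implication ``$z$ elliptic $\Rightarrow$ compatible''.

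The upshot is that you must use the hypothesis $D(R)=R$ more fully than just extracting ellipticity of $z$. The paper takes a completely different route that bypasses the blowup issue: it invokes the Cohen--Lustig parabolic orbit theorem to produce a sequence $\lambda_n D^n(\hat R)$ in Outer space (with $\hat R$ a simplicial metric tree collapsing onto $R$) converging to $S$, observes that each term is compatible with $D^n(R)=R$, and then uses that compatibility is a closed condition to conclude that $S$ is compatible with $R$. This avoids any analysis of how vertex groups of $R$ sit inside $S$.
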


\begin{proof}
The key tool in the proof is a theorem of Cohen and Lustig \cite{CL}, which shows that every free action of $F_N$ on an $\mathbb{R}$-tree has a parabolic orbit in Outer space which converges to a defining tree for the twist.

Let $\hat{R}$ be a simplicial metric $F_N$-tree in unprojectivized Outer space $cv_N$ that collapses onto $R$. By \cite{CL}, there exists a sequence $(\lambda_n)_{n\in\mathbb{N}}\in(\mathbb{R}_+^\ast)^\mathbb{N}$ such that $\lambda_nD^{n}(\hat{R})$ converges to $S$ (in the Gromov--Hausdorff equivariant topology). Since for every $n\in\mathbb{N}$, the splittings $\lambda_n D^{n}(\hat{R})$ and $R=D^{n}(R)$ are compatible, it follows from \cite[Corollary~A.12]{GL-jsj} that in the limit, the splittings $S$ and $R$ are compatible.
\end{proof}

Given a subgroup $K$ of $\Out(F_N)$, we denote by $C_{\out(F_N)}(K)$ the centralizer of $K$ in $\Out(F_N)$. More generally, if $H$ is a subgroup of $\Out(F_N)$ then we use $C_H(K)$ to denote the intersection of the centralizer with $H$. Twists determined by cyclic edges are central in a finite index subgroup of the stabilizer of the tree:

\begin{lemma}[{Cohen--Lustig \cite[Lemma~5.3]{CL2}}]\label{twist-central}
Let $S$ be a splitting of $F_N$ with exactly one orbit of edges whose stabilizer is isomorphic to $\mathbb{Z}$, and let $D$ be a nontrivial twist about $S$. Let $H_S$ be the subgroup of $\Out(F_N)$ that stabilizes $S$, acts as the identity on the quotient graph $S/F_N$, and induces the identity on each of the edge groups of $S$. Then $D$ is central in $H_S$.
\end{lemma}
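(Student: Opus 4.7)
The plan is to take any $\phi \in H_S$, lift it to a convenient representative $\tilde\phi \in \Aut(F_N)$, and check by direct calculation that $\tilde\phi D \tilde\phi^{-1} = D$ in $\Aut(F_N)$, hence in $\Out(F_N)$.

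The crucial preliminary observation is that the twistor $z$ is almost determined. Since $F_N$ is free, every vertex group of $S$ is free, and hence every centralizer of a nontrivial element in a vertex group is cyclic. Writing $G_e = \langle c\rangle$ for the edge stabilizer and $v$ for an endpoint of $e$, the hypothesis that $\langle c\rangle$ is root-closed forces $C_{G_v}(G_e) = \langle c\rangle$. Thus every twist $D_{e,z}$ about $e$ has twistor of the form $z = c^n$ for some integer $n$, and in particular $D = D_{e, c^n}$ for some such $n$.

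Next I would choose a lift $\tilde\phi$ of $\phi$ that fixes the (oriented) edge $e$ and satisfies $\tilde\phi(c) = c$. This is where the three conditions defining $H_S$ enter: stabilizing $S$ lets us take a lift acting on $S$; acting trivially on $S/F_N$ lets us adjust by an inner automorphism to fix $e$ together with its orientation; and inducing the identity on edge groups rules out the possibility $\tilde\phi(c) = c^{-1}$, pinning down $\tilde\phi(c) = c$. In the amalgam case $F_N = A \ast_C B$ this forces $\tilde\phi(A) = A$ and $\tilde\phi(B) = B$; in the HNN case $F_N = A \ast_C$ with stable letter $t$, it forces $\tilde\phi(A) = A$ and $\tilde\phi(t) = ta$ for some $a \in A$.

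A short unwinding of the definition of twist then shows that $\tilde\phi \circ D_{e,z} \circ \tilde\phi^{-1} = D_{e, \tilde\phi(z)}$: in the amalgam case this is immediate from the formulas $D|_A = \mathrm{id}$ and $D|_B = \mathrm{conj}_z$, and in the HNN case one checks on $t$ that the term $ta$ introduced by $\tilde\phi(t) = ta$ cancels with the corresponding $a^{-1}$ from $\tilde\phi^{-1}(t)$. Combined with $\tilde\phi(z) = \tilde\phi(c^n) = c^n = z$, this gives $\tilde\phi D \tilde\phi^{-1} = D$ in $\Aut(F_N)$, so $\phi$ and $D$ commute in $\Out(F_N)$.

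The main obstacle, such as it is, is the bookkeeping for the lift $\tilde\phi$ — especially in the HNN case, one must be careful to work with a single half-edge incident to $v$ (rather than all edges at $v$) when arranging $\tilde\phi(e) = e$, and to check that all three conditions on $H_S$ really can be achieved simultaneously for the same representative. Once this setup is in place the verification is a direct one-line computation on the generators $A \cup \{t\}$ (or $A \cup B$).
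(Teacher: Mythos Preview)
The paper does not supply its own proof of this lemma; it is quoted directly from Cohen--Lustig \cite[Lemma~5.3]{CL2}. So there is no in-paper argument to compare against, and your write-up would serve as a self-contained justification.

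Your argument is correct in substance, with one slip to fix. You invoke ``the hypothesis that $\langle c\rangle$ is root-closed'' to conclude $C_{G_v}(G_e)=\langle c\rangle$, but the lemma as stated makes no root-closure assumption; the edge group is only required to be infinite cyclic. This does not damage the proof: in a free group the centralizer of any nontrivial element is cyclic, generated by its unique maximal root $c_0$, so every twistor has the form $z=c_0^{\,n}$. Your lift $\tilde\phi$ satisfies $\tilde\phi(c)=c$, and since roots are unique in $F_N$ this forces $\tilde\phi(c_0)=c_0$ and hence $\tilde\phi(z)=z$. With that correction the computation $\tilde\phi\, D_{e,z}\,\tilde\phi^{-1}=D_{e,\tilde\phi(z)}=D_{e,z}$ goes through exactly as you wrote it, in both the amalgam and HNN cases.
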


We establish one more fact about twists about cyclic splittings. 

\begin{lemma}\label{lemma:twistor}
Let $S$ be a splitting of $F_N$ with exactly one orbit of edges whose stabilizer is isomorphic to $\mathbb{Z}$, and let $w$ be a generator of the edge group of $S$. Let $\Phi\in\Out(F_N)$ be an automorphism which commutes with the Dehn twist about $S$.
\\ Then $\Phi$ preserves the conjugacy class of $\langle w\rangle$.
\end{lemma}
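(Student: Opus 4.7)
The plan is to reduce the statement to showing that $\Phi\cdot S = S$ as an $F_N$-equivariant isomorphism class of splitting. Granting this, any $F_N$-equivariant isomorphism $S\to\Phi\cdot S$ permutes the $F_N$-orbits of edges of $S$; since $S$ has only one such orbit, with stabilizer $\langle w\rangle$, the image $\Phi(\langle w\rangle)$ must be $F_N$-conjugate to $\langle w\rangle$, which is the desired conclusion.

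To establish $\Phi\cdot S=S$, I would reuse the Cohen--Lustig convergence theorem already invoked in the proof of Lemma~\ref{twist-compatible}. Let $D$ be a nontrivial twist about $S$, fix any simplicial metric tree $\hat T\in cv_N$, and choose positive scalars $(\lambda_n)$ with $\lambda_n D^n\hat T\to S$ in the equivariant Gromov--Hausdorff topology. Since $\Phi$ commutes with $D$ and acts continuously on the closure of Outer space,
\[
\lambda_n D^n(\Phi\cdot\hat T) \;=\; \Phi\cdot(\lambda_n D^n\hat T) \;\longrightarrow\; \Phi\cdot S.
\]
Applying Cohen--Lustig a second time with starting tree $\Phi\cdot\hat T\in cv_N$ yields scalars $(\mu_n)$ with $\mu_n D^n(\Phi\cdot\hat T)\to S$. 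Two rescalings of the same sequence $D^n(\Phi\cdot\hat T)$ cannot converge to projectively inequivalent trees, so $\Phi\cdot S$ and $S$ coincide as $F_N$-trees up to a global rescaling, and in particular are equal as splittings.

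The only potentially delicate bookkeeping is verifying that the two rescaled sequences indeed have projectively equivalent limits; this is a routine consequence of the fact that a single sequence in $cv_N$ can accumulate onto at most one projective tree. Beyond that, the argument reduces to a direct combination of the Cohen--Lustig attraction with the commutation $\Phi D=D\Phi$, followed by the elementary observation that an $F_N$-equivariant automorphism of $S$ must permute its single orbit of edge stabilizers.
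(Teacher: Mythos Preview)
Your approach is essentially the same as the paper's: both deduce the result from Cohen--Lustig's parabolic orbit theorem by showing that $\Phi$ fixes the relevant tree.  The paper in fact cites \cite[Corollary~6.8]{CL2} directly for the statement that the centralizer of the Dehn twist fixes the tree, rather than rerunning the two-rescalings argument you give, but your version of that step is fine.

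There is one genuine oversight.  The hypothesis of Lemma~\ref{lemma:twistor} does \emph{not} assume the edge group is root-closed, so $w$ may be a proper power, say $w=\sqrt{w}^{\,k}$.  In that case the Cohen--Lustig limit of $\lambda_n D^n\hat T$ is not $S$ but the tree $S'$ obtained from $S$ by equivariantly folding $e$ with $\sqrt{w}\cdot e$ (what Cohen--Lustig call ``getting rid of proper powers''); this $S'$ has root-closed edge group $\langle\sqrt{w}\rangle$, and it is to $S'$ that Lemma~\ref{twist-compatible} applies.  The paper handles this explicitly by passing to $S'$ first.  Your argument then goes through verbatim with $S'$ in place of $S$, yielding $\Phi\cdot S'=S'$; since $S'$ has a single edge orbit with stabilizer $\langle\sqrt{w}\rangle$, you conclude that $\Phi$ preserves the conjugacy class of $\langle\sqrt{w}\rangle$, hence of $\langle w\rangle$.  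So the fix is exactly the reduction the paper performs, but as written your claim ``$\lambda_n D^n\hat T\to S$'' is not what Cohen--Lustig gives in the non-root-closed case.
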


\begin{proof}
Let $\sqrt{w}$ be the unique smallest root of $w$ (in particular, $w=\sqrt{w}^k$ for some $k \geq 1$). We can replace $S$ with a splitting $S'$ with edge stabilizer $\sqrt{w}$ by equivariantly folding an edge $e$ with $\sqrt{w}\cdot e$ (Cohen and Lustig describe this process as \emph{getting rid of proper powers} \cite{CL2}). The tree $S'$ has an edge group generated by $\sqrt{w}$. Any Dehn twist on $S$ is also a Dehn twist on $S'$, and furthermore Cohen and Lustig's parabolic orbit theorem implies that the centralizer of a such a Dehn twist fixes the splitting $S'$ (see, for example, \cite[Corollary~6.8]{CL2}). As $\Phi \cdot S'=S'$ and there is only one orbit of edges in $S'$, the conjugacy class of $\langle \sqrt w \rangle$ (and therefore the conjugacy class of $\langle w \rangle$) is invariant under $\Phi$. 
\end{proof}

\section{The edgewise nonseparating free splitting graph}\label{sec:ens}

\emph{In this section, we introduce the edgewise nonseparating free splitting graph and show that all its graph automorphisms come from the action of $\Out(F_N)$.}
\\
\\
\indent We let $M_{N}:=\sharp_{i=1}^N (S^1\times S^2)$ be the connected sum of $N$ copies of $S^1 \times S^2$, and we identify once and for all the fundamental group of $M_N$ with the free group $F_N$. 
We recall that every embedded sphere in $M_N$ which does not bound a ball determines a one-edge free splitting of $F_N$ in the following way. The fundamental group of a sphere is trivial, so an application of van Kampen's theorem shows that an embedded sphere determines a splitting of the fundamental group of $M_N$ (which has been identified with $F_N$) over the trivial group. The fact that the sphere does not bound a ball ensures that the splitting defined in this way is nontrivial. Conversely, every one-edge free splitting of $F_N$ can be represented by the isotopy class of an essential embedded sphere in $M_N$ (this is described in the appendix of \cite{Hat} in the case of simple sphere systems but the proof extends to all free splittings. See also \cite{Sta1,AS}). More generally, every collection $\Sigma$ of essential (i.e.\ which do not bound a ball), pairwise disjoint, pairwise non-isotopic spheres determines a free splitting of $F_N$ whose one-edge collapses are precisely the one-edge free splittings determined by each of the spheres in $\Sigma$. Such a collection is called a \emph{sphere system}. A sphere system $\Sigma$ (or the corresponding free splitting $S$ of $F_N$) is \emph{simple} if all the components of $M_N-\Sigma$ have trivial fundamental group (equivalently, the $F_N$-action on $S$ is free).
In this section, we will abuse notation in places by blurring the distinction between a sphere system and its induced free splitting as well as the distinction between an edge in such a splitting and its associated sphere in $M_N$.

\begin{de}[Free splitting graph]
 The \emph{free splitting graph} $\mathrm{FS}$ is the graph whose vertices are the (homeomorphism classes of) one-edge free splittings of $F_N$, two vertices being joined by an edge whenever they are compatible. 
\end{de}

\begin{de}[Nonseparating free splitting graph]
The \emph{nonseparating free splitting graph} $\NS$ is the graph whose vertices are the (homeomorphism classes of) loop-edge free splittings of $F_N$, two vertices being joined by an edge whenever they are compatible.
\end{de}

Due to the correspondence between spheres and one-edge splittings, $\NS$ can alternatively be thought as the graph whose vertices are nonseparating spheres in $M_N$, with edges given by disjointness. The following theorem, established by Pandit in \cite{Pan}, heavily relies on previous work of Bridson and Vogtmann \cite{BV2} giving a similar rigidity statement for the spine of reduced Outer space.

\begin{theo}[Pandit \cite{Pan}]\label{pandit}
For every $N\ge 3$, the natural map $\Out(F_N)\to\Aut(\NS)$ is an isomorphism.
\end{theo}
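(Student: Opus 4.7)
The plan is to exploit the dictionary between one-edge free splittings of $F_N$ and isotopy classes of essential embedded spheres in $M_N=\#_{i=1}^N(S^1\times S^2)$, and to reduce rigidity of $\NS$ to the rigidity of the spine $K_N$ of reduced Outer space proved by Bridson and Vogtmann \cite{BV2}. Injectivity of the map $\Out(F_N)\to\Aut(\NS)$ for $N\ge 3$ should follow quickly: an outer automorphism fixing every non-separating splitting also fixes every conjugacy class of corank-one free factor $A$ (since the HNN splitting $F_N=A\ast$ recovers $[A]$), and this is well known to force triviality, for instance by combining with the rigidity of the free factor complex \cite{BB}.

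For surjectivity, given $\psi\in\Aut(\NS)$, I would first pass to the flag completion $\NS^\Delta$ of $\NS$, whose simplices are systems of pairwise disjoint non-separating spheres in $M_N$; any graph automorphism of $\NS$ extends canonically to $\NS^\Delta$. A standard Euler-characteristic count shows that the top-dimensional simplices of $\NS^\Delta$ are precisely the maximal simple sphere systems all of whose spheres are non-separating: each consists of exactly $3N-3$ spheres, and its complement in $M_N$ is a disjoint union of $2N-2$ three-balls, each carrying three boundary spheres. Such a system encodes a reduced trivalent marked graph in Culler--Vogtmann's spine $K_N$. Two such maximal systems $\Sigma,\Sigma'$ differ by a Whitehead move precisely when they share $3N-4$ spheres, the remaining pair being related by the unique non-trivial sphere-surgery inside a common complementary ball.

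The main obstacle, and the heart of the argument, is to verify that the Whitehead-move relation is detectable purely from the combinatorics of cliques in $\NS$, so that $\psi$ induces a well-defined simplicial automorphism of $K_N$. I would approach this by characterising pairs of intersecting non-separating spheres through their common neighbourhoods in $\NS^\Delta$: two non-disjoint spheres arise from a shared surgery configuration exactly when a prescribed pattern of shared maximal cliques (encoding the remaining $3N-4$ spheres of the surrounding system) is present. Once $\psi$ is promoted to an automorphism of $K_N$ in this way, Bridson and Vogtmann's theorem \cite{BV2} produces a unique $\phi\in\Out(F_N)$ inducing $\psi$ on $K_N$, and hence on $\NS$. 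Combined with injectivity, this yields the asserted isomorphism.
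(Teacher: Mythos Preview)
Your overall strategy---reduce to Bridson--Vogtmann rigidity of the spine $K_N$---matches the paper's, but you miss the key combinatorial observation that makes the reduction clean. The paper does not restrict to maximal cliques. Instead it observes that a clique $\Sigma$ in $\NS$ corresponds to a simplex of $K_N$ (i.e., the associated sphere system is simple) if and only if the link of $\Sigma$ in $\NS$ is \emph{finite}: a non-simple system has a vertex with nontrivial stabilizer admitting infinitely many nonseparating blowups, whereas a simple system has only finitely many refinements. Since the face relation in $K_N$ is just inclusion of cliques, this immediately yields an $\Out(F_N)$-equivariant injection $\Aut(\NS)\to\Aut(K_N)$, and Bridson--Vogtmann finishes the argument.

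Your route through maximal cliques and Whitehead moves is more roundabout and has gaps. First, your ``standard Euler-characteristic count'' does not explain why every maximal clique in $\NS$ is simple---you need the argument above that non-simple systems always admit a further nonseparating blowup. Second, and more seriously, even granting that maximal cliques are exactly the trivalent reduced marked graphs, knowing the action on top-dimensional simplices and their codimension-one adjacencies does not obviously produce an automorphism of all of $K_N$; you would need to reconstruct the lower-dimensional simplices, which is extra work the finite-link characterization avoids entirely. Finally, your ``main obstacle'' paragraph is confused: two maximal cliques differing by a Whitehead move is already the purely combinatorial condition of sharing $3N-4$ vertices, so no analysis of intersecting spheres is needed there. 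The detour through surgery configurations and common neighbourhoods is unnecessary for this theorem (though related ideas do appear later in the paper, when analysing $\ens$).
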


\begin{proof}[Sketch proof]
A system of nonseparating spheres $\Sigma$ (equivalently, a clique in $\NS$) determines a simplex in the spine of reduced Outer space $K_N$ if and only if it is simple.
 The graph corresponding to a sphere system has finitely many blowups if and only if it is simple or there is a leaf with $\mathbb{Z}$ as the vertex stabilizer. However, in the latter case this leaf edge (equivalently, its corresponding sphere) is separating. Therefore a system $\Sigma$ of nonseparating spheres is simple if and only if the link of the clique corresponding to $\Sigma$  is finite in $\NS$. Furthermore the simplex determined by $\Sigma$ is a face of the simplex determined by $\Sigma'$ in $K_N$ if and only if $\Sigma \subset \Sigma'$. These conditions are preserved under automorphisms of $\NS$, so we have an induced map $\Phi: \Aut(\NS) \to \Aut(K_N)$ which is equivariant under the $\Out(F_N)$ action. This induced map is also injective: if $\sigma$ and $\sigma'$ are distinct nonseparating splittings we can find a simple sphere system $\Sigma$ containing one but not the other. It follows that if an automorphism $\alpha \in \Aut(\NS)$ induces the identity on the spine it fixes $\Sigma$ setwise and cannot send $\sigma$ to $\sigma'$. Hence $\alpha$ is also the identity on $\NS$. We have the following commutative diagram:
\[
  \begin{tikzcd}
 \Out(F_N) \arrow[bend left=30]{rr}{\Psi}   
     \arrow{r} & \Aut(\NS) \arrow{r}{\Phi} & \aut(K_N).
      \end{tikzcd}
\]
A theorem of Bridson and Vogtmann \cite{BV2} states that the natural map $\Psi: \Out(F_N) \to \Aut(K_N)$ is an isomorphism, in particular $\Phi$ is also surjective and hence is an isomorphism.
\end{proof}

\begin{de}[Edgewise nonseparating free splitting graph]
The \emph{edgewise nonseparating free splitting graph} $\ens$ is the graph whose vertices are the (homeomorphism classes of) loop-edge free splittings of $F_N$, two vertices being joined by an edge whenever they are compatible and have a two-petal rose refinement (equivalently, the complement of the union of the two corresponding spheres in $M_N$ is connected).
\end{de}

Informally, we define $\ens$ by throwing out all of the edges in $\NS$ that are given by a pair of disjoint nonseparating spheres whose union separates.  The dual graph given by such a pair of spheres is a loop with two edges. Such a pair of spheres are then of distance 2 in $\ens$ (we will see a refinement of this statement in the claim within the proof of Theorem~\ref{ens-automorphisms}).

\begin{theo}\label{ens-automorphisms}
For every $N\ge 3$, the natural map $\theta:\Out(F_N)\to\Aut(\ens)$ is an isomorphism.
\end{theo}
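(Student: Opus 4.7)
The plan is to reduce Theorem~\ref{ens-automorphisms} to Pandit's Theorem~\ref{pandit}. Note first that since $V(\ens) = V(\NS)$ and $E(\ens) \subseteq E(\NS)$, the natural restriction map $\Aut(\NS) \to \Aut(\ens)$ is well-defined (because $\Aut(\NS) = \Out(F_N)$ manifestly preserves $\ens$-edges) and injective (because both graphs are simple, so any automorphism fixing all vertices is trivial). Composing with Pandit's isomorphism yields injectivity of $\theta$. If I can additionally show that every $\alpha \in \Aut(\ens)$ extends to some $\tilde\alpha \in \Aut(\NS)$ with the same vertex action, then by Pandit there is $\phi \in \Out(F_N)$ with $\tilde\alpha = \theta(\phi)$, and $\theta(\phi) = \alpha$ by simplicity of $\ens$, giving surjectivity.

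Given $\alpha \in \Aut(\ens)$, the candidate extension $\tilde\alpha$ acts on vertices exactly as $\alpha$ does. Since $\alpha$ already preserves $E(\ens)$, the only thing to verify is that $\alpha$ preserves the complement $E(\NS) \setminus E(\ens)$, i.e., pairs $\{\sigma_1, \sigma_2\}$ of disjoint nonseparating spheres whose union separates $M_N$. To this end I need a characterization of this subset of vertex pairs purely in terms of the graph $\ens$.

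The discussion preceding the theorem observes that any such pair lies at $\ens$-distance exactly $2$, but this alone is not sufficient, since a pair of intersecting (hence $\NS$-non-adjacent) nonseparating spheres may also sit at $\ens$-distance $2$. The refinement I would pursue exploits the following geometric rigidity: when $\sigma_1 \cup \sigma_2$ separates $M_N$ into two pieces $P_1 \sqcup P_2$, every common $\ens$-neighbor $\tau$ is forced to lie in exactly one of the two pieces. This "two-sided" structure should impose a distinctive combinatorial signature on the star of $\{v_1, v_2\}$ in $\ens$ (for instance a particular pattern of length-$2$ paths, the existence of a configuration of mutually $\ens$-compatible vertices straddling $\sigma_1, \sigma_2$, or a link-finiteness condition in the spirit of Pandit's argument for $\NS$) which does not survive when $\sigma_1$ and $\sigma_2$ intersect nontrivially, because cutting along an intersecting pair does not partition the manifold and so the ambient piece structure is absent.

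Formulating this combinatorial fingerprint rigorously and verifying that it detects $E(\NS) \setminus E(\ens)$ precisely is the main obstacle; it is the "refinement" alluded to in the text. Once it is in place the characterized set is $\Aut(\ens)$-invariant by construction, so $\alpha$ preserves $\NS$-adjacency, the extension $\tilde\alpha \in \Aut(\NS)$ exists, and Pandit's theorem completes the argument.
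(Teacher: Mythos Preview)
Your overall strategy matches the paper's exactly: both reduce to Pandit's theorem by showing that every $\alpha\in\Aut(\ens)$ extends to an automorphism of $\NS$, which amounts to giving an $\ens$-intrinsic characterization of the pairs in $E(\NS)\setminus E(\ens)$. However, you have correctly identified the main difficulty without resolving it, so the proof is incomplete. The paper supplies a precise such characterization: two vertices $S,S'$ with $d_{\ens}(S,S')>1$ satisfy $d_{\NS}(S,S')=1$ if and only if $\lk(S)\cap\lk(S')$ (links taken in $\ens$) contains a clique of size $3N-5$ with finite link, contains no clique of size $3N-4$, and is not a cone over a point. The forward implication comes from an explicit maximal blowup of the two-edge loop. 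The converse is the real work: one observes that two incompatible spheres lying in the complement of a simple sphere system intersect in a single circle, and then carries out a case-by-case analysis (Lemma~\ref{cliques}) of the six possible configurations of the four ``boundary spheres'' arising from such a pair, checking that in each case one of the three clique/link conditions fails.

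Your heuristic about the ``two-sided structure'' of the complement points in a reasonable direction, but it is not developed into a usable criterion, and in fact the paper's argument does \emph{not} proceed by sorting common $\ens$-neighbors according to the side of $\sigma_1\cup\sigma_2$ on which they lie; the invariant it uses is purely about clique sizes and link finiteness in $\lk(S)\cap\lk(S')$. Without a concrete characterization together with its verification---especially on the intersecting-sphere side, which is where the nontrivial case analysis occurs---the argument remains a sketch rather than a proof.
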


\begin{proof}
As no free splitting of $F_N$ is invariant by every element of $\Out(F_N)$, the map $\theta$ is injective. We now focus on proving that $\theta$ is onto.

Let $\Psi\in\Aut(\ens)$. In view of Theorem~\ref{pandit}, it is enough to show that $\Psi$ can be extended to a simplicial automorphism of $\NS$. In other words, we wish to show that if $S$ and $S'$ are two distinct compatible splittings whose common refinement is a two-edge loop, then the same is true for $\Psi(S)$ and $\Psi(S')$. It is enough to prove the following claim.
\\
\\
\textbf{Claim:} Let $S$ and $S'$ be two splittings such that $d_{\ens}(S,S')>1$. The following are equivalent.
\begin{itemize}
\item We have $d_{\NS}(S,S')=1$, in other words $S$ and $S'$ are compatible, and denoting by $U$ their common refinement, the graph $U/F_N$ is a loop.
\item The intersection $\lk(S)\cap\lk(S')$ in $\ens$ contains a clique with finite link of size $3N-5$, but no clique of size $3N-4$. Furthermore, $\lk(S)\cap\lk(S')$ is not a cone over a point. 
\end{itemize}

\begin{figure}
\centering
\input{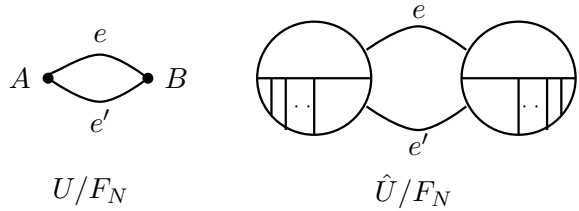}
\caption{A maximal clique in the common link of two compatible splittings whose common refinement is a two-edge loop.}
\label{fig:maximal-clique}
\end{figure}

 We now prove the above claim. First assume that $d_{\NS}(S,S')=1$; in other words, the splittings $S$ and $S'$ are compatible, and denoting by $U$ their common refinement, the graph $U/F_N$ is a two-edge loop. One can then blow up each of the vertex groups of the loop to get the splitting $\hat{U}$ depicted in Figure~\ref{fig:maximal-clique}. The graph $\hat{U}/F_N$ is a trivalent graph whose fundamental group has rank $N$, so it contains $3N-3$ edges. Given any two edges $e_1$ and $e_2$ that are not equal to $e$ or $e'$, the graph obtained from $\hat{U}/F_N$ by collapsing all edges but $e_1$ and $e_2$ is a two-petalled rose. This shows that $\lk(S)\cap\lk(S')$ contains a clique of size $3N-5$. In addition, the splitting obtained from $\hat{U}$ by collapsing the orbits of $e$ and $e'$ is simple, so this clique has finite link. Notice also that $\lk(S)\cap\lk(S')$ cannot contain a clique of size $3N-4$, as adding $e$ and $e'$ to this clique in $\NS$ would yield a free splitting of $F_N$ with $3N-2$ orbits of edges, which is impossible. As there are incompatible blowups at each of the two vertices of $U/F_N$, we see that  $\lk(S)\cap\lk(S')$ is not a cone over a point.

Conversely, let us assume that $\lk(S)\cap\lk(S')$ in $\ens$ contains a clique with finite link of size $3N-5$, no clique of size $3N-4$, and is not a cone over a point. Assume that $d_{\NS}(S,S')>1$, i.e.\ $S$ and $S'$ are not compatible. Then $S$ and $S'$ lie in a complementary region of a simple sphere system (corresponding to the clique with finite link of size $3N-5$). Such a complementary region is a 3-sphere with finitely many open balls removed. Any two spheres in such a region are disjoint or, up to isotopy, intersect in a single circle. This is ruled out by a case-by-case analysis in Lemma~\ref{cliques}, below.
\end{proof}

\subsection{Spheres intersecting in a single circle}

Suppose $S$ and $S'$ are two spheres that intersect in a single essential circle. The circle separates each sphere into two discs, and the regular neighbourhood of the union of $S$ and $S'$ is a $3$-sphere with four boundary components, each of which is a $2$-sphere isotopic to the union of one half of $S$ and one half of $S'$. We refer to these four spheres as the \emph{boundary spheres} of $S$ and $S'$. Each boundary sphere is essential, as otherwise the circle of intersection between $S$ and $S'$ would not be essential. It might happen that two of these spheres are isotopic.

The four boundary spheres of $S$ and $S'$ determine a free splitting $U$ of $F_N$, which we call the \emph{boundary splitting} of $S$ and $S'$. The vertices of the quotient graph $U/F_N$ correspond to complementary regions in $M_N$ of the union of the boundary spheres (in the case where two boundary spheres are isotopic, we ignore the redundant valence $2$ vertex associated to the region bounded by the two spheres). One of these complementary regions is precisely the regular neighbourhood of $S$ and $S'$, which has trivial fundamental group. We refer to the corresponding vertex of $U/F_N$ as the \emph{central vertex} of $U/F_N$; it has valence four and every lift of this vertex in $U$ has trivial $F_N$-stabilizer. 

We claim that if $N \geq 3$, then at most two of the boundary spheres are isotopic and form a loop at the central vertex. Otherwise, the quotient graph of groups $U/F_N$ would be a $2$-petal rose. As the central vertex has trivial vertex group, this implies that $N=2$. In the case where where exactly two boundary spheres are isotopic, then $U/F_N$ has exactly three edges, one of which is a loop-edge. In the case where the boundary spheres are pairwise non-isotopic, the quotient graph $U/F_N$ has exactly four edges. 

In all cases, the spheres $S$ and $S'$ correspond to distinct blowups of the four half-edges at the central vertex (combinatorially these are obtained by a partition of the four half-edges at the central vertex into two subsets of two half-edges). In order for both $S$ and $S'$ to be nonseparating, at least two half-edges are adjacent to the same connected component of $M_N$ with these three or four spheres removed. The boundary splitting can have one of six types, depicted in Figure~\ref{fig:boundary-spheres}. 
\begin{figure}
\centering
\input{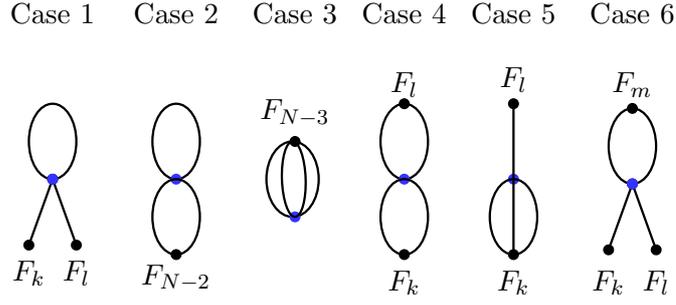}
\caption{The six possibilities for (the quotient graph of groups of) the boundary splitting. The central vertex is depicted in blue, and has trivial stabilizer.}
\label{fig:boundary-spheres}
\end{figure}

\begin{enumerate}[1.]
\item One loop, two non-central vertices with fundamental groups $F_k$ and $F_l$ respectively, with $k+l=N-1$.
\item One loop, one non-central vertex with fundamental group $F_{N-2}$.
\item No loop, one non-central vertex with fundamental group $F_{N-3}$.
\item No loop, two non-central vertices each adjacent to two of the boundary spheres with fundamental groups $F_k$ and $F_l$ with $k+l=N-2$.
\item No loop, two non-central vertices, one of which is adjacent to one of the boundary spheres, the other of which is adjacent to three of the boundary spheres, where the fundamental groups are $F_k$ and $F_l$ with $k+l=N-2$ (here possibly $k=0$).
\item No loop, three non-central vertices with fundamental groups $F_k$, $F_l$ and $F_m$ with $k+l+m=N-1$.
\end{enumerate}

We are now in a position to study maximal cliques in the joint links of $S$ and $S'$ in $\ens$. This completes the proof of Theorem~\ref{ens-automorphisms}.

\begin{lemma}\label{cliques}
If $S$ and $S'$ are two nonseparating spheres in $M_N$ which intersect in a single circle (when in normal form) then either
\begin{itemize}
 \item  $\lk(S)\cap\lk(S')$ does not contain a clique of size $3N-5$,
 \item  $\lk(S)\cap\lk(S')$ contains a clique of size $3N-4$, or
 \item  $\lk(S)\cap\lk(S')$ is a cone over a point.
\end{itemize}

\end{lemma}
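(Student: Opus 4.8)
\emph{Passing to sphere systems.} The plan is to reformulate the statement in the sphere-complex picture and then examine the six configurations of Figure~\ref{fig:boundary-spheres} one at a time. A clique of size $m$ in $\lk(S)\cap\lk(S')$ is a system $\Sigma=\{T_1,\dots,T_m\}$ of pairwise disjoint, pairwise non-isotopic essential spheres, each nonseparating in $M_N$, each disjoint from $S$ and $S'$, and such that the union of any two spheres among $\Sigma\cup\{S,S'\}$ joined by an edge of $\ens$ has connected complement. As each $T_i$ is disjoint from $S$ and from $S'$, after an isotopy $\Sigma$ is disjoint from the boundary sphere system $\Sigma_0$; so every $T_i$ is carried either by a sphere of $\Sigma_0$ or by one of the complementary regions of $\Sigma_0$. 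An essential sphere carried by the central region $C\cong M_{0,4}$ and disjoint from both $S$ and $S'$ must be boundary-parallel, since $S$ and $S'$ are two of the three $\{2|2\}$-spheres of $C$ and the only essential spheres of $C$ disjoint from both are the four boundary-parallel ones. Hence $\Sigma$ is carried by $\Sigma_0$ together with the non-central regions $R_1,\dots,R_j$, whose fundamental groups are the free groups of the ranks recorded in Figure~\ref{fig:boundary-spheres}.

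\emph{A uniform bound.} Next I would record a counting bound valid in all six cases. A clique in $\ens$ of size $m$ is a sphere system whose dual graph, after deletion of any two edges, has its remaining non-loop edges connected and spanning the vertex set; a short count shows $m\le 3N-3$. For a clique $\Sigma\subseteq\lk(S)\cap\lk(S')$, refine $\Sigma\cup\Sigma_0$ to a maximal simple sphere system $\Sigma_{\max}$ of size $3N-3$. Any such refinement must blow up the central vertex of the boundary splitting, which has trivial group and valence four, and the sphere realizing this blow-up is a $\{2|2\}$-sphere of $C$, i.e.\ $S$, $S'$ or the third $\{2|2\}$-sphere $S''$; none of these three can belong to $\Sigma$, since each meets $S$ or $S'$. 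So $\Sigma$ misses at least one sphere of $\Sigma_{\max}$, giving $|\Sigma|\le 3N-4$. Equality will correspond to the second alternative in the statement; when the bound is not attained one has to show that more complexity is lost, so that $|\Sigma|\le 3N-6$ or the link degenerates.

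\emph{The case analysis.} I would then treat the six boundary-splitting types of Figure~\ref{fig:boundary-spheres}, together with the low-rank degenerations in which some $R_i$ is $D^3$, $S^1\times D^2$, $S^2\times I$ or $M_{0,3}$ (and so carries no essential interior sphere). In each case: (i) decide which spheres of $\Sigma_0$ survive into $\lk(S)\cap\lk(S')$, by testing the connected-complement condition against $S$ and $S'$ --- typically the spheres cutting off a handle, or forming the only connection of a non-central region to the rest of $M_N$, are eliminated; (ii) record the adjacencies among the surviving boundary spheres and the spheres supported in the $R_i$ --- the upshot is that $\lk(S)\cap\lk(S')$ is a join of the (relative) sphere complexes of the regions $R_i$, possibly together with a simplex or small graph on the surviving boundary spheres; (iii) read off the clique number from this join decomposition. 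When every $R_i$ is small (Cases 1 and 2 and the degenerations) one gets clique number at most $3N-6$, or the link empty, or a cone over a point; when some $R_i$ carries enough boundary spheres that are nonseparating and suitably adjacent (Cases 3--6 in their generic ranges) one adjoins those spheres to a maximal simple system of $R_i$ and reaches a clique of size $3N-4$. In every case one of the three alternatives holds, which also completes the proof of Theorem~\ref{ens-automorphisms}.

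\emph{The main difficulty.} The conceptual content is short: a valence-four trivial-group central vertex is exactly one unit of complexity short of what would make $\lk(S)\cap\lk(S')$ look like the common link in the compatible case, and the missing unit can only be supplied by $S$, $S'$ or $S''$. The substance is the bookkeeping: for each of the six configurations and each low-rank degeneration, one must determine precisely which boundary spheres are nonseparating and which pairs of spheres have connected complement (equivalently, which edges of the relevant dual graph become loops once the rest are collapsed and reabsorbed), identify the join decomposition of the common link, and compute its clique number. Making this case check airtight, not any single step, is where the work lies.
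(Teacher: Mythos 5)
Your setup (spheres in the common link are boundary spheres or live in the complementary regions of the boundary system, and no further essential sphere of the central piece is available) matches the paper, and your uniform bound $|\Sigma|\le 3N-4$ is correct. But that bound is too weak to give the first alternative anywhere, and the step you defer to ``bookkeeping'' is carried out with the wrong outcomes. The device that actually handles most cases is sharper: if two \emph{distinct} boundary spheres $S_1,S_2$ are absent from a clique $\Sigma\subseteq\lk(S)\cap\lk(S')$, then $\Sigma\cup\{S_1,S_2,S\}$ is a sphere system, so $|\Sigma|\le 3N-6$ and there is no clique of size $3N-5$. This applies in Cases 1 and 6 (two, in fact all four in Case 6, boundary spheres are separating, hence not even vertices of $\ens$), in Case 4 (each parallel pair of boundary spheres has separating union, so is circle-compatible and at most one per pair can appear), and in Case 5 (the nonseparating boundary sphere that $S$ pairs with the separating one is not rose-compatible with $S$, and similarly for $S'$, giving two excluded spheres). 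Your proposal omits this counting step entirely.

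Consequently your advertised case conclusions are false: you claim that in Cases 3--6 (generic ranges) one reaches a clique of size $3N-4$, but this is only true in Case 3. In Case 6 all boundary spheres are separating and a Euler-characteristic count of the regions caps cliques at $3N-8$; in Cases 4 and 5 the exclusions above cap them at $3N-6$. So in Cases 4, 5, 6 the correct alternative is the first one (no clique of size $3N-5$), not the second, and an argument following your outline would be asserting statements that are simply not true, besides failing to prove the lemma. A secondary worry: your claim that $\lk(S)\cap\lk(S')$ is a join of the (relative) sphere complexes of the regions is not right as stated, since adjacency in $\ens$ is rose-compatibility, a global non-separation condition in $M_N$, not disjointness; two disjoint spheres in distinct regions (or two boundary spheres) can have separating union, which is exactly what drives Cases 2, 4 and 5.
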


\begin{proof}
We study the joint link $\lk(S)\cap\lk(S')$ in $\ens$ on a case-by-case basis. In each case, we will see that one of the above conditions is satisfied. Note that every sphere in $\lk(S)\cap\lk(S')$ is either a boundary sphere or disjoint from the boundary spheres, so that every clique in $\lk(S)\cap\lk(S')$ can be refined to a blowup of the boundary splitting.

Let $\Sigma$ be a maximal clique in $\lk(S)\cap\lk(S')$ in $\ens$. If there exist two distinct boundary spheres $S_1$ and $S_2$ which are both not contained in $\Sigma$, then $\lk(S)\cap\lk(S')$ does not contain a clique of size $3N-5$. This is because any maximal clique in $\lk(S)\cap\lk(S')$ can be extended by these two boundary spheres and either $S$ or $S'$ to form a clique in $\mathrm{FS}$, therefore contains at most $3N-6$ vertices. This applies to Cases~1 and~6 as there are two distinct separating boundary spheres in these splittings. It also applies to Case~4, as each pair of boundary spheres with the same endpoints are non-adjacent in $\ens$, so that at most two can be contained in a maximal clique in $\ens$. Furthermore, we can also apply this to Case~5: by examining the blowup given by $S$ one sees that only two of the three nonseparating edges are adjacent to $S$ in $\ens$. Therefore this sphere and the separating boundary sphere are not contained in $\lk(S)\cap\lk(S')$. 
 
In Case~2, the loop edge is adjacent to both $S$ and $S'$ in $\ens$, as well as both of the other boundary spheres and every blowup of the non-central vertex. Therefore $\lk(S)\cap\lk(S')$ is a cone over the splitting corresponding to this loop edge.

\begin{figure}
\centering
\input{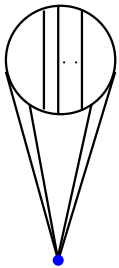}
\caption{A blowup of the boundary splitting with $3N-4$ orbits of edges in Case~3.}
\label{fig:blowup}
\end{figure}

In case 3, Figure~\ref{fig:blowup} represents a blowup of the boundary splitting with $3N-4$ orbits of edges, such that every one-edge collapse is in the common link of $S$ and $S'$ in $\ens$. 
This gives a clique in $\lk(S)\cap\lk(S')$ of size $3N-4$.
\end{proof}

\section{Direct products acting on hyperbolic spaces}\label{sec:direct-product-vs-hyp}

\emph{As explained in the introduction, a key feature used in the proof of our main theorem is that stabilizers of one-edge nonseparating free splittings contain a normal subgroup which is a direct product of two free groups. In this section, we describe how such normal subgroups restrict actions on hyperbolic spaces. }
\\ 
\\
\indent Given an isometric action of a group $H$ on a metric space $X$, we say that $H$ \emph{has bounded orbits} in $X$ if for every $x\in X$, the diameter of the orbit $H\cdot x$ is finite. When $X$ is Gromov hyperbolic, we use $\partial_\infty X$ to denote the Gromov boundary and $\partial_H X$ to denote the \emph{limit set} of $H$ in $\partial_\infty X$, i.e.\ the space of all accumulation points of $H\cdot x$ in $\partial_\infty X$, where $x\in X$ is any point. In particular, if $H$ has bounded orbits then $\partial_H X$ is empty and if $\Phi$ is a loxodromic isometry then $\partial_{\langle \Phi \rangle} X$ is a two point set consisting of the attracting and repelling points of $\Phi$. The following theorem of Gromov \cite{Gro} (see also \cite[Proposition~3.1]{CCMT}) classifies group actions on hyperbolic metric spaces. 
Note that the action is not required to be proper.

\begin{theo}[Gromov]\label{gromov} 
Let $X$ be a geodesic Gromov hyperbolic metric space, and let $H$ be a group acting by isometries on $X$. Then either
\begin{itemize}
\item $H$ contains two loxodromic isometries of $X$ that generate a free subgroup of $H$, or 
\item the limit set $\partial_H X$ contains a finite nonempty $H$-invariant subset, or else
\item $H$ has bounded orbits in $X$.  
\end{itemize} 
\end{theo}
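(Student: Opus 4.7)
The plan is a case analysis on the loxodromic elements of $H$, following Gromov's classical trichotomy.

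First, if $H$ has bounded orbits we are done, so assume $H\cdot x$ is unbounded for some (equivalently every) basepoint $x\in X$. Then some subsequence of $H\cdot x$ converges to a boundary point, so $\partial_H X$ is nonempty.

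If $H$ contains no loxodromic element, every $h\in H$ is elliptic or parabolic. I would invoke the standard fact that in a Gromov hyperbolic space, two parabolic isometries with distinct fixed points on $\partial_\infty X$ must have a loxodromic product (this follows by tracking Gromov products via $\delta$-hyperbolicity). Hence all parabolic elements of $H$ share a single fixed point $\xi\in\partial_\infty X$, and any $H$-orbit then accumulates only at $\xi$. Moreover $H$ fixes $\xi$, since otherwise conjugating a parabolic would produce a parabolic with a different fixed point. Thus $\partial_H X=\{\xi\}$ is a single $H$-invariant point, giving the middle alternative.

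Otherwise, pick a loxodromic $g\in H$ with attracting and repelling fixed points $g^\pm\in\partial_\infty X$. If the pair $\{g^+,g^-\}$ is $H$-invariant, we are again in the middle alternative; otherwise choose $h\in H$ with $h\{g^+,g^-\}\neq\{g^+,g^-\}$, so that $hgh^{-1}$ is loxodromic with fixed-point pair $\{hg^+,hg^-\}$ distinct from $\{g^+,g^-\}$. When the two pairs are disjoint, I would invoke the north-south dynamics of loxodromic isometries on $X\cup\partial_\infty X$ and apply the ping-pong lemma: choosing disjoint boundary neighborhoods $U^\pm$ of $g^\pm$ and $V^\pm$ of $hg^\pm$, sufficiently high powers $g^n$ map $(X\cup\partial_\infty X)\setminus U^-$ into $U^+$, and symmetrically for $(hgh^{-1})^n$; these powers then freely generate a rank-$2$ subgroup of loxodromics, giving the first alternative. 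If the two pairs instead overlap in exactly one point, I would modify $h$ by premultiplying by a high power of $g$: using the north-south dynamics, $g^n h$ moves the pair $\{g^+,g^-\}$ off of itself entirely for $n$ large, so that $(g^nh)g(g^nh)^{-1}$ has fixed points disjoint from $\{g^+,g^-\}$, reducing to the previous ping-pong situation.

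The main obstacle is this last step: the ping-pong requires precise control of the dynamics of loxodromic isometries on the Gromov compactification, specifically the north-south convergence of high powers and the resulting ability to find disjoint attracting and repelling neighborhoods. This is standard but technically delicate, and is really the core geometric content of Gromov's classification; the degenerate overlap case additionally requires careful bookkeeping to ensure the modified conjugate remains in $H$ and produces loxodromics with genuinely disjoint fixed points.
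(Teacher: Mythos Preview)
The paper does not prove this theorem at all: it is quoted as a classical result of Gromov, with a reference to \cite{Gro} and to \cite[Proposition~3.1]{CCMT}, and is used as a black box. So there is no ``paper's proof'' to compare against; your sketch is an attempt to reprove a cited background result.

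Your outline follows the standard route and is mostly sound, but there is a genuine gap in the ``no loxodromic'' case. You write that every $h\in H$ is then elliptic or parabolic, and you argue only about the parabolic elements. But it can happen that $H$ has unbounded orbits while \emph{every} element of $H$ is elliptic (each individual element has bounded orbits, yet there is no uniform bound); this occurs already for actions on simplicial trees. In that situation your argument produces no fixed boundary point, because there are no parabolic elements to anchor it. The correct treatment of this case is more delicate: one shows that if $\partial_H X$ has at least two points and $H$ has no loxodromic, then $H$ must fix a unique point of $\partial_\infty X$ (this is the ``horocyclic'' type in Gromov's classification, and is exactly what \cite[Proposition~3.1]{CCMT} handles carefully). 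Without this, your trichotomy is incomplete.

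A smaller point: in the overlap case at the end, you want $g^n h$ to move $\{g^+,g^-\}$ entirely off itself. But $g^n$ fixes $g^+$ and $g^-$, so $g^n h$ sends $\{g^+,g^-\}$ to $g^n\cdot\{hg^+,hg^-\}$; if, say, $hg^+=g^+$, then $g^n h g^+ = g^+$ for all $n$, and the overlap persists. One instead applies north--south dynamics to the \emph{other} point of the pair, or works directly with $g$ and $hgh^{-1}$ and uses that two loxodromics sharing exactly one fixed point already generate a group containing a loxodromic with a new fixed-point pair.
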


If $K$ is a subgroup of $H$, then the centralizer of $K$ in $H$ fixes the limit set $\partial_K X$ pointwise. The goal of this section is to combine this observation with Gromov's theorem to prove the following:

\begin{prop}\label{product-vs-hyp}
Let $X$ be a geodesic Gromov hyperbolic metric space, and let $H$ be a group acting by isometries on $X$. Assume that $H$ contains a normal subgroup $K$ which is isomorphic to a direct product $K=\prod_{i=1}^kK_i$.  
\\ If some $K_j$ contains a loxodromic element then $\prod_{i \neq j} K_i$ has a finite orbit in $\partial_\infty X$.
\\ If no $K_j$ contains a loxodromic element, then either $K$ has a finite orbit in $\partial_\infty X$ or $H$ has bounded orbits in $X$.
\end{prop}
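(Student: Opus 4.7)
The plan is to handle the two cases separately; in both, the key ingredient is that different factors of a direct product centralize each other, combined with Gromov's classification of actions on hyperbolic spaces (Theorem~\ref{gromov}).

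In Case~1, suppose $\phi\in K_j$ is loxodromic with endpoints $\phi^\pm\in\partial_\infty X$. Any $\psi\in\prod_{i\neq j}K_i$ commutes with $\phi$, so a standard continuity argument on $X\cup\partial_\infty X$ forces $\psi$ to fix $\phi^+$ and $\phi^-$ pointwise: for any basepoint $x$ the sequences $\phi^n(\psi x)$ and $\psi(\phi^n x)$ coincide, both converge to $\phi^+$ by loxodromic attraction, and continuity of $\psi$ identifies the latter limit with $\psi\phi^+$. Consequently $\prod_{i\neq j}K_i$ has a finite orbit in $\partial_\infty X$.

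For Case~2, the plan is to apply Theorem~\ref{gromov} to the $K$-action on $X$. The finite-invariant-set alternative already gives the desired conclusion, so the task reduces to ruling out the two-loxodromic alternative and handling the possibility that $K$ acts with bounded orbits on $X$. To rule out the first alternative, suppose $\phi\in K$ is loxodromic and write $\phi=k_1k_2\cdots k_m$ with $k_i\in K_i$. The $k_i$ pairwise commute, and from $(gh)^n=g^nh^n$ for commuting isometries one deduces subadditivity of the stable translation length, so $\tau(\phi)\leq\sum_i\tau(k_i)$. Each $k_i\in K_i$ is non-loxodromic by hypothesis, hence $\tau(k_i)=0$, so $\tau(\phi)=0$, contradicting loxodromicity. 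Thus $K$ has no loxodromic element at all, and Gromov's theorem leaves only bounded orbits.

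In that remaining subcase, if $H$ also has bounded orbits we are done; otherwise $\partial_H X\neq\emptyset$, and the claim is that $K$ fixes every point of $\partial_H X$. Pick $\xi=\lim h_n x_0\in\partial_H X$ for some sequence $(h_n)$ in $H$, and let $k\in K$. Normality lets us write $kh_n=h_n(h_n^{-1}kh_n)=h_nk_n$ with $k_n\in K$, so $d(kh_n x_0,h_n x_0)=d(k_n x_0,x_0)\leq\diam(K\cdot x_0)<\infty$. In a Gromov hyperbolic space two sequences at uniformly bounded distance share the same limit in the boundary, so $kh_n x_0\to\xi$, and continuity of $k$ on $X\cup\partial_\infty X$ gives $k\xi=\xi$. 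Hence $\{\xi\}$ is a finite $K$-invariant subset of $\partial_\infty X$. The main subtleties are (i) the subadditivity of stable translation length for commuting isometries, which instantly neutralizes the two-loxodromic alternative, and (ii) using normality to transfer the displacement bound from $k$ to the conjugate $k_n$ so that the approximating sequence is only shifted by a bounded amount; the rest is standard manipulation of the Gromov boundary.
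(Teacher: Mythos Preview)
Your proof is correct. Case~1 matches the paper's argument exactly. In Case~2 you take a somewhat different route: the paper applies Gromov's classification to each factor $K_j$ separately (if some $K_j$ has unbounded orbits but no loxodromic, its finite invariant set in $\partial_\infty X$ is also preserved by the commuting complement; if all $K_j$ have bounded orbits, so does $K$, and then Lemma~\ref{bounded-normal} applies). You instead apply Gromov's classification to $K$ directly and eliminate the loxodromic alternative in one stroke via subadditivity of stable translation length for commuting isometries --- a clean observation that bypasses the factor-by-factor analysis. Your treatment of the remaining bounded-orbits subcase is essentially an inline proof of Lemma~\ref{bounded-normal}: where the paper argues via the $H$-invariant set $Y=\{x:\diam(K\cdot x)\le M\}$ and its accumulation points, you pick $\xi\in\partial_H X$ directly and use normality to bound $d(kh_nx_0,h_nx_0)$ uniformly. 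The two arguments have the same content; yours is perhaps slightly more transparent, while the paper's packaging of the bounded-orbits step as a separate lemma makes it reusable.
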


Before the proof, we give a brief lemma that describes the case when $K$ has bounded orbits.

\begin{lemma}\label{bounded-normal}
Let $X$ be a geodesic Gromov hyperbolic metric space, and let $H$ be a group acting by isometries on $X$. Assume that $H$ contains a normal subgroup $K$ that has bounded orbits in $X$.
\\ Then either $K$ fixes a point in $\partial_\infty X$ or $H$ has bounded orbits in $X$. 
\end{lemma}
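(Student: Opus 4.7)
The plan is to prove the contrapositive: assume that $H$ does not have bounded orbits in $X$, and show that $K$ must then fix a point of $\partial_\infty X$. The two ingredients will be Gromov's trichotomy (Theorem~\ref{gromov}) applied to $H$, and the normality of $K$ in $H$.

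First, I would apply Theorem~\ref{gromov} to the $H$-action on $X$. Under the assumption that $H$ has unbounded orbits, we are either in the case where $H$ contains a loxodromic element $g$ (whose attracting fixed point automatically lies in $\partial_H X$), or in the case where $\partial_H X$ contains a finite nonempty $H$-invariant subset. Either way, the limit set $\partial_H X$ is nonempty, so I can pick $\xi\in\partial_H X$ together with a basepoint $x_0\in X$ and a sequence $(h_n)$ in $H$ such that $h_n x_0 \to \xi$ in the Gromov bordification $X\cup\partial_\infty X$.

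The heart of the argument is a normality trick. Fix $k\in K$ and set $D:=\diam(K\cdot x_0)<\infty$. Writing $kh_n = h_n(h_n^{-1}kh_n)$ with $h_n^{-1}kh_n\in K$ by normality of $K$, we get
\[ d(h_n x_0, kh_n x_0)=d(x_0, h_n^{-1}kh_n\cdot x_0)\le D \]
uniformly in $n$. Combining this uniform bound with $h_n x_0\to\xi$, a standard Gromov product estimate shows that $kh_n x_0\to\xi$ as well. On the other hand, the isometry $k$ extends continuously to the Gromov bordification, so $kh_n x_0\to k\xi$. By uniqueness of limits in $\partial_\infty X$, we conclude $k\xi=\xi$. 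As this holds for every $k\in K$, the group $K$ fixes $\xi\in\partial_\infty X$.

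I do not anticipate any serious obstacle. The only steps requiring some care are (i) the nonemptiness of $\partial_H X$ under the unbounded-orbits assumption, which is directly handed to us by Theorem~\ref{gromov}, and (ii) the fact that isometries of a Gromov hyperbolic space extend continuously to the Gromov boundary, a standard fact valid even without properness that I would invoke without proof. Note in particular that the same strategy, and with essentially no extra work, will be the engine behind the no-loxodromic case of Proposition~\ref{product-vs-hyp}: combining Gromov's trichotomy for each $K_i$ with this lemma applied to $K\trianglelefteq H$ immediately yields either a finite $K$-orbit in $\partial_\infty X$ or bounded orbits for $H$.
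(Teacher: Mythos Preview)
Your proof is correct and uses essentially the same idea as the paper: normality of $K$ gives the uniform bound $\diam(K\cdot h x_0)=\diam(K\cdot x_0)$ for all $h\in H$, and this forces any accumulation point of an $H$-orbit in $\partial_\infty X$ to be $K$-fixed. The paper packages this slightly differently, introducing the $H$-invariant set $Y=\{x\in X:\diam(K\cdot x)\le M\}$ and dichotomizing on whether $Y$ accumulates on $\partial_\infty X$, but the content is the same and your contrapositive version is arguably more explicit about the Gromov-product step.
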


\begin{proof}
As $K$ has bounded orbits in $X$, we can find $M>0$ such that $$Y:=\{x\in X|\diam(K\cdot x)\le M\}$$ is nonempty. Since $K$ is normal in $H$, the set $Y$ is $H$-invariant: this follows from the fact that for all $x\in X$ and all $h\in H$, we have $\diam(K\cdot h x)=\diam(h K\cdot x)=\diam(K\cdot x)$.
If $Y$ has an accumulation point in $\partial_\infty X$, then this is a fixed point of $K$ in $\partial_\infty X$. Otherwise, $Y$ is a $H$-invariant subset of $X$ with no accumulation point in $\partial_\infty X$, so in particular $\partial_H X=\emptyset$. By Theorem~\ref{gromov}, this implies that $H$ has bounded orbits in $X$.  
\end{proof}

\begin{proof}[Proof of Proposition~\ref{product-vs-hyp}] 
If $K_j$ contains a loxodromic isometry $\Phi$ of $X$, then $\prod_{i\neq j}K_i$ commutes with $\Phi$ and therefore fixes the two-point set $\partial_{\langle\Phi\rangle}X$ consisting of the attracting and repelling points of $\Phi$ in the boundary. We may therefore assume that no subgroup $K_i$ contains a loxodromic isometry.

If there exists $j\in\{1,\dots,k\}$ such that $K_j$ has unbounded orbits in $X$, 
 Theorem~\ref{gromov} implies that $K_j$ has a finite invariant set in $\partial_\infty X$, which is also fixed by the subgroup $\prod_{i\neq j}K_i$ which commutes with $K_j$. Hence $K$ has a finite orbit in $\partial_\infty X$.

In view of Theorem~\ref{gromov}, we are thus left with the case where all subgroups $K_i$ have bounded orbits in $X$,  
in which case it is not hard to see that $K$ itself has bounded orbits in $X$. As $K$ is normal in $H$, Lemma~\ref{bounded-normal} implies that either the whole group $K$ has a fixed point in $\partial_\infty X$ or $H$ has bounded orbits in $X$. 
\end{proof}

When at least two of the subgroups $K_i$ contain a loxodromic isometry we can say a bit more, namely that the whole group $H$ has a finite orbit in $\partial_\infty X$.

\begin{lemma}\label{loxo-loxo}
Let $X$ be a geodesic Gromov hyperbolic metric space, and let $H$ be a group acting by isometries on $X$.
\\ Assume that $H$ contains a normal subgroup $K$ which is isomorphic to a direct product $K=\prod_{i=1}^kK_i$, and that there exists $j,l\in\{1,\dots,k\}$ with $j\neq l$ such that both $K_j$ and $K_l$ contain a loxodromic isometry of $X$.  
\\ Then $H$ has a finite orbit in $\partial_\infty X$. 
\end{lemma}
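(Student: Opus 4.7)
The plan is to pin down a canonical pair of boundary points stabilized by $K$, and then use normality to propagate this stabilization to all of $H$.

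First I would pick loxodromics $\Phi \in K_j$ and $\Psi \in K_l$. Since $j \neq l$ and $K$ is a direct product, $\Phi$ and $\Psi$ commute, so $\Psi$ preserves the $2$-point set $\mathrm{Fix}(\Phi) = \{\xi^+,\xi^-\} \subset \partial_\infty X$. But $\Psi$ is loxodromic, so $\Psi$ has exactly two fixed points at infinity, and since it preserves $\{\xi^+,\xi^-\}$ we get $\mathrm{Fix}(\Psi) = \{\xi^+,\xi^-\}$. (If $\Psi$ swapped $\xi^+$ and $\xi^-$ then neither would be attracting/repelling for $\Psi$, contradicting loxodromicity.)

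Next I would show that the whole subgroup $K$ preserves $\{\xi^+,\xi^-\}$ setwise. For each index $i$, the factor $K_i$ commutes with at least one of $\Phi$, $\Psi$ (since $j \neq l$ forces either $i \neq j$ or $i \neq l$). Since $K_i$ commutes with a loxodromic whose fixed-point set is $\{\xi^+,\xi^-\}$, it must preserve that set. Combining over all $i$, the direct product $K$ preserves $\{\xi^+,\xi^-\}$. A key consequence to extract at this point is that every loxodromic element of $K$ must in fact have fixed-point set exactly $\{\xi^+,\xi^-\}$: its two boundary fixed points lie in the $K$-invariant set $\{\xi^+,\xi^-\}$.

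Finally, for any $h \in H$, the element $h\Phi h^{-1}$ lies in $K$ by normality and is loxodromic, so on the one hand its fixed-point set equals $h\cdot\{\xi^+,\xi^-\}$, and on the other hand, by the previous observation applied inside $K$, it equals $\{\xi^+,\xi^-\}$. Thus $h\cdot\{\xi^+,\xi^-\} = \{\xi^+,\xi^-\}$ for all $h \in H$, so $\{\xi^+,\xi^-\}$ is a finite $H$-invariant subset of $\partial_\infty X$, as required. The main subtlety — and the only place one must be careful — is step two above, ruling out that $\Psi$ swaps $\xi^\pm$; everything else is then formal from normality and the commutation built into the direct product structure.
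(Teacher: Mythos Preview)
Your proof is correct and follows essentially the same approach as the paper: identify the common fixed pair $\{\xi^+,\xi^-\}$ of the two commuting loxodromics, show that every factor $K_i$ (hence all of $K$) preserves this pair, and then use normality to push invariance up to $H$. The only cosmetic difference is in the last step: the paper phrases it as ``$\{\xi^+,\xi^-\}$ is the \emph{unique} $K$-invariant pair, hence $H$-invariant,'' whereas you phrase it as ``every loxodromic in $K$ has fixed set $\{\xi^+,\xi^-\}$, and $h\Phi h^{-1}$ is such a loxodromic'' --- these are two sides of the same north--south dynamics argument. One small remark: your justification that $\Psi$ cannot swap $\xi^\pm$ is slightly incomplete as written (swapping would only show $\xi^\pm$ are not fixed by $\Psi$, not yet a contradiction); the missing line is that a loxodromic cannot preserve any finite set disjoint from its attracting/repelling pair, by north--south dynamics (equivalently, $\Psi^2$ fixes both points and has the same endpoints as $\Psi$). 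The paper simply asserts $\partial_{\langle\Phi_l\rangle}X=\partial_{\langle\Phi_j\rangle}X$ without comment, treating this as standard.
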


\begin{proof}
Let $\Phi_j\in K_j$ be a loxodromic isometry of $X$. Then for every $i\neq j$, the group $K_i$ centralizes $\Phi_j$, hence  fixes the two-point set $\partial_{\langle \Phi_j\rangle}X$. If $\Phi_l$ is a loxodromic isometry in $K_l$, then as $\Phi_l$ and $\Phi_j$ commute we have $\partial_{\langle\Phi_l\rangle}X=\partial_{\langle\Phi_j\rangle}X$. Therefore $K_j$ also fixes the pair $\partial_{\langle\Phi_j\rangle}X$, and this is the only $K$-invariant pair in $\partial_\infty X$. As $K$ is normal in $H$, we deduce that this pair of points is $H$-invariant. 
\end{proof}

\section{Stabilizers of relatively arational trees}\label{sec:arat}

\emph{When a direct product of subgroups of $\Out(F_N,\calf)$ acts on the relative free factor graph $\mathrm{FF}:=\FF(F_N,\calf)$, the previous section forces its action to be elementary: either it has bounded orbits, or one factor has a finite orbit in the boundary. This suggests that one needs to understand stabilizers of points in $\partial_\infty\mathrm{FF}$, which up to finite index are stabilizers of relatively arational trees. Understanding these stabilizers is the goal of the present section.}
\\
\\
\indent Our main result in this section will be the following proposition.

\begin{prop}\label{prop:stab-arat-out}
Let $K\subseteq\ia$ be a subgroup contained in the isometric stabilizer of a relatively arational tree.
\\ Then $K$ virtually centralizes a subgroup of $\Out(F_N)$ isomorphic to $\mathbb{Z}^3$.
\end{prop}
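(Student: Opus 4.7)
The plan is to analyze the isometric stabilizer of $T$ via the structure theory of Guirardel--Levitt~\cite{GL}, and to produce three commuting elements in $\Out(F_N)$ that are centralized by a finite-index subgroup of $K$. Fix a nonsporadic free factor system $\calf=\{[A_1],\ldots,[A_k]\}$ such that $T$ is arational relative to $\calf$, so that $F_N=A_1\ast\cdots\ast A_k\ast F_r$. Because $K\subseteq\ia$ stabilizes $T$, it preserves $\calf$, and Handel--Mosher~\cite[Theorem~3.1]{HM5} forces each conjugacy class $[A_i]$ to be $K$-invariant. By arationality, every $A_i$ fixes a point of $T$ and its minimal invariant subtree is a single point, so $T$ does not ``see'' the internal geometry of the peripheral factors.

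The first two commuting elements are produced by following Guirardel--Levitt's analysis of isometric stabilizers of $\mathbb{R}$-trees, which should yield a $\mathbb{Z}^2$-subgroup of $\Out(F_N)$ centralizing the whole of $\stab_{iso}(T)$. The two generators come from automorphisms that can be ``twisted'' without moving $T$: a Dehn twist about a one-edge $\zmax$-splitting compatible with $T$ (which is central in the splitting stabilizer by Lemma~\ref{twist-central}), and an automorphism supported on some peripheral factor $A_i$, extended by the identity on the other factors, that preserves the $A_i$-fixed-point structure in $T$. Verification that these commute with every element of the isometric stabilizer uses the parabolic orbit theorem of Cohen--Lustig (as in Lemma~\ref{twist-compatible}) together with the fact that isometric stabilizers of an arational tree preserve the peripheral combinatorics.

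The third generator is obtained by exploiting that $K\subseteq\ia$: there are automorphisms that commute with $K$ but not necessarily with the full isometric stabilizer of $T$. Concretely, the nonsporadic hypothesis $k+r\ge 3$ always furnishes an independent peripheral direction---a second cyclic splitting compatible with $T$, or a partial conjugation of a peripheral factor when $r\ge 1$, or an automorphism of a higher-rank peripheral factor when $r=0$ and $k\ge 3$---whose commutation with a finite-index subgroup of $K$ is guaranteed by the fact that $K$ acts trivially on the relevant peripheral combinatorics modulo $3$. Linear independence of the three generators is then checked on a suitable abelian quotient, for instance by examining the twistors via Lemma~\ref{lemma:twistor} or by tracking the action on the quotient graph $S/F_N$ of a chosen refinement $S$ of $\calf$.

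The main obstacle is the production of the $\mathbb{Z}^2$ in the centralizer of $\stab_{iso}(T)$, which requires the technical framework of Guirardel--Levitt on $\mathbb{R}$-tree stabilizers: one must verify that the candidate centralizing automorphisms genuinely preserve the metric structure of $T$ and not just some combinatorial approximation, and this is delicate because elements of $K$ are only known to act on $T$ up to equivariant isometry. The casework on $(k,r)$ to produce the third generator is a secondary point of technical care, as each configuration requires a separate construction, all of which must be checked to commute with a single fixed finite-index subgroup of $K$.
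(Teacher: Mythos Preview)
Your proposal has a genuine gap in the construction of the centralizing $\mathbb{Z}^3$. You propose building generators from automorphisms supported on peripheral factors $A_i$ (extended by the identity elsewhere), but there is no reason these commute with $K$: elements of $K$ preserve each conjugacy class $[A_i]$, yet they may induce highly nontrivial outer automorphisms of $A_i$. The hypothesis $K\subseteq\ia$ only controls the action on $H_1(F_N;\mathbb{Z}/3\mathbb{Z})$, not the action on the peripheral factors themselves, so your claim that ``$K$ acts trivially on the relevant peripheral combinatorics modulo $3$'' does not buy what you need. In the same vein, the assertion that there is ``a one-edge $\zmax$-splitting compatible with $T$'' is not established; arational trees need not be compatible with any simplicial splitting in the naive sense, and Cohen--Lustig's parabolic orbit theorem does not produce one here. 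Finally, your casework on $(k,r)$ breaks down when all peripheral factors are cyclic (e.g.\ $\calf=\{\mathbb{Z},\mathbb{Z},\mathbb{Z}\}$ in $F_3$), since then there are no nontrivial peripheral automorphisms to use.

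The paper's argument is structurally different. It does invoke Guirardel--Levitt, but to produce the \emph{skeleton} $S$ of the $K$-canonical piecewise-$F_N$ transverse covering of $T$ (Theorem~\ref{th:gl} and Lemma~\ref{lemma:stab-arat-fixes-splittings}). This $S$ is a genuine simplicial splitting with nontrivial, root-closed edge stabilizers, and it has a distinguished vertex $v$ (with $G_v$ finitely generated and $(G_v,\Inc_v)$ nonsporadic) such that every blowup of $S$ at $v$ is virtually $K$-invariant. One then blows up $v$ using Lemma~\ref{lemma:z-blow-up} to obtain a splitting $\hat S$ with enough $\mathcal{Z}_{max}$ edges that its group of twists contains a $\mathbb{Z}^3$; by Lemma~\ref{twist-central} this $\mathbb{Z}^3$ is central in a finite-index subgroup of $\Stab_{\ia}(\hat S)$, which contains a finite-index subgroup of $K$. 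So all three generators are Dehn twists about cyclic edges of a single splitting that $K$ virtually fixes, and no peripheral-factor automorphisms are needed.
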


Later, in Section~\ref{sec:later}, we will also describe stabilizers of arational trees in the subgroups of $\out(F_N)$ that appear in our main theorem.
We will first give some background about the structure of relatively arational trees before giving the proof of Proposition~\ref{prop:stab-arat-out} in Section~\ref{s:proof_stab_rel_aration}.

\subsection{Transverse coverings of arational trees and their skeletons}

Let $T$ be a minimal $F_N$-tree. Recall from \cite[Definition~4.7]{Gui} that a \emph{transverse family} in $T$ is an $F_N$-invariant collection $\caly$ of nondegenerate subtrees of $T$ such that any two distinct subtrees in $\caly$ intersect in at most one point. It is a \emph{transverse covering} if in addition, every subtree in $\caly$ is closed, and every segment in $T$ is covered by finitely many subtrees in $\caly$.

Every transverse covering $\caly$ of $T$ has an associated \emph{skeleton} $S$, as defined in \cite[Definition~4.8]{Gui}.  This is the bipartite tree with one vertex $v_Y$ for every subtree $Y\in\caly$ and one vertex $v_x$ for every point $x\in T$ which is contained in at least two different subtrees of $\caly$. There is an edge joining $v_x$ to $v_Y$ whenever $x\in Y$. By \cite[Lemma~4.9]{Gui}, the tree $S$ is minimal as an $F_N$-tree. We will usually denote by $G_Y$ the stabilizer of the vertex $v_Y$. 

\begin{lemma}\label{lemma:minimal}
Let $\caly$ be a transverse family in a very small $F_N$-tree with dense orbits.  If $Y \in \caly$ then the action of $\Stab(Y)$ on $Y$ has dense orbits. In addition, either the action of $\stab(Y)$ on $Y$ is minimal or the skeleton of $\caly$ has an edge with trivial stabilizer.
\end{lemma}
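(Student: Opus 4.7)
Let $H := \Stab(Y)$ and note that by the transverse family condition, for every $g\in F_N$ either $gY=Y$ (so $g\in H$) or $gY\cap Y$ contains at most one point.

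For the first assertion, I would fix $y_0\in Y$ and try to show $\overline{H\cdot y_0}=Y$. Given any target $y\in Y$, density of $F_N$-orbits in $T$ provides $g_n\in F_N$ with $g_n y_0\to y$; my goal is to produce a subsequence lying in $H$. For each $n$ with $g_n\notin H$, let $p_n$ denote the unique point of $gY\cap Y$ (if this intersection is non-empty), or the endpoint in $Y$ of the bridge between $gY$ and $Y$ otherwise. The nearest-point projection $\pi_Y\colon T\to Y$ is 1-Lipschitz and satisfies $\pi_Y(g_n y_0)=p_n$ whenever $g_n\notin H$; since $\pi_Y(g_n y_0)\to\pi_Y(y)=y$, we get $p_n\to y$. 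The plan is to argue that for a generic choice of $y$ — specifically, one which is not an attachment point of any other subtree in $\caly$, a condition that excludes only countably many points — the approximating translates $g_n y_0$ must themselves lie inside $Y$, forcing $g_n\in H$. Density at generic points then extends to density at every point of $Y$ by a continuity argument and equivariance.

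For the dichotomy, assume the $H$-action on $Y$ is not minimal, so there exists a proper $H$-invariant subtree $Y_0\subsetneq Y$; by density of $H$-orbits, $\overline{Y_0}=Y$, and I may take $Y_0$ to be the union of axes of loxodromic elements of $H$ (together with any global $H$-fixed point, if present). I would then pick a point $x\in Y\setminus Y_0$ and show two things. First, $x$ must be an intersection point: the ``extra'' direction at $x$ not swept out by $Y_0$ cannot continue generically inside $Y$ (else $Y_0$, being dense and closed under geodesic segments, would already contain $x$), so it must enter another subtree $Y'\in\caly$, yielding $x\in Y\cap Y'$. Second, the $H$-stabilizer of $x$ is trivial: invoking the very-smallness of $T$, any nontrivial $h\in H$ fixing $x$ would fix a non-degenerate arc around $x$ in $Y$, and such an arc would meet $Y_0$, forcing $x\in Y_0$, contradicting our choice. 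Consequently $(v_Y,v_x)$ is an edge in the skeleton of $\caly$ with trivial stabilizer.

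The main obstacle will be making the second part precise. In particular, I expect two subtleties: clarifying how a non-minimal action with dense orbits actually arises in the $\mathbb{R}$-tree setting (the interplay between closed and non-closed invariant subtrees), and ensuring that a limit point $x\in Y\setminus Y_0$ is genuinely an intersection point of the transverse family rather than an inaccessible ``wild'' boundary point of $Y_0$. Both points will rely on the specific structure of very small $F_N$-trees — in particular, the classification of arc and point stabilizers — together with a careful use of the transverse family hypothesis to control how directions at $x$ are distributed among subtrees of $\caly$.
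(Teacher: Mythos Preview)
Your plan for the first assertion has a genuine gap. Even granting that attachment points of $Y$ are countable (which does hold when $\caly$ consists of finitely many $F_N$-orbits, but not for an arbitrary transverse family), the implication you want fails: knowing only that $p_n=\pi_Y(g_ny_0)\to y$ and that each $p_n$ is an attachment point does \emph{not} force $g_ny_0\in Y$ eventually, because countably many attachment points can perfectly well accumulate at a non-attachment point $y$. You never obtain $p_n=y$, and you never obtain $g_n\in H$. The paper's argument is quite different and avoids this trap entirely: it assumes non-density, picks $x_0$ and an orbit point $x'$ with $d(x',x_0)$ close to the positive infimum $\epsilon$, and then uses the Gaboriau--Levitt fact that a very small tree with dense orbits has only finitely many $F_N$-orbits of branch directions. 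A pigeonhole argument on the (densely many) branch directions along $[x',x_0]$ produces $g\in F_N$ carrying one such direction to another along this segment; since $gY\cap Y$ is then nondegenerate, $g\in\Stab(Y)$, and $g$ moves $x'$ strictly closer to $x_0$, contradicting the infimum. This is the missing idea.

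For the second assertion your outline is closer to the paper's, but the justification of trivial edge stabilizer is garbled. Very-smallness does \emph{not} tell you that a nontrivial $h$ fixing $x$ must fix a nondegenerate arc; and even if $h$ did fix an arc meeting $Y_0$, that would not force $x\in Y_0$. The correct chain is: from $x\in\overline{Y_0}\setminus Y_0$ and convexity of $Y_0$ one deduces that $x$ has a \emph{unique} direction $d$ in $Y$ (two directions would give points of $Y_0$ on both sides of $x$, hence $x\in Y_0$). Any $h$ in the edge stabilizer $\Stab(Y)\cap\Stab(x)$ then fixes $x$ and preserves $d$; comparing $[x,y]$ with $h[x,y]=[x,hy]$, both lying in the single direction $d$, shows $h$ fixes their common initial segment pointwise. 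Now invoke that very small trees with dense orbits have trivial arc stabilizers to conclude $h=1$. Your first step (showing $x$ lies in a second subtree of $\caly$) also needs $\caly$ to be a transverse covering, not just a family, so that the other directions at $x$ in $T$ are accounted for.
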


\begin{proof}
Suppose that the action of $\Stab(Y)$ on $Y$ does not have dense orbits. Then there exist $x,x_0 \in Y$ such that \[ \epsilon := \inf\{d(gx,x_0): g \in \Stab(Y) \}  >0. \]
Pick $x'$ in the $\Stab(Y)$-orbit of $x$ and suppose that $d(x',x_0)=a\epsilon$ for some $a > 1$ (notice that $[x_0,x']\subseteq Y$ and we can choose $x'$ so that $a$ is arbitrarily close to 1). Recall that a \emph{direction} at a point $y \in T$ is a component $d_y$ of $T - \{y\}$. Let $X$ be the set of branch directions containing $x_0$ based at points in the interior of the segment $[x',x_0]$ (i.e. $X$ is the set of branch directions in $[x',x_0]$ pointing towards $x_0$). By Lemma~4.2 of \cite{LL} (see also \cite{GabL}), arc stabilizers in $T$ are trivial and the number $B$ of $F_N$-orbits of branch directions is finite. Furthermore, the fact that the $F_N$-action on $T$ has dense orbits implies that the branch points are dense in $[x',x_0]$. It follows that for any $C>B$, there exist two directions $d,d' \in X$ in the same $F_N$-orbit based at points at least $a\epsilon/C$ apart. Let $d=d_y$ and let $g \in F_N$ such that $gd=d'$. After possibly swapping the directions (and $g$ with $g^{-1}$) we may assume that $gy$ is closer to $x_0$ than $y$ (by at least $a\epsilon/C$). As $g$ sends the direction at $y$ containing $x_0$ to the direction at $gy$ containing $x_0$ and $[x_0,x']\subseteq Y$, we see that $gY \cap Y$ is non-degenerate and $g \in \stab(Y)$. If $x'$ is chosen so that $a<C/(C-1)$, then \[ d(gx',x_0)\leq a\epsilon - {a\epsilon}/{C}< \epsilon, \]   which is a contradiction. Hence $\Stab(Y)$ acts on $Y$ with dense orbits. 

For the second point, suppose that the minimal subtree $Y'$ of $\stab(Y)$ is not equal to $Y$. Let $x$ be a point in $Y-Y'$. As $x$ is in the closure of $Y'$ there exists a unique direction $d$ at $x$ which intersects $Y$. Therefore $x$ lies in more than one element of $\caly$ and determines a vertex in the skeleton $S$. The stabilizer of the edge between $x$ and $Y$ also fixes the direction $d$, and such stabilizers are trivial in very small $F_N$-trees with dense orbits (or indeed any tree with trivial arc stabilizers). 
\end{proof}

A tree is \emph{mixing} if given any two segments $I,J\subseteq T$, there exists a finite set $\{g_1,\dots,g_k\}$ of elements of $F_N$ such that $J\subseteq g_1I\cup\dots\cup g_kI$. Any mixing tree has dense $F_N$-orbits. The mixing condition implies that any transverse family $\caly$ of closed subtrees is a transverse covering, and $\caly$ has only one orbit under $F_N$. Relatively arational trees are mixing by \cite[Lemma~4.9]{Hor0}, and the skeleton given by a transverse covering of an arational tree satisfies the following properties:

\begin{lemma}\label{l:claim-1}
Let $\calf$ be a free factor system of $F_N$. Let $\caly$ be a transverse covering of an arational $(F_N,\calf)$-tree $T$ and let $S$ be the skeleton of $\caly$.
\begin{itemize}
 \item There is exactly one $F_N$-orbit of vertices of the form $v_Y$ in $S$ (in other words, $F_N$ acts transitively on $\caly$).  
 \item The stabilizer of every edge of $S$ is nontrivial, and cyclic edge stabilizers are peripheral.
 \item The stabilizer of every vertex of the form $v_x$ is an element of $\calf$.
\end{itemize}
\end{lemma}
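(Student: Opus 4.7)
My plan is to treat the three bullets in sequence, each relying on a different ingredient from the preceding material, with the second and third bullets requiring the bulk of the work and being tightly interrelated.

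The first bullet is a direct consequence of the mixing property of arational trees, recalled just above the lemma from \cite[Lemma~4.9]{Hor0}. A standard consequence of mixing, due to Guirardel, is that any transverse family of closed nondegenerate subtrees of a mixing tree consists of a single $F_N$-orbit: otherwise, a segment contained in $Y$ could not be covered by finitely many translates of a segment of some other $Y'$, since any translate lying inside a different subtree of $\caly$ would meet $Y$ in at most a single point. Thus $F_N$ acts transitively on $\caly$.

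For the third bullet, given a vertex $v_x$ of $S$, by construction $x$ lies in at least two distinct subtrees $Y, Y' \in \caly$ with $Y \cap Y' = \{x\}$. I would proceed in two steps. First, I would establish that every nontrivial point stabilizer $G_x$ in an arational $(F_N,\calf)$-tree is conjugate to an element of $\calf$: the smallest $(F_N,\calf)$-free factor containing $G_x$ up to conjugacy cannot be proper by the definition of arationality, and a direct analysis using the relative Grushko condition for proper free factors promotes this to $G_x$ being exactly an element of $\calf$. Second, and this is where the main work lies, I would show that $G_x$ is nontrivial. Arguing by contradiction, if $G_x = 1$ then the edge stabilizer $G_x \cap G_Y$ would also be trivial, so $S$ has an edge with trivial stabilizer. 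Collapsing the complementary edges equivariantly yields a one-edge free splitting of $F_N$, necessarily relative to $\calf$ since each element of $\calf$ fixes a point of $T$ and hence a vertex of $S$. The vertex groups are therefore proper $(F_N,\calf)$-free factors; one of them is $G_Y$. Combining with the conclusion of Lemma~\ref{lemma:minimal} that $\stab(Y)$ acts on $Y$ with dense orbits, and noting that $Y$ is a proper subtree of $T$, one contradicts the arationality demand that $T_{G_Y}$ be a simplicial $(G_Y,\calf|_{G_Y})$-Grushko tree.

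For the second bullet, the ``peripheral'' portion is immediate from the third: every edge stabilizer is a subgroup of some $G_x \in \calf$, hence peripheral, so in particular any cyclic edge stabilizer is peripheral. Nontriviality of edge stabilizers follows from Lemma~\ref{lemma:minimal} combined with the third bullet: either $\stab(Y)$ acts minimally on $Y$, in which case a rank argument using that $G_x$ is a full element of $\calf$ (hence finitely generated and nontrivially permuting only finitely many translates of $Y$ through $x$) forces $G_x \cap G_Y$ to be nontrivial; or the skeleton has a trivial-stabilizer edge, which is exactly the case excluded by the collapsing argument above. The central technical obstacle is the promotion of ``$G_x$ lies in a peripheral subgroup'' to ``$G_x$ is itself an element of $\calf$'', a standard but subtle point in the theory of relatively arational trees that essentially requires extracting the full peripheral structure from the Grushko condition in the definition of arationality, drawing on the techniques of Guirardel--Levitt referred to in the introduction.
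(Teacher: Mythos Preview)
Your approach to the first bullet is correct and matches the paper.

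There is a genuine gap in your treatment of the third bullet. You claim that every nontrivial point stabilizer in an arational $(F_N,\calf)$-tree lies in $\calf$, but this is false: arational surface trees have cyclic \emph{nonperipheral} point stabilizers coming from boundary curves (see \cite[Lemma~4.6]{Hor0}, cited in the paper). Your sketched argument --- ``the smallest free factor containing $G_x$ cannot be proper'' --- only shows $G_x$ is not contained in a proper $(F_N,\calf)$-free factor; it does not force $G_x$ itself to be an element of $\calf$. Since your second bullet then relies on the third (edge stabilizers are peripheral because they sit inside some $G_x\in\calf$), the error propagates: you never actually exclude cyclic nonperipheral edge stabilizers.

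The paper avoids this by reversing the order. It proves the second bullet first and treats the trivial and cyclic-nonperipheral cases simultaneously: collapsing the remaining edge orbits yields a graph-of-actions decomposition of $T$ whose vertex groups are proper $\mathcal{Z}$-factors of $(F_N,\calf)$ (not just free factors), and one then invokes \cite[Proposition~11.5]{GH1} to conclude these act simplicially on their minimal subtrees in $T$, forcing $T$ simplicial, a contradiction. Only then is the third bullet deduced: $G_x$ is a point stabilizer, hence trivial, in $\calf$, or cyclic nonperipheral; but it contains an edge stabilizer, so the second bullet rules out the first and last possibilities. Your collapsing argument for the trivial case is close in spirit, but note that after collapsing the vertex group need not equal $G_Y$; the paper's graph-of-actions viewpoint handles this cleanly. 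Finally, your ``rank argument'' for nontriviality in the minimal-action case is too vague to assess and appears unnecessary once the order is corrected.
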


\begin{proof}
As we discussed above, the first assertion follows from the fact that $T$ is mixing. 

We will now prove the second assertion of the lemma. We first observe that every subgroup $A$ in $\calf$ is elliptic in $S$. Indeed, the group $A$ is elliptic in $T$ and fixes a unique point $x$. If $x$ is contained in a single subtree $Y\in\caly$, then $Y$ is $A$-invariant so $A$ fixes $v_Y$ in $S$. Otherwise $x$ is contained in at least two distinct subtrees in $\caly$ and $A$ fixes the point $v_x$ in $S$. 

Now, suppose that an edge stabilizer $G_e$ is trivial or cyclic and nonperipheral. Collapsing all other edge orbits in $S$, we obtain a decomposition of $T$ as a graph of actions where each vertex group $G_v$ is either a free factor of $(F_N,\calf)$, or more generally a \emph{proper $\mathcal{Z}$-factor of $(F_N,\calf)$}  as defined in Section~11.4 of \cite{GH1} (i.e.\ a nonperipheral subgroup that arises as a point stabilizer in a splitting of $F_N$ relative to $\calf$ whose edge groups are either trivial or cyclic and nonperipheral).  If $G_v$ is a free factor then the action of $G_v$ on its minimal subtree in $T$ is simplicial as $T$ is arational as an $(F_N,\calf)$-tree, and more generally \cite[Proposition~11.5]{GH1} tells us the same thing is true if $G_v$ is a proper $\mathcal{Z}$-factor. Therefore the whole action of $F_N$ on $T$ is simplicial, which is a contradiction.

We now prove the third assertion of the lemma. The stabilizer of every vertex of the form $v_x$ is a point stabilizer $G_x$ in $T$ so is either trivial, an element of $\calf$, or cyclic and nonperipheral (this follows from \cite[Lemma~4.6]{Hor0} -- the cyclic, nonperipheral stabilizers come from \emph{arational surface trees}). However, $G_x$ contains an edge stabilizer in $S$, so by the above work $G_x$ has to be an element of $\calf$.
\end{proof}

\subsection{Canonical piecewise-$F_N$ coverings}

Given a subgroup $K\subseteq\Out(F_N)$, we denote by $\tilde{K}$ the full preimage of $K$ in $\Aut(F_N)$. Now let $K\subseteq\Out(F_N)$ be a subgroup contained in the isometric stabilizer of $T$: this means that for every $\alpha\in\tilde{K}$, there exists an isometry $I_\alpha$ of $T$ which is \emph{$\alpha$-equivariant} in the sense that for every $g\in F_N$, one has $I_\alpha(gx)=\alpha(g)I_\alpha(x)$ (and such a map $I_\alpha$ is actually unique, see e.g.\ \cite[Corollary~3.7]{KL}). Assume that the transverse covering $\caly$ is $K$-invariant. We say that $\caly$ is \emph{$K$-piecewise-$F_N$} if there exists a map $g:\tilde{K}\times\caly\to F_N$ such that for every $\alpha\in\tilde{K}$ and every $Y\in\caly$, the automorphism $\alpha$ induces the same action on $Y$ as $g(\alpha,Y)$.  Using the fact that $T$ has trivial arc stabilizers and subtrees in $\caly$ are nondegenerate, we get that such a map $g$ is unique. 

Given an outer automorphism $\Phi$ in the isometric stabilizer of an $F_N$-tree $T$, we say that $\Phi$ \emph{preserves all orbits of branch directions} in $T$ if for some (equivalently any) representative $\alpha$ of $\Phi$ in $\Out(F_N)$, the isometry $I_{\alpha}$ sends every branch direction in $T$ to a branch direction in the same orbit. More generally, we say that a subgroup $K$ of the isometric stabilizer of $T$ \emph{preserves all orbits of branch directions} in $T$ if every element in $K$ does. Since by \cite{GabL}, there is a bound on the number of branch directions in a very small $F_N$-tree $T$, every subgroup of the isometric stabilizer of $T$ has a finite-index subgroup that preserves all orbits of branch directions.

Recall that $G \subseteq F_N$ is a \emph{fixed subgroup} of $K \subseteq \out(F_N)$ if every element of $K$ has a representative in $\Aut(F_N)$
acting as the identity on $G$. If $G$ is noncyclic then every outer automorphism has a unique representative fixing $G$, so that $G$ determines a lift $\tilde K_G$ of $K$ to $\Aut(F_N)$. By \cite{DV}, the maximal fixed subgroup of every collection of outer automorphisms of $F_N$ is finitely generated (of rank at most $N$).

There is a natural partial ordering on the collection of all transverse coverings of a given $F_N$-tree $T$, by letting $\caly\le\caly'$ whenever $\caly$ refines $\caly'$ (in other words every subtree in $\caly$ is contained in a subtree in $\caly'$). Any pair of coverings $\mathcal{Y}$ and $\mathcal{Y}'$ have a maximal common refinement given by the nondegenerate intersections of their elements. Any finite collection of transverse coverings has a maximal common refinement in a similar fashion. 

The following theorem is due to Guirardel and Levitt; we include a proof, which we learned from Vincent Guirardel, only for completeness. 
 
\begin{theorem}[Guirardel--Levitt \cite{GL}]\label{th:gl}
Let $\calf$ be a free factor system of $F_N$, and let $T$ be an arational $(F_N,\calf)$-tree. Let $K \subseteq \out(F_N)$ be a subgroup of the isometric stabilizer of $T$, and let $K^0$ be the finite-index subgroup of $K$ made of all elements that preserve all orbits of branch directions in $T$.
\\ Then there exists a unique maximal $K^0$-piecewise-$F_N$ transverse covering $\caly$ of $T$. In addition, the stabilizer $G_Y$ of any subtree $Y\in\caly$ is (up to conjugation) the unique maximal noncyclic nonperipheral fixed subgroup of $K^0$ (in particular $G_Y$ is finitely generated). 
\end{theorem}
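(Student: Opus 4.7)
\emph{Construction of $\caly$ and the transverse-covering part.} Following the approach of \cite{GL}, I would build $\caly$ from the loci where lifts act by inner automorphisms. For each $\alpha\in\tilde K^0$ and $g\in F_N$, the set $Z_{\alpha,g}=\{x\in T\co I_\alpha(x)=g x\}$ is a closed subtree of $T$. Since arc stabilizers in $T$ are trivial by \cite{LL}, two such subtrees with distinct $g$'s meet in at most one point, so the collection $\{Z_{\alpha,g}\}_g$ is a transverse family for each fixed $\alpha$. Define $\caly$ to be the set of maximal nondegenerate closed subtrees $Y\subseteq T$ admitting, for every $\alpha\in\tilde K^0$, some $g(\alpha,Y)\in F_N$ with $Y\subseteq Z_{\alpha,g(\alpha,Y)}$. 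The same arc-stabilizer argument shows $\caly$ is a transverse family: any nondegenerate overlap $Y\cap Y'$ would force $g(\alpha,Y)=g(\alpha,Y')$ for every $\alpha$, making $Y\cup Y'$ admissible and contradicting maximality. To verify that $\caly$ is a transverse \emph{covering} I would use the hypothesis that $K^0$ preserves orbits of branch directions, combined with the mixing of $T$ (\cite[Lemma~4.9]{Hor0}): the former places all branch points inside admissible subtrees, while the latter propagates the covering along segments. Maximality and uniqueness in the poset are then immediate, since any other $K^0$-piecewise-$F_N$ transverse covering has subtrees contained in elements of $\caly$ by the defining maximality property.

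\emph{$G_Y$ is a fixed subgroup and is noncyclic nonperipheral.} For $Y\in\caly$, by Lemma~\ref{lemma:minimal} the stabilizer $G_Y=\Stab_{F_N}(Y)$ acts on $Y$ with dense orbits, so is noncyclic (cyclic groups act on trees only with discrete orbits or as point-stabilizers), and it is nonperipheral since by Lemma~\ref{l:claim-1} all point stabilizers of $T$ lie in $\calf$ and $G_Y$ cannot fix a point in the nondegenerate tree $Y$. For each $\Phi\in K^0$ pick the unique lift $\tilde\Phi$ acting as the identity on $Y$ (given any lift $\alpha$, this is $\ad_{g(\alpha,Y)^{-1}}\circ\alpha$). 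For $h\in G_Y$ and $x\in Y$, the $\tilde\Phi$-equivariance of $I_{\tilde\Phi}$ gives $I_{\tilde\Phi}(hx)=\tilde\Phi(h)\cdot x$, while $hx\in Y$ forces $I_{\tilde\Phi}(hx)=hx$, so $\tilde\Phi(h)$ and $h$ agree on the nondegenerate tree $Y$ and therefore coincide by triviality of arc stabilizers. Hence $G_Y$ is fixed by the coherent lifts $\{\tilde\Phi\}_{\Phi\in K^0}$, and is finitely generated of rank at most $N$ by \cite{DV}.

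\emph{Maximality of $G_Y$ (the main obstacle).} Let $H$ be any noncyclic nonperipheral fixed subgroup of $K^0$ for some choice of lifts $\{\tilde\Phi_H\}_\Phi$; noncyclicity makes this choice unique. Each $I_{\tilde\Phi_H}$ centralizes the $H$-action on $T$, so preserves the minimal $H$-invariant subtree $T_H$, which is nondegenerate as $H$ is non-elliptic in $T$ by Lemma~\ref{l:claim-1}. The key technical step, and where I expect the main obstacle, is to show $I_{\tilde\Phi_H}$ acts as the identity on $T_H$. The argument splits along the dichotomy coming from arationality: either the action of $H$ on $T_H$ has dense orbits, in which case an $H$-equivariant isometry commuting with $H$ translates by an element of $Z(H)=\{1\}$ and is therefore trivial; or $H$ is a proper $(F_N,\calf)$-free factor and by arationality $T_H$ is a Grushko $(H,\calf|_H)$-tree, for which one identifies the centralizer of $H$ in $\Isom(T_H)$ with the deck transformations of the cover $T_H\to T_H/H$, which together with triviality of $Z(H)$ forces $I_{\tilde\Phi_H}|_{T_H}=\mathrm{id}$. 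Granting this, $T_H$ is $K^0$-piecewise-$F_N$, so by the maximality established in the first paragraph $T_H$ is contained in some $Y\in\caly$. For any $h\in H$ the subtree $hY$ also contains the nondegenerate set $T_H$, and the transverse-family property forces $hY=Y$, so $H\subseteq G_Y$. Since all vertices of the form $v_Y$ form a single $F_N$-orbit in the skeleton by Lemma~\ref{l:claim-1}, all such $G_Y$ are $F_N$-conjugate, yielding the claimed uniqueness of the maximal noncyclic nonperipheral fixed subgroup up to conjugation.
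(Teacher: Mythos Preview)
Your maximality argument contains the main gap, and the issue is twofold. First, the dichotomy you propose is false: a noncyclic nonperipheral fixed subgroup $H$ of $K^0$ has no reason to be a proper $(F_N,\calf)$-free factor, so arationality tells you nothing about how $H$ acts on $T_H$. Fixed subgroups of automorphisms can be quite arbitrary finitely generated subgroups, and there is no dense-orbits/Grushko alternative available for them. Second, even within the cases you describe, the arguments do not work. In the dense-orbits case, the claim that an $H$-equivariant isometry ``translates by an element of $Z(H)$'' is not meaningful: an isometry of a tree commuting with $H$ need not be given by an element of $H$ at all, and trivial center does not by itself force such an isometry to be the identity. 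In the Grushko case, the centralizer of $H$ in $\Isom(T_H)$ surjects onto the isometry group of the finite quotient graph $T_H/H$, which can be nontrivial, so your deck-transformation description does not yield triviality of $I_{\tilde\Phi_H}|_{T_H}$ either.

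The paper's argument avoids all of this with a direct geometric trick: for a hyperbolic $a\in H$, the isometry $I_\alpha$ preserves the axis of $a$ and acts on it as a translation; choosing a second hyperbolic $b\in H$ with $\langle a,b\rangle$ noncyclic, the axes of $a$ and $b$ have compact (possibly empty) intersection, and $I_\alpha$ must fix the projection of one axis to the other. Hence $I_\alpha$ fixes a point on the axis of $a$, so acts as the identity on it, and therefore on all of $T_H$. This works for arbitrary $H$ without any structural hypothesis. A secondary issue: your direct construction of $\caly$ over all of $\tilde K^0$ at once does not address why the admissible subtrees remain nondegenerate when $K^0$ is infinitely generated; the paper handles this by reducing to the finitely generated case via common refinements and then invoking the ascending chain condition on fixed subgroups \cite{MV} to stabilize.
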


We call $\caly$ the \emph{$K$-canonical piecewise-$F_N$ transverse covering of $T$}. 

\begin{proof}
We first deal with the case where $K^0$ is a cyclic group, generated by a single outer automorphism $\Phi$. Let $\alpha\in\Aut(F_N)$ be a representative of $\Phi$, and let $I_{\alpha}$ be the unique $\alpha$-equivariant isometry of $T$. For every $g\in F_N$, we let $Y_g:=\{x\in T| I_{\alpha}(x)=gx\}$. Each $Y_g$ is a subtree, and since $\Phi$ preserves all orbits of branch directions in $T$, at least one of the subtrees $Y_g$ is nondegenerate. Since $T$ has trivial arc stabilizers, if $Y_g \cap Y_h$ is nondegenerate then $g=h$, so the family $\caly$ made of all nondegenerate subtrees of the form $Y_g$ is a transverse family in $T$. As $T$ is mixing and all subtrees in $\caly$ are closed, $\caly$ is a transverse covering, and by construction it is the unique maximal $K^0$-piecewise-$F_N$ transverse covering of $T$.

More generally, if $K^0$ is finitely generated, then construct coverings $\mathcal{Y}_1, \ldots, \mathcal{Y}_k$ as above for a generating set $\Phi_1, \ldots, \Phi_k$ of $K^0$ and let $\mathcal{Y}$ be the maximal common refinement of the $\caly_i$. By construction, $\caly$ is the unique maximal $K^0$-piecewise-$F_N$ transverse covering of $T$. Let $Y$ be a subtree in the family $\caly$, and let $G_Y$ be its stabilizer. We will now prove that $G_Y$ is (up to conjugation) the unique maximal noncyclic nonperipheral fixed subgroup of $K^0$. 

By Lemma~\ref{l:claim-1}, the skeleton of $\caly$ does not contain any edge with trivial stabilizer. Lemma~\ref{lemma:minimal} therefore ensures that $Y$ is the minimal invariant subtree of its stabilizer $G_Y$.
As peripheral subgroups are elliptic in $T$, this tells us that $G_Y$ is nonperipheral. As a cyclic group cannot act on a nondegenerate tree with dense orbits,  $G_Y$ is noncyclic by Lemma~\ref{lemma:minimal}.  %(see  e.g.\ \cite[Lemma~1.9]{Hor0})  

We will first show that $G_Y$ is a fixed subgroup of $K^0$. Every element of $K^0$ has a unique representative that acts as the identity on $Y$. To see this, if $\alpha \in \tilde K^0$, then there exists $g\in F_N$ such that for every $x\in Y$, one has $I_\alpha(x)=gx$. Hence $\ad_g^{-1}\alpha$ acts as the identity on $Y$. This representative is unique as $Y$ is nondegenerate and $T$ has trivial arc stabilizers. 
Let $\tilde{K}_Y$ be the lift of $K^0$ to $\Aut(F_N)$ made of all such automorphisms. For every $g\in G_Y$, every $y\in Y$, and every $\alpha\in \tilde{K}_Y$, one has $gy=I_{\alpha}(gy)=\alpha(g)I_\alpha(y)=\alpha(g)y$. As $T$ has trivial arc stabilizers, this implies that $\alpha(g)=g$ and $\alpha_{|G_Y}=\mathrm{id}$. 

We will now prove the maximality of $G_Y$. Let $A$ be a noncyclic nonperipheral subgroup of $F_N$ such that every element of $K^0$ has a representative $\alpha\in\Aut(F_N)$ such that $\alpha_{|A}=\mathrm{id}$. Notice that the $A$-minimal subtree $T_A$ is nontrivial because $T$ is relatively arational (recall that the only nonperipheral point stabilizers in $T$ are cyclic). Let $a\in A$ be an element that acts hyperbolically on $T$. Then $I_{\alpha}$ preserves the axis of $a$, so acts on it by translation. Given an element $b\in A$ acting hyperbolically on $T$ such that $\langle a,b\rangle$ is noncyclic, the intersection of the axes of $a$ and $b$ in $T$ is compact (possibly empty). The isometry $I_{\alpha}$ also preserves the axis of $b$, and therefore it fixes a point on the axis of $a$ (namely, the projection of the axis of $b$ to the axis of $a$ if these do not intersect, or otherwise the midpoint of their intersection). Therefore $I_{\alpha}$ acts as the identity on the axis of every hyperbolic element of $A$. This implies that $I_{\alpha}$ acts as the identity on the $A$-minimal subtree $T_A\subseteq T$ and its closure $\overline{T}_A$.  Notice that the family $\{g\overline{T}_A\}_{g\in F_N}$ is a transverse family in $T$ (indeed $I_\alpha$ acts like identity on $\overline{T}_A$ and like $\alpha(g)$ on $g\overline{T}_A$ and $T$ has trivial arc stabilizers). As $T$ is mixing, the family $\{g\overline{T}_A\}_{g\in F_N}$ is a transverse covering. The maximality property of the covering $\caly$ implies that $\overline{T}_A\subseteq Y$ for some $Y \in \caly$. If $a\in A$, then $aY\cap Y$ contains $T_A$. Since $\caly$ is a transverse family, this implies that $aY=Y$. This proves that $A$ is a subgroup of $G_Y$. This finishes the proof of the theorem when $K^0$ is finitely generated. 

We now deal with the general case. Let $(K_i)_{i\in\mathbb{N}}$ be an increasing sequence of finitely generated subgroups of $K^0$ such that $K^0=\bigcup_{i\in\mathbb{N}}K_i$. For every $i\in\mathbb{N}$, let $\caly_i$ be the $K_i$-canonical piecewise-$F_N$ transverse covering of $T$. We will prove that the coverings $\caly_i$ stabilize for $i$ sufficiently large. Let $Y_i$ be a subtree in $\caly_i$, and let $G_i$ be its stabilizer. For every $i\in\mathbb{N}$, we have $G_{i+1}\subseteq G_i$. Since every $G_i$ is the maximal fixed subgroup of a collection of automorphisms and those satisfy a chain condition \cite[Corollary~4.2]{MV}, it follows that the groups $G_i$ eventually stabilize. Since $Y_i$ is the $G_i$-minimal subtree of $T$, it follows that the transverse coverings $\caly_i$ stabilize, as claimed. In addition $G_i$ (for sufficiently large $i$) is (up to conjugacy) the unique maximal noncyclic nonperipheral fixed subgroup of $K^0$, which concludes the proof.
\end{proof}

When $K\subseteq\ia$, the following lemma implies that $G_Y$ is also the unique maximal noncyclic, nonperipheral fixed subgroup of $K$.

\begin{lemma}\label{lemma:fixed_subgroups} 
Suppose that $K$ is a subgroup of $\ia$ and $K^0$ is finite index in $K$. Then $K$ and $K^0$ have the same fixed subgroups in $F_N$.
\end{lemma}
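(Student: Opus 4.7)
The direction that every fixed subgroup of $K$ is a fixed subgroup of $K^0$ is immediate from $K^0 \subseteq K$. For the other direction, my plan is to induct on the rank $N$, using the Handel--Mosher theorem (cited just before Proposition~\ref{prop:maximal-unbounded}) to reduce to the smallest free factor containing the given subgroup.

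Fix a fixed subgroup $G$ of $K^0$ and an element $\Psi \in K$; put $n = [K:K^0]$ so that $\Psi^n \in K^0$, and let $\alpha_0 \in \Aut(F_N)$ be a representative of $\Psi^n$ with $\alpha_0|_G = \mathrm{id}$. Assume $G$ is nontrivial (otherwise the conclusion is immediate), and let $A \subseteq F_N$ be the smallest free factor containing $G$, chosen in its $F_N$-conjugacy class so as to actually contain $G$. Since $\alpha_0(G) = G$, one also has $\alpha_0(A) = A$, so $[A]$ is $\Psi^n$-invariant and hence $\Psi$-periodic. Since $\Psi \in \ia$, Handel--Mosher forces $[A]$ to be $\Psi$-fixed, so I may pick a representative $\alpha$ of $\Psi$ with $\alpha(A) = A$. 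If $A$ is a proper free factor of $F_N$, then $H_1(A;\mathbb{Z}/3\mathbb{Z})$ is a direct summand of $H_1(F_N;\mathbb{Z}/3\mathbb{Z})$, so the restriction map sends $K$ into $\mathrm{IA}_A(\mathbb{Z}/3\mathbb{Z})$, and its image of $K^0$ fixes $G$ inside $A$ (witnessed by $\alpha_0|_A$). The inductive hypothesis applied inside $\Out(A)$ yields $\beta = \ad_g \circ \alpha|_A \in \Aut(A)$ for some $g \in A$ with $\beta|_G = \mathrm{id}$, and then $\ad_g \circ \alpha \in \Aut(F_N)$ is a representative of $\Psi$ fixing $G$.

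The main obstacle is the filling case $A = F_N$, where the induction does not reduce. If $G = F_N$ itself, then $\Fix_{\Aut(F_N)}(F_N) = \{1\}$ forces $K^0 = \{1\}$, and torsion-freeness of $\ia$ then gives $K = \{1\}$, so the statement is trivial. Otherwise $G \subsetneq F_N$ is noncyclic and fills. Here, using that the centralizer of a noncyclic subgroup of $F_N$ is trivial, each $\Phi \in K^0$ has a unique representative $\sigma(\Phi) \in \Aut(F_N)$ fixing $G$, yielding an embedding $\sigma \colon K^0 \hookrightarrow \Aut(F_N)$. After replacing $K^0$ by its normal core in $K$ (without affecting the set of fixed subgroups), I would show---by comparing, for $\Phi\in K^0$, the canonical representative of $\Psi\Phi\Psi^{-1}$ fixing $G$ with the conjugate $\tilde\Psi \sigma(\Phi) \tilde\Psi^{-1}$ fixing $\tilde\Psi(G)$---that any lift $\tilde\Psi$ of $\Psi$ can be modified by an inner automorphism so that $\tilde\Psi(G) = G$. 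The restriction $\tilde\Psi|_G$ then has $n$-th power inner, so defines a torsion element of $\Out(G)$; the final step is to identify this with an element of a torsion-free congruence-type subgroup of $\Out(G)$ inherited from $\Psi \in \ia$, forcing it to be trivial and producing the desired representative of $\Psi$ fixing $G$ pointwise.
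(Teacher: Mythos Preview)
Your inductive reduction to the smallest free factor $A \supseteq G$ is sound, and the step when $A$ is proper works as you describe. The filling case $A = F_N$, however, has a real gap. Your final step asserts that $\tilde\Psi|_G$ lands in a ``torsion-free congruence-type subgroup of $\Out(G)$ inherited from $\Psi \in \ia$'', but when $G$ is not a free factor the map $H_1(G;\mathbb{Z}/3\mathbb{Z}) \to H_1(F_N;\mathbb{Z}/3\mathbb{Z})$ is typically not injective, so acting trivially on the latter says nothing about the former; and $G$ need not even be finitely generated. There is a smaller gap one step earlier: you claim $(\tilde\Psi|_G)^n$ is inner in $\Out(G)$, but $\tilde\Psi^n$ restricted to $G$ is only conjugation by some $h \in F_N$ normalizing $G$, and nothing forces $h \in G$. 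Even showing $\tilde\Psi(G)$ is conjugate to $G$ is not clear from the comparison you indicate, since two representatives of the same outer automorphism can have non-conjugate fixed subgroups.

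The paper sidesteps all of this by invoking a different Handel--Mosher result, \cite[Theorem~4.1]{HM5}: for $\phi \in \ia$, if some power $\phi^k$ preserves the conjugacy class of an element $g \in F_N$, then so does $\phi$. Applying this to every $g \in G$ shows $\phi$ preserves each conjugacy class in $G$; then \cite[Lemma~5.2]{MO} says any such $\phi$ has a representative acting on $G$ by a single global conjugation. This gives the result directly, with no induction and no filling/non-filling dichotomy. The Handel--Mosher theorem you cite (on periodic free factors) is the wrong tool here; the conjugacy-class version is what makes the argument short.
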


\begin{proof}
Any fixed subgroup of $K$ is also a fixed subgroup of $K^0$. Conversely, let $G\subseteq F_N$ be a fixed subgroup of $K^0$, and let $\phi\in K$. Then $\phi$ has a power $\phi^k\in K^0$, and $\phi^k$ preserves every conjugacy class in $G$. Since $K\subseteq\ia$, a theorem of Handel and Mosher \cite[Theorem~4.1]{HM5} ensures that $\phi$ preserves every conjugacy class in $G$. It then follows from \cite[Lemma~5.2]{MO} that $\phi_{|G}$ is a global conjugation by an element of $F_N$. This shows that $G$ is a fixed subgroup of $K$. 
\end{proof}

Our proof of Proposition~\ref{prop:stab-arat-out} relies on the following lemma. We actually work much harder: the first conclusion in this lemma (invariance by the commensurator of $K$) will only be used in the next section. We recall from the introduction that given a group $G$ and a subgroup $H\subseteq G$, the \emph{relative commensurator} $\Comm_G(H)$ is the subgroup of $G$ made of all elements $g$ such that $H\cap gHg^{-1}$ has finite index in $H$ and in $gHg^{-1}$.

\begin{lemma}\label{lemma:stab-arat-fixes-splittings} 
Let $\calf$ be a free factor system of $F_N$, let $T$ be an arational $(F_N,\calf)$-tree, and let $K\subseteq\Out(F_N,\calf)$ be a subgroup contained in the isometric stabilizer of $T$. Let $\caly$ be the $K$-canonical piecewise-$F_N$ transverse covering of $T$, and let $S$ be the skeleton of $\caly$. Then
\begin{enumerate}
\item the splitting $S$ is invariant by $\Comm_{\Out(F_N,\calf)}(K)$, and 
\item all edge stabilizers of $S$ are nontrivial and root-closed, and
\item there exists a vertex $v\in S$ whose $F_N$-orbit meets all edges of $S$, such that %which is contained in every $F_N$-orbit of edges such that
\begin{enumerate}
\item $G_v$ is finitely generated and the incident edge groups $\Inc_v$ are a nonsporadic free factor system of $G_v$, and 
\item every splitting of $F_N$ which is a blowup of $S$ at $v$ is invariant by some finite-index subgroup of $K$. 
\end{enumerate}
\end{enumerate}
\end{lemma}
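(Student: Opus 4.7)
My plan is to deduce the three conclusions from Theorem~\ref{th:gl} and the structural description of the skeleton in Lemma~\ref{l:claim-1}, treating them in the order (2), (3), (1). For (2), nontriviality of edge stabilizers is already part of Lemma~\ref{l:claim-1}. Any edge of $S$ joins a vertex $v_Y$ (with $Y\in\caly$) to a vertex $v_x$ (with $x\in Y$ lying in at least two subtrees of $\caly$), and its stabilizer is $G_Y\cap G_x$. Since $G_x\in\calf$ is a free factor of $F_N$, hence root-closed in $F_N$, it suffices to establish root-closure of $G_Y\cap G_x$ inside $G_x$. This would follow from the standard fact that stabilizers of arc germs in very small $F_N$-trees are root-closed, since $G_Y\cap G_x$ coincides with the stabilizer in $G_x$ of the germ of arc of $Y$ emanating from $x$.

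For (3), the correct choice of vertex is $v=v_Y$: the bipartite structure of $S$ guarantees that its $F_N$-orbit meets every edge, and $G_v=G_Y$ is finitely generated by Theorem~\ref{th:gl}. The incident edge groups $\Inc_v$ are precisely the $G_Y$-stabilizers of (orbit representatives of) branch points of $Y$ shared with other subtrees of $\caly$, and I would identify these with the nontrivial point stabilizers of $G_Y$ acting on $Y$. Using the arationality of $T$ relative to $\calf$ together with Lemma~\ref{lemma:minimal} (so that $Y$ has dense orbits, in particular is not simplicial), I would deduce that $Y$ is an arational $(G_Y,\Inc_v)$-tree, which forces $\Inc_v$ to be a nonsporadic free factor system of $G_Y$. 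For (3b), every blowup of $S$ at $v$ is obtained by equivariantly splicing in a free splitting of $G_Y$ relative to $\Inc_v$ at $v$; since the lift $\tilde{K}_Y$ of $K^0$ furnished by Theorem~\ref{th:gl} acts as the identity on $G_Y$, it preserves any such spliced splitting, providing the required finite-index subgroup of $K$.

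For (1), let $g\in\Comm_{\Out(F_N,\calf)}(K)$ and $K_1=K\cap gKg^{-1}$, which is finite-index in both $K$ and $gKg^{-1}$. The translate $g\cdot S$ is the skeleton of the canonical covering for $gKg^{-1}$ acting on $g\cdot T$. My plan is to argue that $T$ and $g\cdot T$ lie in a common simplex of arational $(F_N,\calf)$-trees (using that $K_1$ acts on both, combined with Proposition~\ref{prop:fix-boundary} applied to the finite limit set of $K_1$ in $\partial_\infty\FF(F_N,\calf)$), and that the canonical skeleton depends only on this simplex together with $K$ up to commensuration. The main obstacle is the second point: the canonical covering of Theorem~\ref{th:gl} is defined via maximal noncyclic nonperipheral fixed subgroups of the finite-index subgroup $K^0$, and a priori these can grow when one passes to a commensurable subgroup, potentially coarsening $\caly$. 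Resolving this will involve exploiting the descending chain condition on fixed subgroups of Martino--Ventura together with the rigidity of the simplex structure of arational trees, and this is where the bulk of the technical effort will lie.
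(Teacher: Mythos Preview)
Your treatment of (2) and (3) is roughly on track, though in both places you are working harder than necessary. For (2), the arc-germ argument is delicate (there may be several directions of $Y$ at $x$, so $G_Y\cap G_x$ is not literally the stabilizer of a single germ); the paper instead observes that both kinds of vertex stabilizers are root-closed in $F_N$ --- the $G_x$ because they are free factors, and $G_Y$ because it is a maximal fixed subgroup (if $\alpha(g^k)=g^k$ then $\alpha(g)=g$ by uniqueness of roots in $F_N$) --- and the intersection of root-closed subgroups is root-closed. For (3a), you do not need $Y$ to be arational as a $(G_Y,\Inc_v)$-tree; it suffices that $Y$ is a nonsimplicial very small $(G_Y,\Inc_v)$-tree (Lemma~\ref{lemma:minimal} gives dense orbits), since sporadic pairs admit no such tree. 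For (3b), your idea that the lift $\tilde K_Y$ acts trivially on $G_Y$ and hence on any relative splitting of $G_Y$ is exactly right; the paper just makes the resulting isometry of $\hat S$ explicit and checks it matches on attaching points.

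The genuine gap is in (1). Your plan to compare $T$ with $g\cdot T$ is both unnecessary and unlikely to succeed: an element $g\in\Comm_{\Out(F_N,\calf)}(K)$ need not send $T$ into its own simplex of arational trees, so there is no reason to expect $T$ and $g\cdot T$ to be equivalent. The paper's route avoids $T$ entirely. The skeleton $S$ is reconstructible from the conjugacy classes of its vertex stabilizers together with the adjacency relation (two vertices are adjacent iff their stabilizers intersect nontrivially). The $G_x$ lie in $\calf$, so their conjugacy classes are preserved by any $\theta\in\Out(F_N,\calf)$. For $G_Y$, Theorem~\ref{th:gl} characterizes it as the unique (up to conjugacy) maximal noncyclic nonperipheral fixed subgroup of $K^0$, and the obstacle you identify --- that fixed subgroups might change under commensuration --- is precisely what Lemma~\ref{lemma:fixed_subgroups} rules out: commensurable subgroups of $\ia$ have the same fixed subgroups. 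Applying this to $K^0$ and $\theta K^0\theta^{-1}$ (both in $\ia$, both commensurable) shows $\theta$ preserves the conjugacy class of $G_Y$, hence $\theta\cdot S=S$. You should invoke Lemma~\ref{lemma:fixed_subgroups} directly rather than trying to work around it.
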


\begin{proof}
%Let $K^0$ be the finite-index subgroup of $K$ made of all automorphisms that preserve all orbits of directions in $T$. Let $S$ be the skeleton of $\caly$. By Lemma~\ref{l:claim-1}, the stabilizer of every vertex of the form $v_x$ in $S$ is an element of $\calf$, and by Theorem~\ref{th:gl}, the stabilizer of every vertex of the form $v_Y$ is finitely generated.
 We first show that $S$ is invariant by every element $\theta\in\Comm_{\Out(F_N,\calf)}(K)$ (Property~1). Note that every edge (and therefore every vertex) stabilizer in $S$ is nontrivial, and two vertices $v_x$ and $v_Y$ are adjacent in $S$ if and only if the intersection $G_x \cap G_Y$ of their stabilizers is nontrivial (this follows from the fact that distinct free factors in $\calf$ have trivial intersection, so that an elliptic subgroup does not fix any arc of length greater than 2). Hence to show that $\theta$ preserves $S$, it is enough to show that $\theta$ preserves the conjugacy classes of the vertex stabilizers of $S$. 

 Now let $K^0$ be the finite-index subgroup of $K$ made of all automorphisms that belong to $\ia$ and preserve all orbits of directions in $T$. Note that $\Comm_{\Out(F_N,\calf)}(K)=Comm_{\Out(F_N,\calf)}(K^0)$ as $K^0$ is finite index in $K$. As the stabilizer of every vertex of the form $v_x$ is a subgroup in $\calf$, its conjugacy class is preserved by $\theta$. As $K^0$ and $\theta K^0\theta^{-1}$ are commensurable in $\ia$ they have the same fixed subgroups by Lemma~\ref{lemma:fixed_subgroups} and the conjugacy classes of these groups are permuted by $\theta$. As $G_Y$ is the unique maximal noncyclic, nonperipheral fixed subgroup of $K^0$, the automorphism $\theta$ preserves the conjugacy class of $G_Y$. Hence $\theta \cdot S = S$. 

We will now check Property~2, namely that edge stabilizers of $S$ are nontrivial and root-closed. That they are nontrivial follows from the fact that $T$ is arational (see the second conclusion of Lemma~\ref{l:claim-1}). To see that they are root-closed, it is enough to notice that stabilizers of vertices of the form $v_x$ are root-closed as they are free factors, and stabilizers of vertices of the form $v_Y$ are root-closed as they are maximal fixed subgroups (the maximal fixed subgroup of an automorphism $\alpha$ of $F_N$ is root-closed, because if $\alpha(g^k)=g^k$, then $\alpha(g)$ is the unique $k^{\text{th}}$-root of $g^k$, namely $g$). 

Now let $Y\in\caly$. By Theorem~\ref{th:gl}, the stabilizer $G_Y$ of $Y$ is finitely generated. We will now prove that, denoting by $\Inc_Y$ the collection of all $F_N$-stabilizers of edges of $S$ that are incident on $v_Y$, the Grushko deformation space of $G_Y$ relative to $\Inc_Y$ is nonsporadic (Property~$3(a)$, with $v=v_Y$). To prove this, notice that the stabilizers of vertices of the form $v_x$ form a subsystem $\calf'$ of the free factor system $\calf$, so that $\Inc_Y$ is the free factor system of $G_Y$ induced by its intersections with $\calf'$. As there exists a nonsimplicial very small $(G_Y,\Inc_Y)$-tree (namely $Y$), we deduce that $(G_Y,\Inc_Y)$ is nonsporadic. This completes our proof of Property~$3(a)$.

We will now show that given $Y\in\caly$, every blowup $\hat{S}$ of $S$ at the vertex $v_Y$ is $K^0$-invariant (Property~$3(b)$). We denote by $\hat{S}_Y$ the preimage of $v_Y$ under the collapse map $\hat{S}\to S$. Let $\alpha\in\tilde{K}^0$, let $I_\alpha$ be the induced isometry of $T$ and let $J_\alpha$ be the $\alpha$-equivariant isometry of $S$. Let $\hat{J}_\alpha:\hat{S}\to\hat{S}$ be the map defined by sending every point $x\in\hat{S}_Y$ to $g(\alpha,Y)x$, and sending every point $y$ not contained in any translate of $\hat{S}_Y$ to the unique preimage of $J_\alpha(y)$ in $\hat{S}$. We claim that $\hat{J}_\alpha$ is an $\alpha$-equivariant isometry of $\hat{S}$.

To prove that $\hat{J}_\alpha$ is an isometry, the key point is to show that if $e\subseteq S$ is an edge incident to $v$, and $x_e\in\hat{S}_Y$ is the corresponding attaching point then $\hat{J}_\alpha(x_e)=g(\alpha,Y)x_e$ is the corresponding attaching point of $J_\alpha(e)$. To check this, note that $e$ is determined by a pair $(p,Y)$, where $p \in Y$. As $I_\alpha$ acts on $Y$ by $g(\alpha, Y)$, the edge $J_\alpha(e)$ is given by the pair $(g(\alpha,Y)p,g(\alpha,Y)Y)$. Hence $J_\alpha(e)=g(\alpha,Y)e$, which has corresponding attaching point $g(\alpha,Y)x_e$ by equivariance of the blow-up.

To check that $\hat{J}_\alpha$ is $\alpha$-equivariant, it is enough to observe that for every $\alpha\in\tilde{K}$, every $Y\in\caly$, and every $h\in F_N$, one has $$\alpha(h)=g(\alpha,hY)hg(\alpha,Y)^{-1}.$$ This follows from the fact that for every $x\in Y$, one has $$g(\alpha,hY)hx=I_{\alpha}(hx)=\alpha(h)I_\alpha(x)=\alpha(h)g(\alpha,Y)x,$$ which yields the above identity as $T$ has trivial arc stabilizers. If follows that the image of $\alpha$ in $\out(F_N)$ preserves $\hat{S}$. This completes the proof of Property~$3(b)$. 
\end{proof}

\subsection{The proof of Proposition~\ref{prop:stab-arat-out}} \label{s:proof_stab_rel_aration}

\begin{lemma}\label{lemma:z-blow-up}
Let $G_v$ be a finitely generated free group, and let $\Inc_v$ be a nonempty, nonsporadic free factor system of $G_v$. Then either:
\begin{enumerate}
\item $G_v$ has a three-edge splitting relative to $\Inc_v$ with $\mathcal{Z}_{max}$ edge stabilizers and nonabelian vertex stabilizers.
\item $\Inc_v$ contains a factor $A$ isomorphic to $\mathbb{Z}$ and $G_v$ has a two-edge splitting relative to $\Inc_v$ with $\mathcal{Z}_{max}$ edge stabilizers and nonabelian vertex stabilizers.
\item $(G_v,\Inc_v)$ is isomorphic to $(F_3,\{\mathbb{Z},\mathbb{Z},\mathbb{Z}\})$, and there is a one-edge separating splitting of $G_v$ relative to $\Inc_v$ with $\mathcal{Z}_{max}$ edge stabilizers and nonabelian vertex stabilizers.
\end{enumerate}
\end{lemma}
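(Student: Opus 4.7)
The proof is a case-by-case construction based on the Grushko decomposition of $(G_v, \Inc_v)$. Write $G_v = A_1 * \cdots * A_k * F_r$ so that $\Inc_v = \{[A_1], \ldots, [A_k]\}$; by nonemptiness and nonsporadicity, $k \geq 1$ and $k + r \geq 3$. In each of the three cases I will produce a graph of groups on $G_v$ relative to $\Inc_v$ with cyclic edges generated by primitive (hence root-closed) elements of $G_v$, and with free, nonabelian vertex stabilizers, each $A_i$ conjugate into some vertex group.

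I would first dispose of the exceptional case~3. When $(G_v, \Inc_v) \cong (F_3, \{[\langle a\rangle], [\langle b\rangle], [\langle c\rangle]\})$, the amalgam $G_v = \langle a, b\rangle *_{\langle ab\rangle} \langle ab, c\rangle$ is a one-edge separating splitting with both vertex groups isomorphic to $F_2$ and edge stabilizer generated by the primitive element $ab$; each $A_i$ sits inside a vertex group, and the edge is visibly separating.

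Outside this exceptional case, the plan is to aim for case~1 (a three-edge splitting) whenever the rank budget permits, and to fall back on case~2 (a two-edge splitting, using a $\mathbb{Z}$-factor as one edge to save an edge) otherwise. For case~1, I would build a tree of groups on a path $V_1 - V_2 - V_3 - V_4$ (or a tripod) with $\mathbb{Z}$ edge stabilizers. The Euler characteristic identity forces $\sum_i \mathrm{rk}(V_i) = \mathrm{rk}(G_v) + 3$, so I would distribute the $A_j$ and the free-part generators across the four vertices so that each vertex group is nonabelian and contains the assigned $A_j$ as a free factor; each edge stabilizer is chosen to be cyclic generated by a primitive element formed as the product of a generator on one side with a generator on the other. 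For case~2, using a $\mathbb{Z}$-factor $A = \langle a\rangle \in \Inc_v$, I would construct a two-edge path $V_1 - V_2 - V_3$ with one edge stabilizer being a conjugate of $A$ (automatically maximal cyclic since $A$ is a free factor) and the other generated by a primitive product. The relaxed budget $\sum_i \mathrm{rk}(V_i) = \mathrm{rk}(G_v) + 2$ makes this feasible whenever $\Inc_v$ contains a $\mathbb{Z}$-factor.

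The main obstacle is verifying the exhaustiveness of the case split: one needs to show that whenever case~1 cannot be arranged for lack of rank, either $\Inc_v$ contains a $\mathbb{Z}$-factor (so case~2 applies) or we are in the configuration of case~3. This is a finite check over the small configurations with $k + r \in \{3, 4\}$ and small factor ranks, where one verifies by hand that if $k + r = 3$ and every $A_i$ is $\mathbb{Z}$ then either $r = 0$ (case~3) or $r > 0$ (giving a $\mathbb{Z}$-factor via some $A_i$ for case~2), while if some $A_j$ has rank $\geq 2$ then the rank $\mathrm{rk}(G_v) \geq 5$ suffices to run the four-vertex construction of case~1 directly. The rest of the proof is then the combinatorial bookkeeping to confirm each explicit decomposition is indeed a graph of groups presentation of $G_v$ with the advertised properties.
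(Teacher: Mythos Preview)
Your overall strategy---a case analysis on the Grushko data $(k,r)$ followed by an explicit construction---is the same as the paper's. The paper first produces a free splitting with nonabelian vertex groups (two-edge in the generic case, one-edge when $\Inc_v$ contains a $\mathbb{Z}$-factor, and the $F_2\ast\mathbb{Z}$ splitting in the exceptional case), then folds half-edges at nonabelian vertices with translates by non-proper-powers to obtain the required $\mathcal{Z}_{max}$ edges. Your direct construction of the cyclic splitting is a legitimate alternative.

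However, your exhaustiveness check contains a genuine error, and it stems from restricting yourself to \emph{tree}-shaped graphs of groups. Your four-vertex path (or tripod) construction for case~1 requires $\mathrm{rk}(G_v)\ge 5$, and your three-vertex path for case~2 requires $\mathrm{rk}(G_v)\ge 4$. The claim ``if $k+r=3$ and some $A_j$ has rank $\ge 2$ then $\mathrm{rk}(G_v)\ge 5$'' is false: take $(G_v,\Inc_v)=(F_4,\{[F_2]\})$ with $k=1$, $r=2$, $\mathrm{rk}(A_1)=2$. Here $\mathrm{rk}(G_v)=4$, there is no $\mathbb{Z}$-factor in $\Inc_v$, and it is not the exceptional case, so you are forced into case~1---but your tree construction cannot reach it. Similarly, your case~2 construction fails for $(F_3,\{[\mathbb{Z}]\})$ and $(F_3,\{[\mathbb{Z}],[\mathbb{Z}]\})$, where there is a $\mathbb{Z}$-factor but $\mathrm{rk}(G_v)=3<4$.

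In each of these low-rank cases the lemma does hold, but the required splitting has a loop. For instance, for $(F_4,\{[\langle a,b\rangle]\})$ one can take the theta graph with $V_1=\langle a,b\rangle$, $V_2=\langle a,\,cbc^{-1},\,dbd^{-1}\rangle\cong F_3$, amalgam edge $\langle a\rangle$, and two further edges with stable letters $c,d$ and edge group $\langle b\rangle$; this is a three-edge $\mathcal{Z}_{max}$ splitting with both vertex groups nonabelian. The paper's fold-from-a-free-splitting approach produces such non-tree splittings automatically (the two-petal rose with vertex $\langle a,b\rangle$ is the relevant two-edge free splitting here). The fix to your argument is either to allow graphs with loops in your explicit constructions, or to route through a free splitting first as the paper does.
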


\begin{figure}
\centering
\input{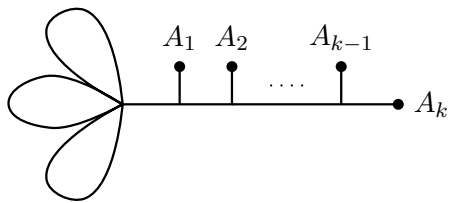}
\caption{The Grushko splitting from the proof of Lemma~\ref{lemma:z-blow-up}.}
\label{fig:grushko}
\end{figure}

\begin{proof}
 One of the following holds:
\begin{enumerate}
\item $G_v$ has a 2-edge free splitting relative to $\Inc_v$ with nonabelian vertex stabilizers, or
\item $\Inc_v$ contains a factor isomorphic to $\mathbb{Z}$ and $G_v$ has a one-edge free splitting relative to $\Inc_v$ with nonabelian vertex stabilizers, or else
\item $(G_v,\Inc_v)$ is isomorphic to $(F_3,\{\mathbb{Z},\mathbb{Z},\mathbb{Z}\})$, and $G_v$ splits as $F_2\ast\mathbb{Z}$ relative to $\Inc_v$.
\end{enumerate}
Such a splitting can be found by collapsing the Grushko splitting given in Figure~\ref{fig:grushko} (the generic situation is case 1 but there are some low-complexity examples that fall into cases 2 and 3).  One then obtains the desired $\mathcal{Z}_{max}$ splittings by folding half-edges at nonabelian vertex groups with their translate by some element of the vertex group which is not a proper power.
\end{proof}

\begin{proof}[Proof of Proposition~\ref{prop:stab-arat-out}]
 Let $S$ be the skeleton of the $K$-canonical piecewise-$F_N$ transverse covering of $T$. Let $v\in S$ be a vertex as in the third point of Lemma~\ref{lemma:stab-arat-fixes-splittings}. Property~$3(b)$ from Lemma~\ref{lemma:stab-arat-fixes-splittings} ensures that any blow-up $\hat{S}$ of $S$ at $v$ is virtually $K$-invariant. If $\hat{S}$ has nontrivial edge stabilizers, then the group of twists $\mathcal{T}$ on $\hat{S}$ is central in a finite-index subgroup of $\stab_{\ia}(\hat{S})$ (Lemma~\ref{twist-central}).  Furthermore, by \cite[Proposition~3.1]{Lev}, $\mathcal{T}$ is a free abelian group of rank $k-l$, where  $k$ is the number of $F_N$-orbits of $\mathbb{Z}$ edges in $\hat{S}$ and $l$ is the number of $F_N$-orbits of vertices with cyclic stabilizer. 
We are going to find a blow-up $\hat{S}$ at $v$ such that $\mathcal{T}$ is of rank at least 3.

We denote by $\Inc_v$ the collection of all incident edge stabilizers of $G_v$. Then $\Inc_v$ is a free factor system of $G_v$ and $G_v$ is nonsporadic relative to this free factor system. We now look at the cases given by Lemma~\ref{lemma:z-blow-up}. In the case that $G_v$ has a three-edge splitting relative to $\Inc_v$ with $\mathcal{Z}_{max}$ edge stabilizers and nonabelian vertex stabilizers, the group $\mathcal{T}$ generated by twists in these edges is $\mathbb{Z}^3$. To see this, note that as all the vertices in the splitting of $G_v$ are nonabelian, by collapsing all other edges in $\hat{S}$ we get a graph with nonabelian vertex stabilizers, three cyclic edges, and twist group $\mathcal{T}$. The same argument also shows that when $\Inc_v$ contains a cyclic factor and $G_v$ has a two-edge splitting relative to $\Inc_v$ with $\mathcal{Z}_{max}$ stabilizers and nonabelian vertex stabilizers, the group $\mathcal{T}$ generated by twists in these two edges and the twist about some incident cyclic edge is isomorphic to $\mathbb{Z}^3$. In the final case, we obtain a splitting $\hat{S}$ which collapses onto a minimal four edge $\mathcal{Z}_{max}$ splitting with five of the eight half edges based at vertices with nonabelian stabilizers. By minimality, and the fact the edge stabilizers are root-closed, at most one vertex in this splitting can be cyclic, so that the group $\mathcal{T}$ is rank at least 3.
\end{proof}

\subsection{Stabilizers in twist-rich subgroups}\label{sec:later}

Our main theorem is in the more general setting of twist-rich subgroups of $\Out(F_N)$. For this, we will also need to understand the stabilizer of an arational tree within a subgroup $\G$ of $\Out(F_N)$ which is `big enough' to satisfy the following property. 

\begin{enumerate}[($H_1$)]
 \item Given a splitting $S$ of $F_N$ with all edge stabilizers nontrivial, and a vertex $v$ of $S$ such that $G_v$ is finitely generated and the Grushko decomposition of $G_v$ relative to the incident edge groups $\Inc_v$ is nonsporadic:
\begin{enumerate}[(a)]
\item If $(G_v,\Inc_v)$ is not isomorphic to $(F_3,\{\mathbb{Z},\mathbb{Z},\mathbb{Z}\})$, then there is a blowup $S'$ of $S$ by a two-edge splitting of $(G_v,\Inc_v)$ with edge groups isomorphic to $\mathbb{Z}$ and root-closed, such that the group of twists about these edges is isomorphic to $\mathbb{Z}^2$ and $\G$ contains a finite-index subgroup of this group of twists.
\item If $(G_v,\Inc_v)$ is isomorphic to $(F_3,\{\mathbb{Z},\mathbb{Z},\mathbb{Z}\})$, then there is a blowup $S'$ of $S$ by a one-edge splitting of $(G_v,\Inc_v)$ with edge groups isomorphic to $\mathbb{Z}$ and root-closed, such that $\G$ contains a finite index subgroup of the infinite cyclic group of twists about this edge. 
\end{enumerate} 
\end{enumerate}

This will be the first property of a twist-rich subgroup. In particular, we will show in Proposition~\ref{prop:example} that ($H_1$) holds for all subgroups $\G \subseteq \Out(F_N)$  given in the main theorem of the introduction. We used cyclic blow-ups in the proof of Proposition~\ref{prop:stab-arat-out} regarding isometric stabilizers of arational trees in $\out(F_N)$, and following the same idea we will prove a slightly weaker result for isometric stabilizers of arational trees in a subgroup $\G\subseteq \Out(F_N)$ which satisfies ($H_1$). 

\begin{prop}\label{prop:stab-arat}
Let $\Gamma\subseteq\ia$ be a subgroup that satisfies Hypothesis~$(H_1)$, and let $\calf$ be a nonsporadic free factor system of $F_N$. Let $K\subseteq\Gamma\cap\Out(F_N,\calf)$ be a subgroup, and assume that some finite-index subgroup of $K$ is contained in the isometric stabilizer of an arational $(F_N,\calf)$-tree. Then 
\begin{enumerate}
\item $K$ virtually centralizes a subgroup of $\Gamma$ isomorphic to $\mathbb{Z}$.
\item One of the following two possibilities hold:
\begin{enumerate}
\item $K$ virtually centralizes a subgroup of $\Gamma$ isomorphic to $\mathbb{Z}^2$, or 
\item $\Comm_{\Gamma\cap\Out(F_N,\calf)}(K)$ contains no free abelian subgroup of rank $2N-4$. 
\end{enumerate} 
\end{enumerate}
\end{prop}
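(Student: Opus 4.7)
The argument follows the template of Proposition~\ref{prop:stab-arat-out}. After replacing $K$ by a finite-index subgroup (which leaves the commensurator unchanged), we may assume $K$ itself is contained in the isometric stabilizer of an arational $(F_N,\calf)$-tree $T$. Let $\caly$ be the $K$-canonical piecewise-$F_N$ transverse covering of $T$ provided by Theorem~\ref{th:gl}, and $S$ its skeleton. By Lemma~\ref{lemma:stab-arat-fixes-splittings}, $S$ has all edge stabilizers nontrivial and root-closed, is invariant under $\Comm_{\Out(F_N,\calf)}(K)$, and contains a vertex $v=v_Y$ with $G_v$ finitely generated, $(G_v,\Inc_v)$ nonsporadic, and every blow-up of $S$ at $v$ invariant by a finite-index subgroup of $K$.

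Next, apply Hypothesis $(H_1)$ at $v$. If $(G_v,\Inc_v)\not\cong(F_3,\{\mathbb{Z},\mathbb{Z},\mathbb{Z}\})$, then $(H_1)(a)$ yields a blow-up $\hat S$ of $S$ at $v$ by a two-edge splitting with root-closed cyclic edge stabilizers whose twist group is $\mathbb{Z}^2$, and $\Gamma$ contains a finite-index subgroup $A\le\mathbb{Z}^2$ of this twist group. By Lemma~\ref{twist-central}, each of the two generators of the twist group is central in a finite-index subgroup of $\Stab(\hat S)$, and since $K$ virtually fixes $\hat S$ it virtually centralizes $A$. This simultaneously yields conclusions~1 and~2(a).

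In the exceptional case $(G_v,\Inc_v)\cong(F_3,\{\mathbb{Z},\mathbb{Z},\mathbb{Z}\})$, $(H_1)(b)$ only gives a one-edge cyclic blow-up whose infinite cyclic twist group virtually lies in $\Gamma$; the same application of Lemma~\ref{twist-central} yields conclusion~1. For conclusion~2, suppose $K$ does not virtually centralize any $\mathbb{Z}^2\subseteq\Gamma$, so that we must establish 2(b). By Lemma~\ref{lemma:stab-arat-fixes-splittings}(1) the group $\Comm_{\Gamma\cap\Out(F_N,\calf)}(K)$ is contained in $\Stab_{\Out(F_N,\calf)}(S)$, so it suffices to bound the rank of free abelian subgroups of this stabilizer. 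Using the standard exact sequence relating the stabilizer of $S$ (acting trivially on the quotient graph) to its group of twists $\mathcal{T}$ and to the product of relative outer automorphism groups at the vertices of $S/F_N$, one bounds the abelian rank of the stabilizer by the sum of a contribution from $\mathcal{T}$ and contributions from each vertex orbit. The exceptional shape at $v$ forces $\Out(F_3,\{\mathbb{Z},\mathbb{Z},\mathbb{Z}\})$ to contribute strictly less than a generic vertex would, and the failure of 2(a) ensures that the twist contribution near $v$ is at most $1$; a careful accounting then yields a rank bound of at most $2N-5$.

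The main obstacle is this final rank count: one has to track the contributions of all vertex and edge orbits of $S$ to the abelian rank of its stabilizer, exploiting the fact that the exceptional shape $(F_3,\{\mathbb{Z},\mathbb{Z},\mathbb{Z}\})$ at $v$ shaves off exactly one unit from the generic bound. Everything else, namely the invocation of Hypothesis $(H_1)$ and of Lemma~\ref{twist-central} in both cases, is a direct adaptation of the proof of Proposition~\ref{prop:stab-arat-out}.
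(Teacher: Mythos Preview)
Your treatment of assertion~1 and of case~2(a) in the non-exceptional situation $(G_v,\Inc_v)\not\cong(F_3,\{\mathbb{Z},\mathbb{Z},\mathbb{Z}\})$ is essentially the paper's argument: apply $(H_1)$, invoke Property~3(b) of Lemma~\ref{lemma:stab-arat-fixes-splittings} to see $K$ virtually fixes the blow-up, and use centrality of twists (Lemma~\ref{twist-central}).

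The gap is in the exceptional case for 2(b). You propose to bound the abelian rank of $\Stab_{\Out(F_N,\calf)}(S)$ directly via the exact sequence with kernel the group of twists of $S$. The difficulty is that $S$ is the skeleton of the transverse covering, so by Lemma~\ref{l:claim-1} its edge stabilizers are subgroups of factors in $\calf$ and need not be cyclic; the vertices of type $v_x$ carry full factors of $\calf$. For such a splitting the group of twists is neither abelian nor central, and the ``standard exact sequence'' does not give a clean additive rank bound. Your sentence ``the failure of 2(a) ensures that the twist contribution near $v$ is at most~1'' is not justified: the $(H_1)(b)$ twist lives on a blow-up $\hat S$, not on $S$ itself, so the negation of 2(a) does not directly constrain $\Gamma\cap\mathcal T_S$.

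The paper avoids this by first collapsing all edges of $S$ with non-cyclic stabilizer to obtain a tree $\overline S$; this is still $\Comm_{\Out(F_N,\calf)}(K)$-invariant, and now every edge group is $\mathbb{Z}$, so the twist group $\mathcal T_{\overline S}$ is free abelian and central in a finite-index subgroup of $\Stab(\overline S)$. Since $(G_v,\Inc_v)\cong(F_3,\{\mathbb{Z},\mathbb{Z},\mathbb{Z}\})$, the three incident edges at $v$ already give a $\mathbb{Z}^2$ (or $\mathbb{Z}^3$) inside $\mathcal T_{\overline S}$. The dichotomy is then: either $\Gamma\cap\mathcal T_{\overline S}$ contains a $\mathbb{Z}$, in which case together with the $(H_1)(b)$ blow-up twist one gets a $\mathbb{Z}^2\subseteq\Gamma$ virtually centralized by $K$; or $\Gamma\cap\mathcal T_{\overline S}$ is trivial, in which case any abelian $A\le\Stab_\Gamma(\overline S)$ extends by this central $\mathbb{Z}^2$ to an abelian subgroup of $\Out(F_N)$, forcing $\mathrm{rk}(A)\le(2N-3)-2=2N-5$. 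Passing to $\overline S$ is the missing idea that makes the rank count go through.
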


\begin{proof}
Let $T$ be an arational $(F_N,\calf)$-tree such that $K$ is contained in the isometric stabilizer of $T$. Let $S$ be the skeleton of the $K$-canonical piecewise-$F_N$ transverse covering of $T$, and let $v\in S$ be a vertex of $S$ given by Lemma~\ref{lemma:stab-arat-fixes-splittings}. We denote by $\Inc_v$ the collection of all incident edge stabilizers.

Hypothesis~$(H_1)$ ensures that we can find a blowup $\hat{S}$ of $S$ at $v$ by a cyclic edge such that the group of twists $\mathcal{T}$ associated to this edge intersects $\Gamma$ nontrivially. Property~$3(b)$ from Lemma~\ref{lemma:stab-arat-fixes-splittings} ensures that $\hat{S}$ is virtually $K$-invariant. Therefore $K$ virtually centralizes $\Gamma\cap\mathcal{T}$, which is isomorphic to $\mathbb{Z}$. This proves the first assertion of the lemma.

If $(G_v,\Inc_v)$ is not isomorphic to $(F_3,\{\mathbb{Z},\mathbb{Z},\mathbb{Z}\})$, then Hypothesis~$(H_1)$ ensures that we can find a blowup $\hat{S}$ of $S$ at $v$ by two cyclic edges, such that the group of twists associated to these two edges is isomorphic to $\mathbb{Z}^2$ and has a finite-index subgroup contained in $\Gamma$. The same argument as above ensures that in this case, $K$ virtually centralizes a subgroup of $\Gamma$ isomorphic to $\mathbb{Z}^2$. 

We can therefore assume that $(G_v,\Inc_v)$ is isomorphic to $(F_3,\{\mathbb{Z},\mathbb{Z},\mathbb{Z}\})$. Let $\overline{S}$ be the tree obtained from $S$ by collapsing all edges of $S$ whose stabilizer is not isomorphic to $\mathbb{Z}$. As $S$ is $\Comm_{\Out(F_N,\calf)}(K)$-invariant (by the first point in Lemma~\ref{lemma:stab-arat-fixes-splittings}), so is $\overline{S}$. Using \cite[Proposition~3.1]{Lev}, we see that the group of twists $\mathcal{T}_{\overline{S}}$ of $\overline{S}$ (in $\Out(F_N)$) contains a subgroup isomorphic to $\mathbb{Z}^2$ or $\mathbb{Z}^3$ (given by the incident edges at $v$). If $\Gamma\cap\mathcal{T}_{\overline{S}}$ contains a subgroup isomorphic to $\mathbb{Z}$, then by blowing-up a cyclic edge at $v$ as above, we see that $K$ virtually centralizes a subgroup of $\Gamma$ isomorphic to $\mathbb{Z}^2$. Otherwise, a maximal free abelian subgroup of $\Stab_{\Gamma}(\overline{S})$ has rank at most $(2N-3)-2=2N-5$. As $\Comm_{\Gamma\cap\Out(F_N,\calf)}(K)\subseteq\Stab_{\Gamma}(\overline{S})$, we deduce in particular that $\Comm_{\Gamma\cap\Out(F_N,\calf)}(K)$ has no free abelian subgroup of rank $2N-4$. 
\end{proof}

\section{Direct products of free groups in $\Out(F_N)$}\label{sec:product-f2}

\emph{In this section we will use direct products of free groups in $\Out(F_N)$ to distinguish between stabilizers of separating and nonseparating one-edge free splittings. They will also be used to see if two one-edge nonseparating free splittings span an edge in $\ens$.} 
\\
\\
\indent Given a group $G$, we denote by $\pr(G)$ the maximal integer $k$ such that $G$ contains a subgroup isomorphic to a direct product of $k$ nonabelian free groups. Note that passing to a finite index subgroup does not change $\pr(G)$. In this section we shall show that $\pr(\Out(F_N))=2N-4$ for all $N\ge 3$ and study $\pr(G)$ when $G$ is the stabilizer of a free splitting. 

 A typical example of a direct product of $2N-4$ nonabelian free groups in $\out(F_N)$ is given as follows. Pick a basis $x_1, x_2, \ldots, x_N$ of $F_N$. For every $i\in\{3,\dots,N\}$, the subgroup $L_i$ made of all automorphisms of the form $x_i\mapsto l_ix_i$ with $l_i$ varying in $\langle x_1,x_2\rangle$ is free. Likewise, for every $i\in\{3,\dots,N\}$, the subgroup $R_i$ made of all automorphisms of the form $x_i\mapsto x_ir_i$ with $r_i$ varying in $\langle x_1,x_2\rangle$ is free. The groups $L_i$ and $R_i$ pairwise commute, giving a direct product of $2N-4$ nonabelian free groups in $\Out(F_N)$. This direct product of free groups is equal to the group of twists in the stabilizer of the free splitting given by the rose with $N-2$ petals corresponding to $x_3, \ldots, x_N$ and vertex group $\langle x_1, x_2 \rangle$. 

Every inner automorphism given by an element of $\langle x_1, x_2 \rangle$ commutes with the examples above, which yields a direct product of  $2N-3$ copies of $F_2$ in $\aut(F_N)$. A complete classification of these maximal direct products is beyond the scope of this paper, however we will need to show that these examples are maximal.

\begin{theo}\label{direct_products_of_free_groups}
For every $N\ge 2$, we have $\pr(\Aut(F_N))=2N-3$. 
\\ For every $N\ge 3$, we have $\pr(\Out(F_N))=2N-4$. 
\\ In addition, if $H$ is a subgroup of $\Out(F_N)$ isomorphic to a direct product of $2N-4$ nonabelian free groups, then $H$  virtually fixes a one-edge nonseparating free splitting of $F_N$, but  does not virtually fix any one-edge separating   free splitting of $F_N$.
\end{theo}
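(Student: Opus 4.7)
The lower bounds are supplied by the explicit construction described just before the theorem: the twist subgroups $L_3,\dots,L_N,R_3,\dots,R_N$ pairwise commute and supply $2N-4$ nonabelian free subgroups of $\Out(F_N)$, while adjoining $\Inn(\langle x_1,x_2\rangle)\cong F_2$ (which commutes with every $L_i$ and $R_j$) gives the extra factor witnessing $\pr(\Aut(F_N))\ge 2N-3$. Moreover, every element of this example preserves the one-edge nonseparating free splitting $F_N=\langle x_1,x_2,\ldots,\widehat{x_i},\ldots,x_N\rangle\ast$ for any $i\ge 3$, so the existence part of the final statement will be witnessed by this example.

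For the upper bound on $\pr(\Out(F_N))$, I would induct on $N$. Let $H=K_1\times\cdots\times K_k\subseteq\Out(F_N)$ be a direct product of $k$ nonabelian free groups. Since $\pr$ is invariant under passage to finite-index subgroups, we may assume $H\subseteq\ia$. Choose a maximal $H$-invariant free factor system $\calf$. The central task is to rule out the nonsporadic case: if $\calf$ were nonsporadic, Proposition~\ref{prop:maximal-unbounded} would force $H$ to act on $\FF(F_N,\calf)$ with unbounded orbits, and then Proposition~\ref{product-vs-hyp} would force either $H$ itself, or some subproduct $\prod_{i\neq j}K_i$ (when $K_j$ contains a loxodromic), to have a finite orbit in $\partial_\infty\FF(F_N,\calf)$. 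Proposition~\ref{prop:fix-boundary} would then furnish a virtual stabilizer of a relatively arational tree $T$, and the skeleton of the canonical piecewise-$F_N$ transverse covering of $T$ (Lemma~\ref{lemma:stab-arat-fixes-splittings}) would exhibit an $H$-periodic free factor system strictly refining $\calf$, contradicting maximality after the single loxodromic factor $K_j$, if any, is absorbed via the induction hypothesis.

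Hence $\calf$ is sporadic, so by Lemma~\ref{lemma:stab-splitting-ia} the group $H$ fixes a one-edge free splitting $S$. In the separating case $F_N=A\ast B$, malnormality of free factors implies that a finite-index subgroup of $\Stab(S)$ injects into $\Aut(A)\times\Aut(B)$, yielding by induction
\[
\pr(\Stab(S))\le\pr(\Aut(A))+\pr(\Aut(B))\le 2N-5,
\]
strictly smaller than $2N-4$; this already proves the last assertion of the theorem. In the nonseparating case $F_N=C\ast$ with $\mathrm{rk}(C)=N-1$, the twist group $T$ of $S$ is $F_{N-1}\times F_{N-1}$ by Levitt's theorem cited in the introduction, and $(\Stab(S)\cap\ia)/T$ embeds virtually into $\Aut(F_{N-1})$. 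Splitting the $K_i$'s according to whether their image in $\Aut(F_{N-1})$ contains a nonabelian free subgroup (bounded in number by $\pr(\Aut(F_{N-1}))=2N-5$ inductively) or whether they lie virtually inside $T$ (bounded in number by $\pr(T)=2$), and using the $\Out(F_N)$-action on $T$ to exclude double counting, yields $k\le 2N-4$, with equality matching the explicit example. This simultaneously proves the existence part of the final statement.

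The bound $\pr(\Aut(F_N))\le 2N-3$ then follows by projecting a direct product $\prod_{i=1}^k K_i\subseteq\Aut(F_N)$ to $\Out(F_N)$: at most one factor $K_j$ can survive in $\Inn(F_N)\cong F_N$ as a nonabelian free subgroup, because the centralizer in $F_N$ of any nonabelian free subgroup is trivial, so the remaining factors would fail to commute with it. Thus $\pr(\Aut(F_N))\le\pr(\Out(F_N))+1=2N-3$. The main obstacle I expect is the nonsporadic case: combining Proposition~\ref{product-vs-hyp} and Proposition~\ref{prop:stab-arat-out} cleanly enough to force a contradiction with maximality (rather than a weaker bound) is where the heavy machinery of Sections~\ref{sec:direct-product-vs-hyp} and~\ref{sec:arat} is needed, and the sub-case where a single factor $K_j$ carries a loxodromic is the most delicate step, as it requires an extra inductive argument applied to the centralizing subproduct of $k-1$ factors.
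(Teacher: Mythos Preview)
Your overall architecture matches the paper's: induct on $N$, pass to $\ia$, take a maximal $H$-invariant free factor system $\calf$, rule out the nonsporadic case, and then analyze stabilizers of one-edge splittings. However, the nonsporadic case contains a genuine gap.

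You assert that the skeleton of the canonical piecewise-$F_N$ covering of the arational tree $T$ ``would exhibit an $H$-periodic free factor system strictly refining $\calf$, contradicting maximality.'' This is not what the skeleton does. By Lemma~\ref{l:claim-1}, every edge stabilizer of the skeleton is a nontrivial subgroup lying in $\calf$, so the skeleton is \emph{not} a free splitting and does not produce a free factor system larger than $\calf$; its vertex groups $G_Y$ are maximal fixed subgroups, not free factors. The paper's mechanism is different and is packaged as Lemma~\ref{direct_product_rel_arational}: once a subproduct of $k-1\ge 2N-5$ nonabelian free factors virtually lies in the homothetic stabilizer of $T$, one passes to the isometric stabilizer and invokes Proposition~\ref{prop:stab-arat-out} to get a virtually centralized $\mathbb{Z}^3$; this yields a free abelian subgroup of $\Out(F_N)$ of rank $(2N-5)+3=2N-2$, contradicting the bound $2N-3$ on abelian ranks. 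The contradiction is with abelian rank, not with maximality of $\calf$.

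There is also a slip in the nonseparating case. The quotient of $\Stab^0(S)$ by the full twist group $T\cong F_{N-1}\times F_{N-1}$ is $\Out(F_{N-1})$, not $\Aut(F_{N-1})$; with the quotient you wrote, Lemma~\ref{direct-product-exact-seq} only gives $\pr\le 2+(2N-5)=2N-3$, which is too weak. The paper instead applies Lemma~\ref{direct-product-exact-seq} to the sequence $1\to F_{N-1}\to\Stab^0(S)\to\Aut(F_{N-1})\to 1$ of Lemma~\ref{automorphic-lifts}, obtaining $\pr\le 1+(2N-5)=2N-4$ directly. Finally, you do not address the base case $N=2$ (Lemma~\ref{lemma:aut-f2}), which needs its own short argument.
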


We will prove Theorem~\ref{direct_products_of_free_groups} by induction on the rank. The base case where $N=2$ is given by the following lemma.

\begin{lemma}\label{lemma:aut-f2}
The group $\aut(F_2)$ does not contain a direct product of two nonabelian free groups.
\end{lemma}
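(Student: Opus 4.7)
The plan is to use the short exact sequence $1 \to \Inn(F_2) \to \Aut(F_2) \xrightarrow{\pi} \Out(F_2) \to 1$, where $\Inn(F_2) \cong F_2$ (because $F_2$ is centerless) and $\Out(F_2) \cong \mathrm{GL}_2(\mathbb{Z})$ is virtually free. Suppose for contradiction that $A \times B \subseteq \Aut(F_2)$ with $A, B$ both nonabelian free. In a virtually free group, the centralizer of any nonabelian subgroup is finite: pass to a finite-index free subgroup; two commuting subgroups of a free group lie in a common cyclic subgroup, hence both are cyclic if they commute. Applied in $\mathrm{GL}_2(\mathbb{Z})$ to the commuting pair $\pi(A), \pi(B)$, at least one of them is virtually cyclic; without loss of generality $\pi(A)$ is virtually cyclic.

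After passing to a finite-index subgroup of $A$, I may assume $A_0 := A \cap \Inn(F_2)$ satisfies $A/A_0$ cyclic. Since $A$ is nonabelian free with cyclic quotient, $A_0$ is nontrivial, and being a normal subgroup of a nonabelian free group it must be noncyclic: a nontrivial cyclic normal subgroup would lie in the (trivial) center. Hence $A_0$ is nonabelian free, and via $\Inn(F_2) \cong F_2$ identifies with a nonabelian free subgroup of $F_2$. The centralizer of $\ad_g$ in $\Aut(F_2)$ equals the pointwise fixer of $g$ in $F_2$ (again using centerlessness of $F_2$), so since $B$ centralizes $A_0$, every $\beta \in B$ fixes $A_0$ pointwise, and $B \subseteq \mathrm{Fix}_{\Aut(F_2)}(A_0)$.

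The key observation is that the restriction $\pi \colon \mathrm{Fix}_{\Aut(F_2)}(A_0) \to \Out(F_2)$ is injective: its kernel is $\{\ad_g : g \in C_{F_2}(A_0)\}$, and the centralizer of a nonabelian subgroup in a free group is trivial. Hence $\pi(B) \cong B$ is a nonabelian free subgroup of $\mathrm{GL}_2(\mathbb{Z})$. Since $\pi(A)$ centralizes the nonabelian subgroup $\pi(B)$, the finiteness of centralizers of nonabelian subgroups in $\mathrm{GL}_2(\mathbb{Z})$ forces $\pi(A)$ to be finite. In particular $A_0 = A \cap \Inn(F_2)$ has finite index in $A$, and remains nonabelian free.

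To conclude, I analyze the image of $A_0 \subseteq F_2$ under the abelianization $F_2 \to F_2^{ab} \cong \mathbb{Z}^2$. If this image has rank $2$, every $\pi(\beta) \in \mathrm{GL}_2(\mathbb{Z})$ fixes a finite-index sublattice pointwise, hence $\pi(\beta) = I$; then $B \subseteq \Inn(F_2) \cong F_2$, and $A, B$ are commuting nonabelian subgroups of a free group, contradicting the fact that such subgroups lie in a common cyclic subgroup. If the image has rank $1$, then $\pi(B)$ lies in the virtually cyclic stabilizer of a primitive vector in $\mathrm{GL}_2(\mathbb{Z})$, contradicting its nonabelianness. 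If the image is trivial, then $A_0 \subseteq [F_2, F_2]$; I iterate the abelianization argument along the lower central series. By residual nilpotence of $F_2$ together with $A_0 \ne 1$, there is some $n \geq 3$ at which $A_0 \cap \gamma_n(F_2)$ has nontrivial image in $\gamma_n(F_2)/\gamma_{n+1}(F_2)$; the stabilizer of a nonzero vector under the $\mathrm{GL}_2(\mathbb{Z})$-representation on this graded piece of the free Lie algebra (e.g.\ for $n = 3$, the representation is $M \mapsto \det(M)\cdot M$ on $\mathbb{Z}^2$, whose point stabilizers are virtually cyclic) then forces $\pi(B)$ to be virtually cyclic, giving the contradiction. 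The hard part is making precise this last iteration through the Johnson-type filtration; the rank-$2$ and rank-$1$ cases are straightforward.
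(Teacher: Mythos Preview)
Your setup through the penultimate paragraph is essentially the paper's argument: use the short exact sequence with $\Inn(F_2)\cong F_2$ and $\Out(F_2)$ virtually free, force one factor (say $A$) to meet $\Inn(F_2)$ in a nonabelian subgroup $A_0$, and deduce that $B$ fixes $A_0$ pointwise and embeds into $\Out(F_2)$. The divergence is in the endgame.

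The paper finishes in one line using the identification $\Out(F_2)\cong\mathrm{MCG}(\Sigma_{1,1})$: an automorphism of $F_2$ with nonabelian fixed subgroup cannot be exponentially growing (a pseudo-Anosov fixes only peripheral conjugacy classes, so any lift has cyclic fixed subgroup), hence the image of $B$ in $\Out(F_2)$ contains no hyperbolic element of $\mathrm{GL}_2(\mathbb{Z})$ and is therefore virtually cyclic.

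Your alternative route through the graded pieces $L_n=\gamma_n(F_2)/\gamma_{n+1}(F_2)$ has a real gap, which you yourself flag. The assertion that for $n\ge 3$ the $\mathrm{GL}_2(\mathbb{Z})$-stabilizer of any nonzero vector in $L_n$ is virtually cyclic is \emph{false}. For instance, a character computation gives $L_6\cong V_4\oplus V_2\oplus V_0$ as $\mathrm{SL}_2$-modules (dimension $9=5+3+1$), and a nonzero vector in the $V_0$ summand is fixed by all of $\mathrm{SL}_2(\mathbb{Z})$ (under $\mathrm{GL}_2(\mathbb{Z})$ it transforms by a power of $\det$, so the stabilizer is still $\mathrm{SL}_2(\mathbb{Z})$). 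Thus if the first $n\ge 3$ at which $A_0\cap\gamma_n$ has nonzero image in $L_n$ happens to be such an $n$, and the image lands in the invariant summand, your argument gives no constraint on $\pi(B)$. You would need an additional argument that the image of $A_0$ cannot live entirely in these invariant pieces for all $n$; this is plausible but not established, and is considerably more work than the MCG observation.
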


\begin{proof}
Suppose that $H=H_1 \times H_2$ is a direct product of two nonabelian free groups in $\aut(F_2)$. As both the kernel and quotient are virtually free in the exact sequence $1 \to F_2 \to \Aut(F_2) \to \Out(F_2) \to 1$, the image of some factor ($H_1$, say) is finite in $\out(F_2)$ and $H_1$ intersects $F_2$ in a nonabelian subgroup. It follows that the other factor $H_2$ embeds in $\out(F_2)$ under the quotient map. If $\phi$ is an automorphism in $H_2$ then $\phi$ commutes with every $\ad_x \in H_1$. This implies that $\phi(x)=x$ for every $x\in H_1$. In particular, $\phi$ has a nonabelian fixed subgroup.  
By using the identification of $\out(F_2)$ with the mapping class group of a once-holed torus, we see that $H_2$ cannot contain any exponentially growing elements and either $H_2$ is finite or virtually cyclic and generated by a power of a Dehn twist, which is a contradiction.
\end{proof}

Our proof of Theorem~\ref{direct_products_of_free_groups} relies on three more lemmas.

\begin{lemma}\label{direct-product-exact-seq}
Let $1 \to K \to G \to Q \to 1$ be an exact sequence of groups. Then $\pr(G)\le\pr(K)+\pr(Q)$.
\end{lemma}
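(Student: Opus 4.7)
The plan is to start with a direct product $H = F_1 \times \cdots \times F_n$ of nonabelian free groups inside $G$ and partition its factors into two classes according to how they meet $K$: one class will produce a direct product of nonabelian free groups inside $K$, the other a direct product of nonabelian free groups inside $Q$. Summing the two contributions will then yield $n \le \pr(K) + \pr(Q)$.

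The key elementary fact I intend to invoke is that a nonabelian free group has no nontrivial normal cyclic subgroup: a generator of such a subgroup would be conjugated to itself or its inverse by every element of the group, so the ambient group would be a union of two cosets of a cyclic centraliser and hence virtually cyclic, contradicting the nonabelian-free hypothesis. Since $K$ is normal in $G$, each intersection $F_i \cap K$ is normal in $F_i$ and is free (as a subgroup of a free group), so by the above fact it is either trivial or free of rank at least two; in the latter case it contains a nonabelian free subgroup.

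Accordingly, let $I = \{i : F_i \cap K \ne 1\}$ and $J = \{1,\dots,n\} \setminus I$. For each $i \in I$ I would pick a nonabelian free subgroup $F_i' \subseteq F_i \cap K$. Since the ambient $F_i$ form a direct product inside $G$, the subgroups $F_i' \subseteq F_i$ also form a direct product, and this product lies entirely in $K$; hence $|I| \le \pr(K)$.

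For $i \in J$ the projection $p \colon G \to Q$ restricts to an embedding of $F_i$, giving commuting nonabelian free subgroups $p(F_i) \subseteq Q$. The main step, and the one place where care is needed, is to verify that these actually form an internal direct product in $Q$: any element $x \in p(F_i) \cap \prod_{j \in J \setminus \{i\}} p(F_j)$ can be written as a product of elements drawn from the various $p(F_j)$ with $j \ne i$, each of which commutes with all of $p(F_i)$; hence $x$ centralises $p(F_i)$, and, lying itself in $p(F_i)$, it belongs to the centre of this nonabelian free group, forcing $x = 1$. This gives $|J| \le \pr(Q)$, and adding the two inequalities yields $n \le \pr(K) + \pr(Q)$. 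The only mildly delicate ingredient is this final direct-product verification in $Q$; everything else amounts to bookkeeping around the dichotomy that a normal subgroup of a nonabelian free group is either trivial or contains a nonabelian free subgroup.
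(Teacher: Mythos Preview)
Your proof is correct and follows essentially the same approach as the paper's. Both arguments partition the factors according to how they meet $K$ and bound the two pieces separately. The only cosmetic difference is that the paper phrases the dichotomy via the projection of $H\cap K$ to each factor and extracts the nonabelian intersection with a commutator calculation $(gh_1g^{-1}h_1^{-1},1,\dots,1)$, whereas you invoke the equivalent fact that a nontrivial normal subgroup of a nonabelian free group is itself nonabelian; your explicit verification that the images $p(F_i)$ form an internal direct product in $Q$ is a detail the paper leaves implicit.
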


\begin{proof}
Let $H=H_1 \times H_2 \times \cdots \times H_k$ be a direct product of nonabelian free groups in $G$, and let $H_K=H \cap K$ be the normal subgroup of $H$ contained in the kernel. If $x=(h_1, h_2,\ldots,h_k)$ belongs to $H_K$, then by normality, so does $y=(gh_1g^{-1},h_2,\ldots,h_k)$ for every $g \in H_1$. Then $yx^{-1}=(gh_1g^{-1}h_1^{-1},1,1,\ldots,1)$ is also in $H_K$. This calculation implies that if the projection of $H_K$ to some factor is nontrivial then $H_K$ intersects that factor in a nonabelian free group. Hence there can be at most $\pr(K)$ factors with nontrivial projections of $H_K$, and the direct product of the remaining $k - \pr(K)$ factors embed in $Q$. This implies that $k - \pr(K) \leq \pr(Q)$  and the result follows.
\end{proof}

\begin{lemma}\label{automorphic-lifts} 
If $S$ is a one-edge nonseparating free splitting of $F_N$ then $\stab(S)$ has an index 2 subgroup $\stab^0(S)$ with a split-exact sequence \[ 1\to F_{N-1}  \to \stab^0(S) \to \Aut(F_{N-1}) \to 1.\] If $S$ is a one-edge separating free splitting of $F_N$ corresponding to $A\ast B$ then $\stab(S)$ has a  subgroup $\stab^0(S)$ of index at most 2 such that \[ \stab^0(S) \cong \Aut(A) \times \Aut(B). \] If $S$ is a free splitting of $F_N$ such that $S/F_N$ is a two-edge loop with vertex groups $A$ and $B$, then $\stab(S)$ has a  subgroup $\stab^0(S)$ of index at most 4 with a split-exact sequence \[ 1\to A \times B \to \stab^0(S) \to \Aut(A) \times \Aut(B)\to 1. \]
\end{lemma}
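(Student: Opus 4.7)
The plan is to treat all three cases uniformly by defining $\stab^0(S)$ to be the kernel of the natural map $\stab(S)\to\Aut(S/F_N)$ given by the action on the underlying quotient graph. In Case~1 this graph is a single loop with automorphism group $\mathbb{Z}/2\mathbb{Z}$, and the flip is always realized in $\stab(S)$ by the automorphism fixing $F_{N-1}$ pointwise and sending $t\mapsto t^{-1}$, so the index is exactly $2$. In Case~2 the quotient graph is a segment, whose automorphism group has order at most $2$ (swap of $A$ and $B$, realizable only when $A\cong B$), giving index at most $2$. In Case~3 the quotient has two vertices joined by two parallel edges, whose automorphism group is $(\mathbb{Z}/2\mathbb{Z})^2$ (swap of vertices times swap of edges), giving index at most $4$.

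For the split exact sequence, in each case I would realize $\stab^0(S)$ as the bijective image in $\Out(F_N)$ of an explicit subgroup $P_0\subseteq\Aut(F_N)$ of normalized representatives. In Case~1, set $P_0=\{\alpha\in\Aut(F_N)\colon\alpha(F_{N-1})=F_{N-1},\ \alpha(t)=tw\text{ for some }w\in F_{N-1}\}$. In Case~2, set $P_0=\{\alpha\colon\alpha(A)=A,\ \alpha(B)=B\}$. In Case~3, set $P_0=\{\alpha\colon\alpha(A)=A,\ \alpha(B)=B,\ \alpha(t)=atb\text{ for some }a\in A, b\in B\}$. In each case $P_0$ is a subgroup of $\Aut(F_N)$, and restriction to the vertex subgroups is a surjection from $P_0$ onto $\Aut(F_{N-1})$ (respectively $\Aut(A)\times\Aut(B)$), split by extending any vertex-group automorphism via $\alpha(t)=t$. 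A direct computation of the composition law identifies the kernel with $F_{N-1}$ in Case~1, with the trivial group in Case~2, and with $A\times B$ in Case~3 (the composition $(a_1,b_1)\cdot(a_2,b_2)=(a_2a_1,b_1b_2)$ gives an $A^{\mathrm{op}}\times B$ which is abstractly isomorphic to $A\times B$).

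The two pieces of technical work are surjectivity and injectivity of the induced map $P_0\to\Out(F_N)$ onto $\stab^0(S)$. For surjectivity, given $\phi\in\stab^0(S)$ I would pick an $\alpha$-equivariant isomorphism $I_\alpha$ of the Bass--Serre tree $S$ and normalize $\alpha$ by inner automorphisms so that $I_\alpha$ fixes the base vertex $\tilde u$ (yielding $\alpha(A)=A$); in Cases~2 and~3 a further inner adjustment arranges $I_\alpha(\tilde v)=\tilde v$ as well. The key geometric point in Cases~2 and~3 is that $I_\alpha(\tilde v)$ automatically lies in the $A$-orbit of $\tilde v$, since it is an endpoint of $I_\alpha(\tilde e_1)$ whose other endpoint $\tilde u$ is already fixed, so the required inner automorphism can be taken in the still-available normalizer of $A$. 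Reading off the action of $I_\alpha$ on the remaining stable letter then yields the prescribed form of $\alpha(t)$. For injectivity, two representatives in $P_0$ defining the same outer class differ by some $\ad_c$ with $c\in N_{F_N}(A)=A$ and, in Cases~2--3, also $c\in N_{F_N}(B)=B$; this forces $c\in A\cap B=\{1\}$ in Cases~2 and~3, while in Case~1 one has $c\in F_{N-1}$, and comparing $\alpha(t)=tw$ with $\ad_c\alpha(t)=ctwc^{-1}$ in Britton normal form (or using $Z(F_{N-1})=\{1\}$) forces $c=1$.

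The main obstacle I expect is the Bass--Serre normalization step in Case~3: after fixing both $\tilde u$ and $\tilde v$, one needs to show that the remaining freedom for $I_\alpha$ on the edge orbit of $\tilde e_2$ is parameterized exactly by a pair $(a,b)\in A\times B$ giving $\alpha(t)=atb$, which requires a careful local enumeration of the $e_2$-type edges based at $\tilde u$ and ending in the $B$-orbit of $t\tilde v$.
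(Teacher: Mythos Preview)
Your proposal is correct and follows essentially the same strategy as the paper: define $\stab^0(S)$ as the kernel of the action on $S/F_N$, lift it to an explicit subgroup of $\Aut(F_N)$, and read off the exact sequence from restriction to vertex groups together with the action on the stable letter. The paper in fact cites the first two cases from the literature and only writes out Case~3.

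The one place where the paper is slightly cleaner is exactly the spot you flag as your ``main obstacle''. Rather than normalizing in two steps (first fix $\tilde u$, then fix $\tilde v$, then analyse the residual freedom on the second edge orbit), the paper takes $\tilde K_e$ to be the stabiliser in $\tilde K\subseteq\Aut(F_N)$ of a single \emph{edge} $e$ of $S$. Since edge stabilisers in a free splitting are trivial, $\tilde K_e\to\stab^0(S)$ is immediately injective, and surjectivity is one line (any representative can be composed with an inner automorphism to fix $e$). Both endpoints $v_A,v_B$ are then fixed for free, and the kernel computation reduces to tracking where the \emph{other} edge $e'$ at $v_A$ and its translate $te'$ go: $I_\alpha(e')=ae'$, $I_\alpha(te')=bte'$, forcing $\alpha(t)=bta^{-1}$. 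This packages your two normalisation steps and the ``local enumeration of $e_2$-type edges'' into a single edge-stabiliser argument, but the content is the same.
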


\begin{proof}
This will be familiar to some readers. The proofs of the first two parts can be found in Section 1.4 of \cite{GS}, for example. In short, stabilizers of free splittings in $\Out(F_N)$ have very nice \emph{automorphic lifts} to subgroups of $\aut(F_N)$. We give a proof of the third statement along these lines. Let $S$ be a two-edge loop splitting of $F_N$. Let $e$ be an edge of $S$ with endpoints $v_A$ and $v_B$ with stabilizers $A$ and $B$, respectively. The subgroup $\stab^0(S)$ which acts trivially on the quotient graph $S/F_N$ is of index at most 4. The preimage $\tilde K$ of $\stab^0(S)$ in $\Aut(F_N)$ acts on the tree $S$. If $\tilde K_e$ is the stabilizer of $e$ in $\tilde K$, then the map $\tilde K_e \to \stab^0(S)$ induced by the map $\Aut(F_N) \to \Out(F_N)$ is an isomorphism (it is injective as no nontrivial inner automorphism fixes $e$ and is surjective as every element of $\stab^0(S)$ has a representative in $\Aut(F_N)$ fixing $e$ as the action on $S/F_N$ is trivial). There is a natural map from $\tilde K_e$ to $\Aut(A) \times \Aut(B)$ given by restriction of an automorphism to its action on the vertex groups. We claim that the kernel of this map is isomorphic to $A \times B$. Indeed, if $e'$ is an edge in a distinct orbit to $e$ at $v_A$ (i.e. representing the other edge in the loop) and $t$ is an element taking $e'$ to an edge $te'$ adjacent to $v_B$ then $F_N \cong A\ast B \ast \langle t \rangle $. Suppose $\alpha \in \tilde K_e$, and let $I_\alpha$ be the induced action on the tree. Then $I_\alpha(e')=ae'$ for some $a \in A$ and $I_\alpha(te')=bte'$ for some $b \in B$ and $I_\alpha(te')=\alpha(t)I_\alpha(e')=\alpha(t)ae'$, which implies that $\alpha(t)a=bt$ and $\alpha(t)=bta^{-1}$. This gives a way of identifying the kernel of the map to $\Aut(A)\times \Aut(B)$ with $A \times B$. The decomposition $F_N=A\ast B\ast\langle t\rangle$ gives a map from $\Aut(A)\times\Aut(B)$ to $\Aut(F_N)$, showing that the exact sequence is split.
\end{proof}

\begin{lemma}\label{direct_product_rel_arational}
Let $H$ be a direct product of $2N-5$ nonabelian free groups contained in $\out(F_N)$. Then no finite index subgroup of $H$ is contained in the homothetic stabilizer of a relatively arational tree. 
\end{lemma}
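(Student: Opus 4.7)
The plan is to apply Proposition~\ref{prop:stab-arat-out} to manufacture a free abelian subgroup of $\Out(F_N)$ of rank $2N-2$, contradicting the Feighn--Handel bound of $2N-3$ on the rank of any free abelian subgroup of $\Out(F_N)$ \cite{FeH}.

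Suppose for contradiction that some finite-index subgroup of $H$, which we continue to denote by $H=H_1\times\cdots\times H_{2N-5}$, is contained in the homothetic stabilizer of a relatively arational tree $T$. The first step is to reduce to the setting in which $H\subseteq\ia$ and $H$ is contained in the isometric stabilizer of $T$. Passing to $H\cap\ia$ is harmless since $\ia$ has finite index in $\Out(F_N)$: each factor is replaced by a finite-index subgroup of a nonabelian free group, which is again nonabelian free. Next, the stretch-factor homomorphism from the homothetic stabilizer of $T$ to $\mathbb{R}_{>0}$ has kernel the isometric stabilizer, so $[H,H]$ lies in the isometric stabilizer; and since $[H,H]=\prod_i[H_i,H_i]$ and the commutator subgroup of a nonabelian free group is itself a nonabelian free group (of infinite rank), this commutator is still a direct product of $2N-5$ nonabelian free groups, and we may replace $H$ by it.

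Now we would invoke Proposition~\ref{prop:stab-arat-out} to obtain a finite-index subgroup $K\subseteq H$ that centralizes some subgroup $A\cong\mathbb{Z}^3$ of $\Out(F_N)$; the group $K$ is again a direct product of $2N-5$ nonabelian free groups. Since each factor is centerless, $Z(K)=\{1\}$, so $A\cap K\subseteq Z(K)=\{1\}$, and the subgroups $K$ and $A$ generate a copy of $K\times A$ inside $\Out(F_N)$. Choosing an element of infinite order from each of the $2N-5$ free factors of $K$ gives a copy of $\mathbb{Z}^{2N-5}\subseteq K$, and together with $A$ this produces a free abelian subgroup of rank $2N-2$ in $\Out(F_N)$. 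Since $2N-2>2N-3$, this contradicts the Feighn--Handel rank bound and completes the proof.

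The only delicate step is the passage from the homothetic stabilizer to the isometric stabilizer. A finite-index reduction is not available because the quotient is only known to be abelian (a subgroup of $\mathbb{R}_{>0}$), so one must descend to the commutator subgroup; the argument survives precisely because the commutator subgroup of a nonabelian free group is still nonabelian free, so that the product structure of $H$ is preserved. Once $H$ is genuinely inside $\ia$ and inside the isometric stabilizer, Proposition~\ref{prop:stab-arat-out} applies immediately and the remainder of the argument is a rank count.
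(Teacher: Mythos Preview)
Your proof is correct and follows essentially the same route as the paper: pass to the isometric stabilizer, invoke Proposition~\ref{prop:stab-arat-out} to produce a centralized $\mathbb{Z}^3$, and then combine with $\mathbb{Z}^{2N-5}$ coming from the factors to violate the rank bound $2N-3$ on abelian subgroups of $\Out(F_N)$. In fact you are slightly more careful than the paper---you explicitly intersect with $\ia$ before applying the proposition, handle the ``virtually'' in ``virtually centralizes'' by a further finite-index pass, and verify that the $\mathbb{Z}^3$ meets $K$ trivially via $Z(K)=\{1\}$---but these are refinements rather than a different argument.
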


\begin{proof}
Assume towards a contradiction that $H$ contains a finite-index subgroup $H'$ contained in the homothetic stabilizer of a relatively arational tree $T$. Then $H'$ has a morphism onto $\mathbb{R}_+^\ast$ whose kernel $K$ is contained in the isometric stabilizer of $T$, and $K$ also contains a direct product of $2N-5$ nonabelian free groups. Proposition~\ref{prop:stab-arat-out} implies that $K$ centralizes a subgroup of $\Out(F_N)$ isomorphic to $\mathbb{Z}^3$, and this implies that $\Out(F_N)$ contains a free abelian subgroup of rank $2N-2$, a contradiction.  
\end{proof}

\begin{proof}[Proof of Theorem~\ref{direct_products_of_free_groups}]
We argue by induction on $N$. The base case $N=2$ was treated in Lemma~\ref{lemma:aut-f2}. 

Let $H=H_1 \times H_2 \times \cdots \times H_k$ be a subgroup of $\ia$ which is a direct product of $k$ nonabelian free groups, with $k \geq 2N-4$. Let $\mathcal{F}$ be a maximal $H$-invariant free factor system. 

We first assume that $\mathcal{F}$ is nonsporadic, and aim for a contradiction in this case. Using Proposition~\ref{prop:maximal-unbounded}, maximality of $\mathcal{F}$ implies that the group $H$ acts on $\FF=\FF(F_N,\calf)$ with unbounded orbits. Proposition~\ref{product-vs-hyp} then implies that after possibly reordering the factors the subgroup $H'=H_1 \times H_2 \times \cdots \times H_{k-1}$ has a finite orbit in $\partial_\infty\FF$. By Proposition~\ref{prop:fix-boundary}, this implies that $H'$ has a finite-index subgroup that fixes the homothety class of a relatively arational tree, contradicting Lemma~\ref{direct_product_rel_arational}. 

Therefore $\calf$ is sporadic, which implies that $H$ fixes a free splitting of $F_N$. We first assume that $H$ fixes a separating free splitting of $F_N$, which is the Bass--Serre tree of a free product decomposition $F_N=A\ast B$, and aim for a contradiction. Then by the second part of Lemma~\ref{automorphic-lifts} the group $H$ has a finite-index subgroup that embeds into $\Aut(A)\times\Aut(B)$. If both $A$ and $B$ are noncyclic, then by induction we have $\pr(H)\le (2\mathrm{rk}(A)-3)+(2\mathrm{rk}(B)-3)=2N-6$. If $A$ is cyclic, then by induction $\pr(H)\le 2(N-1)-3=2N-5$. In both cases, we have reached a contradiction.

Therefore $H$ fixes a nonseparating free splitting of $F_N$, which is the Bass--Serre tree of a HNN extension $F_N=C\ast$.  By the first part of Lemma~\ref{automorphic-lifts}, the group $H$ has a finite-index subgroup that maps to $\Aut(C)$, with kernel contained in $C$. Using Lemma~\ref{direct-product-exact-seq} and arguing by induction, we deduce that $\pr(H)\le 2(N-1)-3+1=2N-4$.

We have thus proved that $\pr(\Out(F_N))=2N-4$. The result for $\Aut(F_N)$ follows, using the short exact sequence $1\to F_N\to\Aut(F_N)\to\Out(F_N)\to 1$ and Lemma~\ref{direct-product-exact-seq}. 
\end{proof}

We will also need to look at direct products of free groups in $\out(F_N)$ that fix a two-edge loop splitting of $F_N$.

\begin{lemma}\label{lemma:product-in-circle}
Let $N\ge 3$, and let $S$ be a free splitting of $F_N$ such that $S/F_N$ is a two-edge loop.
\\ Then $\pr(\Stab(S))\le 2N-6$.
\end{lemma}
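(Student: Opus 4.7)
The plan is to apply Lemma~\ref{automorphic-lifts} to describe $\Stab(S)$ up to finite index, and then bound $\pr$ using Lemma~\ref{direct-product-exact-seq} together with Theorem~\ref{direct_products_of_free_groups}. Let $A, B$ be the two vertex groups of $S/F_N$, which are free of ranks $a, b$ respectively. Since $F_N \cong A \ast B \ast \langle t \rangle$, where $t$ generates the loop in the underlying graph, we have $a + b = N - 1$; I will treat both $A$ and $B$ as nontrivial (that is $a, b \ge 1$), since otherwise the two-edge loop would have a redundant valence-$2$ vertex with trivial stabilizer and would be canonically collapsible to a one-edge loop.

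The third conclusion of Lemma~\ref{automorphic-lifts} furnishes a finite-index subgroup $\Stab^0(S) \subseteq \Stab(S)$ fitting in a split-exact sequence
$$1 \to A \times B \to \Stab^0(S) \to \Aut(A) \times \Aut(B) \to 1.$$
Applying Lemma~\ref{direct-product-exact-seq} to this sequence, and again to the two natural sequences $1 \to G_1 \to G_1 \times G_2 \to G_2 \to 1$ extracted from each factor, yields
$$\pr(\Stab(S)) = \pr(\Stab^0(S)) \le \pr(A) + \pr(B) + \pr(\Aut(A)) + \pr(\Aut(B)).$$
Since $A, B$ are free, $\pr(F_k) \le 1$ with equality precisely when $k \ge 2$; and Theorem~\ref{direct_products_of_free_groups} gives $\pr(\Aut(F_k)) = 2k - 3$ when $k \ge 2$ and $\pr(\Aut(F_k)) = 0$ otherwise.

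The remaining work is a short case split on $(a, b)$ with $a + b = N - 1$ and $a, b \ge 1$. When both $a, b \ge 2$ one gets $\pr(\Stab(S)) \le 2 + (2a - 3) + (2b - 3) = 2N - 6$; when $a = 1$ and $b \ge 2$ one gets $\pr(\Stab(S)) \le 0 + 1 + 0 + (2(N-2) - 3) = 2N - 6$; and when $a = b = 1$ (so $N = 3$) the bound is $0 = 2N - 6$. The only real subtlety is the exclusion of the trivial-vertex-group case: if one allowed $A = 1$, then $\Stab(S)$ would coincide with the stabilizer of the one-edge loop splitting $F_N = F_{N-1} \ast \mathbb{Z}$ obtained by canonically collapsing the redundant vertex, and by Theorem~\ref{direct_products_of_free_groups} that stabilizer attains $\pr = 2N - 4$. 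So the statement really relies on the two-edge loop carrying two essential (non-redundant) vertices, which is the implicit convention in the paper.
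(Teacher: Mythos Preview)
Your proof is correct and follows essentially the same approach as the paper: both apply Lemma~\ref{automorphic-lifts} to obtain the split-exact sequence for $\Stab^0(S)$, then bound $\pr$ via Lemma~\ref{direct-product-exact-seq} and Theorem~\ref{direct_products_of_free_groups}, with the same case split on the ranks of $A$ and $B$. Your added remark excluding trivial vertex groups is a reasonable clarification of an implicit convention, though not strictly necessary for the argument.
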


\begin{proof}
Let $A$ and $B$ be the vertex groups of $S/F_N$. Then by Lemma~\ref{automorphic-lifts} the group $\Stab(S)$ has a finite index subgroup $\stab^0(S)$ fitting in the exact sequence 
\[ 1\to A \times B \to \stab^0(S) \to \Aut(A) \times \Aut(B)\to 1. \]
Let $k:=\mathrm{rk}(A)$ (so that $\mathrm{rk}(B)=N-k-1$). If both $A$ and $B$ have rank at least $2$, using Theorem~\ref{direct_products_of_free_groups} and Lemma~\ref{direct-product-exact-seq}, we deduce that $\pr(\Stab(S))\le (2k-3)+(2(N-k-1)-3)+2=2N-6$. If $A$ is cyclic and $B$ is noncyclic, we deduce that $\pr(\stab(S))\le 2(N-2)-3+1=2N-6$. If both $A$ and $B$ are cyclic (in rank $N=3$), then $\Stab(S)$ is virtually abelian and the result also holds in this case.
\end{proof}

\section{Twist-rich subgroups of $\ia$}\label{sec:hypotheses}

\emph{In this section, we introduce the notion of \emph{twist-rich} subgroups of $\out(F_N)$, which will be the subgroups to which our methods apply. In particular, we will show that all the subgroups of $\out(F_N)$ mentioned in the introduction are twist-rich. As mentioned previously, to avoid periodic behaviour we work in the finite-index subgroup $\ia$ of $\out(F_N)$.}

\subsection{Definition}

\begin{de}[\textbf{\emph{Twist-rich subgroups of $\ia$}}]
A subgroup $\Gamma$ of $\ia$ is \emph{twist-rich} if it satisfies the following conditions:
\begin{enumerate}[($H_1$)] 
 \item  Given a splitting $S$ of $F_N$ with all edge stabilizers nontrivial, and a vertex $v$ of $S$ such that $G_v$ is finitely generated and the Grushko decomposition of $G_v$ relative to the incident edge groups $\Inc_v$ is nonsporadic:
\begin{enumerate}[(a)]
\item If $(G_v,\Inc_v)$ is not isomorphic to $(F_3,\{\mathbb{Z},\mathbb{Z},\mathbb{Z}\})$, then there is a blowup $S'$ of $S$ by a two-edge splitting of $(G_v,\Inc_v)$ with edge groups isomorphic to $\mathbb{Z}$ and root-closed, such that the group of twists about these edges is isomorphic to $\mathbb{Z}^2$ and $\G$ contains a finite-index subgroup of this group of twists.
\item If $(G_v,\Inc_v)$ is isomorphic to $(F_3,\{\mathbb{Z},\mathbb{Z},\mathbb{Z}\})$, then there is a blowup $S'$ of $S$ by a one-edge splitting of $(G_v,\Inc_v)$ with edge groups isomorphic to $\mathbb{Z}$ and root-closed, such that $\G$ contains a finite index subgroup of the infinite cyclic group of twists about this edge. 
\end{enumerate}
\item For every free splitting $S$ and every half-edge $e$ incident on a vertex $v$ with nonabelian stabilizer $G_v$, the intersection of $\Gamma$ with the group of twists about $e$ is nonabelian and viewed as a subgroup of $G_v$, it is not elliptic in any $\zmax$ splitting of $G_v$. 
\end{enumerate}
\end{de}

Let us provide some intuition for this definition. Hypothesis~$(H_1)$ has already appeared in Section~\ref{sec:later} and is used in the study of $\Gamma$-stabilizers of relatively arational trees. Hypothesis~$(H_2)$ -- which we believe is the most crucial of the two -- is here to ensure that $\Gamma$ intersects the stabilizer of a free splitting $S$ in a large enough subgroup. Importantly for us, $(H_2)$ implies that stabilizers of one-edge nonseparating splittings in $\G$ contain direct products of nonabelian free groups coming from twists. We take advantage of these direct products of free groups to give an algebraic characterization of $\Gamma$-stabilizers of one-edge nonseparating free splittings. Furthermore, we will see in Section~\ref{sec:twist-rich-unique-splitting} that the large group of twists can be combined with the methods of Cohen--Lustig from Lemma~\ref{twist-compatible} to show that the $\Gamma$-stabilizer of a one-edge nonseparating splitting does not fix any other free splitting.

Notice that if $\Gamma\subseteq\Gamma'$ are subgroups of $\ia$, and $\Gamma$ is twist-rich, then $\Gamma'$ is twist-rich. We shall see later that if $\Gamma'$ is twist-rich and $\Gamma$ is a  finite-index subgroup of $\Gamma'$, then $\Gamma$ is also twist rich. 

\subsection{Properties of $\zmax$ splittings and $\zmax$-factors} 

A \emph{$\zmax$-factor} of $F_N$ is a vertex stabilizer of a $\zmax$ splitting. It is \emph{proper} if it is nontrivial and not equal to $F_N$. Such subgroups appear naturally in the context of fixed elements of automorphisms, for instance: 

\begin{proposition}[{\cite[Theorem~7.14]{GL-aut}}] \label{p:zmax_rigid}
Let $g \in F_N$. Then the subgroup $\Out(F_N;\langle g \rangle)$ of automorphisms which preserve $\langle g \rangle$ up to conjugacy is infinite if and only if $g$ is contained in a proper $\zmax$-factor of $F_N$.
\end{proposition}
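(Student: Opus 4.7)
For the direction ($\Leftarrow$), my plan is to exhibit explicit Dehn twists preserving $[g]$. Assume $g$ lies in a proper $\zmax$-factor $A$; then $A$ is a vertex stabilizer of some $\zmax$ splitting $S_0$ of $F_N$. I would collapse all but one orbit of edges of $S_0$, choosing the surviving edge so that $A$ remains contained in a vertex stabilizer $V$ of the resulting one-edge splitting $S$. Since $A$ is proper, $S$ is nontrivial, and its edge stabilizer is either trivial or cyclic and root-closed. By Proposition~3.1 of Levitt (cited earlier in the paper), the twist group about $S$ is infinite cyclic when the edge group is cyclic; when the edge group is trivial, partial conjugations by a nontrivial vertex group give an infinite twist group. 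Each twist acts as the identity on one vertex group and as an inner automorphism of $F_N$ on the other, so it preserves the $F_N$-conjugacy class of every vertex stabilizer, and in particular preserves $[g]$. This produces an infinite subgroup of $\Out(F_N;\langle g\rangle)$.

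For ($\Rightarrow$), I would run a Bestvina--Paulin style limiting argument. Fix a basepoint $R_0\in CV_N$. If $\Out(F_N;\langle g\rangle)$ is infinite, pick a sequence $(\Phi_n)$ of distinct elements in $\Out(F_N;\langle g\rangle)$. Since $\Out(F_N)$ acts on $CV_N$ with finite point stabilizers, the orbit $(\Phi_n R_0)$ is unbounded. Projectivizing and using the compactness of the space of very small projectivized $F_N$-trees (Cohen--Lustig, Bestvina--Feighn), I would extract a nontrivial limit tree $T_\infty$. Because each $\Phi_n$ preserves $\langle g\rangle$ up to conjugacy, the translation length of $g$ on $\Phi_n R_0$ is the constant $\ell_{R_0}(g)$, while the rescaling factor tends to infinity; hence $g$ is elliptic in $T_\infty$.

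From the very small tree $T_\infty$ with $\langle g\rangle$ elliptic, I would extract a nontrivial simplicial $\zmax$ splitting of $F_N$ in which $g$ remains elliptic; the vertex stabilizer containing $g$ would then be a proper $\zmax$-factor. If $T_\infty$ is itself simplicial this is immediate, but in general one invokes the structure theory of very small $F_N$-trees (Bestvina--Feighn, Guirardel) to decompose $T_\infty$ as a graph of actions whose simplicial skeleton has trivial or maximal cyclic edge stabilizers, with $g$ fixing a vertex by ellipticity. The main obstacle is precisely this last step: extracting such a skeleton and controlling how the non-simplicial pieces (axial, Levitt, surface-type components) collapse, so as to produce a genuine $\zmax$ splitting in which $g$ is still elliptic and in a proper vertex stabilizer. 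This is the technical heart of the proof and relies on the deep $\mathbb{R}$-tree machinery developed by Bestvina--Feighn and Guirardel--Levitt in their treatment of the outer space of a free product.
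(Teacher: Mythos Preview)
The paper does not give its own proof of this proposition: it is stated with attribution to \cite[Theorem~7.14]{GL-aut} and used as a black box. There is therefore nothing in the paper to compare your argument against.

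That said, your outline is a reasonable reconstruction of how such a statement is proved. The direction $(\Leftarrow)$ is correct and essentially complete: twists about a one-edge $\zmax$ collapse act as the identity (up to global conjugation) on each vertex group, so they preserve the conjugacy class of $g$ and give an infinite subgroup of $\Out(F_N;\langle g\rangle)$.

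For $(\Rightarrow)$, the Bestvina--Paulin limiting argument is the standard route, and your translation-length computation correctly shows $g$ is elliptic in the limiting very small tree $T_\infty$. You are right that the remaining work is to extract from $T_\infty$ a genuine simplicial $\zmax$ splitting in which $g$ stays elliptic. This is not a gap in the sense of a wrong idea, but it is where all the content lies, and your sketch is vague here. Concretely: if $T_\infty$ is already simplicial the very small condition gives root-closed cyclic edge groups and you are done; otherwise one passes to the Levitt decomposition of $T_\infty$ as a graph of actions and takes its skeleton, but one must then verify that the skeleton is nontrivial and that its edge groups are trivial or maximal cyclic (point stabilizers in the dense-orbit pieces need not be cyclic, so some care is needed in choosing which edges survive). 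Guirardel--Levitt handle exactly this kind of extraction in the cited paper, and their argument is what your last paragraph is gesturing at.
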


% This result should not be surprising to readers familiar with the Bestvina--Paulin construction, which for a hyperbolic group $G$ with an infinite set of outer automorphisms builds a splitting of $G$ over a virtually cyclic group. The above is a relative version: if an infinite set of automorphisms fixes the conjugacy class of $g$ then we can ensure that $g$ is elliptic in the associated splitting (there is more work to show that such a splitting is $\zmax$).\Ccom{actually, in our setting, a limiting tree is in the closure of outer space, no?}
We outline some basic facts about $\zmax$-factors below. 

\begin{proposition}\label{prop:zmax_factors} $\zmax$-factors satisfy the following properties. 
\begin{enumerate}
 \item There exists $g \in F_N$ which is not contained in any proper $\zmax$-factor of $F_N$.
 \item $\zmax$-factors of $F_N$ satisfy the bounded ascending chain condition. Explicitly, every strictly ascending chain $G_1 \subsetneq G_2 \subsetneq \cdots \subsetneq G_k $ of $\zmax$-factors of $F_N$ has size $k\le 2N$.
 \item If a subgroup $K \subseteq F_N$ is not contained in any proper $\zmax$-factor of $F_N$ and $P$ is either finite index in $K$ or a nontrivial normal subgroup of $K$, then $P$ is not contained in any proper $\zmax$-factor of $F_N$. 
 \item A subgroup $K \subseteq F_N$ is contained in a proper $\zmax$-factor of $F_N$ if and only if every element of $K$ is contained in a proper $\zmax$-factor.
\end{enumerate}
\end{proposition}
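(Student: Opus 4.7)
The plan is to address the four assertions in order, with (4) being the real technical obstacle.

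For (1), by Proposition~\ref{p:zmax_rigid} it suffices to exhibit some $g\in F_N$ with $\Out(F_N;\langle g\rangle)$ finite. Such elements are classical: a sufficiently generic element of $F_N$ (e.g.\ a long random Whitehead-reduced word in a basis) has trivial $\Out(F_N)$-stabilizer of its conjugacy class; alternatively, one can verify directly via Whitehead's algorithm that specific elements like $[x_1,x_2][x_3,x_4]\cdots$ have this property.

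For (2), the plan is to exhibit a strictly monotone complexity $c(G)\in\{0,\ldots,2N-1\}$ on $\zmax$-factors. Given a $\zmax$-factor $G$, fix a $\zmax$-splitting of $F_N$ having $G$ as a vertex group, and set $c(G)=2\,\mathrm{rk}(G)-\varepsilon(G)$, where $\varepsilon(G)$ is a count of the root-closed cyclic peripheral subgroups induced at $G$ by the incident edges. Given $G_1\subsetneq G_2$ both $\zmax$-factors, realize them simultaneously as vertex groups of a common refinement, obtained by blowing up the $G_2$-vertex by a $\zmax$-splitting of $G_2$ having $G_1$ as a vertex. A Bass--Serre/Euler characteristic computation then forces $c(G_1)<c(G_2)$: either $\mathrm{rk}(G_1)<\mathrm{rk}(G_2)$, or the ranks coincide and the peripheral count strictly decreases. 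The overall bound $c(G)<2N$ comes from the standard accessibility bound for reduced $\zmax$-splittings of $F_N$.

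For (3), let $G$ be a proper $\zmax$-factor containing $P$, with associated $\zmax$-splitting tree $T$, and let $v\in T$ be the vertex fixed by $G$. If $K$ is noncyclic, then so is $P$: a finite-index subgroup of a noncyclic free group is noncyclic, and a nontrivial normal subgroup of a noncyclic free group is nonabelian. For every $k\in K$, the intersection $P\cap kPk^{-1}$ is nonabelian (finite-index in $P$ in the finite-index case; equal to $P$ in the normal case) and fixes the arc $[v,kv]\subseteq T$; since arc stabilizers in $T$ are cyclic, $kv=v$ and $k\in G$, contradicting that $K$ is not in any proper $\zmax$-factor. In the cyclic case, uniqueness of roots in $F_N$ gives $\Out(F_N;\langle c^n\rangle)=\Out(F_N;\langle c\rangle)$ directly, so Proposition~\ref{p:zmax_rigid} handles both the finite-index and nontrivial-normal subcases at once.

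For (4), the ``if'' direction is trivial, so the content is the converse, which I plan to argue via the contrapositive: assume $K$ is not contained in any proper $\zmax$-factor and produce $k\in K$ that is not in any proper $\zmax$-factor. Using (2) and (3) one reduces to the case when $K$ is finitely generated, via a chain-condition argument (the set of proper $\zmax$-factors of $F_N$ containing a fixed finite subset is controlled by the complexity bound of (2)). With $K$ finitely generated and not contained in any proper $\zmax$-factor, consider the $\zmax$-JSJ tree $T_{\mathrm{JSJ}}$ of $F_N$: if every element of $K$ were elliptic in $T_{\mathrm{JSJ}}$, Serre's theorem would force $K$ itself to fix a vertex, contradicting the hypothesis; so some $k\in K$ is hyperbolic in $T_{\mathrm{JSJ}}$. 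The main obstacle I expect is the final promotion of ``hyperbolic in the $\zmax$-JSJ'' to ``not in any proper $\zmax$-factor at all'' --- a priori $k$ could still be elliptic in some non-JSJ $\zmax$-splitting, and ruling this out requires a careful analysis of the flexible vertex stabilizers of the $\zmax$-JSJ for free groups and how they interact with $\zmax$-factors. This last step is the real technical content of the proposition.
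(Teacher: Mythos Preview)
Your approaches to (1) and (3) are essentially the paper's. The substantive differences are in (2) and (4).

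For (2), the paper takes a completely different and much shorter route: every $\zmax$-factor is the maximal fixed subgroup of some automorphism of $F_N$ (twist about all edges incident to that vertex, as in Cohen--Lustig), and Martino--Ventura's chain condition on fixed subgroups \cite{MV} then gives the bound $2N$ directly. Your complexity $c(G)=2\,\mathrm{rk}(G)-\varepsilon(G)$ is not obviously well-defined, since $\varepsilon(G)$ depends on the particular $\zmax$ splitting realizing $G$ as a vertex group, and the promised Bass--Serre computation showing strict monotonicity is left as a sketch. These issues may be fixable, but the fixed-subgroup argument sidesteps them entirely.

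For (4), the gap you correctly identify---promoting ``hyperbolic in the JSJ'' to ``hyperbolic in every $\zmax$ splitting''---is exactly where the paper's argument diverges from yours, and the paper's resolution avoids JSJ theory altogether. After reducing to a finitely generated $K'\subseteq K$ not contained in any proper $\zmax$-factor (as you do), the paper applies Part~(1) \emph{to $K'$ itself}, viewed as a free group in its own right: this produces $g\in K'$ not contained in any proper $\zmax$-factor of $K'$. Now given an arbitrary $\zmax$ splitting $S$ of $F_N$, the minimal $K'$-invariant subtree $S_{K'}$ is a $\zmax$ splitting of $K'$ (edge groups are intersections with $K'$, hence trivial or cyclic, and root-closedness passes to subgroups by uniqueness of roots in free groups). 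Since $g$ lies in no proper $\zmax$-factor of $K'$, it is hyperbolic in $S_{K'}$, hence in $S$. As $S$ was arbitrary, $g$ lies in no proper $\zmax$-factor of $F_N$. The point is that ``minimal subtree of a $\zmax$ splitting is a $\zmax$ splitting'' works uniformly over all $\zmax$ splittings at once, which is precisely what the JSJ approach struggles to deliver.
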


\begin{proof}
By Proposition~\ref{p:zmax_rigid}, if only finitely many automorphisms preserve the conjugacy class of an element, then this element is not contained in a proper $\zmax$-factor. The existence of such an element is a consequence of Whitehead's algorithm (\cite{Whi}, see also \cite{Sta}). For instance, one can take $g=x_1^3x_2^3\cdots x_N^3$ if $x_1, x_2, \ldots x_N$ is a basis of $F_N$. 

For the ascending chain condition, every $\zmax$-factor is a maximal fixed subgroup of an automorphism (e.g. one obtained by twisting about all adjacent edges in a splitting where this factor is a vertex \cite{CL2}). By \cite{MV}, any strictly ascending chain of fixed subgroups has length at most $2N$.

For Part 3, the conclusion is clear if $K$ is cyclic, so we can assume it is not. As every finite index subgroup of $K$ contains a nontrivial normal subgroup  of $K$ we may focus on the case where $P$ is a nontrivial normal subgroup of $K$. Then $P$ is noncyclic. If $P$ is contained in a $\zmax$-factor of $F_N$, then there exists a $\zmax$ splitting $S$ of $F_N$ such that $P$ is elliptic in $S$. As $S$ has cyclic edge stabilizers, $P$ fixes a unique vertex $x$ in $S$. As $P$ is normal in $K$, if $h \in K$ then $hx$ is also fixed by $P$, so $hx=x$. Therefore $x$ is fixed by $K$, which is a contradiction as $K$ is not contained in a $\zmax$-factor of $F_N$.

For Part 4, it is clear that if $K$ is contained in a proper $\zmax$-factor then so is every element of $K$. To prove the converse we assume that $K$ is not contained in a proper $\zmax$-factor and claim that there exists $g \in K$ that is not contained in a proper $\zmax$-factor. As there is a bound on the length of an increasing chain of $\zmax$-factors of $F_N$, the group $K$ contains a finitely generated subgroup $K'$ which is not contained in any proper $\zmax$-factor of $F_N$. By Part 1, there exists $g\in K'$ such that $g$ is not contained in a proper $\zmax$-factor of $K'$. Let $S$ be a $\zmax$ splitting of $F_N$. As $K'$ is not contained in any $\zmax$-factor of $F_N$, the group $K'$ has a well-defined, nontrivial minimal subtree $S_{K'}$ with respect to its action on $S$. As $S$ is a $\zmax$ splitting of $F_N$, it follows that $S_{K'}$ is a $\zmax$ splitting of $K'$. As $g$ is not contained in any $\zmax$-factor of $K'$, it follows that $g$ is a hyperbolic isometry of $S_{K'}$ and is not elliptic in $S$. As $S$ was chosen arbitrarily, it follows that $g$ is not contained in any $\zmax$-factor of $F_N$.
\end{proof}

Part 3 of the above proposition implies that if $P$ is obtained from $K$ by passing to a finite-index or a proper normal subgroup a finite number of times, then $P$ is elliptic in some $\zmax$ splitting of $F_N$ if and only if $K$ is.

\begin{proposition} \label{prop:fi-twist-rich}
Suppose that $\G$ is twist-rich and $\G'$ is a finite-index subgroup of $\G$. Then $\G'$ is twist-rich.
\end{proposition}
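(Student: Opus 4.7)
The plan is to check that the two defining properties $(H_1)$ and $(H_2)$ of twist-richness both survive passage to a finite-index subgroup, which in each case reduces to elementary arguments about the twist subgroups appearing in the definition.

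For $(H_1)$, I would start from a splitting $S$ and a vertex $v$ as in the hypothesis. Since $\Gamma$ is twist-rich, there is a blowup $S'$ of $S$ at $v$ as described, together with a twist subgroup $T \subseteq \Out(F_N)$ (isomorphic to $\mathbb{Z}^2$ in case (a) or to $\mathbb{Z}$ in case (b)) such that $\Gamma$ contains a finite-index subgroup $T_0 \subseteq T$. Since $T_0 \subseteq \Gamma$, the map $T_0 \to \Gamma/\Gamma'$ has kernel $\Gamma' \cap T_0$, so $[T_0 : \Gamma' \cap T_0] \le [\Gamma : \Gamma'] < \infty$. Hence $\Gamma'$ also contains a finite-index subgroup of $T$, and the same blowup $S'$ witnesses $(H_1)$ for $\Gamma'$.

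For $(H_2)$, fix a free splitting $S$ and a half-edge $e$ incident on a vertex $v$ with $G_v$ nonabelian, and set $K := \Gamma \cap T_e$ and $K' := \Gamma' \cap T_e$, where $T_e$ denotes the group of twists about $e$. Since $G_v$ is nonabelian free, the assignment $z \mapsto D_{e,z}$ identifies $T_e$ with $G_v$ (the center of a nonabelian free group is trivial, so this map is injective). In particular, $T_e$ and all its subgroups are free. The twist-richness of $\Gamma$ tells us that $K$ is nonabelian; as $K$ is then a nonabelian free group, Nielsen--Schreier ensures that the finite-index subgroup $K'$ is also a nonabelian free group, hence nonabelian.

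For the second clause of $(H_2)$, I would view $K$ and $K'$ as subgroups of $G_v$ via the identification $T_e \cong G_v$ above. By assumption $K$ is not contained in any proper $\zmax$-factor of $G_v$; since $G_v$ is itself a free group, Proposition~\ref{prop:zmax_factors}(3) applies within $G_v$ and shows that any finite-index subgroup of $K$ inherits this property. Therefore $K'$ is not elliptic in any $\zmax$ splitting of $G_v$, as required. There is no serious obstacle here: both conditions in the definition were phrased so as to descend automatically to finite-index subgroups, and the only mild point to keep in mind is that $G_v$ is itself a free group, so that the $\zmax$-factor arguments of Proposition~\ref{prop:zmax_factors} can be applied to $G_v$ in the same way they are applied to $F_N$.
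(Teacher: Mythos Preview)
Your proof is correct and follows the same approach as the paper: $(H_1)$ is handled by noting that a finite-index subgroup of a finite-index subgroup of $T$ is again finite-index in $T$, and $(H_2)$ reduces to Part~3 of Proposition~\ref{prop:zmax_factors} applied inside $G_v$. The paper states this in two lines, while you spell out the details (including the nonabelian clause of $(H_2)$, which the paper leaves implicit); there is no substantive difference.
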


\begin{proof}
The fact that $\G'$ satisfies $(H_1)$ is immediate from the definition, and $(H_2)$ follows by Part 3 of Proposition~\ref{prop:zmax_factors}.
\end{proof}

\subsection{Stabilizers of free splittings in twist-rich subgroups}\label{sec:twist-rich-unique-splitting}

The purpose of this section is to show that the stabilizer of a free splitting $S$ in a twist-rich subgroup only fixes the obvious free splittings of $F_N$ given by collapses of $S$. 

\begin{lemma}\label{lemma:single-splitting-stabilized}
Let $\Gamma\subseteq\ia$ be a twist-rich subgroup. Let $S$ be a free splitting of $F_N$ such that every vertex of $S$ has nonabelian stabilizer, let $K:=\Stab_{\Gamma}(S)$, and let $K'$ be a finite-index subgroup of $K$.
\\ Then every $K'$-invariant free splitting of $F_N$ is a collapse of $S$.
\end{lemma}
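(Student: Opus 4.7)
The strategy has two steps: first, show that every vertex stabilizer $G_v$ of $S$ is elliptic in $R$; then deduce that $R$ must be a collapse of $S$.

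For the ellipticity step, fix a vertex $v$ of $S$ and a half-edge $e$ of $S$ incident to $v$. Since $T_e\subseteq\Stab(S)$, we have $K'\cap T_e\subseteq K\cap T_e=\Gamma\cap T_e$, the first inclusion being of finite index. Combining $(H_2)$ with Proposition~\ref{prop:zmax_factors}(3), the subgroup $Z:=K'\cap T_e$, viewed as a subgroup of $G_v$ via the twist identification, is nonabelian and is not elliptic in any $\zmax$ splitting of $G_v$. Suppose toward contradiction that $G_v$ is not elliptic in $R$; then the minimal $G_v$-invariant subtree $R_{G_v}\subseteq R$ is nontrivial, and as it has trivial edge stabilizers it is a $\zmax$ splitting of $G_v$. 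For each $z\in Z$, let $\alpha_z\in\Aut(F_N)$ be the representative of $D_{e,z}$ that is the identity on $G_v$; since $D_{e,z}\in K'$ preserves $R$, there is an $\alpha_z$-equivariant isometry $I_{\alpha_z}:R\to R$, which by $\alpha_z$-equivariance commutes with the $G_v$-action on $R$. It is standard that for a minimal isometric action of a nonabelian free group on a tree the centralizer in the isometry group is trivial (use two hyperbolic elements with distinct axes), so $I_{\alpha_z}$ restricts to the identity on $R_{G_v}$.

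The heart of the proof is a geometric attachment computation. Let $Y$ denote the minimal invariant subtree in $R$ of the ``other side'' of $e$: $Y=R_{G_w}$ in the separating case, and $Y=tR_{G_v}$ in the HNN case with stable letter $t$. Minimality of $R_{G_v}$ and of the $F_N$-action on $R$ force $R_{G_v}$ and $Y$ to be either disjoint or jointly equal to $R$. In the coincidence case, combining $I_{\alpha_z}=\mathrm{id}_R$ with $\alpha_z$-equivariance and faithfulness of the $F_N$-action on $R$ forces $\alpha_z=\mathrm{id}_{F_N}$, hence $D_{e,z}=1$ in $\Out(F_N)$, which is impossible for $1\neq z\in Z$. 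In the disjoint case, let $p\in R_{G_v}$ and $q\in Y$ realize the bridge between them. Then $I_{\alpha_z}(p)=p$, and $I_{\alpha_z}(q)=zq$, since $I_{\alpha_z}$ maps $Y$ to $zY$ and sends the closest point on $Y$ to $R_{G_v}$ to the closest point on $zY$ to $R_{G_v}$. The geodesic $[p,zq]$ in $R$ decomposes as the segment from $p$ to $zp$ inside the convex subtree $R_{G_v}$, followed by the bridge $[zp,zq]$ from $R_{G_v}$ to $zY$; hence it has length $d(p,zp)+d(p,q)$. Isometry of $I_{\alpha_z}$ gives $d(p,zq)=d(p,q)$, forcing $d(p,zp)=0$, i.e.\ $z\in\Stab_{G_v}(p)$. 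Running this over all $z\in Z$ yields $Z\subseteq\Stab_{G_v}(p)$, so $Z$ is elliptic in the nontrivial $\zmax$ splitting $R_{G_v}$ of $G_v$, contradicting our choice of $Z$. Hence $G_v$ is elliptic in $R$.

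Once every vertex stabilizer of $S$ is elliptic in $R$ and every edge stabilizer of $S$ is elliptic (trivially, being trivial), the splittings $S$ and $R$ are compatible, and the common refinement is obtained from $S$ by blowing up each vertex with its minimal subtree in $R$; each such subtree is a single point, so the common refinement equals $S$, and $R$ is a collapse of $S$.

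The main obstacle is the attachment computation in the disjoint case, together with the (standard) centralizer triviality lemma for minimal actions of nonabelian free groups on trees.
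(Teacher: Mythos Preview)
Your argument has two genuine gaps.

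\textbf{Gap 1 (the dichotomy).} The assertion that minimality forces $R_{G_v}$ and $Y$ to be ``either disjoint or jointly equal to $R$'' is unjustified and false. For a concrete failure, take $N=3$, $S$ the one-edge splitting $\langle a,b\rangle\ast$ with stable letter $c$, and $R$ the one-edge splitting $\langle a,c\rangle\ast$. Then $R_{G_v}=R_{\langle a,b\rangle}$ and $Y=cR_{\langle a,b\rangle}$ both contain the vertex fixed by $\langle a,c\rangle$, yet neither equals $R$. (This particular $R$ is not $K'$-invariant, so it does not contradict the lemma, but your dichotomy is stated as a general fact about minimal subtrees and is simply wrong.) Your treatment of the ``coincidence case'' is also off: even if $R_{G_v}\cup Y=R$, you have only shown $I_{\alpha_z}=\mathrm{id}$ on $R_{G_v}$, while on $Y$ it acts like $z$, so you cannot conclude $I_{\alpha_z}=\mathrm{id}_R$. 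The bridge computation can be repaired by working with $R_{G_{\overline v}}$ and $R_{G_{\overline w}}$ (the vertex groups of the one-edge collapse $\overline S$), observing that $I_{\alpha_z}$ is the identity on the first and acts like $z$ on the second, and comparing at any point of their intersection or on the bridge between them; either way one finds a single point fixed by every $z\in Z$.

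\textbf{Gap 2 (the final step).} Even granting that every vertex stabilizer of $S$ is elliptic in $R$, it does \emph{not} follow that $R$ is a collapse of $S$. Ellipticity of all vertex groups of $S$ in $R$ only produces an equivariant map $S\to R$, which in general involves folds, not just collapses: two free splittings can have identical elliptic subgroups without one collapsing to the other (e.g.\ two distinct $(N-2)$-roses with the same vertex group $F_2$). Your appeal to ``blowing up each vertex by its minimal subtree'' does not help, since the attaching data for the blow-up is not determined, and there is no reason the resulting tree collapses to $R$. Nothing in your argument uses the $K'$-invariance of $R$ at this stage, and without it the conclusion is false.

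The paper's proof avoids both difficulties by a completely different mechanism: for each half-edge $e$ at $v$ it chooses (using $(H_2)$ and Proposition~\ref{prop:zmax_factors}(4)) a twistor $z_e\in G_v$ that is not a proper power and relative to which $G_v$ is freely indecomposable, folds $S$ along these twistors to obtain a $\mathcal{Z}_{max}$ splitting $S'$, and then invokes Cohen--Lustig (Lemma~\ref{twist-compatible}) to force any $K$-invariant free splitting to be compatible with $S'$; the structure of $S'$ then pins the free splitting down as a collapse of $S$. This bypasses any direct comparison of minimal subtrees in $R$.
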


\begin{proof}
Since $K\subseteq\ia$, every $K'$-invariant free splitting is $K$-invariant, so we can assume without loss of generality that $K'=K$. For every half-edge $e$ of $S$ incident on a vertex $v$, choose an element $z_e\in G_v$ which is not a proper power, such that $G_v$ is freely indecomposable relative to $z_e$, and such that the corresponding twist is contained in $\Gamma$ (this exists in view of Hypothesis~$(H_2)$ from the definition of a twist-rich subgroup together with the fourth part of Proposition~\ref{prop:zmax_factors}). Let $S'$ be the splitting obtained from $S$ by folding every half-edge $e$ with its translate by $z_e$. Notice that $S'$ can be viewed as a bipartite tree on the vertex set $V_0\cup V_1$, where $V_0$ corresponds to vertices of $S$, and $V_1$ corresponds to midpoints of edges of $S$. For every $v\in V_0$, the group $G_v$ is freely indecomposable relative to the incident edge stabilizers. For every $v\in V_1$, the group $G_v$ is isomorphic to $F_2$, generated by the two incident edge groups. If $U$ is a $K$-invariant free splitting, then Lemma~\ref{twist-compatible} implies that $U$ is compatible with every one edge collapse of $S'$, and therefore $S'$ itself (see \cite[Proposition~A.17]{GL-jsj}). But in view of the above description of $S'$, every free splitting compatible with $S'$ is a collapse of $S$.   
\end{proof}

For future use, we mention that the same argument also yields the following two variations over the previous statement. 

\begin{lemma}\label{lemma:single-splitting-stabilized-2}
Let $\Gamma\subseteq\ia$ be a subgroup that contains a power of every Dehn twist. Let $S$ be a free splitting of $F_N$ such that every vertex of $S$ has nontrivial stabilizer, let $K:=\Stab_{\Gamma}(S)$, and let $K'$ be a finite-index subgroup of $K$.
\\ Then every $K'$-invariant free splitting of $F_N$ is a collapse of $S$.
\end{lemma}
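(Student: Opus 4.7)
The plan is to adapt the proof of Lemma~\ref{lemma:single-splitting-stabilized} to the present weaker hypotheses, namely that $\Gamma$ only contains a power of every Dehn twist (rather than being twist-rich) and that the vertex stabilizers of $S$ are merely nontrivial (rather than nonabelian). As in the original argument, the reduction to $K'=K$ is immediate: any $K'$-invariant free splitting is $K$-periodic, and since $K \subseteq \ia$, Lemma~\ref{lemma:stab-splitting-ia} forces it to be $K$-invariant.

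For each half-edge $e$ of $S$ incident on a vertex $v$, I would choose a twistor $z_e \in G_v$ which is not a proper power as follows: if $G_v$ is infinite cyclic, take $z_e$ to be a generator, and if $G_v$ is nonabelian, choose $z_e$ so that $G_v$ is freely indecomposable relative to $\langle z_e \rangle$ (which is always possible in a free group of rank at least two). Because $D_{e, z_e}$ is a Dehn twist, the hypothesis on $\Gamma$ provides an integer $n_e$ such that $D_{e, z_e}^{n_e} = D_{e, z_e^{n_e}}$ lies in $\Gamma \cap K$. The splitting $S'$ obtained by equivariantly folding each half-edge $e$ with its translate $z_e \cdot e$ is then bipartite with vertex set $V_0 \cup V_1$: the $V_0$-vertices are the old vertices of $S$ (with stabilizers $G_v$ freely indecomposable relative to the incident cyclic edge groups $\langle z_e \rangle$), and the $V_1$-vertices are midpoints with stabilizers $\langle z_e, z_{e'} \rangle$. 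Each edge of $S'$ has root-closed infinite cyclic stabilizer.

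Applying Lemma~\ref{twist-compatible} to each one-edge collapse of $S'$ and using the nontrivial twist $D_{e, z_e^{n_e}} \in K$, any $K$-invariant free splitting $U$ is compatible with every one-edge collapse of $S'$. By \cite[Proposition~A.17]{GL-jsj}, $U$ is compatible with $S'$ itself. The standard analysis of the common refinement --- using that the $V_0$-stabilizers act elliptically on $U$ by free indecomposability, and that each $V_1$-midpoint isomorphic to $F_2$ admits at most the tautological free splitting $\langle z_e \rangle * \langle z_{e'} \rangle$ relative to its incident cyclic subgroups --- then identifies $U$ as a collapse of $S$.

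The main obstacle I anticipate is verifying that the midpoint stabilizer $\langle z_e, z_{e'} \rangle$ in $S'$ is genuinely isomorphic to $F_2$ and not to a smaller group. When the two endpoints of an edge of $S$ lie in distinct $F_N$-orbits, this is immediate from the amalgamated free product structure of $F_N$ across that edge. For loop-edges, where the two twistors lie in conjugate vertex groups via the stable letter, I would use the HNN normal form in $F_N = G_v * \mathbb{Z}$ to check that two non-proper-powers lying respectively in $G_v$ and in the conjugate $tG_vt^{-1}$ generate a rank-two free subgroup; this is where the standing assumption $N \geq 3$ implicitly enters, ensuring that loop vertex groups are rich enough for this to go through.
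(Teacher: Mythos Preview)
Your proposal is correct and follows essentially the same approach as the paper, which simply remarks that the proof of Lemma~\ref{lemma:single-splitting-stabilized} carries over verbatim because the nonabelian hypothesis on vertex groups was only used (via $(H_2)$) to place the relevant twists inside $\Gamma$, and that is now automatic up to a power. Your added detail---splitting into the cyclic versus nonabelian cases for the choice of $z_e$, and checking that the midpoint stabilizers $\langle z_{e},z_{e'}\rangle$ are free of rank two---is sound; note however that the rank-two conclusion for loop edges does not actually require $N\ge 3$, since in $G_v\ast\langle t\rangle$ any nontrivial $z\in G_v$ and nontrivial $z'\in tG_vt^{-1}$ lie in free factors with trivial intersection and hence generate $F_2$ regardless of the rank of $G_v$.
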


\begin{proof}
In the above proof, the fact that vertex stabilizers were nonabelian as opposed to just nontrivial was only used to ensure that the corresponding twists are contained in $\Gamma$, which is automatic (up to passing to a power) here. The proof of Lemma~\ref{lemma:single-splitting-stabilized} thus carries over to yield Lemma~\ref{lemma:single-splitting-stabilized-2}.
\end{proof}

\begin{lemma}\label{lemma:single-splitting-2}
Let $S$ be a one-edge nonseparating free splitting of $F_N$, and let $K\subseteq\Stab_{\Out(F_N)}(S)$ be a group that contains a twist about a half-edge of $S$ whose twistor is not contained in any proper free factor of the incident vertex group.
\\ Then $S$ is the only nontrivial $K$-invariant free splitting of $F_N$.
\qed 
\end{lemma}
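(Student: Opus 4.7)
The plan is to show that any nontrivial $K$-invariant free splitting $T$ equals $S$, in three steps: use Lemma~\ref{twist-compatible} to produce a vertex of $T$ fixed by the root of the twistor, pin down its stabilizer via a classical theorem on intersections of free factors, and then exclude every other candidate free splitting.

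First I would set $C := G_v$ and let $w$ be the primitive root of the given twistor $z$ in $F_N$. Since $z$ is not contained in any proper free factor of $C$, neither is $w$ (any proper free factor of $C$ containing $w$ would also contain $z = w^k$). Folding the half-edge $e$ with $w \cdot e$ inside $S$ produces a one-edge $\mathcal{Z}_{max}$ splitting $\Sigma$ with root-closed edge group $\langle w \rangle$, and the twist $D_{e,z}$ is a nontrivial power of the Dehn twist about $\Sigma$. Since $D_{e,z} \in K$ preserves $T$, Lemma~\ref{twist-compatible} implies that $\Sigma$ and $T$ are compatible, whence $w$ fixes some vertex $v'$ of $T$ with stabilizer $G_{v'}$.

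Next I would identify $G_{v'}$ with $C$. The group $G_{v'}$ is a free factor of $F_N$ (being a vertex stabilizer of a free splitting), and by a classical theorem of Takahasi the intersection $C \cap G_{v'}$ of two free factors of a free group is itself a free factor. It contains $w$, so by hypothesis it equals $C$, giving $C \subseteq G_{v'}$. Nontriviality of $T$ rules out $G_{v'} = F_N$; the only remaining possibility is $\rk G_{v'} = N-1$. Since a free factor of $F_N$ contained in another free factor of $F_N$ is a free factor of the latter, $C$ is a free factor of $G_{v'}$ of equal rank, and therefore $G_{v'} = C$.

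Finally, I would argue $T = S$. As $C$ is a corank-one free factor of $F_N$, a nontrivial free splitting of $F_N$ in which $C$ is a vertex stabilizer is either $S$ itself (up to a change of stable letter within $CtC \cup Ct^{-1}C$, which produces the same Bass--Serre tree) or a separating splitting of the form $F_N = C \ast \langle s \rangle$. Taking the representative $\alpha \in \Aut(F_N)$ of $D_{e,z}$ with $\alpha|_C = \mathrm{id}$ and $\alpha(t) = zt$ (where $t$ is the stable letter of $S$), the $\alpha$-equivariant self-isometry $I$ of $T$ must fix the unique vertex $v'$ of $T$ stabilized by $C$ and permute the $C$-orbits of edges incident to $v'$. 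This forces $\alpha(s)$ to lie in $CsC \cup Cs^{-1}C$ in the HNN case, and to be $F_N$-conjugate to $s^{\pm 1}$ in the separating case. A cyclic normal-form comparison in the free product $F_N = C \ast \langle t \rangle$ (noting that for stable letters outside the double coset $CtC \cup Ct^{-1}C$ the HNN presentation would not decompose $F_N$) then rules out every candidate $T \neq S$, as each would force $z = 1$. I expect this last step to be the main obstacle, requiring careful normal-form manipulation together with the classification of HNN stable letters for $F_N$ over $C$ modulo $C$-double cosets.
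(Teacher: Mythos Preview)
Your approach through Step~3 is sound and essentially coincides with the paper's intended argument: realize the twist as a Dehn twist about a one-edge $\mathcal{Z}_{max}$ splitting $\Sigma$ (obtained by folding), invoke Lemma~\ref{twist-compatible} to get that any $K$-invariant free splitting $T$ is compatible with $\Sigma$, and use a Kurosh-type argument to see that $C$ must be elliptic in $T$ with stabilizer exactly $C$.

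There is, however, a gap in Step~4. Your classification claim---that a nontrivial free splitting of $F_N$ with $C$ as a vertex stabilizer must be either $S$ or a one-edge separating splitting $C\ast\langle s\rangle$---is false as stated. There is a two-edge example: the quotient graph has the $C$-vertex of valence one, joined by a separating edge to a trivial-stabilizer vertex of valence three carrying a loop. (This is precisely the common refinement of $S$ and some $C\ast\langle s\rangle$.) Your normal-form analysis does not treat this case.

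The cleanest fix---and what the paper's bare \qed\ is pointing at, following the template of Lemma~\ref{lemma:single-splitting-stabilized}---is to avoid the detour entirely. Rather than extracting only ``$w$ is elliptic in $T$'' from the compatibility with $\Sigma$, analyse the common refinement $U$ of $T$ and $\Sigma$ directly. This $U$ is a blow-up of $\Sigma$ at its unique vertex, whose group is $A=C\ast\langle t^{-1}wt\rangle$ with incident edge groups $\langle w\rangle$ and $\langle t^{-1}wt\rangle$. Your own Kurosh argument shows that any free factor of $A$ containing $w$ contains $C$; hence the only free splitting of $A$ relative to these incident groups is the obvious one-edge splitting $A=C\ast\langle t^{-1}wt\rangle$. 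This pins down $U$ uniquely (up to further cyclic blow-ups, which collapse back), and collapsing its cyclic edge yields $T=S$, with no normal-form computation needed. Alternatively, you can salvage your own route: in the missed two-edge $T$, the $\alpha$-equivariant isometry must fix the unique half-edge at the $C$-vertex and hence preserve both edge orbits, so $D_{e,z}$ would preserve the separating collapse $C\ast\langle s\rangle$---which you have already shown forces $z=1$.
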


\subsection{Examples of twist-rich subgroups}

\begin{proposition}\label{prop:example}
Let $N\ge 3$. Then every subgroup of $\ia$ which contains a term of the Andreadakis--Johnson filtration of $\Out(F_N)$ is twist-rich.
\end{proposition}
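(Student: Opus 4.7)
The plan is to verify hypotheses $(H_1)$ and $(H_2)$ for any $\Gamma$ containing the $k$-th term $\mathcal{A}_k = \ker(\Out(F_N) \to \Out(F_N/\gamma_{k+1}(F_N)))$ of the Andreadakis--Johnson filtration. The central observation, used throughout, is that any twist $D_{e,z}$ (a Nielsen twist about a free splitting or a Dehn twist about a cyclic splitting) with twistor $z \in \gamma_{k+1}(F_N)$ lies in $\mathcal{A}_k$: indeed $z$ maps to the identity in $F_N/\gamma_{k+1}$, so $zt \equiv t$ in the Nielsen case and conjugation by $z$ is the identity in the Dehn case.

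To verify $(H_2)$, let $S$ be a free splitting and $e$ a half-edge at a vertex $v$ with $G_v$ nonabelian. Every twistor $z \in \gamma_{k+1}(G_v) \subseteq \gamma_{k+1}(F_N)$ yields a twist in $\mathcal{A}_k \subseteq \Gamma$; viewed as a subset of $G_v$ via the twistor map this is the subgroup $\gamma_{k+1}(G_v)$, a nontrivial normal subgroup of the nonabelian free group $G_v$ and hence itself nonabelian. Since $G_v$ is trivially not contained in any proper $\zmax$-factor of itself, applying Proposition~\ref{prop:zmax_factors}(3) internally to the free group $G_v$ shows that $\gamma_{k+1}(G_v)$ is not contained in any proper $\zmax$-factor of $G_v$, so it is not elliptic in any $\zmax$-splitting of $G_v$.

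To verify $(H_1)$, I will produce the required blowup using the folding construction from the proof of Lemma~\ref{lemma:z-blow-up}, but with the folding elements chosen deep in the lower central series. In that proof, a cyclic edge with root-closed generator $g$ is created by folding a half-edge at a nonabelian vertex $v'$ with its translate by a non-proper-power element $g \in G_{v'}$; here I will choose $g$ inside $\gamma_{k+1}(G_{v'}) \subseteq \gamma_{k+1}(F_N)$. Since $\gamma_{k+1}(G_{v'})$ is a nonabelian free group of infinite rank, it contains many suitable elements (for instance a generic iterated commutator of the generators of $G_{v'}$ is not a proper power in $F_N$). The corresponding Dehn twist then lies in $\mathcal{A}_k \subseteq \Gamma$. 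In case~(a), applying this procedure to the $\zmax$-splittings of Cases~1 and~2 of Lemma~\ref{lemma:z-blow-up}, and collapsing an edge of a three-edge splitting if necessary, produces a two-edge $\mathbb{Z}$-splitting with nonabelian vertex stabilizers whose twist group is $\mathbb{Z}^2$ by Levitt's formula (\cite[Proposition~3.1]{Lev}) and is entirely contained in $\Gamma$; in case~(b) the argument applies directly to the one-edge splitting of Case~3.

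The main technical obstacle is the low-complexity configuration, such as $(G_v, \Inc_v) = (F_3, \{\mathbb{Z}\})$, in which no ``path-type'' two-edge $\mathbb{Z}$-splitting can have all vertex stabilizers nonabelian for rank reasons. In such cases I will use instead a loop (amalgam-plus-HNN) construction: with $G_v = \langle a, b, c \rangle$ and incident factor $\langle a \rangle$, choose non-proper-powers $w_1, w_2 \in \gamma_{k+1}(\langle a, b \rangle)$, set $v_1 = \langle a, b \rangle$ and $v_2 = \langle w_1, c \rangle$, form the amalgam $v_1 *_{\langle w_1 \rangle} v_2 \cong F_3$, and add an HNN extension identifying $\langle w_2 \rangle \subseteq v_1$ with $\langle c \rangle \subseteq v_2$ via $w_2 \mapsto c$. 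This yields a two-edge $\mathbb{Z}$-splitting of $F_3$ with two nonabelian rank-$2$ vertex stabilizers, twist group $\mathbb{Z}^2$, and both edge generators in $\gamma_{k+1}(F_N)$, so the full twist group lies in $\mathcal{A}_k \subseteq \Gamma$.
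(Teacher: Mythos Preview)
Your argument for $(H_2)$ is correct and essentially identical to the paper's (you use $\gamma_{k+1}(G_v)$ where the paper uses the $k$-th derived subgroup; both are nontrivial normal subgroups of $G_v$, so Proposition~\ref{prop:zmax_factors}(3) applies equally well). Your general strategy for $(H_1)$---choose folding elements in $\gamma_{k+1}$ so that the resulting Dehn twists lie in $\mathcal{A}_k$---is also the paper's strategy.

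Where your proposal diverges is in the treatment of ``low-complexity'' cases such as $(G_v,\Inc_v)=(F_3,\{\mathbb{Z}\})$, and here you are making life harder than necessary. The paper does not pass through the $\mathcal{Z}_{max}$ splittings in the \emph{statement} of Lemma~\ref{lemma:z-blow-up}; it uses the \emph{free} splittings appearing in its proof. Outside the exceptional case $(F_3,\{\mathbb{Z},\mathbb{Z},\mathbb{Z}\})$ there is always a nontrivial free splitting $S_v$ of $(G_v,\Inc_v)$ with all vertex groups nonabelian (for $(F_3,\{\mathbb{Z}\})$ this is the one-edge HNN $\langle a,b\rangle\ast$ with stable letter $c$). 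Any such splitting has at least two half-edges in distinct orbits, each based at a nonabelian vertex; picking twistors $g_e,g_{e'}$ in $\gamma_{k+1}$ of the incident vertex groups and folding produces a two-edge cyclic splitting whose twist group $\mathbb{Z}^2$ lies in $\Gamma$. No ad hoc construction is needed.

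Separately, your explicit construction for $(F_3,\{\mathbb{Z}\})$ is garbled as written: the amalgam $\langle a,b\rangle \ast_{\langle w_1\rangle}\langle w_1,c\rangle$ is already all of $F_3$, so one cannot ``add an HNN extension'' without enlarging the group. What you presumably intend is a single graph of groups with two vertices and two edges (one amalgam-type, one HNN-type) whose fundamental group is $F_3$; this can be made to work, but it requires care to identify the stable letter inside the given $F_3$ and to check that the twist about the HNN edge really has twistor in $\gamma_{k+1}$. Since the uniform free-splitting approach above avoids all of this, I would recommend replacing your special-case analysis with it.
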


\begin{proof}
Let $k\ge 1$, and assume that $\Gamma$ contains the $k^{\text{th}}$ term of the Andreadakis--Johnson filtration of $\Out(F_N)$. 

We first prove Hypothesis~$(H_1)$. Let $S$ be a splitting of $F_N$, and let $v\in S$ be a vertex such that $(G_v,\Inc_v)$ is nonsporadic. We denote by $\calf$ the smallest free factor system of $G_v$ that contains $\Inc_v$.

If $(G_v,\Inc_v)$ is not of the form $(F_3,\{\mathbb{Z},\mathbb{Z},\mathbb{Z}\})$, then $(G_v,\calf)$ is not of the form $(F_3,\{\mathbb{Z},\mathbb{Z},\mathbb{Z}\})$ either. Therefore, there exists a nontrivial free splitting $S_v$ of $G_v$ relative to $\Inc_v$ in which every vertex stabilizer is nonabelian (cf.\ the proof of Lemma~\ref{lemma:z-blow-up}).  For every half-edge $e$ of $S_v$, denoting by $w$ the vertex of $S_v$ incident on $e$, we can choose an element $g_e$ in the $k^{\text{th}}$ derived subgroup of $G_w$. Then $g_e$ is also in the $k^{\text{th}}$ derived subgroup of $F_N$. This implies that the twist by $g_e$ around $e$, viewed as an automorphism of $F_N$ after blowing up $S$ at $v$ into $S_v$, belongs to $\Gamma$ (it is either a partial conjugation by $g_e$ or a transvection of some basis element by $g_e$). By considering two half-edges $e$ and $e'$ in distinct orbits, we thus get a free abelian group of twists isomorphic to $\mathbb{Z}^2$ contained in $\Gamma$.

If $(G_v,\Inc_v)$ is of the form $(F_3,\{\mathbb{Z},\mathbb{Z},\mathbb{Z}\})$, then we can only assume that one of the vertex groups of $S_v/G_v$ is nonabelian, and consider a twist as above around a half-edge incident on $e$.

To prove $(H_2)$, notice that the group of twists about $e$ in $\Gamma$ contains the $k^{\text{th}}$ derived subgroup of $G_v$. As this is a normal subgroup of $G_v$, the fact that it is not elliptic in any nontrivial $\zmax$ splitting of $G_v$ follows from Part 3 of Proposition~\ref{prop:zmax_factors}.     
\end{proof}

We also record the following class of examples, for which twist-richness is clear from the definition.

\begin{prop}\label{prop:example2}
Let $N\ge 3$, and let $\Gamma$ be a subgroup of $\ia$ such that every twist has a power contained in $\Gamma$.
\\ Then $\Gamma$ is twist-rich.
\qed
\end{prop}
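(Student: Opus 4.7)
I plan to verify the two hypotheses $(H_1)$ and $(H_2)$ of twist-richness directly. The key observation underlying both is that a twist $D_{e,z}$ about a half-edge $e$ with twistor $z\in C_{G_v}(G_e)$ satisfies $D_{e,z}^n = D_{e,z^n}$, so the hypothesis that $\Gamma$ contains a positive power of every twist translates, through the twistor identification, to the statement that the intersection of $\Gamma$ with the group of twists about $e$ corresponds to a subgroup of the twistor group that contains a positive power of every element.

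For $(H_1)$, given a splitting $S$ with all edge stabilizers nontrivial and a vertex $v$ with $(G_v,\Inc_v)$ nonsporadic, I would note that $G_v$ is a subgroup of $F_N$ and hence a finitely generated free group, so Lemma~\ref{lemma:z-blow-up} applies and produces a two-edge blowup of $S$ at $v$ (or a one-edge blowup in the exceptional case $(F_3,\{\mathbb{Z},\mathbb{Z},\mathbb{Z}\})$) whose new edge stabilizers are cyclic and root-closed, with nonabelian vertex stabilizers. By \cite[Proposition~3.1]{Lev} the twist group about these new edges inside $\Out(F_N)$ is $\mathbb{Z}^2$ (respectively $\mathbb{Z}$), each cyclic direct factor being generated by a single twist. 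Since $\Gamma$ contains a positive power of each such generator, $\Gamma$ meets this twist group in a finite-index subgroup, which is exactly what $(H_1)$ requires.

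For $(H_2)$, given a free splitting $S$ and a half-edge $e$ incident on a vertex $v$ with nonabelian stabilizer $G_v$, the edge stabilizer is trivial and the group of twists about $e$ is canonically identified with $G_v$ via $z\mapsto D_{e,z}$. Let $P\subseteq G_v$ be the subgroup corresponding to the intersection of $\Gamma$ with this twist group; by the key observation above, $P$ contains a positive power of every element of $G_v$. To see $P$ is nonabelian, I would take $a,b\in G_v$ generating a rank-two free subgroup and pick positive powers $a^m,b^n\in P$; since two elements of a free group commute only if they are powers of a common element, $a^m$ and $b^n$ generate a nonabelian free subgroup of $P$. For non-ellipticity in $\zmax$ splittings, I would apply Proposition~\ref{prop:zmax_factors}(1) inside $G_v$ to produce an element $g\in G_v$ contained in no proper $\zmax$-factor of $G_v$, and then choose a positive power $g^N\in P$. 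A short translation-length argument shows that $\zmax$-factors are root-closed (any hyperbolic isometry of a $\zmax$ tree has a hyperbolic positive power, while an edge fixed by $g^N$ has root-closed stabilizer that therefore also contains $g$), so $g^N$ likewise lies in no proper $\zmax$-factor of $G_v$. Proposition~\ref{prop:zmax_factors}(4) applied to $G_v$ then yields that $P$ itself is not contained in any proper $\zmax$-factor of $G_v$, completing the verification.

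I do not anticipate any serious obstacle; the statement is essentially a direct unpacking of the definition of twist-rich, combined with the root-closedness of $\zmax$-factors and the properties of $\zmax$-factors already collected in Proposition~\ref{prop:zmax_factors}. The only small point requiring care is the root-closedness of $\zmax$-factors, which I would handle by the brief tree-action argument sketched above rather than treat as a known fact.
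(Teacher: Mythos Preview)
Your proposal is correct and follows the same approach as the paper, which simply marks the proposition with a \qed\ and the remark that ``twist-richness is clear from the definition.'' You have spelled out in detail what the paper leaves implicit: that the hypothesis ``every twist has a power in $\Gamma$'' immediately yields finite-index intersections with any cyclic or $\mathbb{Z}^2$ twist group (for $(H_1)$), and that the subgroup of $G_v$ corresponding to $\Gamma$'s twists contains a power of every element, hence is nonabelian and not contained in any proper $\zmax$-factor (for $(H_2)$). One small simplification: your root-closedness argument can be stated more directly as ``if $g$ is hyperbolic in a $\zmax$ tree then so is $g^n$,'' which is all you need.
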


\begin{remark}
As mentioned in the introduction, this applies for example to the kernel of the natural morphism from $\Out(F_N)$ to the outer automorphism group of a free Burnside group of rank $N$ and any exponent.
\end{remark}

\section{Characterizing stabilizers of nonseparating free splittings}\label{sec:vertices}

\emph{Let $\Gamma$ be a twist-rich subgroup of $\ia$. The goal of the present section is to prove that the set of commensurability classes of $\Gamma$-stabilizers of one-edge nonseparating free splittings of $F_N$ is $\Comm(\Gamma)$-invariant. In other words $\Comm(\Gamma)$ preserves the set of commensurability classes of stabilizers of vertices of $\ens$.} 
\\
\\
\indent We introduce the following algebraic property of a subgroup $H\subseteq\Gamma$.

\begin{itemize}
\item[$(P_{\stab})$] The group $H$ satisfies the following two properties:
\begin{enumerate}
\item $H$ contains a normal subgroup that splits as a direct product $K_1\times K_2$ of two nonabelian free groups, such that for every $i\in\{1,2\}$, if $P_i$ is a normal subgroup of a finite index subgroup of $K_i$, then $C_\Gamma(P_i)=K_{i+1}$ (where indices are taken mod $2$).  
\item $H$ contains a direct product of $2N-4$ nonabelian free groups.
\end{enumerate} 
\end{itemize}

In Section~\ref{sec:prop-satisfied}, we will check that the $\Gamma$-stabilizer of a one-edge nonseparating free splitting $S$ satisfies Property~$\Pstab$ (by taking for $K_1$ and $K_2$ the intersections of $\Gamma$ with the groups of left and right twists about the splitting $S$). In Section~\ref{sec:converse}, we will show that conversely, every subgroup of $\Gamma$ which satisfies Property~$\Pstab$ fixes a one-edge nonseparating free splitting. This will be enough to prove in Section~\ref{sec:ccl} that $\Comm(\Gamma)$ preserves the set of commensurability classes of stabilizers of one-edge nonseparating free splittings.

\subsection{Stabilizers of nonseparating free splittings satisfy $\Pstab$.} \label{sec:prop-satisfied}

We will now prove the following proposition.

\begin{prop}\label{prop:property-satisfied}
Let $\Gamma$ be a subgroup of $\ia$ which satisfies Hypothesis~$(H_2)$, and let $S$ be a one-edge nonseparating free splitting of $F_N$.
\\ Then $\Stab_\Gamma(S)$ satisfies Property~$(P_{\stab})$.
\end{prop}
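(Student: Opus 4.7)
The plan is to define $K_i := \Gamma \cap T_i$ (for $i=1,2$), where $T_1, T_2$ are the twist groups of $S$ about its two half-edges, and verify the three requirements of $(P_{\stab})$ in turn. Writing $F_N = C\ast$ with vertex group $C \cong F_{N-1}$, the full twist group of $S$ is the internal direct product $T_1 \times T_2$ with each $T_i \cong C$, by \cite{Lev}. Hypothesis $(H_2)$ applied at the nonabelian vertex group $C$ (valid since $N \geq 3$) yields that each $K_i$ is nonabelian, and since $K_i$ is a subgroup of the free group $T_i$, it is a nonabelian free group. Normality of $K_1 \times K_2$ in $H := \Stab_\Gamma(S)$ follows from Lemma~\ref{lemma:stab-splitting-ia}: since $\Gamma \subseteq \ia$, every element of $H$ acts trivially on the quotient graph $S/F_N$ and therefore preserves each half-edge, hence normalizes each $T_i$ setwise; the direct product structure of $T_1 \times T_2$ then restricts to $K_1 \times K_2$.

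The key point is the centralizer condition $C_\Gamma(P_1) = K_2$ (the case $i=2$ being symmetric). The inclusion $\supseteq$ is immediate from the commutation of $T_1$ and $T_2$. For $\subseteq$, let $\phi \in \Gamma$ commute with $P_1$, and let $Z \subseteq C$ be the image of $P_1$ under $T_1 \cong C$. By $(H_2)$ together with Proposition~\ref{prop:zmax_factors}(3), $Z$ is not contained in any proper $\zmax$-factor of $C$, in particular not in any proper free factor of $C$. For every $z \in Z$, a suitable power of the left twist $D_{e_1,z}$ is a Dehn twist about a one-edge cyclic splitting of $F_N$ whose edge group is generated by the primitive root of $z$ (obtained from $S$ by folding $e_1$ with $z \cdot e_1$); Lemma~\ref{lemma:twistor} therefore forces $\phi$ to preserve the $F_N$-conjugacy class of $\langle z \rangle$ for every $z \in Z$. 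Since the free factor hull in $F_N$ of the family $\{\langle z \rangle : z \in Z\}$ is canonical up to $F_N$-conjugacy, and since $C$ is itself a free factor of $F_N$ with no proper free factor containing $Z$, this hull must be $C$ up to conjugation. Hence $\phi$ preserves the conjugacy class of $C$, so $\phi \in H$. Choosing a representative $\tilde\phi \in \Aut(F_N)$ with $\tilde\phi(C) = C$, commutation with the lifts $\tilde D_{e_1,z}$ (which fix $C$ and send $t \mapsto zt$) forces $\tilde\phi|_C$ to fix $Z$ modulo inner automorphisms; the $\zmax$-non-ellipticity of $Z$ in $C$ then upgrades this to $\tilde\phi|_C$ being inner, so $\phi$ must itself be a right twist about $e_2$, i.e., $\phi \in K_2$.

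For the final clause of $(P_{\stab})$, choose a basis $x_1, \ldots, x_{N-1}$ of $C$ and let $t$ be the stable letter of the HNN extension, so that $(x_1, \ldots, x_{N-1}, t)$ is a basis of $F_N$. Let $R$ be the refinement of $S$ whose quotient is the rose with vertex group $\langle x_1, x_2 \rangle$ and $N-2$ petals labelled by $x_3, \ldots, x_{N-1}, t$. By the example recalled in Section~\ref{sec:product-f2}, the left- and right-twist groups $L_i, R_i$ about the $i$-th petal generate a subgroup of the twist group of $R$ isomorphic to $F_2^{2N-4}$. Applying $(H_2)$ at the nonabelian vertex group $\langle x_1, x_2 \rangle$, each $L_i \cap \Gamma$ and $R_i \cap \Gamma$ is a nonabelian free group, and their internal direct product lies in $\Stab_\Gamma(R) \subseteq H$, supplying the required direct product of $2N-4$ nonabelian free groups.

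The main obstacle is the centralizer inclusion $C_\Gamma(P_1) \subseteq K_2$: first, propagating the commutation of $\phi$ with left twists through Lemma~\ref{lemma:twistor} and the free factor hull operation in $F_N$ to show $\phi \in \Stab_\Gamma(S)$, and then exploiting the normal product structure of $H$ together with the $\zmax$-non-ellipticity of $Z$ in $C$ to force $\phi$ to be a pure right twist rather than some more general element of $H$.
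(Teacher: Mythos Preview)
Your overall architecture matches the paper's: take $K_i=\Gamma\cap T_i$, verify normality and nonabelian freeness via $(H_2)$, and build the $(2N-4)$-fold product from a rose refinement. These parts are fine.

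There is one genuine gap. In the centralizer argument you correctly reach the point where some representative $\alpha\in\Aut(C)$ of $\phi|_C$ fixes $Z$ pointwise, and then assert that ``the $\zmax$-non-ellipticity of $Z$ in $C$ upgrades this to $\tilde\phi|_C$ being inner''. This implication is not automatic: knowing that $\Fix(\alpha)$ contains a subgroup not lying in any proper $\zmax$-factor does not by itself force $\alpha$ to be inner. What is actually needed is to pick a single element $w\in Z$ that lies in no proper $\zmax$-factor of $C$ (Proposition~\ref{prop:zmax_factors}(4)), observe that $\phi|_C\in\Out(C;\langle w\rangle)$, invoke Proposition~\ref{p:zmax_rigid} to conclude that $\phi|_C$ has finite order in $\Out(C)$, and then use that $\phi\in\ia$ forces $\phi|_C\in\mathrm{IA}(C,\mathbb{Z}/3\mathbb{Z})$, which is torsion-free, so $\phi|_C=1$. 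You cite neither Proposition~\ref{p:zmax_rigid} nor the torsion-freeness of $\mathrm{IA}(C,\mathbb{Z}/3\mathbb{Z})$, and without them the step does not go through.

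A minor remark on the step showing $\phi\in\Stab_\Gamma(S)$: the paper does this differently, via Lemma~\ref{lemma:single-splitting-2} (the Cohen--Lustig parabolic orbit argument shows $S$ is the \emph{unique} free splitting fixed by $P_1$, hence the centralizer of $P_1$ must fix $S$). Your route through Lemma~\ref{lemma:twistor} and the free factor hull is also viable, but your phrasing ``the free factor hull of the family $\{\langle z\rangle:z\in Z\}$'' is ambiguous: as a family of conjugacy classes this yields a free factor \emph{system}, which need not be $\{[C]\}$. What you want is either the hull of a single $z\in Z$ lying in no proper free factor of $C$ (which is $C$, and its $\phi$-invariance follows from preservation of $[\langle z\rangle]$), or the hull of the subgroup $Z$ (in which case you must separately argue $\phi$-invariance). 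Either fix works; the paper's route via Lemma~\ref{lemma:single-splitting-2} sidesteps the issue entirely.
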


In order to prove Proposition~\ref{prop:property-satisfied}, we need to understand centralizers of half-groups of twists in $\Gamma$. Let $S$ be a one-edge nonseparating free splitting of $F_N$, and let $A\subseteq F_N$ be a corank one free factor such that $S$ is the Bass--Serre tree of the HNN extension $F_N=A\ast$. Let $\Stab^0(S)$ be the index $2$ subgroup of $\Stab_{\Out(F_N)}(S)$ made of automorphisms acting trivially on the quotient graph $S/F_N$, i.e.\ those that do not flip the unique edge in this graph. We mention that $\stab_{\ia}(S)\subseteq\stab^0(S)$ (Lemma~\ref{lemma:stab-splitting-ia}). Then $\Stab^0(S)$ surjects onto $\Out(A)$, and the kernel of this map is precisely equal to the group of twists of the splitting $S$. Let $e_1$ and $e_2$ be the two half-edges of $S/F_N$, and for every $i\in\{1,2\}$, let $K_{e_i}$ be the group of twists (in $\Out(F_N)$) about the edge $e_i$, which is isomorphic to $A$. 
We will call $K_{e_1}$ the \emph{group of left twists} of $S$, and $K_{e_2}$ the \emph{group of right twists} of $S$. 
By \cite[Proposition~3.1]{Lev}, the group of twists of the splitting $S$ is isomorphic to $K_{e_1}\times K_{e_2}$.  This gives a short exact sequence \[1 \to K_{e_1} \times K_{e_2} \to \Stab^0(S) \to \Out(A) \to 1\] describing the automorphisms fixing $S$ and acting trivially on $S/F_N$. 

\begin{proof}[Proof of Proposition~\ref{prop:property-satisfied}]
The fact that $\Stab_{\Gamma}(S)$ contains a direct product of $2N-4$ nonabelian free groups follows from Hypothesis~$(H_2)$: indeed, one can find a blowup $\hat{S}$ of $S$ which is a rose with $N-2$ petals, and Hypothesis~$(H_2)$ ensures that $\Stab_{\Gamma}(\hat{S})$ contains a direct product of $2N-4$ nonabelian free groups. As subgroups of $\ia$ preserve $F_N$-orbits of edges, $\Stab_{\Gamma}(\hat{S})$ is contained in $\Stab_{\Gamma}(S)$  (Lemma~\ref{lemma:stab-splitting-ia}). 

We will now prove that $\Stab_{\Gamma}(S)$ satisfies the first assertion from Property~$(P_\stab)$. As $K_{e_1}$ and $K_{e_2}$ are normal subgroups of $\Stab^0(S)$, the groups $K_1=K_{e_1} \cap \Gamma$ and $K_2=K_{e_2} \cap \Gamma$ are normal subgroups of $\Stab_\G(S)$ ($K_1$ and $K_2$ are the intersections of $\Gamma$ with the groups of left and right twists about $S$, respectively). Then $K_1\times K_2$ is a normal subgroup of $\Stab_{\Gamma}(S)$. Let $K'_1$ be a finite-index subgroup of $K_1$, and let $P_1$ be a normal subgroup of $K'_1$. We aim to prove that $C_{\Gamma}(P_1)=K_2$ (by symmetry, the same will hold true if we reverse the roles of $K_1$ and $K_2$).

It is clear that every right twist about $S$ centralizes $P_1$. We need to prove that conversely $C_{\Gamma}(P_1)$ is contained in the group of right twists of the splitting $S$. 
Let $A\subseteq F_N$ be a corank one free factor such that $S$ is the Bass--Serre tree of the splitting $F_N=A\ast$. We identify the group of left twists about $S$ (in $\Out(F_N)$) with $A$. Hypothesis~$(H_2)$ shows that $K_1$ is not contained in any proper $\zmax$-factor of $A$. Part 3 of Proposition~\ref{prop:zmax_factors} states that  this property is preserved every time we pass to a finite-index or normal subgroup, therefore  $P_1$ is not contained in any proper $\zmax$-factor of $A$. By Part 4 of Proposition~\ref{prop:zmax_factors}, $P_1$ contains an element $w$ which is not contained in any proper $\zmax$-factor of $A$. In particular $w$ is not contained in a proper free factor of $A$, and Lemma~\ref{lemma:single-splitting-2} tells us that the splitting $S$ is the only free splitting of $F_N$ which is $P_1$-invariant. Therefore the centralizer of $P_1$ also preserves $S$.

Now let $\Phi$ be any element of the centralizer of $P_1$. Then by the above $\Phi \in \Stab_{\Gamma}(S)$. We claim that the image $\Phi_{|A}$ of $\Phi$ in $\Out(A)$ is trivial. To see this, let $w$ be the above element of $P_1$ that is not contained in any $\zmax$-factor of $A$. As $\Phi$ commutes with the twist given by $w$, the automorphism $\Phi_{|A}$ preserves the conjugacy class of the subgroup generated by $w$ (Lemma~\ref{lemma:twistor}). Then $\Phi_{|A}$ is finite-order in $\Out(A)$ by Proposition~\ref{p:zmax_rigid}. As $\Phi \in \ia$ the restriction $\Phi_{|A}$ is contained in $\mathrm{IA}(A,\mathbb{Z}/3\mathbb{Z})$, which is torsion-free, so $\Phi_{|A}$ is trivial.  Hence $C_{\Gamma}(P_1)$ is contained in the group of twists of the splitting $S$. As $P_1$ is a nonabelian group of left twists it follows that $C_{\Gamma}(P_1)$ is contained in the group of right twists.
\end{proof}

\subsection{Characterizing stabilizers of nonseparating free splittings}\label{sec:converse}

We now provide a converse statement to Proposition~\ref{prop:property-satisfied}.

\begin{prop}\label{criterion-fix-splitting}
Let $\Gamma\subseteq\ia$ be a subgroup that satisfies Hypothesis~$(H_1)$. Let $H$ be a subgroup of $\Gamma$ which satisfies Property~$(P_\stab)$.
\\ Then $H$ fixes a one-edge nonseparating free splitting of $F_N$.  
\end{prop}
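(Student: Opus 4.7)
I shall let $\calf$ be a maximal $H$-invariant free factor system of $F_N$ and show that it must be sporadic and of the form $\{[C]\}$ with $C$ of corank one. Since $H \subseteq \ia$, Lemma~\ref{lemma:stab-splitting-ia} will then imply that $H$ fixes the associated one-edge nonseparating free splitting $F_N = C \ast$, which is the desired conclusion.

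\paragraph{Ruling out the separating sporadic case.} If $\calf = \{[A],[B]\}$, then $H$ fixes the separating splitting $F_N = A \ast B$. By Lemma~\ref{automorphic-lifts}, the stabilizer of this splitting has a finite-index subgroup isomorphic to $\Aut(A) \times \Aut(B)$, and Theorem~\ref{direct_products_of_free_groups} gives $\pr(\Aut(A) \times \Aut(B)) \leq 2N - 5$ in every subcase on the ranks of $A$ and $B$. This contradicts the second clause of $(P_{\stab})$, which forces $\pr(H) \geq 2N - 4$.

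\paragraph{Ruling out nonsporadic systems.} Suppose for contradiction that $\calf$ is nonsporadic. By Proposition~\ref{prop:maximal-unbounded}, $H$ acts on $\FF := \FF(F_N, \calf)$ with unbounded orbits. I apply Proposition~\ref{product-vs-hyp} to the normal subgroup $K_1 \times K_2 \lhd H$. If both $K_1$ and $K_2$ contain loxodromic isometries of $\FF$, Lemma~\ref{loxo-loxo} implies $H$ itself has a finite orbit in $\partial_\infty \FF$; Proposition~\ref{prop:fix-boundary} then gives a finite-index subgroup $H'$ of $H$ fixing the homothety class of an arational tree $[T]$, and the kernel of the homothety character $H' \to \mathbb{R}_+^*$ lies in the isometric stabilizer of $T$ while still containing a direct product of $2N-4$ nonabelian free groups (taken as the commutator subgroups of finite-index intersections of the $(P_{\stab})$-factors with $H'$), contradicting Lemma~\ref{direct_product_rel_arational}. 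Therefore we may assume, after relabelling, that $K_2$ has a finite orbit in $\partial_\infty \FF$.

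\paragraph{The main contradiction and obstacle.} Let $K'_2 \leq K_2$ be a finite-index subgroup fixing a homothety class $[T]$, and define $P_2$ to be the intersection of $K'_2$ with the isometric stabilizer of $T$. Then $P_2$ is normal in $K'_2$ (since the isometric stabilizer is normal in the homothety stabilizer), contains $[K'_2, K'_2]$ and is therefore nonabelian, and is itself contained in the isometric stabilizer of $T$. By the first clause of $(P_{\stab})$, $C_\Gamma(P_2) = K_1$, a nonabelian free group. I now apply Proposition~\ref{prop:stab-arat} to $P_2 \subseteq \Gamma \cap \Out(F_N, \calf)$, which is legitimate since $\Gamma$ satisfies $(H_1)$. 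In the generic subcase, Part~2(a) produces a $\mathbb{Z}^2 \subseteq \Gamma$ that is virtually centralized by $P_2$; because $P_2 \subseteq \ia$, Lemma~\ref{lemma:stab-splitting-ia} upgrades the periodic blow-up splitting built in that proof to a $P_2$-invariant one with trivial action on the quotient graph, so by Lemma~\ref{twist-central} the corresponding twists centralize \emph{all} of $P_2$. Hence $\mathbb{Z}^2 \subseteq C_\Gamma(P_2) = K_1$, contradicting that $K_1$ is free. The principal obstacle is the exceptional subcase $(G_v, \Inc_v) = (F_3, \{\mathbb{Z}, \mathbb{Z}, \mathbb{Z}\})$ of Proposition~\ref{prop:stab-arat}, where only the alternative Part~2(b) is available; to close the argument there, I will exhibit a $\mathbb{Z}^{2N-4}$ inside $\Comm_{\Gamma \cap \Out(F_N, \calf)}(P_2)$ by passing to a finite-index subgroup of $H$ that fixes $[T]$ (using that $K_1$ permutes the finite $K_2$-orbit of $[T]$) and invoking Lemma~\ref{lemma:stab-arat-fixes-splittings} to see that the $(P_{\stab})$-direct product commensurates $P_2$ through its action on the canonical piecewise-$F_N$ transverse covering of $T$.
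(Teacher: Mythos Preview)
Your overall strategy (reduce to a maximal $H$-invariant free factor system $\calf$, rule out the separating sporadic case via Theorem~\ref{direct_products_of_free_groups}, and rule out the nonsporadic case via the dynamics on $\FF(F_N,\calf)$) is the same as the paper's, and your treatment of the case where both $K_1$ and $K_2$ contain loxodromics is correct, though you route through Lemma~\ref{direct_product_rel_arational} where the paper instead uses the first conclusion of Proposition~\ref{prop:stab-arat} together with clause~1 of $(P_{\stab})$.

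There is, however, a genuine gap in your remaining case. Once you pass to $P_2=K_2'\cap\Stab_\Gamma(T)$, you only know that $P_2$ is normal in $K_2'$; you do \emph{not} know it has finite index in $K_2$. Consequently, when Proposition~\ref{prop:stab-arat} gives you a $\mathbb{Z}^2$ centralizing only a finite-index subgroup of $P_2$, that subgroup need not be of the form ``normal in a finite-index subgroup of $K_2$'', so clause~1 of $(P_{\stab})$ does not apply. Your attempted upgrade via Lemma~\ref{lemma:stab-splitting-ia} is invalid: that lemma is about \emph{free} splittings, whereas the blow-up $\hat S$ built in the proof of Proposition~\ref{prop:stab-arat} has nontrivial cyclic edge stabilizers, and there is no analogue of Lemma~\ref{lemma:stab-splitting-ia} for such splittings. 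The same finite-index problem obstructs your handling of the exceptional case~2(b): to place a $\mathbb{Z}^{2N-4}$ inside $\Comm_{\Gamma\cap\Out(F_N,\calf)}(P_2)$ you would need $H$ to commensurate $P_2$, and you only know $H$ commensurates $K_2$.

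The fix is a sharper relabelling, and it is exactly what the paper does. After ruling out the case where both $K_i$ contain loxodromics, choose the labelling so that $K_2$ contains \emph{no} loxodromic (equivalently, no element fully irreducible relative to $\calf$); this is possible since not both $K_i$ do. Then any finite-index $K_2'\le K_2$ fixing the homothety class $[T]$ in fact lies in the \emph{isometric} stabilizer of $T$, because an automorphism with nontrivial scaling factor on $T$ is fully irreducible relative to $\calf$ (see \cite[Proposition~6.2]{GH}). Thus $P_2=K_2'$ is finite index in $K_2$. Now in case~2(a), if $K_2'$ virtually centralizes a $\mathbb{Z}^2\subseteq\Gamma$, then some finite-index $K_2''\le K_2$ (normal in itself) satisfies $\mathbb{Z}^2\subseteq C_\Gamma(K_2'')=K_1$, contradicting freeness of $K_1$. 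In case~2(b), since $H$ normalizes the direct product $K_1\times K_2$ of centerless groups it at worst swaps the two factors, hence commensurates $K_2$ and therefore $K_2'$; thus $H\subseteq\Comm_{\Gamma\cap\Out(F_N,\calf)}(K_2')$ has no $\mathbb{Z}^{2N-4}$, contradicting clause~2 of $(P_{\stab})$.
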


\begin{proof}
We will show that $H$ fixes a one-edge free splitting of $F_N$; the fact that this splitting is nonseparating then follows from the fact that $H$ contains a direct product of $2N-4$ nonabelian free groups (Hypothesis~$2$ from Property~$\Pstab$), while stabilizers of one-edge separating free splittings do not (Theorem~\ref{direct_products_of_free_groups}).

Assume towards a contradiction that $H$ does not fix any free splitting of $F_N$, and let $\calf$ be a maximal $H$-invariant free factor system of $F_N$ (so in particular $H\subseteq\Out(F_N,\calf)$). Then $\calf$ is nonsporadic. For ease of notation, we simply denote by $\FF$ the relative free factor graph $\FF(F_N,\calf)$. As $\calf$ is maximal, Proposition~\ref{prop:maximal-unbounded} tells us that $H$ acts on $\FF$ with unbounded orbits.

Let $K_1$ and $K_2$ be nonabelian free subgroups of $H$ as in Hypothesis~1 from Property~$\Pstab$. We first assume that both $K_1$ and $K_2$ contain a fully irreducible automorphism relative to $\calf$ (which are loxodromic in $\FF$ by \cite[Theorem~A]{Gup} or \cite[Theorem~4.1]{GH}). By Lemma~\ref{loxo-loxo}, the groups $K_1$ and $K_2$ have finite-index subgroups $K_1^0$ and $K_2^0$ that share a common fixed point $\xi$ in $\partial_\infty \FF$. By Proposition~\ref{prop:fix-boundary}, a finite index subgroup of the stabilizer of $\xi$ preserves the homothety class $[T]$ of an arational $(F_N,\calf)$-tree $T$. We can therefore pass to two further finite-index subgroups $K'_1\subseteq K_1$ and $K'_2\subseteq K_2$ which also fix $[T]$.

There is a map $\Stab_{\Gamma}([T])\to\mathbb{R}_+^\ast$ (given by the homothety factor), whose kernel is equal to the isometric stabilizer $\Stab_\Gamma(T)$. We let $P_1:=K'_1\cap\Stab_{\Gamma}(T)$ and $P_2:=K'_2\cap\Stab_{\Gamma}(T)$ be the respective intersections of $K_1'$ and $K_2'$ with this isometric stabilizer. For $i \in \{1,2\}$, the group $P_i$ is nonabelian and normal in $K_i'$ as it is the kernel of a map from $K_i'$ to an abelian group. As $T$ is an arational $(F_N,\calf)$-tree, the first conclusion of Proposition~\ref{prop:stab-arat} implies that $P_1\times P_2$ virtually centralizes an infinite cyclic subgroup of $\Gamma$. This contradicts the first hypothesis from $\Pstab$.  

Up to exchanging the roles of $K_1$ and $K_2$, we can therefore assume that $K_1$ contains no fully irreducible automorphism relative to $\calf$. 
Then $K_1$ does not contain a loxodromic element with respect to the action on $\FF$. Since $H$ has unbounded orbits in $\FF$, Proposition~\ref{product-vs-hyp} implies that $K_1$ has a finite-index subgroup $K_1^0$ that fixes a point in $\partial_\infty \FF$. By the same argument as above, we can pass to a further finite-index subgroup $K'_1$ of $K_1$ that preserves the homothety class of an arational $(F_N,\calf)$-tree $T$. As $K'_1$ contains no fully irreducible automorphism relative to $\calf$, it fixes $T$ up to isometry, not just homothety (see e.g.\ \cite[Proposition~6.2]{GH}). Therefore, Proposition~\ref{prop:stab-arat} implies that either $K_1$ virtually centralizes a subgroup of $\Gamma$ isomorphic to $\mathbb{Z}^2$, or else that $H$ (which is contained in $\Comm_{\Gamma\cap\Out(F_N,\calf)}(K_1)$) does not contain any free abelian subgroup of rank $2N-4$. In the former case, we get a contradiction to Hypothesis~$1$ from Property~$\Pstab$. In the latter case $H$ cannot contain a direct product of $2N-4$ nonabelian free groups, contradicting Hypothesis~$2$ from Property~$\Pstab$.   
\end{proof}

\subsection{Conclusion}\label{sec:ccl}

We are now ready to show that the set of all commensurability classes of $\Gamma$-stabilizers of one-edge nonseparating free splittings of $F_N$ is $\Comm(\Gamma)$-invariant.

\begin{prop}\label{prop:stab-invariant}
Let $\Gamma\subseteq\ia$ be a twist-rich subgroup. Let $\Psi\in\Comm(\Gamma)$.
\\ Then for every one-edge nonseparating free splitting $S$ of $F_N$, there exists a unique one-edge nonseparating free splitting $S'$ of $F_N$ such that $\Psi([\Stab_{\Gamma}(S)])=[\Stab_{\Gamma}(S')]$.
\end{prop}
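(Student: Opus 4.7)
The plan is to combine Propositions~\ref{prop:property-satisfied} and~\ref{criterion-fix-splitting} with a direct transfer of Property~$(P_\stab)$ through a representative of $\Psi$, and then to invoke Lemma~\ref{lemma:single-splitting-stabilized} to pin down $S'$ uniquely. Let $f\colon H_1 \to H_2$ be a representative of $\Psi$, and set $H := \Stab_\Gamma(S)$, $L := H \cap H_1$ and $M := f(L)$, so that $[M] = \Psi\cdot[H]$. The goal is to show that there exists a unique one-edge nonseparating free splitting $S'$ such that $M$ is commensurable in $\Gamma$ with $\Stab_\Gamma(S')$.

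For existence, the strategy is to verify a commensuration-invariant version of $(P_\stab)$ for $M$, then apply (an adaptation of) Proposition~\ref{criterion-fix-splitting}. By Proposition~\ref{prop:property-satisfied}, $H$ satisfies $(P_\stab)$ with nonabelian free subgroups $K_1, K_2$, and by Theorem~\ref{direct_products_of_free_groups} it contains a direct product of $2N-4$ nonabelian free groups. Setting $L_i := f(K_i \cap H_1)$, these are nonabelian free, the product $L_1 \times L_2 = f((K_1 \times K_2) \cap H_1)$ is normal in $M$, and $M$ also inherits a direct product of $2N-4$ nonabelian free groups through $f$. For the centralizer condition, if $P \triangleleft L_i^0$ with $L_i^0$ of finite index in $L_i$, then $f^{-1}(P) \triangleleft f^{-1}(L_i^0)$, a subgroup of finite index in $K_i$, so $(P_\stab)$ for $H$ yields $C_\Gamma(f^{-1}(P)) = K_{i+1}$; intersecting with $H_1$ and applying $f$ gives $C_\Gamma(P) \cap H_2 = L_{i+1}$, and since $H_2$ has finite index in $\Gamma$, the subgroup $L_{i+1}$ lies as a finite-index subgroup of $C_\Gamma(P)$. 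I would then re-inspect the proof of Proposition~\ref{criterion-fix-splitting} and check that the commensurated version suffices: the only essential use of the exact equality $C_\Gamma(P_i) = K_{i+1}$ is in deriving a contradiction from $P_1 \times P_2$ virtually centralizing an infinite cyclic subgroup $\langle z \rangle$ of $\Gamma$, and the commensurated version carries through because any element of a group has a power in any finite-index subgroup; so from $z$ commuting with suitable normal subgroups of finite-index subgroups of $L_1$ and $L_2$, one obtains $z^{n_1} \in L_2$ and $z^{n_2} \in L_1$, whence $z^{n_1 n_2} \in L_1 \cap L_2 = \{1\}$, contradicting the infinite order of $z$. This shows $M$ fixes some one-edge nonseparating free splitting $S'$.

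For the finite-index conclusion, I would run the existence argument for $f^{-1}\colon H_2 \to H_1$ applied to $N := \Stab_\Gamma(S') \cap H_2$, obtaining $f^{-1}(N) \subseteq \Stab_\Gamma(S'')$ for some one-edge nonseparating free splitting $S''$. Since $L \subseteq f^{-1}(N)$ (as $M = f(L) \subseteq N$) and $L \subseteq \Stab_\Gamma(S)$, the subgroup $L$ fixes both $S$ and $S''$; as $N \ge 3$ makes the vertex group of any one-edge nonseparating splitting nonabelian, Lemma~\ref{lemma:single-splitting-stabilized} applies and forces $S'' = S$, so that $f^{-1}(N) \subseteq L$, whence $N \subseteq M$ and $M = N$ has finite index in $\Stab_\Gamma(S')$. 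Uniqueness follows from the same lemma: if $\Stab_\Gamma(S_1')$ and $\Stab_\Gamma(S_2')$ are commensurable in $\Gamma$ for two one-edge nonseparating splittings, their intersection is finite-index in $\Stab_\Gamma(S_1')$ and fixes $S_2'$, forcing $S_2'$ to be a collapse of $S_1'$ and hence equal to it. The main obstacle will be the delicate bookkeeping in the adapted proof of Proposition~\ref{criterion-fix-splitting}, where the passage to normal subgroups $P_i$ of finite-index subgroups of $L_i$ arising from arational-tree stabilizers must be organized so that the commensurated centralizer property applies cleanly; the rest is essentially formal tracking through the isomorphism $f$ together with previously established results.
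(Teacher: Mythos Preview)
Your argument is correct in outline but takes a longer path than the paper's.  The simplification you are missing is Proposition~\ref{prop:fi-twist-rich}: finite-index subgroups of twist-rich groups are again twist-rich.  The paper therefore applies Propositions~\ref{prop:property-satisfied} and~\ref{criterion-fix-splitting} directly with $\Gamma$ replaced by $\Gamma_1:=H_1$ and $\Gamma_2:=H_2$.  Since $(P_\stab)$ for $\Stab_{\Gamma_1}(S)$ is phrased in terms of centralizers in $\Gamma_1$, it transfers \emph{exactly} under the isomorphism $f$ to $(P_\stab)$ for $f(\Stab_{\Gamma_1}(S))$ as a subgroup of $\Gamma_2$, and Proposition~\ref{criterion-fix-splitting} then applies verbatim.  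There is no need for a ``commensurated'' version of $(P_\stab)$ or for a re-inspection of the proof of Proposition~\ref{criterion-fix-splitting}.

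Your route does work, but be aware that the proof of Proposition~\ref{criterion-fix-splitting} has a second branch (where $K_1$ virtually centralizes a copy of $\mathbb{Z}^2$) that also invokes the centralizer hypothesis; you only discuss the infinite-cyclic case explicitly, though the same power-passing trick handles the $\mathbb{Z}^2$ case as well.  The back-and-forth argument with $f^{-1}$ and the uniqueness via Lemma~\ref{lemma:single-splitting-stabilized} are the same in both approaches.
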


\begin{proof}
As $\Gamma$ is twist-rich, the $\Gamma$-stabilizers of two distinct one-edge nonseparating free splittings of $F_N$ are not commensurable  in $\Gamma$ (Lemma~\ref{lemma:single-splitting-stabilized}), so $S'$ is unique. 

We now prove existence. Let $f:\Gamma_1\to\Gamma_2$ be an isomorphism between two finite-index subgroups of $\Gamma$ that represents $\Psi$. Proposition~\ref{prop:fi-twist-rich} states that finite-index subgroups of twist-rich groups are twist-rich, so both $\G_1$ and $\G_2$ are twist-rich. By Proposition~\ref{prop:property-satisfied}, the group $\Stab_{\Gamma_1}(S)$ satisfies Property~$(P_\stab)$. As $f$ is an isomorphism, we deduce that $f(\Stab_{\Gamma_1}(S))$ also satisfies Property~$(P_\stab)$. Proposition~\ref{criterion-fix-splitting} implies that there exists a one-edge nonseparating free splitting $S'$ of $F_N$ such that  $f(\Stab_{\Gamma_1}(S))\subseteq\Stab_{\Gamma_2}(S')$. Applying the same argument to $f^{-1}$, we deduce that there exists a one-edge nonseparating free splitting $S''$ such that $$\Stab_{\Gamma_1}(S)\subseteq f^{-1}(\Stab_{\Gamma_2}(S'))\subseteq\Stab_{\Gamma_1}(S'').$$ Lemma~\ref{lemma:single-splitting-stabilized} tells us that $S$ is the unique free splitting invariant under $\Stab_{\Gamma_1}(S)$, so that $S=S''$, and we have equality everywhere. This completes our proof. 
\end{proof}

\section{Characterizing rose-compatibility}\label{sec:edges}

\emph{The goal of the present section is to give an algebraic characterization of when two one-edge nonseparating free splittings of $F_N$ are rose-compatible. This will imply that $\Comm(\Gamma)$ preserves the set of pairs of commensurability classes of stabilizers of adjacent vertices in $\ens$.}
\\
\\
\indent Here two compatible one-edge nonseparating free splittings of $F_N$ are said to be \emph{rose-compatible} if, denoting by $U$ their two-edge refinement, the graph $U/F_N$ is a two-petal rose; they are called \emph{circle-compatible} if $U/F_N$ is a loop with two vertices. 

\indent The general idea will be to use the fact that two one-edge nonseparating free splittings $S$ and $S'$ of $F_N$ are compatible if and only if their common stabilizer does not fix a third one-edge free splitting $S''$. Using the fact that stabilizers of nonseparating free splittings are preserved by the commensurator (as established in the previous section), we will show that this compatibility property is also preserved up to commensuration. We recall that edges in $\ens$ are given by rose-compatibility; distinguishing rose-compatibility from circle-compatibility  for $N\ge 4$ will follow from the fact that the stabilizer of a two-petalled rose in a twist-rich subgroup contains a direct product of $2N-4$ nonabelian free groups whereas the stabilizer of a two-edge loop splitting does not. In rank $3$ to distinguish rose-compatibility from circle-compatibility we will look at maximal free abelian subgroups instead. 

\subsection{The case when $N\ge 4$}

Let $N\ge 4$, and let $\Gamma\subseteq\ia$ be a twist-rich subgroup of $\ia$. We consider the following property of a pair $(K_1,K_2)$ of subgroups of $\Gamma$.

\begin{itemize}
\item[$\Pcomp$] Whenever $K\subseteq\Gamma$ is a subgroup that contains $K_1\cap K_2$ and satisfies $\Pstab$, we either have $K\subseteq K_1$ or $K\subseteq K_2$. In addition $K_1\cap K_2$ contains a direct product of $2N-4$ nonabelian free groups. 
\end{itemize}

\begin{prop}\label{prop:compatibility-all-cases}
Let $N\ge 4$, and let $\Gamma$ be a twist-rich subgroup of $\ia$. Let $S_1$ and $S_2$ be two one-edge nonseparating free splittings of $F_N$, and for every $i\in\{1,2\}$, let $K_i:=\Stab_{\Gamma}(S_i)$.
\\ Then $S_1$ and $S_2$ are rose-compatible if and only if $(K_1,K_2)$ satisfies Property~$\Pcomp$.
\end{prop}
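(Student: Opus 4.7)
The plan is to prove both directions of the equivalence using the characterization of $\Pstab$-subgroups from Section~\ref{sec:vertices} together with the structure of refinements of free splittings developed earlier.

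\emph{Forward direction.} Suppose $S_1$ and $S_2$ are rose-compatible, and let $U$ be their two-petal rose refinement, carried by a single vertex $v$ with stabilizer $F_{N-2}$ (nonabelian since $N\geq 4$). First I would observe that $K_1\cap K_2=\Stab_\Gamma(U)$: the inclusion $\supseteq$ is immediate from the collapse maps $U\to S_i$, and the converse holds because $U$ is the unique minimal common refinement of two compatible distinct one-edge splittings. To exhibit the direct product of $2N-4$ nonabelian free groups demanded by $\Pcomp$, I would further blow up $v$ into a rose with $N-4$ extra loops at a new vertex $v'$ of rank $2$, producing a refinement $\hat U$ which is the standard rose with $N-2$ petals at a rank-$2$ vertex from Section~\ref{sec:product-f2}; Hypothesis $(H_2)$ then produces $2(N-2)=2N-4$ pairwise-commuting nonabelian free twist subgroups inside $\Stab_\Gamma(\hat U)\subseteq\Stab_\Gamma(U)$. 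Finally, given any $K\supseteq K_1\cap K_2$ in $\Gamma$ satisfying $\Pstab$, Proposition~\ref{criterion-fix-splitting} supplies a one-edge nonseparating free splitting $S$ fixed by $K$; since $K\supseteq\Stab_\Gamma(U)$, Lemma~\ref{lemma:single-splitting-stabilized} applied to $U$ (whose unique vertex orbit has nonabelian stabilizer) forces $S$ to be a collapse of $U$. The only one-edge collapses of $U$ are $S_1$ and $S_2$, so $K\subseteq K_1$ or $K\subseteq K_2$.

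\emph{Reverse direction.} Suppose $S_1$ and $S_2$ are not rose-compatible; I would derive the failure of $\Pcomp$ in each of the two possible cases. If $S_1,S_2$ are circle-compatible, with two-edge loop refinement $U$, then again $K_1\cap K_2=\Stab_\Gamma(U)$, and Lemma~\ref{lemma:product-in-circle} gives $\pr(\Stab(U))\leq 2N-6<2N-4$, so the second clause of $\Pcomp$ fails automatically. If instead $S_1,S_2$ are incompatible, then their representing spheres $\sigma_1,\sigma_2$ in $M_N$ meet in an essential circle, and I would use the boundary splitting machinery from Section~\ref{sec:ens}: the boundary splitting $U$ of $\sigma_1,\sigma_2$ falls into one of the six types of Figure~\ref{fig:boundary-spheres}, and in every case at least one boundary sphere determines a one-edge nonseparating free splitting $S_3$ distinct from $S_1,S_2$. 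Since $K_1\cap K_2$ permutes the finite set of boundary spheres, Lemma~\ref{lemma:stab-splitting-ia} promotes this periodicity to invariance, giving $K_1\cap K_2\subseteq\Stab_\Gamma(S_3)=:K$. Proposition~\ref{prop:property-satisfied} guarantees that $K$ satisfies $\Pstab$, while Lemma~\ref{lemma:single-splitting-stabilized} applied to $S_3$ (with its nonabelian vertex stabilizer $F_{N-1}$) guarantees that $S_3$ is the only free splitting fixed by $K$; since $S_3\notin\{S_1,S_2\}$, neither $K\subseteq K_1$ nor $K\subseteq K_2$, contradicting $\Pcomp$.

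\emph{Main obstacle.} The most delicate step is the incompatible case: verifying that every one of the six boundary-splitting configurations of Figure~\ref{fig:boundary-spheres} contains at least one nonseparating boundary sphere (equivalently, that at least one edge of $U/F_N$ remains a loop after collapsing the others). This is a finite topological check, parallel in spirit to the link analysis of Lemma~\ref{cliques}, and it is exactly what turns the geometric ``surgery'' idea from the introduction --- that incompatible splittings force a third fixed splitting --- into the existence of a one-edge \emph{nonseparating} $S_3$ to which Proposition~\ref{prop:property-satisfied} and Lemma~\ref{lemma:single-splitting-stabilized} apply.
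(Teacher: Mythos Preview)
Your forward direction and the circle-compatible case are correct and essentially identical to the paper's argument. The gap is in the incompatible case.

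You assert that when $S_1,S_2$ are incompatible, ``their representing spheres $\sigma_1,\sigma_2$ in $M_N$ meet in an essential circle'', and then invoke the six-case boundary-splitting analysis of Section~\ref{sec:ens}. But two incompatible spheres in $M_N$ need not intersect in a \emph{single} circle: in Hatcher normal form the intersection can consist of many circles, and the single-circle hypothesis in Section~\ref{sec:ens} was only available there because the spheres had been trapped in a complementary component of a simple sphere system (a holed $3$-sphere). So the boundary-splitting machinery of Figure~\ref{fig:boundary-spheres} simply does not apply to an arbitrary incompatible pair, and your ``main obstacle'' (checking that some boundary sphere is nonseparating in each of the six cases) is not the real obstacle.

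The paper handles the incompatible case differently. It invokes Lemma~\ref{lemma:new-splitting}, which uses a surgery sequence (valid for any number of intersection circles) to produce a one-edge free splitting $S\neq S_1,S_2$ fixed by $K_1\cap K_2$. Crucially, the paper does \emph{not} insist that this $S$ be nonseparating: if $S$ happens to be separating, then Theorem~\ref{direct_products_of_free_groups} shows $K_1\cap K_2$ cannot contain a direct product of $2N-4$ nonabelian free groups, so the second clause of $\Pcomp$ fails; only when $S$ is nonseparating does one need the argument via $K=\Stab_\Gamma(S)$ and $\Pstab$. Your attempt to guarantee a nonseparating $S_3$ directly is what forced you into the single-circle assumption; the paper sidesteps this by allowing $S$ to be separating and disposing of that case with the product-rank bound.
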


Our proof of Proposition~\ref{prop:compatibility-all-cases} relies on the following lemma, whose proof turns out to have a nice formulation in the sphere model of splittings of $F_N$.

\begin{lemma}\label{lemma:new-splitting}
Let $N\ge 3$, and let $S_1$ and $S_2$ be two noncompatible one-edge free splittings of $F_N$. Then there exists a one-edge free splitting $S$ of $F_N$ which is distinct from both $S_1$ and $S_2$, and fixed by $\Stab_{\ia}(S_1)\cap\Stab_{\ia}(S_2)$.%\\ In addition $S$ can be chosen to be compatible with either $S_1$ or $S_2$.
\end{lemma}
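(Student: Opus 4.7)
The plan is to use the sphere model of free splittings. Translate the one-edge free splittings $S_1, S_2$ into isotopy classes of essential embedded spheres $\Sigma_1, \Sigma_2 \subseteq M_N$. Since $S_1$ and $S_2$ are noncompatible, $\Sigma_1$ and $\Sigma_2$ cannot be made disjoint. By Hatcher's normal form theory, isotope them into minimal position, so that they intersect transversally in a canonical (up to isotopy) nonempty finite collection of essential circles $c_1,\dots,c_k$, with the property that no disk of $\Sigma_1$ and disk of $\Sigma_2$ with common boundary $c_i$ cobound a $3$-ball (otherwise one could isotope $\Sigma_2$ across this ball and reduce $|\Sigma_1\cap\Sigma_2|$).

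The main construction will be sphere surgery along the circles $c_i$. For each $i$, the circle $c_i$ splits $\Sigma_1$ into two disks $D_i^{1,+}, D_i^{1,-}$ and $\Sigma_2$ into two disks $D_i^{2,+}, D_i^{2,-}$. For each pair $(a,b)\in\{+,-\}^2$, form the new embedded sphere $\Sigma_i^{a,b}:=D_i^{1,a}\cup D_i^{2,b}$ (smoothed and pushed slightly off $c_i$). The finite set
\[ X := \bigl\{\,\Sigma_i^{a,b} : 1\le i\le k,\; a,b\in\{+,-\}\,\bigr\} \]
of isotopy classes depends only on the unordered pair $\{[\Sigma_1],[\Sigma_2]\}$, by uniqueness of normal form up to ambient isotopy.

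The crucial geometric claim is that for (at least) one surgered sphere $\Sigma_i^{a,b}\in X$, the sphere is essential and isotopic to neither $\Sigma_1$ nor $\Sigma_2$. For non-isotopy: if $\Sigma_i^{a,b}\simeq\Sigma_1$, then the disks $D_i^{1,-a}$ and $D_i^{2,b}$ (which share boundary $c_i$) would cobound a $3$-ball, contradicting minimal position; the same argument excludes $\Sigma_i^{a,b}\simeq\Sigma_2$. For essentiality at a fixed $c_i$, if all four $\Sigma_i^{a,b}$ bounded balls, then gluing the two surgered spheres that share the disk $D_i^{1,+}$ along that disk would exhibit $\Sigma_2$ as the boundary of a union of two $3$-balls and hence as inessential, contradicting the essentiality of $\Sigma_2$.

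Now let $H := \Stab_{\ia}(S_1)\cap\Stab_{\ia}(S_2)$. Every element of $H$ preserves the isotopy classes $[\Sigma_1]$ and $[\Sigma_2]$, and hence permutes the canonically defined finite set $X$. Thus every element of $X$ is $H$-periodic; since $H\subseteq\ia$, Lemma~\ref{lemma:stab-splitting-ia} upgrades this to genuine $H$-invariance of each element of $X$. Choosing an essential surgered sphere $S\in X$ distinct from $S_1$ and $S_2$ yields the required one-edge free splitting. The step I expect to be the main obstacle is the essentiality/non-isotopy verification in minimal position; here one needs to invoke the correct form of Hatcher's minimal-position theorem to justify the absence of the cobounded $3$-balls that would otherwise trivialize the surgered sphere.
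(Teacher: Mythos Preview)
Your approach is essentially the paper's: exploit sphere surgery and the canonicality of Hatcher normal form to produce a finite $H$-invariant set of spheres, then use $\ia$ to upgrade periodicity to invariance. However, there is a genuine technical gap in your construction of $X$.

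When $k>1$, the circle $c_i$ does \emph{not} cut $\Sigma_1$ into two disks but into two hemispheres, and these hemispheres may contain other intersection circles $c_j$. Your surgered object $\Sigma_i^{a,b}=D_i^{1,a}\cup D_i^{2,b}$ is then typically \emph{not embedded}: the two pieces meet along every $c_j$ lying in both hemispheres. So $X$ as written is not a set of embedded spheres, and the rest of the argument does not apply to it. The paper sidesteps this by doing surgery only along an \emph{innermost} circle of $\Sigma_2$ (equivalently, taking the first step of a Hatcher surgery sequence): the disk $D\subset\Sigma_2$ bounded by that circle is disjoint from $\Sigma_1$ in its interior, so the surgered sphere is embedded.

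Your non-isotopy argument is also not justified: from $\Sigma_i^{a,b}\simeq\Sigma_1$ you cannot directly conclude that $D_i^{1,-a}$ and $D_i^{2,b}$ cobound a $3$-ball (isotopic spheres in $M_N$ need not cobound a ball, since $M_N$ is reducible). The paper's argument is cleaner and avoids this: after innermost surgery the new sphere $S$ can be pushed off $\Sigma_1$, hence $S\not\simeq\Sigma_2$ (which cannot be made disjoint from $\Sigma_1$); and $S$ has strictly fewer intersection circles with $\Sigma_2$ than $\Sigma_1$ does, hence $S\not\simeq\Sigma_1$. Restricting your set $X$ to surgeries along innermost circles and replacing your non-isotopy argument with this intersection-count argument repairs the proof and makes it coincide with the paper's.
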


\begin{remark}\label{rk:surgery}
The proof will actually show that every free splitting (corresponding to a sphere) which appears on a surgery path from $S_1$ to $S_2$ is fixed by $\Stab_{\ia}(S_1)\cap\Stab_{\ia}(S_2)$.
\end{remark}

\begin{proof}
Viewing $S_1$ and $S_2$ as spheres in $M_N$, as they are noncompatible there is a nontrivial \emph{surgery sequence} from $S_1$ to $S_2$ (see e.g.\ \cite{Hat} or \cite{HV2}), and there are only finitely many of those. If $\Phi\in\Stab_{\ia}(S_1)\cap\Stab_{\ia}(S_2)$, then the $\Phi$-image of a surgery sequence from $S_1$ to $S_2$ is again a surgery sequence from $S_1$ to $S_2$. In particular, as we are working in $\ia$, every sphere on a surgery sequence from $S_1$ to $S_2$ is fixed by $\Stab_{\ia}(S_1)\cap\Stab_{\ia}(S_2)$.  

Now let $S$ be an essential sphere obtained by a single surgery on $S_1$ towards $S_2$.  The sphere $S$ is disjoint from $S_1$ so is not isotopic to $S_2$ and has strictly fewer intersection circles with $S_2$ than $S_1$, so is not isotopic to $S_1$. By the above, it is fixed by $\Stab_{\ia}(S_1)\cap\Stab_{\ia}(S_2)$. This concludes our proof. 
\end{proof}

\begin{proof}[Proof of Proposition~\ref{prop:compatibility-all-cases}]
We first assume that $S_1$ and $S_2$ are rose-compatible. As $N\ge 4$, the splittings $S_1$ and $S_2$ have a common refinement $U$ such that $U/F_N$ is a rose with $N-2$ petals with nonabelian vertex group (isomorphic to $F_2$). The group of twists on this rose is isomorphic to a direct product of $2N-4$ copies of $F_2$ (with each factor given by the group of twists on a half-edge). Hypothesis~$(H_2)$ on $\Gamma$ thus ensures that $K_1\cap K_2$ contains a direct product of $2N-4$ nonabelian free groups.

Now let $K_0:=K_1\cap K_2$. Then $K_0$ is equal to the stabilizer in $\ia$ of the two-edge common refinement $V$ of $S_1$ and $S_2$ (by Lemma~\ref{lemma:stab-splitting-ia}, the stabilizer of a two-edge free splitting in $\ia$ also fixes each of the two one-edge collapses, without permuting them). Let $K\subseteq\Gamma$ be a group that contains $K_0$ and satisfies $\Pstab$. As $K$ satisfies $\Pstab$, Proposition~\ref{criterion-fix-splitting} ensures that $K$ fixes a nonseparating free splitting $S$ of $F_N$. As $K_0\subseteq K$ we deduce that $S$ is $K_0$-invariant. However, as $\Gamma$ is twist-rich, Lemma~\ref{lemma:single-splitting-stabilized} ensures that the only $K_0$-invariant one-edge free splittings are the collapses of $V$, which are $S_1$ and $S_2$. Therefore $K\subseteq K_1$ or $K\subseteq K_2$. This shows that the pair $(K_1,K_2)$ satisfies $\Pcomp$.

We now assume that $S_1$ and $S_2$ are not rose-compatible. If they are circle-compatible, then $K_1\cap K_2$ does not contain any direct product of $2N-4$ nonabelian free groups (Lemma~\ref{lemma:product-in-circle}), so $(K_1,K_2)$ does not satisfy $\Pcomp$. We now assume that $S_1$ and $S_2$ are not compatible. By Lemma~\ref{lemma:new-splitting}, there exists a one-edge free splitting $S$ of $F_N$, distinct from both $S_1$ and $S_2$, which is fixed by $K_1\cap K_2$. If $S$ is a separating splitting, then $K_1\cap K_2$ does not contain any direct product of $2N-4$ nonabelian free groups (Theorem~\ref{direct_products_of_free_groups}), so $(K_1,K_2)$ does not satisfy $\Pcomp$. If $S$ is a nonseparating splitting, we let $K:=\Stab_{\Gamma}(S)$. Proposition~\ref{prop:property-satisfied} ensures that $K$ satisfies $\Pstab$, and we have $K_1\cap K_2\subseteq K$, however Lemma~\ref{lemma:single-splitting-stabilized} tells us that $S$ is the only invariant free splitting of $K$, so that $K$ is neither contained in $K_1$ nor in $K_2$. Therefore $(K_1,K_2)$ does not satisfy $\Pcomp$.
\end{proof}

\subsection{Subgroups of $\Out(F_3)$ that contain a power of every twist}

Let $N=3$, and let $\Gamma\subseteq\mathrm{IA}_3(\mathbb{Z}/3\mathbb{Z})$ be a subgroup such that every twist has a power in $\Gamma$. We consider the following property of a pair $(K_1,K_2)$ of subgroups of $\Gamma$.

\begin{itemize}
\item[$\Ppcomp$] The group $K_1\cap K_2$ is isomorphic to $\mathbb{Z}^3$. In addition, whenever $K\subseteq\Gamma$ is a subgroup that contains $K_1\cap K_2$ and satisfies $\Pstab$, we either have $K\subseteq K_1$ or $K\subseteq K_2$.  
\end{itemize}

\begin{lemma}\label{lemma:vcd-f3}
The stabilizer in $\mathrm{IA}_3(\mathbb{Z}/3\mathbb{Z})$ of a free splitting $S$ such that $S/F_3$ is a two-petal rose is isomorphic to $\mathbb{Z}^3$. The stabilizer of a one-edge separating free splitting of $F_3$ or of a free splitting $S$ such that $S/F_3$ is a two-edge loop does not contain any free abelian subgroup of rank $3$.
\end{lemma}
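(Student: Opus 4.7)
I will split the argument according to the three types of splitting. In each case, Lemma~\ref{automorphic-lifts} (or a short direct computation for the rose) reduces the statement to an easy algebraic bound on abelian subgroups of certain $\Aut$-groups; the only real work is in the two-petal rose case.

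\emph{Two-petal rose.} Write $F_3 = \langle a, b, c \rangle$ with $\langle a \rangle$ the vertex group and $b, c$ representing the two loops. The four half-edges each carry a twist group equal to $C_{\langle a\rangle}(1) = \langle a \rangle$, producing four twists $L_b, R_b, L_c, R_c$ that generate a copy of $\mathbb{Z}^4$ inside $\Aut(F_3)$. Passing to $\Out(F_3)$ kills the inner automorphisms $\ad_{a^k}$; the direct computation $\ad_a = L_b R_b^{-1} L_c R_c^{-1}$ shows that the group of twists in $\Out(F_3)$ is $\mathbb{Z}^4/\langle(1,-1,1,-1)\rangle \cong \mathbb{Z}^3$. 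The full $\Out(F_3)$-stabilizer $\Stab(S)$ is then an extension of this $\mathbb{Z}^3$ by a finite group $Q$ of graph-of-groups symmetries (permuting and flipping the two loops, and inverting $a$). I will check that every nontrivial element of $Q$, and more generally every composition of a nontrivial element of $Q$ with a twist, acts nontrivially on $H_1(F_3;\mathbb{Z}/3\mathbb{Z})$: twists act trivially on $[a]$ and modify $[b],[c]$ only by multiples of $[a]$, whereas any nontrivial element of $Q$ either negates $[a]$ or produces a nontrivial signed permutation of $\{[b],[c]\}$ in the quotient $H_1/\langle[a]\rangle$. Combined with Lemma~\ref{lemma:stab-splitting-ia}, this forces $\Stab_{\mathrm{IA}_3(\mathbb{Z}/3\mathbb{Z})}(S)$ to equal the intersection of $\mathrm{IA}_3(\mathbb{Z}/3\mathbb{Z})$ with the twist subgroup, a finite-index subgroup of $\mathbb{Z}^3$ and hence itself isomorphic to $\mathbb{Z}^3$.

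\emph{One-edge separating splitting.} Writing $F_3 = A \ast B$ with both factors nontrivial forces $\{A, B\} = \{\mathbb{Z}, F_2\}$. Lemma~\ref{automorphic-lifts} gives that $\Stab(S)$ contains an index-at-most-$2$ subgroup isomorphic to $\Aut(\mathbb{Z}) \times \Aut(F_2) = (\mathbb{Z}/2\mathbb{Z}) \times \Aut(F_2)$, so it suffices to show that $\Aut(F_2)$ contains no copy of $\mathbb{Z}^3$. Via the standard exact sequence $1 \to F_2 \to \Aut(F_2) \to \mathrm{GL}_2(\mathbb{Z}) \to 1$, any abelian subgroup has image in the virtually free group $\mathrm{GL}_2(\mathbb{Z})$ of rank at most $1$ and kernel inside the free group $F_2$ of rank at most $1$, so has total rank at most $2$.

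\emph{Two-edge loop splitting.} The underlying graph $S/F_3$ contributes $1$ to the first Betti number of $F_3$, so $\mathrm{rk}(A) + \mathrm{rk}(B) = 2$; discarding the collapsible cases where one vertex group is trivial leaves only $A = B = \mathbb{Z}$. Lemma~\ref{automorphic-lifts} then presents $\Stab(S)$ as virtually $\mathbb{Z}^2 \rtimes (\mathbb{Z}/2\mathbb{Z})^2$, which manifestly contains no $\mathbb{Z}^3$.

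The only genuinely delicate point is in the rose case: correctly passing from the twist group in $\Aut(F_3)$ to its image in $\Out(F_3)$, and verifying that the finite quotient $Q$ is detected by $H_1$ modulo $3$. The separating and loop cases follow immediately from Lemma~\ref{automorphic-lifts} together with the fact that $\Aut(F_2)$ has virtual cohomological dimension $2$.
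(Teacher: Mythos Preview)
Your proof is correct and follows essentially the same approach as the paper's: both arguments rest on Levitt's structural description of stabilizers of free splittings (which the paper simply cites as \cite{Lev}, and which you unpack explicitly, using Lemma~\ref{automorphic-lifts} for the separating and loop cases). Your treatment of the two-petal rose---computing the twist group as $\mathbb{Z}^4/\langle(1,-1,1,-1)\rangle$ and then checking that the finite quotient $Q$ is detected on $H_1(F_3;\mathbb{Z}/3\mathbb{Z})$ modulo twists---is a faithful expansion of what the paper leaves implicit, and your observation that the resulting intersection with $\mathrm{IA}_3(\mathbb{Z}/3\mathbb{Z})$ is a finite-index subgroup of $\mathbb{Z}^3$ (hence itself $\mathbb{Z}^3$) is exactly what is needed.
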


\begin{proof}
It follows from \cite{Lev} that the stabilizer of a two-petal rose in $\mathrm{IA}_3(\mathbb{Z}/3\mathbb{Z})$ is isomorphic to $\mathbb{Z}^3$, the stabilizer of a two-edge loop is isomorphic to $\mathbb{Z}^2$, and the stabilizer $\Stab_{\mathrm{IA}_3(\mathbb{Z}/3\mathbb{Z})}(S)$ of a separating free splitting $S$ of the form $F_2\ast\mathbb{Z}$ fits into a short exact sequence $$1\to F_2\to \Stab_{\mathrm{IA}_3(\mathbb{Z}/3\mathbb{Z})}(S)\to\Out(F_2)\to 1,$$ from which the result follows.
\end{proof}

\begin{prop}\label{prop:compatibility-3}
Let $\Gamma\subseteq\mathrm{IA}_3(\mathbb{Z}/3\mathbb{Z})$ be a subgroup which contains a power of every twist. Let $S_1$ and $S_2$ be two one-edge nonseparating free splittings of $F_3$, and for every $i\in\{1,2\}$, let $K_i:=\Stab_{\Gamma}(S_i)$.
\\ Then $S_1$ and $S_2$ are rose-compatible if and only if $(K_1,K_2)$ satisfies $\Ppcomp$.
\end{prop}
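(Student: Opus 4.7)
The plan is to mirror the proof of Proposition~\ref{prop:compatibility-all-cases}, with the role of ``direct product of $2N-4$ nonabelian free groups'' played instead by ``$\mathbb{Z}^3$'': this is the invariant that, in rank~$3$, distinguishes between the three shapes of a two-edge refinement of $S_1$ and $S_2$ (two-petal rose, two-edge loop, or a separating one-edge splitting produced by surgery when $S_1,S_2$ are incompatible). I shall use throughout that $\Gamma$ is twist-rich by Proposition~\ref{prop:example2}, so that the tools of Section~\ref{sec:vertices} apply, together with the basic observation that whenever $S_1$ and $S_2$ are compatible with common refinement $U$, one has $K_1\cap K_2=\Stab_\Gamma(U)$: the inclusion $\Stab_\Gamma(U)\subseteq K_1\cap K_2$ follows from Lemma~\ref{lemma:stab-splitting-ia}, and the reverse inclusion from the uniqueness of the common refinement of two compatible free splittings.

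For the forward implication, suppose $S_1$ and $S_2$ are rose-compatible, so $U/F_3$ is a two-petal rose. Lemma~\ref{lemma:vcd-f3} identifies $\Stab_{\mathrm{IA}_3(\mathbb{Z}/3\mathbb{Z})}(U)$ with $\mathbb{Z}^3$, generated by three twists; since $\Gamma$ contains a power of every twist, $K_1\cap K_2=\Stab_\Gamma(U)$ is a finite-index subgroup of this $\mathbb{Z}^3$ and is hence itself isomorphic to $\mathbb{Z}^3$. To check the second clause of $\Ppcomp$, pick $K\supseteq K_1\cap K_2$ satisfying $\Pstab$: Proposition~\ref{criterion-fix-splitting} yields a one-edge nonseparating free splitting $S$ fixed by $K$, hence invariant under $\Stab_\Gamma(U)=K_1\cap K_2$. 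Because the single vertex group of $U/F_3$ is $\mathbb{Z}$, and hence nontrivial, Lemma~\ref{lemma:single-splitting-stabilized-2} applies and forces $S$ to be a one-edge collapse of $U$, so $S\in\{S_1,S_2\}$ and $K\subseteq K_1$ or $K\subseteq K_2$.

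For the converse I will argue by contrapositive. If $S_1$ and $S_2$ are circle-compatible, the common refinement $U$ has $U/F_3$ a two-edge loop, and Lemma~\ref{lemma:vcd-f3} shows that $\Stab_{\mathrm{IA}_3(\mathbb{Z}/3\mathbb{Z})}(U)$ contains no free abelian subgroup of rank~$3$, so $K_1\cap K_2$ cannot be $\mathbb{Z}^3$ and the first clause of $\Ppcomp$ fails. Otherwise, $S_1$ and $S_2$ are incompatible, and Lemma~\ref{lemma:new-splitting} provides a one-edge free splitting $S$, distinct from $S_1$ and $S_2$, that is fixed by $\Stab_\ia(S_1)\cap\Stab_\ia(S_2)\supseteq K_1\cap K_2$. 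If $S$ is separating, Lemma~\ref{lemma:vcd-f3} again rules out a $\mathbb{Z}^3$ in $K_1\cap K_2\subseteq\Stab_\ia(S)$. If $S$ is nonseparating, set $K:=\Stab_\Gamma(S)$: then $K_1\cap K_2\subseteq K$ and $K$ satisfies $\Pstab$ by Proposition~\ref{prop:property-satisfied}, but Lemma~\ref{lemma:single-splitting-stabilized-2} ensures that $S$ is the only $K$-invariant one-edge free splitting of $F_3$, so $K$ is contained in neither $K_1$ nor $K_2$ and the second clause of $\Ppcomp$ fails.

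I do not expect a genuine obstacle: once the substitution ``large direct product of nonabelian free groups'' $\leadsto$ ``$\mathbb{Z}^3$'' is in place, every step above is a direct transcription of the rank~$\ge 4$ argument, with Lemma~\ref{lemma:single-splitting-stabilized-2} replacing Lemma~\ref{lemma:single-splitting-stabilized} and Lemma~\ref{lemma:vcd-f3} doing the combinatorial bookkeeping that separates roses, loops, and separating refinements.
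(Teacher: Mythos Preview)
Your proof is correct and follows essentially the same route as the paper: both mirror the proof of Proposition~\ref{prop:compatibility-all-cases}, replacing direct products of free groups by $\mathbb{Z}^3$ via Lemma~\ref{lemma:vcd-f3}, and invoking Lemma~\ref{lemma:single-splitting-stabilized-2} in place of Lemma~\ref{lemma:single-splitting-stabilized}. Your write-up even supplies a detail the paper leaves implicit, namely why $K_1\cap K_2$ is genuinely isomorphic to $\mathbb{Z}^3$ rather than merely contained in one (finite index in $\mathbb{Z}^3$ via the power-of-every-twist hypothesis).
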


\begin{proof}
The proof is the same as the proof of Proposition~\ref{prop:compatibility-all-cases}, using Lemma~\ref{lemma:vcd-f3} instead of maximal direct products of free groups to distinguish nonseparating free splittings from separating ones, and rose-compatibility from circle-compatibility, and Lemma~\ref{lemma:single-splitting-stabilized-2} instead of Lemma~\ref{lemma:single-splitting-stabilized}. 

We first assume that $S_1$ and $S_2$ are rose-compatible. Lemma~\ref{lemma:vcd-f3} ensures that $K_1\cap K_2$ is isomorphic to $\mathbb{Z}^3$. Let $K_0:=K_1\cap K_2$, and let $K\subseteq\Gamma$ be a group that contains $K_0$ and satisfies $\Pstab$. As $K$ satisfies $\Pstab$, Proposition~\ref{criterion-fix-splitting} ensures that $K$ fixes a one-edge nonseparating free splitting $S$ of $F_3$. As $K_0\subseteq K$ we deduce that $S$ is $K_0$-invariant. As every twist has a power contained in $\Gamma$, Lemma~\ref{lemma:single-splitting-stabilized-2} ensures that $S$ is either equal to $S_1$ or to $S_2$. Therefore $K\subseteq K_1$ or $K\subseteq K_2$. This shows that the pair $(K_1,K_2)$ satisfies $\Ppcomp$.

We now assume that $S_1$ and $S_2$ are not rose-compatible. If they are circle-compatible, then $K_1\cap K_2$ does not contain any free abelian subgroup of rank $3$ (Lemma~\ref{lemma:vcd-f3}), so $(K_1,K_2)$ does not satisfy $\Ppcomp$. We now assume that $S_1$ and $S_2$ are not compatible. By Lemma~\ref{lemma:new-splitting}, there exists a one-edge free splitting $S$ of $F_N$, distinct from both $S_1$ and $S_2$, which is fixed by $K_1\cap K_2$. If $S$ is a separating splitting, then $K_1\cap K_2$ does not contain any free abelian subgroup of rank $3$ (Lemma~\ref{lemma:vcd-f3}), so $(K_1,K_2)$ does not satisfy $\Ppcomp$. If $S$ is a nonseparating splitting, we let $K:=\Stab_{\Gamma}(S)$. Proposition~\ref{prop:property-satisfied} ensures that $K$ satisfies $\Pstab$, and we have $K_1\cap K_2\subseteq K$, however $S$ is the unique splitting fixed by $K$ so that $K$ is neither contained in $K_1$ nor in $K_2$ (Lemma~\ref{lemma:single-splitting-stabilized-2}). Therefore $(K_1,K_2)$ does not satisfy $\Ppcomp$.
\end{proof}

\subsection{The case of $\mathrm{IA}_3$}

We remind the reader that $\mathrm{IA}_3$ is the kernel of the natural map $\Out(F_3)\to\mathrm{GL}(3,\mathbb{Z})$. Given a finite-index subgroup $\Gamma$ of $\mathrm{IA}_3$, we consider the following property of a pair $(K_1,K_2)$ of subgroups of $\Gamma$.

\begin{itemize}
\item[$\Pscomp$] The group $K_1\cap K_2$ is isomorphic to $\mathbb{Z}$. In addition, whenever $K\subseteq\Gamma$ is a subgroup that contains $K_1\cap K_2$ and satisfies $\Pstab$, we either have $K\subseteq K_1$ or $K\subseteq K_2$. 
\end{itemize}

\begin{lemma}\label{lemma:stab-rose-ia}
Let $S$ be a free splitting of $F_3$ such that the quotient graph $S/F_3$ is a two-petal rose, and suppose that $\{a,b,c\}$ is a free basis of $F_3$ such that $S$ is the common refinement of the splittings $\langle a,b\rangle\ast$ and $\langle a,c\rangle\ast$ (such a basis always exists). 
\\ Then the stabilizer of $S$ in $\mathrm{IA}_3$ is equal to the group of twists about the one-edge separating cyclic splitting $\langle a,b\rangle\ast_{\langle a\rangle}\langle a,c\rangle$; in particular it is isomorphic to $\mathbb{Z}$.
\end{lemma}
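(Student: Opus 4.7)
My plan is to compute $\Stab_{\mathrm{IA}_3}(S)$ directly and identify it with the group of twists about $T=\langle a,b\rangle\ast_{\langle a\rangle}\langle a,c\rangle$. First I would verify that the basis exists: since $S/F_3$ is a two-petal rose whose fundamental group must equal $F_3$, van Kampen forces the vertex group to be infinite cyclic, say $\langle a\rangle$, and taking $b,c$ to be stable letters of the two loops yields a free basis of $F_3$ because the edge groups are trivial. Collapsing each loop in turn then recovers the splittings $\langle a,c\rangle\ast$ and $\langle a,b\rangle\ast$ as the two one-edge collapses of $S$.

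The bulk of the argument is computing $\Stab_{\mathrm{IA}_3}(S)$. Since $\mathrm{IA}_3\subseteq\ias$, Lemma~\ref{lemma:stab-splitting-ia} tells me that every $\phi\in\Stab_{\mathrm{IA}_3}(S)$ acts trivially on $S/F_3$. I would fix a lift $\tilde v\in S$ of the unique vertex of $S/F_3$. After composing $\phi$ with a suitable inner automorphism, I may assume that its lift $\tilde\phi\in\Aut(F_3)$ fixes $\tilde v$; any two such lifts then differ by $\ad_{a^m}$ for some $m\in\mathbb{Z}$, since $\Stab_{F_3}(\tilde v)=\langle a\rangle$. Because $\tilde\phi$ preserves $\langle a\rangle$ and $\phi$ acts trivially on $H_1(F_3;\mathbb{Z})$, one gets $\tilde\phi(a)=a$. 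Analyzing the incidence of edges at $\tilde v$, using that each oriented edge orbit of $S$ is preserved by $\tilde\phi$, yields $\tilde\phi(b)\in a^{\mathbb{Z}}b\,a^{\mathbb{Z}}$ and $\tilde\phi(c)\in a^{\mathbb{Z}}c\,a^{\mathbb{Z}}$, and a second application of triviality on $H_1$ refines this to $\tilde\phi(b)=a^k b a^{-k}$ and $\tilde\phi(c)=a^p c a^{-p}$. The action $(k,p)\mapsto(k+m,p+m)$ of inner automorphisms then yields $\Stab_{\mathrm{IA}_3}(S)\cong\mathbb{Z}$ via $(k,p)\mapsto p-k$.

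Finally, I would identify the generator. Taking $(k,p)=(0,1)$ produces the automorphism $a\mapsto a,\ b\mapsto b,\ c\mapsto aca^{-1}$, which is exactly the Dehn twist $D_T$ about $T$ obtained by twisting by the generator $a$ of the edge group on the $\langle a,c\rangle$-side. Since $T$ has infinite cyclic edge group, its group of twists in $\Out(F_3)$ is cyclic and generated by $D_T$; moreover $D_T$ lies in $\mathrm{IA}_3$ since conjugation by $a$ acts trivially on $H_1$. Hence $\Stab_{\mathrm{IA}_3}(S)=\langle D_T\rangle$ coincides with the twist group of $T$, which is isomorphic to $\mathbb{Z}$. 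The step I expect to require the most care is the edge analysis at $\tilde v$, where one must pin down the double-coset form of $\tilde\phi(b)$ and $\tilde\phi(c)$ using the combinatorics of the two-petal rose; once this is in place the remaining computation is routine.
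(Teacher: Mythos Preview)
Your proof is correct and follows essentially the same approach as the paper's, just with more detail. The paper's argument is a two-line computation: it first quotes (via Levitt) that the stabilizer of $S$ in $\ias$ is the rank-$3$ group of twists generated by $c\mapsto ac$, $c\mapsto ca$, $b\mapsto ba$, and then reads off that the intersection with $\mathrm{IA}_3$ is generated by the partial conjugation $c\mapsto a^{-1}ca$. Your version unpacks this by working directly in $\mathrm{IA}_3$ via the tree action and the $H_1$-triviality condition, arriving at the same generator; the edge analysis you flag as delicate is exactly what the paper absorbs into the citation of Levitt's description of twist groups.
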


\begin{proof}
The stabilizer of $S$ in $\mathrm{IA}_3(\mathbb{Z}/3\mathbb{Z})$ is generated by the Dehn twists $c\mapsto ac$, $c\mapsto ca$ and $b\mapsto ba$. The stabilizer in $\mathrm{IA}_3$ is therefore generated by the partial conjugation $c\mapsto a^{-1}ca$, so the conclusion follows.
\end{proof}

\begin{lemma}\label{lemma:stab-circle-ia}
Let $S$ be a free splitting of $F_3$ such that the quotient graph $S/F_3$ is a two-edge loop. Then the stabilizer of $S$ in $\mathrm{IA}_3$ is trivial.
\end{lemma}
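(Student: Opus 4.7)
The plan is to combine the structural description of $\Stab(S)$ from Lemma~\ref{automorphic-lifts} with a direct homology computation.

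Since $S/F_3$ is a two-edge loop with vertex groups $A$ and $B$, the rank identity $\rk A + \rk B + 1 = 3$ forces $\rk A + \rk B = 2$. In the nondegenerate case (both vertex groups nontrivial, which is the only case relevant to the application in Proposition~\ref{prop:compatibility-3}, since otherwise the two one-edge collapses of $S$ would coincide), both $A$ and $B$ are infinite cyclic. I will write $A = \langle a\rangle$, $B = \langle b\rangle$, and choose $t$ to be a stable letter for one of the two loop edges, so that $\{a,b,t\}$ is a free basis of $F_3$.

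By Lemma~\ref{automorphic-lifts}, there is a subgroup $\Stab^0(S)\subseteq\Stab(S)$ of index at most $4$ sitting in the split short exact sequence
\[ 1 \to A\times B \to \Stab^0(S) \to \Aut(A)\times\Aut(B) \to 1, \]
whose kernel $A\times B\cong\mathbb{Z}^2$ is the group of twists $t\mapsto b^n t a^{-m}$ (fixing $a$ and $b$), and whose quotient $(\mathbb{Z}/2)^2$ is generated by the two inversions $a\mapsto a^{-1}$ and $b\mapsto b^{-1}$.

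Given $\Phi\in\Stab_{\mathrm{IA}_3}(S)$, Lemma~\ref{lemma:stab-splitting-ia} places $\Phi$ in $\Stab^0(S)$ (using $\mathrm{IA}_3\subseteq\mathrm{IA}_3(\mathbb{Z}/3\mathbb{Z})$). The image of $\Phi$ in $(\mathbb{Z}/2)^2$ is forced to be trivial, since each nontrivial element acts by $-1$ on a summand of $H_1(F_3;\mathbb{Z})$. Hence $\Phi$ is a twist $t\mapsto b^n t a^{-m}$ for some $(m,n)\in\mathbb{Z}^2$, acting on $H_1(F_3;\mathbb{Z})$ by $\bar t\mapsto\bar t + n\bar b - m\bar a$; the hypothesis $\Phi\in\mathrm{IA}_3$ forces $m=n=0$, so $\Phi$ is trivial. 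The proof is essentially direct once the basis $\{a,b,t\}$ is in place; the only mild subtlety is isolating the nondegenerate case via the rank count at the outset.
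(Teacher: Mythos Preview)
Your proof is correct and follows essentially the same route as the paper's: fix a basis adapted to the two-edge loop, identify the finite-index subgroup $\Stab^0(S)$ as a $\mathbb{Z}^2$ group of twists (with a finite $(\mathbb{Z}/2)^2$ quotient), and check directly that every nontrivial element acts nontrivially on $H_1(F_3;\mathbb{Z})$. The paper does this more tersely by simply naming the two generating Dehn twists $c\mapsto cb$ and $c\mapsto ac$ and asserting the homology computation; you are more explicit in invoking Lemma~\ref{automorphic-lifts} and Lemma~\ref{lemma:stab-splitting-ia} and in isolating the nondegenerate case $\rk A=\rk B=1$, which the paper handles implicitly through its choice of basis.
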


\begin{proof}
There exists a free basis $\{a,b,c\}$ of $F_3$ such that $S$ is the free splitting which is the common refinement of the splittings $F_3=\langle a,b\rangle\ast$ and $F_3=\langle a,cbc^{-1}\rangle\ast$. The stabilizer of $S$ in $\Out(F_3)$ has a finite-index subgroup generated by the Dehn twists $c\mapsto cb$ and $c\mapsto ac$. One then sees that its intersection with $\mathrm{IA}_3$ is trivial. 
\end{proof}

We will need the following variation over Lemma~\ref{lemma:single-splitting-stabilized}.

\begin{lemma}\label{lemma:single-splitting-stabilized-ia}
Let $\Gamma$ be a finite-index subgroup of $\mathrm{IA}_3$. Let $S$ be a free splitting of $F_3$ whose quotient graph $S/F_3$ is a two-petal rose, and let $K$ be the stabilizer of $S$ in $\Gamma$. Let $S'$ be a one-edge nonseparating free splittings of $F_3$ which is $K$-invariant.
\\ Then $S'$ is a collapse of $S$.
\end{lemma}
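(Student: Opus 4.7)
The plan is to follow the template of Lemmas~\ref{lemma:single-splitting-stabilized} and~\ref{lemma:single-splitting-stabilized-2}: produce a nontrivial twist lying in $K$, use Cohen--Lustig's parabolic orbit theorem (packaged as Lemma~\ref{twist-compatible}) to deduce compatibility of $S'$ with the splitting about which we twist, and finally classify the free splittings compatible with that splitting. The wrinkle here, compared to the two earlier lemmas, is that the twist will not be about $S$ itself (whose stabilizer in $\mathrm{IA}_3$ is only $\mathbb{Z}$, by Lemma~\ref{lemma:stab-rose-ia}, and in particular contains no twists about a free edge of $S$) but rather about a naturally associated one-edge cyclic splitting.

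Concretely, by Lemma~\ref{lemma:stab-rose-ia} I would pick a free basis $\{a,b,c\}$ of $F_3$ such that $S$ refines both $\langle a,b\rangle *$ and $\langle a,c\rangle *$, and such that $\Stab_{\mathrm{IA}_3}(S)$ is the infinite cyclic group generated by the partial conjugation $\Phi\colon c\mapsto a^{-1}ca$. Since $\Gamma$ has finite index in $\mathrm{IA}_3$, some nonzero power $\Phi^k$ lies in $K$. The key observation is that $\Phi$ is a nontrivial twist about the one-edge $\mathcal{Z}_{\max}$ splitting $\tilde S:=\langle a,b\rangle *_{\langle a\rangle}\langle a,c\rangle$, whose edge group $\langle a\rangle$ is root-closed. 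Since $\Phi^k(S')=S'$, Lemma~\ref{twist-compatible} then yields that $\tilde S$ and $S'$ are compatible.

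It remains to classify the one-edge nonseparating free splittings of $F_3$ compatible with $\tilde S$. A minimal common refinement $U$ of $\tilde S$ and $S'$ has exactly two orbits of edges, one with stabilizer $\langle a\rangle$ (inherited from $\tilde S$) and one with trivial stabilizer (inherited from $S'$), so $U$ is obtained from $\tilde S$ by a one-edge free blow-up at one of its two vertices, keeping the element $a$ elliptic. Focus, by symmetry, on the vertex with stabilizer $\langle a,b\rangle \cong F_2$: the one-edge free splittings of $F_2$ in which $a$ is elliptic are either the amalgam $\langle a\rangle * \langle b\rangle$, whose cyclic-edge-collapse in $U$ produces the separating free splitting $\langle a,c\rangle * \langle b\rangle$ of $F_3$ (excluded since $S'$ is nonseparating), or the HNN extension $\langle a\rangle *$ with stable letter $b$, whose cyclic-edge-collapse produces the nonseparating splitting $\langle a,c\rangle *$, which is one of the two one-edge collapses of $S$. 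The symmetric analysis at $\langle a,c\rangle$ yields the other collapse $\langle a,b\rangle *$. Hence $S'$ is a collapse of $S$.

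I do not anticipate any serious obstacle: both ingredients -- the explicit structure of $\Stab_{\mathrm{IA}_3}(S)$ from Lemma~\ref{lemma:stab-rose-ia}, and Lemma~\ref{twist-compatible} -- are already in hand. The only step requiring a bit of care is the brief enumeration of the 2-edge refinements of $\tilde S$ together with the form of their cyclic-edge collapses; this enumeration is what distinguishes the nonseparating free splittings of $F_3$ compatible with $\tilde S$ from the separating ones, and is exactly what uses the hypothesis that $S'$ is nonseparating.
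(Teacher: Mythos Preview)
Your proposal is correct and follows essentially the same route as the paper: identify $\Stab_{\mathrm{IA}_3}(S)$ with the twist group of the cyclic splitting $\tilde S=\langle a,b\rangle *_{\langle a\rangle}\langle a,c\rangle$ via Lemma~\ref{lemma:stab-rose-ia}, apply Lemma~\ref{twist-compatible} to force compatibility of $S'$ with $\tilde S$, then check that the only nonseparating free splittings compatible with $\tilde S$ are the two collapses of $S$. One minor imprecision: there are infinitely many one-edge amalgam splittings of $\langle a,b\rangle$ with $a$ elliptic (namely $\langle a\rangle * \langle b'\rangle$ for any $b'$ completing $a$ to a basis, not only $b'=b$), but each of these still collapses to a \emph{separating} splitting $\langle b'\rangle * \langle a,c\rangle$ of $F_3$, so your conclusion is unaffected.
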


\begin{proof}
Since every $K$-invariant free splitting of $F_3$ is invariant under the $\mathrm{IA}_3$-stabilizer of $S$, we can assume without loss of generality that $\Gamma=\mathrm{IA}_3$. There exists a free basis $\{a,b,c\}$ of $F_3$ such that $S$ is the two-edge refinement of the splittings $\langle a,b\rangle\ast$ and $\langle a,c\rangle\ast$. By Lemma~\ref{lemma:stab-rose-ia}, the group $K$ is equal to the group of twists about the one-edge separating cyclic splitting $U$ equal to $\langle a,b\rangle\ast_{\langle a\rangle}\langle a,c\rangle$. By Lemma~\ref{twist-compatible}, all free splittings of $F_3$ which are $K$-invariant are compatible with $U$. As the only nonseparating free splittings of $F_3$ compatible with $U$ are the two one-edge collapses of $S$ (there is only one way to blow-up each vertex), the conclusion follows.
\end{proof}

We will also need the following extension of Lemma~\ref{lemma:new-splitting} (valid in any rank $N$). 

\begin{lemma}\label{lemma:new-splitting-2}
Let $S_1$ and $S_2$ be two  one-edge nonseparating free splittings of $F_N$, which are the Bass--Serre trees of two decompositions $F_N=A_1\ast$ and $F_N=A_2\ast$, respectively.  Assume that $S_1$ and $S_2$ are noncompatible, and let $K:=\Stab_{\ia}(S_1)\cap\Stab_{\ia}(S_2)$. 
\\ Then there exists a $K$-invariant one-edge free splitting $S$ of $F_N$ which is distinct from both $S_1$ and $S_2$ and from every separating free splitting of the form $A_1\ast\mathbb{Z}$ or $A_2\ast\mathbb{Z}$.
\end{lemma}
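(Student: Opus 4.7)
The plan is to adapt the surgery argument used in the proof of Lemma~\ref{lemma:new-splitting}, together with a homological observation guaranteeing that a single innermost disc surgery on $S_1$ towards $S_2$ produces at least one \emph{nonseparating} essential sphere. Once we have a nonseparating surgery sphere, it is automatically distinct from every separating splitting $A_i\ast\mathbb{Z}$, and the arguments already present in Lemma~\ref{lemma:new-splitting} handle distinctness from $S_1$ and $S_2$.

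First, I would view $S_1$ and $S_2$ as spheres in $M_N$ in minimal position. Since they are noncompatible, their intersection contains at least one essential circle; choose $c$ that bounds a disc $D\subset S_2$ which is innermost on $S_2$, so that $D$ meets $S_1$ only along $c$. The circle $c$ separates the sphere $S_1$ into two discs $D_1$ and $D_2$, and innermost disc surgery along $D$ produces the two spheres $S':=D_1\cup D^+$ and $S'':=D_2\cup D^-$, where $D^+$ and $D^-$ are parallel copies of $D$. Both of these spheres appear on a surgery sequence from $S_1$ to $S_2$, so by Remark~\ref{rk:surgery} each is fixed by $K=\Stab_{\ia}(S_1)\cap\Stab_{\ia}(S_2)$.

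Next, I would use mod-2 homology to find a nonseparating surgery sphere. Since $D^+$ and $D^-$ cobound a $3$-ball in $M_N$, one has $[S']+[S'']=[S_1]$ in $H_2(M_N;\mathbb{Z}/2)$. Because $S_1$ is nonseparating, $[S_1]\neq 0$, so at least one of the two classes $[S'],[S'']$ is nonzero; call that sphere $S$. Being nonseparating, $S$ is essential (it cannot bound a ball), and it cannot represent a splitting of the form $A_1\ast\mathbb{Z}$ or $A_2\ast\mathbb{Z}$, as such splittings are by definition separating.

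It remains to verify that $S$ is distinct from $S_1$ and $S_2$ as isotopy classes. It has strictly fewer intersection circles with $S_2$ than $S_1$ does (we removed the circle $c$), hence $S\neq S_1$; and $S$ is contained in a regular neighbourhood of $S_1\cup D$ with $D$ chosen disjoint from $S_1\setminus c$, hence $S$ is disjoint from $S_1$, while $S_2$ meets $S_1$ in at least $c$, so $S\neq S_2$. The only nonroutine ingredient is the $K$-invariance of $S$, which is already contained in Remark~\ref{rk:surgery}; the homological observation that a nonseparating sphere cannot split into two separating surgery spheres is the new, but straightforward, point.
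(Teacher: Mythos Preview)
Your argument is correct and is a genuinely different, more direct route than the paper's. The paper argues by contradiction: assuming every $K$-invariant one-edge splitting is $S_1$, $S_2$, or of the form $A_i\ast\mathbb{Z}$, it examines an entire surgery sequence from $S_1$ to $S_2$ and shows (by a case analysis on how the types $A_1\ast\mathbb{Z}$ and $A_2\ast\mathbb{Z}$ can be adjacent along the path) that $S_1$ and $S_2$ would then have to be compatible. You instead bypass this structural analysis with the elementary homological observation that the two spheres produced by a single innermost-disc surgery satisfy $[S']+[S'']=[S_1]$ in $H_2(M_N;\mathbb{Z}/2\mathbb{Z})$, so one of them is nonseparating and hence automatically avoids all the separating forms $A_i\ast\mathbb{Z}$. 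This is shorter and uses nothing beyond what is already established in Lemma~\ref{lemma:new-splitting} and Remark~\ref{rk:surgery}.

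Two small points of polish. First, your sentence ``Both of these spheres appear on a surgery sequence'' is slightly loose: in principle one of $S',S''$ could be inessential, in which case it is discarded and does not correspond to a splitting. This is harmless here because your chosen sphere $S$ is the nonseparating one, which is automatically essential and therefore genuinely lies on the surgery path, so Remark~\ref{rk:surgery} applies to it. Second, the distinctness arguments for $S\neq S_1$ and $S\neq S_2$ implicitly use that $S_1$ and $S_2$ start in minimal (normal) position so that geometric intersection numbers are isotopy invariants; you state this at the outset, so the reasoning is sound, but it is worth making explicit that ``fewer intersection circles'' is being compared against the minimal intersection number of $S_1$ with $S_2$.
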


\begin{proof}
Assume towards a contradiction that all $K$-invariant free splittings of $F_N$ have one of the forms in the statement. In view of Remark~\ref{rk:surgery}, this implies in particular that all spheres on a surgery sequence from $S_1$ to $S_2$ correspond to splittings of the form $A_1\ast\mathbb{Z}$ or $A_2\ast\mathbb{Z}$.

If all the splittings on surgery sequences from $S_1$ to $S_2$ are of the form $A_1\ast\mathbb{Z}$, then one of those (call it $S$) is compatible with $S_2$. But then $S_2$ is obtained from $S$ by blowing up the vertex with vertex group $A_1$ and collapsing the edge coming from $S$, while $S_1$ is obtained from $S$ by blowing up the vertex with vertex group $\mathbb{Z}$ and collapsing the edge coming from $S$. This implies that $S_1$ and $S_2$ are compatible, a contradiction. Likewise, if all the splittings on surgery sequences from $S_1$ to $S_2$ are of the form $A_2\ast\mathbb{Z}$, then we get a contradiction.

In the remaining case, we can find a splitting of the form $A_1\ast\langle a_1\rangle$ and a splitting of the form $A_2\ast\langle a_2\rangle$ which follow each other in the surgery sequence and are therefore compatible. But then their common refinement is of the form $\langle a_1\rangle\ast A\ast\langle a_2\rangle$, and both $S_1$ and $S_2$ are compatible with it (as seen by blowing up the vertices with vertex groups $\langle a_1\rangle$ and $\langle a_2\rangle$, respectively). Again this proves that $S_1$ and $S_2$ are compatible, a contradiction. 
\end{proof}

\begin{prop}\label{prop:compatibility-ia} 
Let $\Gamma$ be a finite-index subgroup of $\mathrm{IA}_3$. Let $S_1$ and $S_2$ be two one-edge nonseparating free splittings of $F_3$, and for every $i\in\{1,2\}$, let $K_i:=\Stab_{\Gamma}(S_i)$.
\\ Then $S_1$ and $S_2$ are rose-compatible if and only if $(K_1,K_2)$ satisfies $\Pscomp$.
\end{prop}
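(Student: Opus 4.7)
The plan is to mirror the proofs of Propositions~\ref{prop:compatibility-all-cases} and~\ref{prop:compatibility-3}, adapting them to the specific structure of $\mathrm{IA}_3$. First I would observe that $\Gamma$ is twist-rich: since $\mathrm{IA}_3$ is the first term of the Andreadakis--Johnson filtration, this follows from combining Propositions~\ref{prop:example} and~\ref{prop:fi-twist-rich}. For the forward direction, suppose $S_1$ and $S_2$ are rose-compatible, with common refinement $U$ such that $U/F_3$ is a two-petal rose. Then $K_1\cap K_2=\Stab_\Gamma(U)$ is a finite-index subgroup of $\Stab_{\mathrm{IA}_3}(U)\cong\mathbb{Z}$ by Lemma~\ref{lemma:stab-rose-ia}, hence itself infinite cyclic. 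For the second clause of $\Pscomp$, any subgroup $K\subseteq\Gamma$ containing $K_1\cap K_2$ and satisfying $\Pstab$ fixes a nonseparating free splitting $S_K$ by Proposition~\ref{criterion-fix-splitting}; then $S_K$ is $\Stab_\Gamma(U)$-invariant, so Lemma~\ref{lemma:single-splitting-stabilized-ia} forces $S_K$ to be a collapse of $U$. Since the only nonseparating collapses of a two-petal rose are $S_1$ and $S_2$, we get $K\subseteq K_1$ or $K\subseteq K_2$.

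For the backward direction, I would argue by contrapositive and assume $S_1,S_2$ are not rose-compatible. If they are circle-compatible, then the common refinement $U'$ has $U'/F_3$ a two-edge loop, so $\Stab_{\mathrm{IA}_3}(U')=1$ by Lemma~\ref{lemma:stab-circle-ia}; hence $K_1\cap K_2=1$, violating the first clause of $\Pscomp$. Otherwise $S_1,S_2$ are noncompatible, and Lemma~\ref{lemma:new-splitting-2} produces a $K_1\cap K_2$-invariant one-edge free splitting $S$ that is distinct from $S_1,S_2$ and not of the form $A_i\ast\mathbb{Z}$, where $A_i$ is the rank-two free factor defining $S_i$. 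When $S$ is nonseparating, I would follow the proof of Proposition~\ref{prop:compatibility-3}: the group $K:=\Stab_\Gamma(S)$ satisfies $\Pstab$ by Proposition~\ref{prop:property-satisfied} and contains $K_1\cap K_2$, while Lemma~\ref{lemma:single-splitting-stabilized} (valid since $\Gamma$ is twist-rich and $S$ has nonabelian vertex stabilizer) shows that $S$ is the unique nontrivial $K$-invariant free splitting, so $K$ is contained in neither $K_1$ nor $K_2$, contradicting the second clause of $\Pscomp$.

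The essential new case, and I expect the main obstacle, is when $S$ is separating. Then $S=B\ast\mathbb{Z}$ for a rank-two free factor $B$ whose conjugacy class is distinct from those of $A_1$ and $A_2$, and $\Stab_{\mathrm{IA}_3}(S)$ is merely isomorphic to $F_2$, rather than containing a free abelian group of high rank as in the settings of Propositions~\ref{prop:compatibility-all-cases} and~\ref{prop:compatibility-3}; so the argument from those proofs does not immediately go through. My resolution is to introduce the auxiliary one-edge nonseparating free splitting $S''$ of $F_3$ with vertex group $B$ (given by any HNN presentation $F_3=B\ast_t$, well-defined up to isomorphism as an $F_3$-tree by the conjugacy class of $B$). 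The key observation is the inclusion $\Stab_{\mathrm{IA}_3}(S)\subseteq\Stab_{\mathrm{IA}_3}(S'')$: a calculation analogous to the proof of Lemma~\ref{lemma:stab-rose-ia} identifies $\Stab_{\mathrm{IA}_3}(S)$ with the group of partial conjugations $\phi_w$ ($w\in B$) fixing $B$ pointwise, and each such $\phi_w$ preserves the conjugacy class of $B$ and hence preserves $S''$. Setting $K:=\Stab_\Gamma(S'')$, we then have $K_1\cap K_2\subseteq\Stab_\Gamma(S)\subseteq K$, and $K$ satisfies $\Pstab$ by Proposition~\ref{prop:property-satisfied}; since $S''\neq S_1,S_2$ (different vertex groups up to conjugacy), Lemma~\ref{lemma:single-splitting-stabilized} again forces $K\not\subseteq K_1$ and $K\not\subseteq K_2$, completing the failure of $\Pscomp$.
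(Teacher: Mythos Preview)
Your proof is correct and follows essentially the same route as the paper's. The only cosmetic differences are that the paper does not pause to note that $\Gamma$ is twist-rich, and in the separating case the paper observes more directly that any automorphism fixing the splitting $B\ast\mathbb{Z}$ preserves the conjugacy class of the nonabelian vertex group $B$ (hence fixes the nonseparating splitting $B\ast$), without the explicit computation of $\Stab_{\mathrm{IA}_3}(S)$ as partial conjugations; for the final uniqueness step the paper cites Lemma~\ref{lemma:single-splitting-stabilized-ia} whereas your appeal to Lemma~\ref{lemma:single-splitting-stabilized} via twist-richness is equally valid.
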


\begin{proof}
The proof is similar to the proofs of Propositions~\ref{prop:compatibility-all-cases} and~\ref{prop:compatibility-3}. Let $A_1$ and $A_2$ be corank one free factors of $F_3$ such that for every $i\in\{1,2\}$, the tree $S_i$ is the Bass--Serre tree of the decomposition $F_3=A_i\ast$.

We first assume that $S_1$ and $S_2$ are rose-compatible. Lemma~\ref{lemma:stab-rose-ia} ensures that $K_1\cap K_2$ is isomorphic to $\mathbb{Z}$. Let $K_0:=K_1\cap K_2$, and let $K\subseteq\Gamma$ be a group that contains $K_0$ and satisfies $\Pstab$. As $K$ satisfies $\Pstab$, Proposition~\ref{criterion-fix-splitting} ensures that $K$ fixes a one-edge nonseparating free splitting $S$ of $F_3$. As $K_0\subseteq K$ we deduce that $S$ is $K_0$-invariant. Lemma~\ref{lemma:single-splitting-stabilized-ia} therefore ensures that $S$ is equal to either $S_1$ or $S_2$. Therefore $K\subseteq K_1$ or $K\subseteq K_2$. This shows that the pair $(K_1,K_2)$ satisfies $\Pscomp$.

We now assume that $S_1$ and $S_2$ are not rose-compatible. If they are circle-compatible, then $K_1\cap K_2$ is trivial (Lemma~\ref{lemma:stab-rose-ia}), so $(K_1,K_2)$ does not satisfy $\Pscomp$. We now assume that $S_1$ and $S_2$ are not compatible. 

 We claim that there exists a one-edge nonseparating free splitting $S$ of $F_3$, distinct from both $S_1$ and $S_2$, which is fixed by $K_1\cap K_2$. Indeed, by Lemma~\ref{lemma:new-splitting-2}, there exists a one-edge free splitting $S'$ of $F_3$, distinct from both $S_1$ and $S_2$, which is fixed by $K_1\cap K_2$; in addition, if $S'$ is separating, then we can assume that $S'$ is the Bass--Serre tree of a decomposition $F_3=C\ast\mathbb{Z}$ where $C$ is not conjugate to any $A_i$. If $S'$ is nonseparating, then we are done by letting $S=S'$. If $S'$ is separating, then 
 we are done by letting $S'$ be the nonseparating splitting $F_3=C\ast$, as any automorphism that fixes $C \ast \mathbb{Z}$ also preserves the conjugacy class of $C$. 

We then let $K:=\Stab_{\Gamma}(S)$. Proposition~\ref{prop:property-satisfied} ensures that $K$ satisfies $\Pstab$, and we have $K_1\cap K_2\subseteq K$. However, Lemma~\ref{lemma:single-splitting-stabilized-ia} ensures that $K$ is neither contained in $K_1$ nor in $K_2$. Therefore $(K_1,K_2)$ does not satisfy $\Pscomp$.
\end{proof}

\section{Conclusion}\label{sec:conclusion}

\emph{In this last section, we complete the proof of our main theorem.}

\begin{theo}\label{theo:main}
Let $N\ge 4$, and let $\Gamma\subseteq\ia$ be a twist-rich subgroup. Then any isomorphism $f\colon H_1 \to H_2$ between two finite index subgroups of $\G$ is given by conjugation by an element of $\Comm_{\Out(F_N)}(\G)$ and the natural map \[\Comm_{\Out(F_N)}(\Gamma)\to\Comm(\Gamma) \] is an isomorphism. 
\end{theo}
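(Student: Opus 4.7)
The plan is to apply the general blueprint Proposition~\ref{prop:blueprint} with $G=\Out(F_N)$, the subgroup $\Gamma$ as given, and the simple graph $X=\ens$. The entire machinery developed in Sections~\ref{sec:blueprint}--\ref{sec:edges} has been set up precisely so that its three hypotheses can be verified, and I would proceed by checking each one in turn.

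First, Hypothesis~(1) of Proposition~\ref{prop:blueprint} is Theorem~\ref{ens-automorphisms}, which asserts that $\Out(F_N)\to\Aut(\ens)$ is an isomorphism. For Hypothesis~(2), suppose that $S$ and $S'$ are distinct vertices of $\ens$ and that $K:=\Stab_\Gamma(S)\cap\Stab_\Gamma(S')$ has finite index in $\Stab_\Gamma(S)$. The vertex groups of the one-edge nonseparating splitting $S$ are isomorphic to $F_{N-1}$, hence nonabelian since $N\ge 4$, so Lemma~\ref{lemma:single-splitting-stabilized} applies: every $K$-invariant free splitting is a collapse of $S$. The only one-edge collapse of a one-edge splitting is itself, so $S'=S$, a contradiction.

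For Hypothesis~(3), the $\Comm(\Gamma)$-invariance of the set of commensurability classes $\cali$ is exactly Proposition~\ref{prop:stab-invariant}. For the set $\calj$, let $\Psi\in\Comm(\Gamma)$ be represented by an isomorphism $f\colon\Gamma_1\to\Gamma_2$ between finite-index subgroups of $\Gamma$, and let $(S_1,S_2)$ be an edge of $\ens$. By Proposition~\ref{prop:fi-twist-rich}, both $\Gamma_1$ and $\Gamma_2$ are twist-rich. Proposition~\ref{prop:stab-invariant} (and the argument in its proof) provides unique one-edge nonseparating free splittings $S_1',S_2'$ with $f(\Stab_{\Gamma_1}(S_i))=\Stab_{\Gamma_2}(S_i')$. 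Applying Proposition~\ref{prop:compatibility-all-cases} to $\Gamma_1$, the rose-compatibility of $S_1,S_2$ is equivalent to the algebraic condition that $(\Stab_{\Gamma_1}(S_1),\Stab_{\Gamma_1}(S_2))$ satisfies $\Pcomp$ in $\Gamma_1$. Since $f$ is a group isomorphism, it transports this property to $(\Stab_{\Gamma_2}(S_1'),\Stab_{\Gamma_2}(S_2'))$ satisfying $\Pcomp$ in $\Gamma_2$; applying Proposition~\ref{prop:compatibility-all-cases} to $\Gamma_2$ then yields rose-compatibility of $(S_1',S_2')$, so this pair is an edge of $\ens$.

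With all three hypotheses verified, Proposition~\ref{prop:blueprint} immediately gives the conclusion: every isomorphism between finite-index subgroups of $\Gamma$ is the restriction of conjugation by some element of $\Comm_{\Out(F_N)}(\Gamma)$, and $\ad\colon\Comm_{\Out(F_N)}(\Gamma)\to\Comm(\Gamma)$ is an isomorphism. At this stage there is no remaining obstacle of substance, since the hard work lies in the earlier sections (the rigidity of $\ens$ in Section~\ref{sec:ens}, the algebraic characterization of stabilizers via $\Pstab$ in Section~\ref{sec:vertices}, and the characterization of rose-compatibility via $\Pcomp$ in Section~\ref{sec:edges}). The only mild subtlety to keep track of here is that the properties $\Pstab$ and $\Pcomp$ depend on the ambient group, but this is benign because $f$ is an isomorphism between twist-rich groups $\Gamma_1$ and $\Gamma_2$ and hence intertwines the centralizer and intersection structures used to define both properties.
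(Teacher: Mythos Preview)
Your proposal is correct and follows essentially the same approach as the paper's proof: both verify the three hypotheses of Proposition~\ref{prop:blueprint} for $X=\ens$ by invoking Theorem~\ref{ens-automorphisms}, Lemma~\ref{lemma:single-splitting-stabilized}, Proposition~\ref{prop:stab-invariant}, and Proposition~\ref{prop:compatibility-all-cases}. You spell out in more detail than the paper how $\Pcomp$ is transported by $f$ between the twist-rich subgroups $\Gamma_1$ and $\Gamma_2$, which is a point the paper leaves implicit but is indeed the intended mechanism.
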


\begin{proof}
If $S$ and $S'$ are two different one-edge nonseparating free splittings of $F_N$, then $\Stab_\Gamma(S)$ and $\Stab_\Gamma(S')$ are not commensurable (Lemma~\ref{lemma:single-splitting-stabilized}). Proposition~\ref{prop:stab-invariant} shows that the collection $\cali$ of all commensurability classes of $\Gamma$-stabilizers of one-edge nonseparating free splittings of $F_N$ is $\Comm(\Gamma)$-invariant. Proposition~\ref{prop:compatibility-all-cases} shows that the collection $\calj$ of all pairs $([\Stab_\Gamma(S)],[\Stab_\Gamma(S')])$, where $S$ and $S'$ are two rose-compatible one-edge nonseparating free splittings of $F_N$, is also $\Comm(\Gamma)$-invariant. As the natural morphism $\Out(F_N)\to\Aut(\ens)$ is an isomorphism (Theorem~\ref{ens-automorphisms}), the conclusion follows from Proposition~\ref{prop:blueprint}.
\end{proof}

The proof of Theorem~1 from the introduction follows from the fact that a subgroup $\G \subseteq \Out(F_N)$ containing a term of the Andreadakis--Johnson filtration or a power of every twist is twist-rich (Proposition~\ref{prop:example} and Proposition~\ref{prop:example2}). The second theorem from the introduction is the following:

\begin{theo}
Let $\Gamma$ be either $IA_3$ or a subgroup of $\Out(F_3)$ such that every twist has a power contained in $\Gamma$. Then any isomorphism $f\colon H_1 \to H_2$ between two finite index subgroups of $\G$ is given by conjugation by an element of $\Comm_{\Out(F_3)}(\G)$ and the natural map \[\Comm_{\Out(F_3)}(\Gamma)\to\Comm(\Gamma) \] is an isomorphism. 
\end{theo}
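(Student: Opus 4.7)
The proof strategy follows the blueprint established for the rank $N \geq 4$ case (Theorem~\ref{theo:main}), applying Proposition~\ref{prop:blueprint} with the graph $X = \ens$ and $G = \Out(F_3)$. Theorem~\ref{ens-automorphisms} supplies condition~(1) of the blueprint, so it remains to verify: the distinctness of vertex stabilizers up to commensuration in $\Gamma$, the $\Comm(\Gamma)$-invariance of the set $\cali$ of commensurability classes of $\Gamma$-stabilizers of one-edge nonseparating free splittings, and the $\Comm(\Gamma)$-invariance of the set $\calj$ of pairs of such stabilizers arising from edges of $\ens$.

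In the first case where $\Gamma = \mathrm{IA}_3$, I first observe that $\mathrm{IA}_3 \subseteq \mathrm{IA}_3(\mathbb{Z}/3\mathbb{Z})$ since trivial action on $H_1(F_3;\mathbb{Z})$ implies trivial action on $H_1(F_3;\mathbb{Z}/3\mathbb{Z})$; as $\mathrm{IA}_3$ is the first term of the Andreadakis--Johnson filtration of $\Out(F_3)$, Proposition~\ref{prop:example} shows that $\mathrm{IA}_3$ is twist-rich. Lemma~\ref{lemma:single-splitting-stabilized-ia} then gives the non-commensurability of distinct vertex stabilizers; Proposition~\ref{prop:stab-invariant} yields the $\Comm(\Gamma)$-invariance of $\cali$; and Proposition~\ref{prop:compatibility-ia} (applied with $\Gamma$ itself, a finite-index subgroup of $\mathrm{IA}_3$) characterizes rose-compatibility algebraically via Property~$\Pscomp$, delivering the $\Comm(\Gamma)$-invariance of $\calj$. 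Proposition~\ref{prop:blueprint} then concludes.

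In the second case where $\Gamma$ contains a power of every twist, I first replace $\Gamma$ by the finite-index subgroup $\Gamma \cap \mathrm{IA}_3(\mathbb{Z}/3\mathbb{Z})$; this substitution preserves both the abstract commensurator and the relative commensurator in $\Out(F_3)$, and the new group still contains a power of every twist, so by Proposition~\ref{prop:example2} it is twist-rich. Non-commensurability of distinct vertex stabilizers is ensured by Lemma~\ref{lemma:single-splitting-stabilized-2}; the $\Comm(\Gamma)$-invariance of $\cali$ comes again from Proposition~\ref{prop:stab-invariant}; and Proposition~\ref{prop:compatibility-3} gives an algebraic characterization of rose-compatibility through Property~$\Ppcomp$, so $\calj$ is $\Comm(\Gamma)$-invariant as well. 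The conclusion again follows from Proposition~\ref{prop:blueprint}.

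The essential obstacle is not the assembly step, which becomes mechanical once all the pieces are available, but rather the breakdown in rank~$3$ of the rank~$\ge 4$ algebraic characterization of edges in $\ens$ (Proposition~\ref{prop:compatibility-all-cases}), which distinguished rose-compatibility from circle-compatibility by exhibiting a direct product of $2N-4$ nonabelian free groups inside the common stabilizer of a rose-refinement. In rank~$3$ one has only two free factors available in such a product, which is not enough to separate rose- from circle-compatibility. The fix, already carried out in Section~\ref{sec:edges}, is to replace the maximal direct product criterion by a criterion involving maximal free abelian subgroups: rank three in the case of Property~$\Ppcomp$ and rank one for Property~$\Pscomp$, reflecting the fact that the stabilizer in the classical $\mathrm{IA}_3$ of a two-petal rose reduces to a single partial conjugation (Lemma~\ref{lemma:stab-rose-ia}). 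Once these rank-$3$ characterizations are available, the blueprint machinery handles the rest.
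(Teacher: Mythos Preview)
Your proposal is correct and follows essentially the same approach as the paper's own proof, which simply says the argument is identical to that of Theorem~\ref{theo:main} with Proposition~\ref{prop:compatibility-3} or~\ref{prop:compatibility-ia} substituted for Proposition~\ref{prop:compatibility-all-cases}. One small slip: in the $\mathrm{IA}_3$ case you cite Lemma~\ref{lemma:single-splitting-stabilized-ia} for the non-commensurability of distinct vertex stabilizers, but that lemma concerns stabilizers of two-petal roses, not one-edge splittings; the correct reference is Lemma~\ref{lemma:single-splitting-stabilized}, which applies since $\mathrm{IA}_3$ is twist-rich and the vertex group $F_2$ of a one-edge nonseparating splitting of $F_3$ is nonabelian.
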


\begin{proof}
The proof is the same as the proof of Theorem~\ref{theo:main}, using Proposition~\ref{prop:compatibility-3} or~\ref{prop:compatibility-ia} instead of Proposition~\ref{prop:compatibility-all-cases}. 
\end{proof}

\bibliographystyle{alpha}
\bibliography{commensurator-bib}

\begin{flushleft}
Camille Horbez\\
CNRS\\ 
Laboratoire de Math\'ematique d'Orsay\\
Univ.\ Paris-Sud, CNRS, Universit\'e Paris-Saclay\\ 
F-91405 ORSAY\\
\emph{e-mail: }\texttt{camille.horbez@math.u-psud.fr}
\end{flushleft}
~
\begin{flushleft}
Richard D.\ Wade\\
Mathematical Institute\\
University of Oxford\\
Oxford OX2 6GG\\
\emph{e-mail: }\texttt{wade@maths.ox.ac.uk} 
\end{flushleft}

\end{document}